\theoremstyle{plain}
\newtheorem{theorem}{Theorem}[section]
\newtheorem{lemma}[theorem]{Lemma}
\newtheorem{corollary}[theorem]{Corollary}
\newtheorem{proposition}[theorem]{Proposition}
\newtheorem{conjecture*}{Conjecture}
\newtheorem{theorem*}{Theorem}
\newtheorem{corollary*}{Corollary}
\newtheorem{conjecture}[theorem]{Conjecture}
\theoremstyle{definition}
\newtheorem{hypothesis}[theorem]{Hypothesis}
\theoremstyle{definition}
\newtheorem{definition}[theorem]{Definition}
\newtheorem{remark}[theorem]{Remark}
\newtheorem*{acknowledgments}{Acknowledgements}
\font\russ=wncyr10  1
\def\sha{\hbox{\russ\char88}}
\DeclareMathOperator{\Gal}{Gal}
\DeclareMathOperator{\Hom}{Hom}
\DeclareMathOperator{\Spec}{Spec}
\DeclareMathOperator{\N}{N}
\DeclareMathOperator{\im}{im}
\newcommand{\CC}{\mathbb{C}}
\newcommand{\DD}{\mathbb{D}}
\newcommand{\GG}{\mathbb{G}}
\newcommand{\HH}{\mathbb{H}}
\newcommand{\PP}{\mathbb{P}}
\newcommand{\QQ}{\mathbb{Q}}
\newcommand{\RR}{\mathbb{R}}
\newcommand{\ZZ}{\mathbb{Z}}
\newcommand{\cD}{\mathcal{D}}
\newcommand{\cL}{\mathcal{L}}
\newcommand{\cH}{\mathcal{H}}
\newcommand{\sH}{\mathscr{H}}
\newcommand{\cI}{\mathcal{I}}
\newcommand{\cN}{\mathcal{N}}
\newcommand{\cO}{\mathcal{O}}
\newcommand{\cQ}{\mathcal{Q}}
\newcommand{\frp}{\mathfrak{p}}
\newcommand{\fz}{\mathfrak{z}}
\newcommand{\id}{\mathrm{id}}
\newcommand{\ord}{\mathrm{ord}}
\newcommand{\drig}{\mathbb{D}_{\rm rig}^\dagger}
\newcommand{\catname}[1]{\textnormal{{\textsf{#1}}}}
\newcommand{\DR}{\catname{R}}
\newcommand{\DL}{\catname{L}}
\newcommand{\rgamma}{\DR\Gamma}
\newcommand{\rhom}{\DR\Hom}
\newcommand{\lotimes}{\otimes^{\DL}}
\begin{document}

\title[]{On derivatives of Kato's Euler system\\
 for elliptic curves}

\author{David Burns, Masato Kurihara and Takamichi Sano}

\begin{abstract}
In this paper we study a new conjecture concerning Kato's Euler system of zeta elements 
for elliptic curves $E$ over $\QQ$. This conjecture, which we refer to as the  `Generalized Perrin-Riou Conjecture', predicts a precise congruence relation between a `Darmon-type derivative' of the zeta element of $E$ over an arbitrary real abelian field 
and the critical value of an appropriate higher derivative of the $L$-function of $E$ over $\QQ$. We prove that the conjecture specializes in the relevant case of analytic rank one to recover Perrin-Riou's conjecture 
on the logarithm of Kato's zeta element. Under mild hypotheses we also prove that the  `order of vanishing' part of the conjecture is valid in arbitrary rank. An Iwasawa-theoretic analysis of our approach leads to the formulation and proof of a natural higher rank generalization of Rubin's formula concerning derivatives of $p$-adic $L$-functions. In addition, we establish a concrete and apparently new connection between the $p$-part of the classical Birch and Swinnerton-Dyer Formula and the Iwasawa Main Conjecture in arbitrary rank and for arbitrary reduction at $p$. In a forthcoming paper we will show that the Generalized Perrin-Riou Conjecture implies (in arbitrary rank) the conjecture of Mazur and Tate concerning congruences for modular elements and, by using this approach, we are able to give a proof, under certain mild and natural hypotheses, that the Mazur-Tate Conjecture is valid in analytic rank one. 
 \end{abstract}

\address{King's College London,
Department of Mathematics,
London WC2R 2LS,
U.K.}
\email{david.burns@kcl.ac.uk}

\address{Keio University,
Department of Mathematics,
3-14-1 Hiyoshi\\Kohoku-ku\\Yokohama\\223-8522,
Japan}
\email{kurihara@math.keio.ac.jp}

\address{Osaka City University,
Department of Mathematics,
3-3-138 Sugimoto\\Sumiyoshi-ku\\Osaka\\558-8585,
Japan}
\email{sano@sci.osaka-cu.ac.jp}

\maketitle

\tableofcontents

\section{Introduction} \label{Intro}

\subsection{Background}

A central problem in modern
number theory is to
understand the arithmetic meaning of the values of zeta
and $L$-functions.

The Birch and Swinnerton-Dyer Conjecture and main conjecture in Iwasawa theory are important instances of this problem, being respectively related to the Hasse-Weil $L$-function of an elliptic curve and to the $p$-adic $L$-function of an appropriate motive.

For an elliptic curve $E$ defined over $\QQ$, significant progress on the problem was made by Kato in \cite{katoasterisque} who used Beilinson elements in the $K$-theory of modular curves to define canonical `zeta elements' in \'{e}tale (Galois) cohomology groups that could be explicitly related to the values of Hasse-Weil $L$-functions.

To be a little more precise we fix an odd prime $p$, a finite abelian extension $F$ of $\QQ$, a finite set of places $S$ of $\QQ$ that contains the  archimedean place, $p$, all primes that ramify in $F$ and all primes at which $E$ has bad reduction. We write $\mathcal{O}_{F,S}$ for the subring of $F$ comprising elements that are integral at all non-archimedean places whose residue characteristic does not belong to $S$ and $T_p(E)$ for the $p$-adic Tate module of $E$.

Then the zeta element $z_{F}$ constructed by Kato belongs to the \'{e}tale cohomology group $H^{1}({\mathcal O}_{F,S}, T_{p}(E))$ and is explicitly related via the dual exponential map to the value at one of the Hasse-Weil $L$-function of $E$ (for more precise statements see \S 2).

As $F$ varies over finite subfields of the cyclotomic $\ZZ_{p}$-extension of $\QQ$, these elements $z_{F}$ form a projective system that can be used to recover the $p$-adic $L$-function of $E$.

In addition, as $F$ varies more generally, the elements $z_{F}$ form an Euler system and so can be used to bound the
$p$-adic Selmer group of $E$.

In this way zeta elements have led to partial results on both the main conjecture and Birch and Swinnerton-Dyer Conjecture for $E$.

For this reason, such elements have subsequently been much studied in the literature and have led to numerous important results.

Our main purpose in these articles is to investigate a conjectural property of Kato's elements that it seems has not been observed previously and to demonstrate that this property, whenever valid, has significant applications.

The conjecture itself predicts a precise link between a `Darmon-type' derivative of $z_F$ for any given $F$ and the value at the critical point of an appropriate higher derivative of the $L$-function of $E$ over $\QQ$.

This conjectural link constitutes a simultaneous refinement of well-known conjectures of Perrin-Riou \cite{PR} and of Mazur and Tate \cite{MT} and will be described in more detail in the next section.


Although we shall not pursue it here, it seems reasonable to expect that the general approach we develop can also be applied to elliptic curves with complex multiplication, with the role of Kato's zeta elements being replaced by elliptic units twisted by a Hecke character. 

We also expect that it should be possible to extend our approach to the setting of abelian varieties and to modular forms and their families, and we hope to return to these questions in a subsequent article.

\subsection{Conjectures and results at finite level} We shall now give an overview of the central conjecture that we formulate and the evidence for it that we have so far obtained.

\subsubsection{}At the outset we fix a finite {\it real} abelian extension $F$ of $\QQ$ and set $G:=\Gal(F/\QQ)$.

Then, following a general idea introduced by Darmon in \cite{DH}, the key object of our study will be the element
$${\mathcal N}_{F/\QQ}(z_{F}):=\sum_{\sigma \in G}
\sigma(z_{F}) \otimes \sigma^{-1}$$
of $H^1({\mathcal O}_{F,S}, T_{p}(E)) \otimes_{\ZZ_{p}}
\ZZ_{p}[G]$.


We write $r$ for the rank of $E(\QQ)$ and assume that $r > 0$, that $E(\QQ)$ has no element of order $p$ and that the $p$-part of the Tate-Shafarevich group of $E/\QQ$ is finite.

Then, under these hypotheses, in Definition \ref{def eta} we shall use the leading term at $s=1$ of $L(E,s)$ to (unconditionally) define a canonical `Birch and Swinnerton-Dyer element' $\eta^{{\rm BSD}}$ in the dimension one vector space over $\CC_p$ that is spanned by $\bigwedge_{\ZZ_{p}}^r
H^1(\ZZ_S, T_{p}(E))$.


With $I$ denoting the augmentation ideal of $\ZZ_{p}[G]$, we shall also define (in \S\ref{sec boc}) a canonical
`Bockstein regulator map'
$${\rm Boc}_F: {\bigwedge}_{\ZZ_{p}}^r H^1(\ZZ_S, T_{p}(E)) \longrightarrow
H^1(\ZZ_S, T_{p}(E)) \otimes_{\ZZ_p} I^{r-1}/I^r .$$

Finally we note the $\ZZ_p$-module $H^1({\mathcal O}_{F,S}, T_{p}(E))$ is free and so $H^1({\mathcal O}_{F,S}, T_{p}(E)) \otimes_{\ZZ_p} I^{r-1}$ identifies with a submodule of $H^1({\mathcal O}_{F,S}, T_{p}(E)) \otimes_{\ZZ_p} \ZZ_p[G]$.

Then, in terms of this notation, the central conjecture of this article can be stated as follows. 

\begin{conjecture}[The Generalized Perrin-Riou Conjecture] \label{conj01}\
\begin{itemize}
\item[(i)] (`Order of vanishing') ${\mathcal N}_{F/\QQ}(z_{F})$ belongs to $H^1({\mathcal O}_{F,S}, T_{p}(E)) \otimes_{\ZZ_p} I^{r-1}$.
\item[(ii)] (`Integrality') If $r > 1$, then $\eta^{\rm BSD}$ belongs to $\bigwedge_{\ZZ_{p}}^rH^1(\ZZ_S, T_{p}(E))$.
\item[(iii)] (`Leading term formula') The image of ${\mathcal N}_{F/\QQ}(z_{F})$ in $H^1({\mathcal O}_{F,S}, T_{p}(E)) \otimes_{\ZZ_p} I^{r-1}/I^r$ is equal to
 ${\rm Boc}_F(\eta^{{\rm BSD}})$.
\end{itemize}
\end{conjecture}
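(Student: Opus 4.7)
My strategy is to lift the statement to Iwasawa theory and then descend to finite level. The natural Iwasawa-theoretic counterpart is a comparison between the norm-compatible tower $(z_{F_n})_n$ of Kato's zeta elements over the cyclotomic $\ZZ_p$-extension of $F$ and the leading term at $s=1$ of the cyclotomic $p$-adic $L$-function of $E$, which is predicted to vanish to order $r$. The augmentation-ideal filtration on $\ZZ_p[G]$ is the finite-level shadow of the $s=1$ filtration on Iwasawa algebras, and this dictionary is what I would systematically exploit.

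For part (i), I would proceed by induction on $k$, showing that the image of $\mathcal{N}_{F/\QQ}(z_F)$ in $H^1(\mathcal{O}_{F,S},T_p(E))\otimes I^k/I^{k+1}$ vanishes for each $k<r-1$. The key input is a Kolyvagin-style bound: the relevant iterated Bockstein map factors through duals of the $p$-Selmer group of $E$, and the rank hypothesis on $E(\QQ)$ together with finiteness of $\sha(E/\QQ)[p^\infty]$ forces these images to land in a torsion submodule that one then shows to be zero. For part (ii), I would argue by descent from the cyclotomic Iwasawa Main Conjecture: given IMC together with finiteness of $\sha[p^\infty]$, the leading term of the $p$-adic $L$-function should produce a $\Lambda$-integral class whose specialization at finite level is precisely $\eta^{\rm BSD}$. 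For part (iii), the plan is to unwind both sides as explicit cup products. Kato's explicit reciprocity law rewrites $z_F$ in terms of dual exponentials of $L$-values, and iterating along the augmentation filtration expresses the class of $\mathcal{N}_{F/\QQ}(z_F)$ in $I^{r-1}/I^r$ as an $(r-1)$-fold iterated cup product of local cohomology classes. The Bockstein regulator $\mathrm{Boc}_F$ is built from the same kind of iterated cup products, so matching the two sides reduces to carefully tracking normalizations through Kato's formula.

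The main obstacle, by far, is the identification in part (iii): showing that the iterated Bockstein output of the analytically defined $\eta^{\rm BSD}$ agrees with the iterated norm-derivative of the geometrically defined zeta element. Both are avatars of a $p$-adic regulator, but they are built by radically different recipes (analytic versus motivic), and their precise compatibility amounts to a higher-rank reciprocity law. This is already where Perrin-Riou's original rank-one conjecture becomes delicate, and the higher-rank version would plausibly require a genuine extension of the explicit reciprocity techniques of $p$-adic Hodge theory. Integrality (part (ii)) is a serious secondary obstruction, tightly bound up with open cases of the equivariant Tamagawa Number Conjecture, and would most likely have to be proved conditionally on IMC.
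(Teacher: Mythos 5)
There is a fundamental problem: the statement you are trying to prove is the central \emph{conjecture} of the paper (it is restated precisely as Conjecture \ref{mrs}), and the paper does not prove it; it only establishes partial results. Your proposal, read closely, is not a proof either: it is a plan that explicitly concedes the two decisive points, namely the higher-rank reciprocity identification needed for part (iii) and the integrality in part (ii), so no amount of care with normalizations in the remaining steps would close the argument. In rank one, part (iii) is shown in the paper to be \emph{equivalent} to Perrin-Riou's conjecture (Theorem \ref{theorem01} via Proposition \ref{PR2} and Remark \ref{PR equiv}), which is itself known only in special cases; so any purported general proof of (iii) by ``tracking normalizations through Kato's formula'' would already settle Perrin-Riou's conjecture, and the cup-product bookkeeping you describe contains no new input of that strength.

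Where your plan overlaps with the paper's actual partial results, the mechanisms also differ in ways that matter. For (i), the paper does not run an induction showing iterated Bockstein images vanish via Selmer duals; it proves (under hypotheses (a)--(d)) a Fitting-ideal bound $\Phi({}_{c,d}z_F)\in{\rm Fitt}^0_{\ZZ_p[G]}(H^2(\cO_{F,S},T))$ for all $\ZZ_p[G]$-linear functionals $\Phi$ (Proposition \ref{prop bss}, from the equivariant Euler system theory of \cite{bss2}) and then converts this into the containment of (i) by the purely algebraic Proposition \ref{link prop}, using the surjection $H^2(\cO_{F,S},T)\twoheadrightarrow H^2(\ZZ_S,T)_{\rm tors}\oplus\ZZ_p^a$. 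Note that the group $H^2(\ZZ_S,T)_{\rm tors}$ is a genuine obstruction (see Theorem \ref{prop kappa}), so your claim that the relevant images land in a torsion module ``that one then shows to be zero'' would fail in general. For (ii), the paper proves equivalence with ${\rm BSD}_p(E)$ via the Tamagawa number conjecture formulation (Proposition \ref{prop eta}); your proposed derivation from the Iwasawa Main Conjecture alone is circular in the paper's framework, since the descent from IMC produces the element $\eta_{\bm{x}}^{\rm Kato}$ of Theorem \ref{key}, and identifying it with $\eta_{\bm{x}}^{\rm BSD}$ requires precisely Conjecture \ref{mrs2} together with the non-vanishing of $R_\omega^{\rm Boc}$ (this is the content of Theorem \ref{th2}), i.e.\ essentially the conjecture you are trying to prove.
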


\begin{remark} A precise statement of Conjecture \ref{conj01} will be given as Conjecture \ref{mrs}. 
For the moment, we note a key advantage of its formulation is that it uses a construction of regulators that works in the same way for all reduction types. 
 A further crucial advantage is that, in the case $r=1$, the conjecture takes a particularly simple form and can be proved under various natural hypotheses.
\end{remark}

In the rest of this section we outline the evidence that we have obtained for the above conjecture and also explain why it constitutes a simultaneous refinement and generalization of conjectures of Perrin-Riou and of Mazur and Tate.

\subsubsection{}We observe first that the containment predicted by Conjecture \ref{conj01}(i) can be studied by using the equivariant theory of Euler systems that was recently described by Sakamoto and the first and third authors in \cite{bss2}.

In particular, by using this approach we are able to prove that Conjecture \ref{conj01}(i) is valid under certain mild hypotheses.

For example, the following concrete result will follow directly from stronger results that we prove in \S\ref{zefi sec}. This result is a natural analogue for zeta elements of the main result of Darmon \cite[Th. 2.4]{DH} concerning Heegner points. 



\begin{theorem}\label{vanishing evidence} The containment of Conjecture \ref{conj01}(i) is valid if all of the following conditions are satisfied.
\begin{itemize}
\item[(a)] $p>3$;
\item[(b)] the $p$-primary parts of $\sha(E/F)$ and $\sha(E/\QQ)$ are finite;
\item[(c)] the image of the representation $G_\QQ \to {\rm Aut}(T_p(E)) \simeq {\rm GL}_2(\ZZ_p)$ contains ${\rm SL}_2(\ZZ_p)$;
\item[(d)] for every prime number $\ell$ in $S$ the group $E(\QQ_\ell)$ contains no element of order $p$.
\end{itemize}
\end{theorem}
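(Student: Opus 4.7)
The plan is to derive Theorem~\ref{vanishing evidence} as a specialization of the general equivariant Euler system machinery developed in \cite{bss2}. The first step is to verify that Kato's system $(z_F)_F$ qualifies as an equivariant Euler system of $\ZZ_p$-rank one for the representation $T_p(E)$ in the precise sense required by that framework. This is essentially standard: the construction of $z_F$ from Beilinson elements on modular curves yields a collection that is norm-compatible along any tower of finite abelian extensions of $\QQ$, with Euler factors at ramifying primes of the expected form. The auxiliary data (choice of the set $S$ and the modification of Euler factors at primes in $S$) must be aligned with the setup of \cite{bss2}, but this alignment is built into the very definition of $z_F$.

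The second step is to verify, under hypotheses (a)--(d), that the Selmer structure on $T_p(E)$ cut out by the global sections $H^1(\mathcal{O}_{F,S},T_p(E))$ has core rank equal to $r=\mathrm{rank}\, E(\QQ)$. Hypothesis (b), combined with (d), identifies $H^1(\ZZ_S, T_p(E))\otimes_{\ZZ_p}\QQ_p$ with the Mordell--Weil space $E(\QQ)\otimes\QQ_p$, which has dimension $r$; one must further check that the induced Selmer conditions at the primes in $S$ match those assumed in \cite{bss2}, and this is again a consequence of (d) via Kummer theory. Hypothesis (c) provides the standard big-image condition on $G_\QQ$ required for the Chebotarev-based constructions of Kolyvagin-type derivative classes, while (a) avoids the small primes at which the abstract cohomological arguments degenerate (in particular, ensures the freeness of the relevant cohomology and the vanishing of $H^0$ of the residual representation).

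Once these hypotheses are in place, one applies the general ``order of vanishing'' theorem of \cite{bss2}: for an equivariant Euler system of rank one attached to a Selmer structure of core rank $r$, the Darmon-type norm ${\mathcal N}_{F/\QQ}(z_F)$ automatically lies in $H^1(\mathcal{O}_{F,S}, T_p(E))\otimes_{\ZZ_p} I^{r-1}$. This is the natural higher-rank generalization of Darmon's theorem \cite[Th.\,2.4]{DH} for Heegner points, and the underlying philosophy is identical: the successive Bockstein-style derivatives of an Euler system class, read off via the augmentation filtration on $\ZZ_p[G]$, must reflect the rank of the associated Selmer group, and vanishing of ``lower'' derivatives is forced by the mismatch between the rank one of the Euler system and the higher rank $r$ of the cohomology.

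The principal obstacle will be the precise translation of the hypotheses (a)--(d) into the abstract axioms of \cite{bss2}, so that its main theorem applies off the shelf. In practice this means: (i) verifying that the finiteness of $\sha(E/F)_p$ (not only $\sha(E/\QQ)_p$) is needed to control the core rank over $F$, which is why (b) is imposed for both fields; (ii) checking that the Kolyvagin-type primes produced via (c) genuinely lie outside $S$ and split appropriately in $F$; and (iii) pinning down the interplay between the integral cohomology $H^1(\mathcal{O}_{F,S},T_p(E))$ and its $G$-coinvariants to justify the identification of the filtration by $I^{\bullet}$ with the filtration by derivative order. Everything else in the argument, including the actual derivative computations, is then absorbed into the abstract machinery of \cite{bss2}.
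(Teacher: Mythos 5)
Your proposal correctly identifies the external input: under hypotheses (a)--(d) the equivariant Euler system machinery of \cite{bss2} applies to Kato's elements. However, there is a genuine gap in how you use it. What \cite{bss2} supplies (and what the paper invokes as Proposition \ref{prop bss}, i.e.\ \cite[Th.~6.11]{bss2}) is \emph{not} an ``order of vanishing'' theorem for the Darmon norm; it is the statement that $\Phi({}_{c,d}z_F)\in {\rm Fitt}_{\ZZ_p[G]}^0(H^2(\cO_{F,S},T))$ for every $\ZZ_p[G]$-linear functional $\Phi$ on $H^1(\cO_{F,S},T)$. Your plan declares that ``the actual derivative computations are absorbed into the abstract machinery of \cite{bss2}'', but the bridge from this Fitting-ideal bound to the containment ${\mathcal N}_{F/\QQ}(z_F)\in H^1(\cO_{F,S},T)\otimes_{\ZZ_p}I_F^{r-1}$ is precisely the new content of the paper's Proposition \ref{link prop}, and no step in your outline produces it. Concretely, one must (i) embed $H^1(\cO_{F,S},T)$ into a finitely generated free $\ZZ_p[G]$-module $P_F$ with torsion-free cokernel (using $E(F)[p]=0$, which follows from (c)), (ii) apply the duality argument of \cite[Prop.~4.17]{bks1} to convert ``$\Phi(z)\in J$ for all $\Phi$'' into ``$\cN_{F/\QQ}(z)\in P_F\otimes_{\ZZ_p}J$'' for $J={\rm Fitt}^0_{\ZZ_p[G]}(H^2(\cO_{F,S},T))$, and (iii) bound this Fitting ideal inside $I_F^{r-1}$ by exploiting the surjection $H^2(\cO_{F,S},T)\twoheadrightarrow H^2(\ZZ_S,T)_{\rm tors}\oplus\ZZ_p^{r-1}$, where the free rank $r-1$ comes from the finiteness of $\sha(E/\QQ)[p^\infty]$ and the exact sequence relating $H^2(\ZZ_S,V)$ to $E(\QQ)^\ast$ and $E_1(\QQ_p)^\ast$. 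None of these three steps is part of the cited machinery.

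Relatedly, your structural explanation of where the exponent $r-1$ comes from is off: you assert that the Selmer structure has ``core rank equal to $r$'', but for a rank-one Euler system attached to $T_p(E)$ the core rank is one; the exponent $r-1$ enters because ${\rm Fitt}^0_{\ZZ_p[G]}$ of a module surjecting onto $\ZZ_p^{r-1}$ (with trivial $G$-action) lies in $I_F^{r-1}$, i.e.\ it is governed by the $\ZZ_p$-rank of $H^2(\ZZ_S,T)$, not by the core rank of the Euler system. This misidentification is not merely cosmetic: it is what leads you to expect an off-the-shelf ``order of vanishing'' theorem where in fact the descent from the Fitting-ideal statement (Conjecture \ref{order}, known under (a)--(d) as an inclusion) to Conjecture \ref{conj01}(i) has to be carried out by hand, as in Proposition \ref{link prop}.
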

%

Concerning Conjecture \ref{conj01}(ii), we can show in all cases that the predicted containment is valid  whenever the $p$-part of the Birch and Swinnerton-Dyer Formula for $E$ over $\QQ$, or `${\rm BSD}_p(E)$' as we shall abbreviate it in the sequel, is valid.
(In fact, a stronger version of this result will be proved in Proposition \ref{prop eta}).

Finally, to discuss the prediction of Conjecture \ref{conj01}(iii) we shall initially specialize to the case that the analytic rank $\ord_{s=1}L(E,s)$ of $E$ is equal to one.

In this case, well-known results of Gross and Zagier and of Kolyvagin (amongst others) imply that $r=1$ and so parts (i) and (ii) of Conjecture \ref{conj01} are valid trivially.

It is also straightforward to check in this case that the equality in Conjecture \ref{conj01}(iii) is valid for any, and therefore every, choice of  field $F$ if and only if one has $z_{\QQ}=\eta^{\rm BSD}$.

By analysing the latter equality, we shall thereby obtain the explicit interpretation of this case of Conjecture \ref{conj01} that is given in the next result. (A proof of this result will be explained in Remark \ref{PR equiv}(ii)).

In the sequel we write $L_{S}(E,s)$ for the $S$-truncated Hasse-Weil $L$-function of $E$.

\begin{theorem}\label{theorem01} If $E$ has analytic rank one, then Conjecture \ref{conj01} is valid for any, and therefore every, field $F$ if and only if one has $z_\QQ \in H^1_f(\QQ,T_p(E))$ and
$$\log_{\omega}(z_\QQ)=\frac{L_{S}'(E,1)}{\Omega^+ \cdot
\langle x,x \rangle_\infty} \log_\omega(x)^2.$$
Here $\log_{\omega}: H_f^1(\QQ, T_{p}(E)) \rightarrow
\QQ_{p}$ is the formal logarithm associated to the (fixed) 
N\'{e}ron differential $\omega$, $L_{S}'(E,1)$ denotes the value at $s=1$ of the first derivative of $L_{S}(E,s)$, $\Omega^+$ is the real N\'{e}ron period, $x$ is a generator of
$E(\QQ)$ modulo torsion and $\langle -,- \rangle_\infty$ is
the N\'eron-Tate height pairing.\end{theorem}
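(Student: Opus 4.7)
Since $E$ has analytic rank one, deep work of Gross--Zagier and Kolyvagin yields $r=1$. In this case, part (i) of Conjecture \ref{conj01} is automatic (with $r=1$ the target module $H^1(\mathcal{O}_{F,S}, T_p(E)) \otimes_{\ZZ_p} I^{r-1}$ coincides with the ambient module $H^1(\mathcal{O}_{F,S}, T_p(E)) \otimes_{\ZZ_p} \ZZ_p[G]$), while part (ii) is vacuous since it is only asserted when $r>1$. Hence the statement reduces to showing that part (iii) of Conjecture \ref{conj01} is equivalent to the displayed logarithm formula together with $z_\QQ \in H^1_f(\QQ, T_p(E))$.

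The plan is to establish this equivalence in two steps. First, I would reduce part (iii) (asserted for all real abelian $F$) to the single scalar equality $z_\QQ = \eta^{\rm BSD}$ inside $\CC_p \otimes_{\ZZ_p} H^1(\ZZ_S, T_p(E))$, as the paper flags as ``straightforward''. One direction is immediate on specialising to $F = \QQ$: the group $G$ is then trivial, $I = 0$, one has $\mathcal{N}_{\QQ/\QQ}(z_\QQ) = z_\QQ$, and the Bockstein regulator ${\rm Boc}_\QQ$ degenerates to the identity under the canonical isomorphism $I^0/I^1 \cong \ZZ_p$, so (iii) with $F = \QQ$ reads precisely $z_\QQ = \eta^{\rm BSD}$. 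For the converse, one uses the Euler system norm relations for $z_F$ together with the naturality of ${\rm Boc}_F$ under change of base field to propagate the equality at $F = \QQ$ to (iii) for every $F$.

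Second, I would unpack Definition \ref{def eta} in the case $r=1$. The top exterior power $\bigwedge^1_{\ZZ_p} H^1(\ZZ_S, T_p(E))$ coincides with $H^1(\ZZ_S, T_p(E))$, and the comparison isomorphism used to define $\eta^{\rm BSD}$ in rank one factors through the Kummer image of the generator $x$ of $E(\QQ)$ modulo torsion, and involves the N\'eron--Tate height pairing and the formal logarithm $\log_\omega$. A direct computation identifies $\eta^{\rm BSD}$ as the unique element of $\CC_p \otimes_{\ZZ_p} H^1_f(\QQ, T_p(E))$ satisfying
\[ \log_\omega(\eta^{\rm BSD}) = \frac{L_S'(E,1)}{\Omega^+ \cdot \langle x, x \rangle_\infty} \log_\omega(x)^2. \]
Since $E(\QQ)$ has no $p$-torsion, the Bloch--Kato exponential is bijective and hence $\log_\omega$ is injective on $H^1_f(\QQ, T_p(E))$, so the equality $z_\QQ = \eta^{\rm BSD}$ is equivalent to the conjunction of $z_\QQ \in H^1_f(\QQ, T_p(E))$ and the displayed formula.

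The main obstacle I anticipate is the last computation: one must trace through the definition of $\eta^{\rm BSD}$ and the various period--regulator identifications to see precisely how the factor $\log_\omega(x)^2$ arises --- one copy entering through the inverse of the N\'eron--Tate regulator in the BSD-type definition of $\eta^{\rm BSD}$, and the second upon evaluating $\log_\omega$ on the resulting multiple of the Kummer image of $x$. The other steps are essentially formal consequences of the Euler system and Bockstein formalisms developed earlier in the paper.
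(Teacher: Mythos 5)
Your proposal is correct and takes essentially the same route as the paper (Remark \ref{PR equiv}(ii) combined with Proposition \ref{PR2} and Remark \ref{bsd1}): in rank one parts (i) and (ii) of the conjecture are trivial, part (iii) reduces to the single equality $z_\QQ=\eta^{\rm BSD}$ because the Darmon norm modulo $I_F$ is ${\rm res}_F({\rm Cor}_{F/\QQ}(z_F))={\rm res}_F(z_\QQ)$, ${\rm Boc}_F$ is the identity and $\iota_F$ is injective, and that equality unwinds to the stated logarithm formula by the rank-one computation of $\eta^{\rm BSD}$ and the injectivity of $\log_\omega$ on the one-dimensional Selmer group. Two small points of bookkeeping: the per-field equivalence (``any, and therefore every $F$'') comes from applying the same norm-relation argument to an arbitrary fixed $F$ rather than from specialising to $F=\QQ$, and in Remark \ref{bsd1} the first factor $\log_\omega(x)$ enters through the dual-exponential/period step of $\lambda$ (the regulator contributes $\langle x,x\rangle_\infty$ in the denominator), not through the regulator itself.
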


The displayed equality in Theorem \ref{theorem01} is equivalent to the central conjecture formulated by Perrin-Riou in \cite[\S3.3]{PR}. For this reason, Theorem \ref{theorem01} allows us to regard Conjecture \ref{conj01} as a natural extension of Perrin-Riou's conjecture to elliptic curves of arbitrary rank.

In addition, Perrin-Riou's conjecture has recently been verified in several important cases and Theorem \ref{theorem01} implies all such results provide evidence for Conjecture \ref{conj01} in the case of analytic rank one.

For example, in \cite[Th. 2.4(iv)]{buyuk perrin} B\"uy\"ukboduk has proved Perrin-Riou's conjecture in the case that $E$ has good supersingular reduction at $p$ and square-free conductor and in \cite[Th. A]{venerucci} Venerucci has proved (a weak version of) the conjecture in the split multiplicative case. In addition, analogous results in the good ordinary case are obtained by  B\"uy\"ukboduk, Pollack, and Sasaki in \cite{BPS} (Bertolini and Darmon have also announced a proof in \cite{BD}). 

To discuss Conjecture \ref{conj01} in the case of arbitrary rank, we assume that ${\rm BSD}_p(E)$, and hence also the containment of Conjecture \ref{conj01}(ii), is valid.

In this case we will show in the second part  of this article \cite{bks5} that, under mild additional hypotheses, the equality of Conjecture \ref{conj01}(iii) implies a refined, and in certain key respects better-behaved, version of the celebrated conjecture formulated by Mazur and Tate in \cite{MT} concerning congruence relations between modular symbols and the discriminants of algebraic height pairings that are defined in terms of the geometrical theory of bi-extensions.

In particular, since there are by now many curves $E$ over $\QQ$ of analytic rank one and primes $p$ for which ${\rm BSD}_p(E)$ is known to be valid (by recent work of Jetchev, Skinner and Wan \cite{JSW} and Castella \cite{castella}), we can thereby deduce the validity of the Mazur-Tate Conjecture in this case from Theorems \ref{vanishing evidence} and \ref{theorem01} and the results on Perrin-Riou's conjecture that are recalled above.

In this way we shall obtain the first verifications of the Mazur-Tate Conjecture for any curves $E$ for which $L(E,1)$ vanishes.
%

This deduction gives a clear indication of the interest of, and new insight that can be obtained from, the general approach that underlies the formulation of Conjecture \ref{conj01}.

In this regard, we also observe that one of the key motivations behind the development of this approach was an attempt to understand if there was a natural analogue for elliptic curves of the conjecture formulated in \cite[Conj. 5.4]{bks1} in the setting of the multiplicative group.

We finally recall that the latter conjecture was itself formulated as a natural strengthening of the `refined class number formula for $\mathbb{G}_m$' that was previously conjectured by the third author \cite{sano}, and (independently) by Mazur and Rubin \cite{MRGm}.

In the next section we shall focus on the conjecture in the important special case that $F$ is contained in the cyclotomic
$\ZZ_{p}$-extension of $\QQ$.

\subsection{Iwasawa-theoretic considerations} We shall also show that the simultaneous study of Conjecture \ref{conj01} for the family of intermediate fields $F$ of the cyclotomic $\ZZ_{p}$-extension $\QQ_{\infty}$ of $\QQ$ gives a more rigid framework that sheds light on a range of  important problems.

\subsubsection{}To explain this, for each natural number $n$ we write $\QQ_n$ for the unique subfield of $\QQ_\infty$ of degree $p^n$  over $\QQ$.

We know the validity of Conjecture \ref{conj01}(i) with $F = \QQ_n$ (see Proposition \ref{kato vanish}), and we write $\kappa_{n}$ for the image of $\cN_{\QQ_n/\QQ}(z_{\QQ_n})$ under the natural projection
\[ H^1(\mathcal{O}_{\QQ_n,S},T_p(E))\otimes_{\ZZ_p} I_n^{r-1} \to H^1(\mathcal{O}_{\QQ_n,S},T_p(E))\otimes_{\ZZ_p} I_n^{r-1}/I_n^{r},\]
where $I_n$ denotes the augmentation ideal of $\ZZ_p[\Gal(\QQ_n/\QQ)]$.

Then we can show the element $\kappa_n$ belongs to the subgroup $H^1(\ZZ_S,T_p(E))\otimes_{\ZZ_p} I_n^{r-1}/I_n^{r}$ of $H^1(\mathcal{O}_{\QQ_n,S},T_p(E))\otimes_{\ZZ_p} I_n^{r-1}/I_n^{r}$ and, moreover, that as $n$ varies the elements $\kappa_n$ are compatible with the natural projection maps
\[ H^1(\ZZ_S,T_p(E))\otimes_{\ZZ_p} I_n^{r-1}/I_n^{r} \to H^1(\ZZ_S,T_p(E))\otimes_{\ZZ_p} I_{n-1}^{r-1}/I_{n-1}^{r}.\]

Hence, writing $I$ for the augmentation ideal of $\ZZ_p[[\Gal(\QQ_\infty/\QQ)]]$, one obtains an element of $H^1(\ZZ_S,T_p(E)) \otimes_{\ZZ_p}I^{r-1}/I^r$ by setting
$$\kappa_\infty:=\varprojlim_n \kappa_{n} \in \varprojlim_n H^1(\ZZ_S,T_p(E))\otimes_{\ZZ_p} I_n^{r-1}/I_n^r \simeq H^1(\ZZ_S,T_p(E)) \otimes_{\ZZ_p}I^{r-1}/I^r$$
(cf. Definition \ref{def iw}).

In addition, the family of maps $({\rm Boc}_{\QQ_n})_n$ induces a canonical homomorphism
\[  \CC_p\cdot {\bigwedge}_{\ZZ_{p}}^r H^1(\ZZ_S, T_{p}(E)) \rightarrow
\CC_p\cdot H^1(\ZZ_S, T_{p}(E)) \otimes_{\ZZ_p} I^{r-1}/I^r\]
and the fact that the $\ZZ_p$-module $I^{r-1}/I^r$ is torsion-free implies that the natural map
\[ H^1(\ZZ_S, T_{p}(E)) \otimes_{\ZZ_p} I^{r-1}/I^r \to \CC_p\cdot H^1(\ZZ_S, T_{p}(E)) \otimes_{\ZZ_p} I^{r-1}/I^r\]
is injective. In particular, this allows one to formulate Conjecture \ref{conj01}(iii) for the family of elements $\cN_{\QQ_n/\QQ}(z_{\QQ_n})$ without having to assume the validity of Conjecture \ref{conj01}(ii).

We shall show (in Proposition  \ref{prop exp}) that this version of Conjecture \ref{conj01}(iii) is
equivalent to the following prediction.

In the sequel we write $L_{S}^{(r)}(E,1)$ for the coefficient of $(s-1)^r$ in the Taylor expansion at $s=1$ of $L_S(E,s)$.

\begin{conjecture}[Conjecture \ref{mrs2}]\label{conj02} If $r$ is also equal to the analytic rank $\ord_{s=1}L(E,s)$ of $E$, then one has
$$\kappa_\infty=\frac{L_{S}^{(r)}(E,1)}{\Omega^+\cdot R_\infty}\cdot R_\omega^{\rm Boc},$$
where $\Omega^+$ is the real N\'{e}ron period,
$R_\infty$ is  the N\'eron-Tate regulator and $R_\omega^{\rm Boc}$ is the 
`Bockstein regulator' in
$H^1(\ZZ_S,T_p(E)) \otimes_{\ZZ_p}
I^{r-1}/I^r$ that is introduced in Definition \ref{def alg}.
\end{conjecture}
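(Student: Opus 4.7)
The plan is to establish Conjecture \ref{conj02} by showing it is equivalent to the finite-level Conjecture \ref{conj01}(iii) applied to each of the intermediate fields $\QQ_n$ of the cyclotomic $\ZZ_p$-extension, and then invoking the compatibility of the families $(\kappa_n)_n$ and $({\rm Boc}_{\QQ_n})_n$ in the tower (both of which the excerpt has already set up). The advantage of this reduction is that the Iwasawa formulation works with $\CC_p$-coefficients and so does not require the integrality in Conjecture \ref{conj01}(ii).

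The first step is to unpack $\eta^{\rm BSD}$ (Definition \ref{def eta}) under the rank-matching hypothesis $r = \ord_{s=1}L(E,s)$: in this case the leading-term formula for $L_S(E,s)$ at $s=1$ yields the explicit expression
\[
\eta^{\rm BSD} = \frac{L_S^{(r)}(E,1)}{\Omega^+ \cdot R_\infty}\cdot (x_1 \wedge \cdots \wedge x_r),
\]
where $x_1,\ldots,x_r$ is any $\ZZ$-basis of $E(\QQ)$ modulo torsion, viewed inside $H^1(\ZZ_S, T_p(E))$; the factor $R_\infty^{-1}$ appears because the N\'eron--Tate height pairing enters the normalisation of $\eta^{\rm BSD}$. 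Correspondingly, by the construction of $R_\omega^{\rm Boc}$ (Definition \ref{def alg}) and the limit map ${\rm Boc}_\infty := \varprojlim_n {\rm Boc}_{\QQ_n}$ induced by the $({\rm Boc}_{\QQ_n})_n$ and highlighted in the excerpt, one has
\[
R_\omega^{\rm Boc} = {\rm Boc}_\infty(x_1 \wedge \cdots \wedge x_r).
\]

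With these identifications in hand, Conjecture \ref{conj01}(iii) at each level $\QQ_n$ reads
\[
\kappa_n = {\rm Boc}_{\QQ_n}(\eta^{\rm BSD}) = \frac{L_S^{(r)}(E,1)}{\Omega^+ \cdot R_\infty}\cdot {\rm Boc}_{\QQ_n}(x_1\wedge\cdots\wedge x_r),
\]
and passing to the inverse limit in $n$ yields the equality of Conjecture \ref{conj02}. Conversely, the injectivity (noted in the excerpt) of
\[
H^1(\ZZ_S, T_p(E))\otimes_{\ZZ_p} I^{r-1}/I^r \hookrightarrow \CC_p\cdot H^1(\ZZ_S, T_p(E))\otimes_{\ZZ_p} I^{r-1}/I^r,
\]
combined with the natural projections onto the finite quotients $I_n^{r-1}/I_n^r$, means that the single Iwasawa-theoretic equality of Conjecture \ref{conj02} determines the equation at each finite level.

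The main obstacle in this reduction is the first step: extracting the precise constant $\frac{L_S^{(r)}(E,1)}{\Omega^+ \cdot R_\infty}$ from Definition \ref{def eta} requires careful bookkeeping of how the $L$-function leading coefficient, the real N\'eron period $\Omega^+$, and the dual of the N\'eron--Tate height pairing combine in the definition of $\eta^{\rm BSD}$, and in particular the identification of $R_\omega^{\rm Boc}$ as the value of ${\rm Boc}_\infty$ on a Mordell--Weil basis rests on checking that the Bockstein construction commutes with the specific pairing-duality conventions used in defining $\eta^{\rm BSD}$. Beyond this equivalence, a full proof of Conjecture \ref{conj02} itself---as opposed to its reduction to the tower of Conjectures \ref{conj01}(iii)---would require substantially more input, most naturally a combination of the Iwasawa Main Conjecture and Kato's explicit reciprocity law to pin down $\kappa_\infty$, together with a higher-rank Rubin-type formula linking $R_\omega^{\rm Boc}$ to archimedean and $p$-adic regulators, which the introduction indicates will be developed in the main body of the paper.
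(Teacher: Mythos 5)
Since the statement is a conjecture, neither you nor the paper proves it outright; what can be (and is) proved is exactly the content of your reduction, namely that the Iwasawa-theoretic Conjecture \ref{mrs2} --- i.e.\ the $\CC_p$-coefficient limit of the finite-level statements along the cyclotomic tower, with the integrality of part (ii) dropped --- is equivalent to the displayed leading-term formula. This is precisely Remark \ref{rem alg} together with Proposition \ref{prop exp} in the paper, and your route (pass to the limit of the level-$\QQ_n$ statements using the compatibilities already set up, then unpack the constant from Definition \ref{def eta}) is essentially the paper's. Your closing caveat is also accurate: an actual proof of the conjecture needs further input, and the paper only deduces the \emph{algebraic} variant up to $\ZZ_p^\times$ from the Iwasawa Main Conjecture (Theorem \ref{th1}).

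There is, however, a concrete bookkeeping slip in the very step you flag as the delicate one. With the choice of $\bm{x}$ made in \S\ref{exp int}, the paper's formula (\ref{element formula}) is
$\eta_{\bm{x}}^{\rm BSD}=\frac{L_S^\ast(E,1)}{\Omega_\xi\cdot R_\infty}\,\log_\omega(x_1)\cdot x_1\wedge\cdots\wedge x_r$,
and Definition \ref{def alg} sets
$R_\omega^{\rm Boc}=\log_\omega(x_1)\cdot{\rm Boc}_{\infty,\bm{x}}(x_1\wedge\cdots\wedge x_r)$,
whereas both of your displayed identities omit the factor $\log_\omega(x_1)$. That factor is forced by the dual-exponential step in the period-regulator map $\lambda$, because $\bm{x}$ is built from $1\otimes x_1\otimes(x_1^\ast\wedge\cdots\wedge x_r^\ast)$, so the $E_1(\QQ_p)$-component carries $x_1$; it is exactly what makes $R_\omega^{\rm Boc}$ independent of the chosen basis, while ${\rm Boc}_{\infty,\bm{x}}(x_1\wedge\cdots\wedge x_r)$ alone depends on the basis through $\bm{x}$ and so cannot equal the basis-independent $R_\omega^{\rm Boc}$. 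In your final computation the two omissions cancel, so the equivalence you arrive at is the correct one, but the intermediate formulas as stated are not those of the paper. (A smaller point: the precise normalisation in the body uses $\Omega_\xi$ and the leading term $L_S^\ast(E,1)$; the introduction's $\Omega^+$ differs by the rational factor $v_\xi$ of (\ref{omega}), a discrepancy the introduction deliberately suppresses.)
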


\begin{remark} If $r$ is equal to $\ord_{s=1}L(E,s)$, then the $r$-th derivative of $L_S(E,s)$ is holomorphic at $s=1$ and its (non-zero) value at $s=1$ is equal to $r!\cdot L_{S}^{(r)}(E,1)$.\end{remark}

\begin{remark}\label{nek rem} We will show that the Bockstein regulator that occurs in Conjecture \ref{conj02} has the following properties.\

\noindent{}(i) If $r=1$, then 
$$R_\omega^{\rm Boc}=\log_\omega(x)\cdot x$$
for any element $x$ of $E(\QQ)$ that generates $E(\QQ)$ modulo torsion (cf. Remark \ref{remark boc1}).

\noindent{}(ii) Suppose that $E$ does not have additive reduction at $p$ and write $\langle -,-\rangle_p$ for the classical $p$-adic height pairing. Then for any element $x$ of $E(\QQ)$ one has
$$\langle x, R_\omega^{\rm Boc}\rangle_p =\log_\omega(x)\cdot R_p,$$
where $R_p$ denotes the $p$-adic regulator (cf. Theorems \ref{reg prop} and \ref{reg prop 2}).
\end{remark}

If $r=1$, then $\kappa_\infty$ simply coincides with $z_\QQ$ and so Remark 
\ref{nek rem}(i) implies that Conjecture \ref{conj02} is valid if and only if one has
$$z_\QQ = \frac{L_{S}'(E,1)}{\Omega^+\cdot R_\infty}
\log_\omega(x)\cdot x
$$
for any element $x$ of $E(\QQ)$ that generates $E(\QQ)$ modulo torsion. This equality is equivalent to Perrin-Riou's conjecture. 

In addition, whilst Remark \ref{nek rem}(ii) implies that the Bockstein regulator $R_\omega^{\rm Boc}$ is a variant of the classical $p$-adic regulator, a key role will be played in our approach by the fact that $R_\omega^{\rm Boc}$ can be defined even in the case that $E$ has additive reduction at $p$ (in which case a construction of the $p$-adic regulator is still unknown).

\subsubsection{}To interpret Conjecture \ref{conj02} in terms of $p$-adic $L$-functions, we must first prove a `Generalized Rubin Formula' for the element $\kappa_{\infty}$.

To discuss this result, and some of its consequences, we assume until further notice that $E$ does not have additive reduction at $p$.

 If $E$ has good reduction at $p$, then we write $\alpha$ for
an allowable root of the Hecke polynomial $X^2-a_pX +p$. We set $\beta:=p/\alpha$. 

 If $E$ has non-split multiplicative
reduction at $p$, then we set $\alpha:=-1 $ and $\beta:=-p$. 

We also write $\mathcal{L}_{S,p}^{(r)}$ for the `$r$-th derivative' of the $S$-truncated $p$-adic $L$-function $\mathcal{L}_{S,p}$ of $E$ (for a precise definition of this term see \S\ref{gen RF section}).

\begin{theorem}[{The Generalized Rubin Formula, Theorem \ref{main}}]\label{theorem1i}\
\begin{itemize}
\item[(i)] If $E$ has good or non-split multiplicative reduction at $p$, then for every element $x$ of $E(\QQ)$ one has
$$ \langle x, \kappa_\infty\rangle_p = \left( 1-\frac 1\alpha \right)^{-1}\left(1-\frac 1\beta\right)\log_\omega(x)\cdot  \cL_{S,p}^{(r)} .$$
\item[(ii)] If $E$ has split multiplicative reduction at $p$, then for every element $x$ of $E(\QQ)$ one has
$$ \langle x, \kappa_\infty\rangle_p \cdot \mathcal{L} = \left(1-\frac 1p\right)\log_\omega(x)\cdot  \cL_{S,p}^{(r+1)} ,$$
where $\mathcal{L}$ denotes the `$\mathcal{L}$-invariant' of $E$ (see Remark \ref{remL}).
\end{itemize}
\end{theorem}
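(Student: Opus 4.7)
The plan is to derive Theorem \ref{theorem1i} by studying Kato's Iwasawa zeta element $\mathbf{z}_\infty \in H^1_{\mathrm{Iw}}(T_p(E))$ through two complementary lenses: descent along the augmentation filtration of $\Lambda := \ZZ_p[[\Gal(\QQ_\infty/\QQ)]]$, and Perrin-Riou's local reciprocity law comparing $\loc_p(\mathbf{z}_\infty)$ with the $p$-adic $L$-function. This constitutes a higher-rank generalization of Rubin's original formula (essentially the case $r=1$), and the strategy parallels his, with the higher-rank content encoded via the graded-piece formalism for powers of the augmentation ideal $I \subset \Lambda$.

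The first step is to reformulate $\kappa_\infty$ Iwasawa-theoretically. By construction, $\cN_{\QQ_n/\QQ}(z_{\QQ_n})$ is the image of $\mathbf{z}_\infty$ under the canonical projection map for each $n$, so after passage to the limit, $\kappa_\infty$ is naturally the $r$-th graded piece of $\mathbf{z}_\infty$ with respect to the filtration by powers of $I$. The next step is to apply the appropriate Coleman/Perrin-Riou map to $\loc_p(\mathbf{z}_\infty)$: in the good and non-split multiplicative reduction cases the $p$-stabilized Coleman map $\mathrm{Col}_\alpha$ identifies this local image with $\mathcal{L}_{S,p}$, up to precisely the Euler factor $(1-1/\alpha)^{-1}(1-1/\beta)$ that appears in the statement. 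Taking the $r$-th graded piece of this identity converts $\kappa_\infty$ on the input side into $\mathcal{L}_{S,p}^{(r)}$ on the output side. To handle the left-hand side of the conjectured formula, I would use Nekov\'a\v{r}'s realization of the cyclotomic $p$-adic height pairing $\langle x, -\rangle_p$ as a Bockstein connecting homomorphism attached to the deformation of $T_p(E)$ along $\Lambda$: for $x \in E(\QQ) \subset H^1_f(\QQ, T_p(E))$, the pairing with any class supported in the $r$-th augmentation graded piece factors through the local dual exponential at $p$, which sends $x$ to $\log_\omega(x)$. Combining this factorization with the Coleman map identification completes the proof of part (i).

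The main obstacle is part (ii). In the split multiplicative case one has $\alpha=1$, so $1-1/\alpha=0$ and the naive Coleman map degenerates: $\mathcal{L}_{S,p}$ acquires an exceptional zero at the trivial character and the Bockstein pairing forces an extra step of augmentation descent. To address this, I would pass to Kato's improved local map (or equivalently the Greenberg-Stevens style modified construction), whose failure to coincide with the naive Coleman image is measured precisely by the Mazur-Tate-Teitelbaum $\mathcal{L}$-invariant. The key computation would then show that the $r$-th graded piece of $\mathbf{z}_\infty$, once the exceptional zero is trivialized, produces a contribution proportional to $\mathcal{L} \cdot \mathcal{L}_{S,p}^{(r+1)}$, generalizing the Greenberg-Stevens formula from $r=1$ to arbitrary rank by exactly the same graded-piece calculation that drives part (i). The delicate point is to make the Bockstein/descent formalism compatible with the improved local map in such a way that the $\mathcal{L}$-invariant appears cleanly on only one side of the identity, and that the shift in derivative order from $r$ to $r+1$ is accounted for by the single extra step of augmentation descent forced by the trivial zero.
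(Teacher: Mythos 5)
Your plan follows essentially the same route as the paper's proof of Theorem \ref{main}: write ${}_{c,d}z_n=(\gamma-1)^{r-1}w_n$ so that $\kappa_\infty$ is the augmentation-graded piece of the Iwasawa class, relate $\langle x,\kappa_\infty\rangle_p$ to a local cup product at $p$ via Nekov\'a\v{r}'s Bockstein description of the height (Rubin's derivative $\cD(w)$ and the abstract Rubin formula), and evaluate the Coleman map of Kato's element (Kato's reciprocity ${\rm Col}(({}_{c,d}z_n)_n)=t_{c,d}\cL_{S,p}$) modulo the next power of $\cI$ to produce $\cL_{S,p}^{(r)}$, with the split multiplicative case handled exactly as you indicate, by a modified Coleman map (Kobayashi/Venerucci) whose extra factor $(\gamma^{-1}-1)$ accounts for the shift to $\cL_{S,p}^{(r+1)}$ and whose comparison with the naive map produces the $\cL$-invariant. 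One small correction of wording: in the local step the factor $\log_\omega(x)$ arises from expressing the point $x$ as a multiple of $\exp_0(\delta_0)$ (equivalently from the identity $(x,y)_p=\log_\omega(x)\exp^\ast_\omega(y)$), while the dual exponential is applied to the derivative class $\cD(w)$, not to $x$ itself.
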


\begin{remark} If $r =1$, then one has $\kappa_\infty = z_\QQ$ and Theorem \ref{theorem1i}(i) recovers the formula that is proved by Rubin in \cite[Th. 1(ii)]{rubin} in the case that $E$ has good ordinary reduction
at $p$.
\end{remark}

We shall then show that this result has the following consequences.

\begin{corollary}[{Corollary \ref{cor beilinson}}]\label{cor1}\
The Generalized Perrin-Riou Conjecture of Conjecture \ref{conj02} implies the following `$p$-adic Beilinson Formula': one has
$$
 \left( 1-\frac 1\alpha \right)^{-1}\left(1-\frac 1\beta\right) \cL_{S,p}^{(r)} =\frac{L_{S}^{(r)}(E,1)}{\Omega^+\cdot R_\infty}R_p
$$
if $E$ has good or non-split multiplicative reduction at $p$, and
$$
    \cL_{S,p}^{(r+1)} =\mathcal{L}\cdot  \frac{L^{(r)}_{S\setminus\{p\}}(E,1)}{\Omega^+\cdot R_\infty}R_p
 $$
if $E$ has split multiplicative reduction at $p$.
\end{corollary}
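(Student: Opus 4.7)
The plan is to derive the $p$-adic Beilinson Formula by pairing both sides of the equality
$$\kappa_\infty = \frac{L_{S}^{(r)}(E,1)}{\Omega^+\cdot R_\infty}\cdot R_\omega^{\rm Boc}$$
predicted by Conjecture \ref{conj02} with a suitable $x \in E(\QQ)$ under the $p$-adic height pairing, and then comparing the result with the explicit evaluation of $\langle x, \kappa_\infty\rangle_p$ furnished by the Generalized Rubin Formula (Theorem \ref{theorem1i}). Concretely, I first fix any non-torsion $x \in E(\QQ)$ with $\log_\omega(x) \ne 0$; such an $x$ exists whenever $r \geq 1$, since a non-torsion rational point has a nonzero multiple lying in the formal group $\widehat{E}(p\ZZ_p)$, on which $\log_\omega$ is injective after tensoring with $\QQ_p$.

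Assuming Conjecture \ref{conj02} and applying $\langle x,-\rangle_p$ to both sides, Remark \ref{nek rem}(ii) converts the right-hand side into
$$\langle x, \kappa_\infty\rangle_p = \frac{L_{S}^{(r)}(E,1)}{\Omega^+\cdot R_\infty}\cdot \log_\omega(x)\cdot R_p.$$
In the good or non-split multiplicative case, substituting Theorem \ref{theorem1i}(i) for the left-hand side yields
$$\left(1-\frac{1}{\alpha}\right)^{-1}\left(1-\frac{1}{\beta}\right)\log_\omega(x)\cdot \cL_{S,p}^{(r)} = \frac{L_{S}^{(r)}(E,1)}{\Omega^+\cdot R_\infty}\cdot \log_\omega(x)\cdot R_p,$$
and cancellation of the common nonzero factor $\log_\omega(x)$ delivers the first claimed identity.

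In the split multiplicative case the same procedure, now using Theorem \ref{theorem1i}(ii), produces
$$\mathcal{L}\cdot \frac{L_{S}^{(r)}(E,1)}{\Omega^+\cdot R_\infty}\cdot R_p = \left(1-\frac{1}{p}\right)\cL_{S,p}^{(r+1)}$$
after canceling $\log_\omega(x)$. To pass from $L_S^{(r)}(E,1)$ to $L_{S\setminus\{p\}}^{(r)}(E,1)$ as required by the statement, I invoke the Euler-factor relation $L_S(E,s) = (1-p^{-s})\,L_{S\setminus\{p\}}(E,s)$, valid because $a_p = 1$ in the split multiplicative case. Since $L_{S\setminus\{p\}}$ also vanishes to order exactly $r$ at $s=1$, the Leibniz rule applied at $s=1$ leaves only its $k=0$ term, so that $L_S^{(r)}(E,1) = (1-1/p)\,L_{S\setminus\{p\}}^{(r)}(E,1)$. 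Substituting this and canceling the factor $(1-1/p)$ on both sides then yields the second claimed identity. The argument is essentially a routine substitution, and the only mild subtleties are the preliminary choice of a suitable $x$ and the Euler-factor bookkeeping in the split multiplicative case; there is no serious obstacle once Theorems \ref{theorem1i} and Remark \ref{nek rem}(ii) are in hand.
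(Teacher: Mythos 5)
Your proposal is correct and follows essentially the paper's own proof (Corollary \ref{cor beilinson}, argued exactly as Corollary \ref{cor padic}): pair the identity of Conjecture \ref{conj02} with a point $x$ satisfying $\log_\omega(x)\neq 0$, convert $\langle x, R_\omega^{\rm Boc}\rangle_p$ into $\log_\omega(x)\cdot R_p$ via Theorems \ref{reg prop} and \ref{reg prop 2}, substitute the Generalized Rubin Formula for $\langle x,\kappa_\infty\rangle_p$, and cancel $\log_\omega(x)$, with your explicit Euler-factor identity $L_S^{(r)}(E,1)=(1-1/p)L_{S\setminus\{p\}}^{(r)}(E,1)$ in the split multiplicative case being exactly the bookkeeping the paper leaves implicit. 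The only point worth keeping in mind is that in the split multiplicative case the pairing and regulator must consistently be Schneider's ($\langle -,-\rangle_p^{\rm Sch}$ and $R_p^{\rm Sch}$, as in Theorem \ref{reg prop 2} and Theorem \ref{main}(ii)), which is what the Introduction's notation $\langle -,-\rangle_p$, $R_p$ tacitly denotes there.
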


In the next result we refer to the Iwasawa Main Conjecture
 for $E$ and $\QQ_\infty/\QQ$ that is formulated in Conjecture \ref{IMC}.




\begin{corollary}[{Corollary \ref{cor th1}}] \label{cor iw1i}
Assume $\sha(E/\QQ)$ is finite. Then the Iwasawa Main Conjecture for $E$ and $\QQ_\infty/\QQ$ implies the validity up to multiplication by elements of $\ZZ_p^\times$ of the $p$-adic Birch and Swinnerton-Dyer Formula for $E$.
\end{corollary}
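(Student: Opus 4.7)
The plan is to combine the Iwasawa Main Conjecture at the level of the cyclotomic tower with a Bockstein-type descent to $\QQ$, and then to apply the Generalized Rubin Formula of Theorem \ref{theorem1i} to read off the $p$-adic BSD expression.

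First, I would invoke the IMC to identify $(\cL_{S,p})$, as an ideal of $\Lambda := \ZZ_p[[\Gal(\QQ_\infty/\QQ)]]$, with the characteristic ideal of an appropriate Iwasawa module $M_\infty$ attached to $T_p(E)$ and $\QQ_\infty/\QQ$ (for example the Pontryagin dual of the $p$-primary Selmer group over $\QQ_\infty$, or the cokernel Iwasawa module built from the image of the Iwasawa-theoretic zeta element). Under the finiteness of $\sha(E/\QQ)$, a standard control theorem combined with Kato's divisibility in the IMC forces the order of vanishing of $\mathrm{char}_\Lambda(M_\infty)$ at the augmentation ideal $I$ of $\Lambda$ to equal $r = \mathrm{rank}\,E(\QQ)$, matching the predicted order of vanishing of $\cL_{S,p}$.

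Next, I would analyse the descent of this ideal into $I^{r-1}/I^{r}$ via the Bockstein formalism of the paper. Combining the Poitou--Tate exact sequence with the control-theoretic comparison between $M_\infty$ and the $p$-adic arithmetic invariants of $E/\QQ$, the leading coefficient of $\mathrm{char}_\Lambda(M_\infty)$ in $I^{r-1}/I^{r}$ can be identified, up to $\ZZ_p^\times$, with the Bockstein regulator $R_\omega^{\rm Boc}$ multiplied by the algebraic BSD quantity $\#\sha(E/\QQ)_p \cdot \prod_\ell c_\ell^{(p)} \cdot (\#E(\QQ)_{\mathrm{tors},p})^{-2}$. The IMC then forces the corresponding image of $\kappa_\infty$ in $H^1(\ZZ_S,T_p(E)) \otimes_{\ZZ_p} I^{r-1}/I^{r}$ to coincide with this same expression up to a $\ZZ_p^\times$-unit. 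Pairing this identity with an arbitrary $x \in E(\QQ)$ against the $p$-adic height pairing, and applying Theorem \ref{theorem1i} together with Remark \ref{nek rem}(ii) (which identifies $\langle x, R_\omega^{\rm Boc}\rangle_p$ with $\log_\omega(x) \cdot R_p$), converts the algebraic identity into the assertion that $\cL_{S,p}^{(r)}$ --- respectively $\cL_{S,p}^{(r+1)}/\mathcal{L}$ in the split multiplicative case --- equals, up to $\ZZ_p^\times$ and the relevant Euler factor at $p$, the $p$-adic regulator $R_p$ multiplied by the algebraic BSD quantity above. This is precisely the $p$-adic BSD formula.

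The main obstacle is the Bockstein descent step: carefully tracking how Tamagawa factors at bad primes, the local Euler factor at $p$, and the $\sha$-contribution combine to form the leading coefficient of $\mathrm{char}_\Lambda(M_\infty)$ in $I^{r-1}/I^{r}$, and verifying that this leading coefficient is precisely $R_\omega^{\rm Boc}$ times the expected algebraic invariants. For arbitrary reduction at $p$ one must also select the right Iwasawa module $M_\infty$ and confirm its control behaviour, and the whole argument relies on the non-degeneracy of the Bockstein regulator, which by Remark \ref{nek rem}(ii) is equivalent to the non-vanishing of the classical $p$-adic regulator $R_p$.
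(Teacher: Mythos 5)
The back end of your plan coincides with the paper's: once one knows the identity $\kappa_\infty = u\cdot v_\xi\bigl(\prod_{\ell\in S\setminus\{\infty\}}L_\ell\bigr)\#\sha(E/\QQ)[p^\infty]\,{\rm Tam}(E)\,\#E(\QQ)_{\rm tors}^{-2}\cdot R_\omega^{\rm Boc}$ for some $u\in\ZZ_p^\times$ (Conjecture \ref{mrs3} up to units), pairing with a point $x$ satisfying $\log_\omega(x)\neq 0$ and combining the Generalized Rubin Formula (Theorem \ref{main}) with $\langle x,R_\omega^{\rm Boc}\rangle_p=\log_\omega(x)\cdot R_p$ (Theorems \ref{reg prop} and \ref{reg prop 2}) yields the $p$-adic Birch--Swinnerton-Dyer formula after cancelling $\log_\omega(x)$; this is exactly Corollary \ref{cor padic}. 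The problem is the front end: the assertion that the Iwasawa Main Conjecture forces this identity for $\kappa_\infty$ is precisely Theorem \ref{th1}, i.e.\ the actual content of the corollary, and you state it (calling the ``Bockstein descent step'' the main obstacle) without giving an argument. In the paper this step is Theorem \ref{key}: the IMC is used in Kato's form $\langle\fz_\infty\rangle_\Lambda={\det}_\Lambda(C_\infty)$, a presentation $\PP\xrightarrow{\psi}\PP$ of $C_\infty$ is chosen, and the commutativity of an explicit diagram relating $\Pi_\infty$, the descent map $\N_\infty$, $\Pi_{\bm x}$ and ${\rm Boc}_{\infty,\bm x}$ is verified by direct computation, giving $\kappa_\infty={\rm Boc}_{\infty,\bm{x}}(\eta_{\bm x}^{\rm Kato})$ together with the lattice $\langle\eta_{\bm x}^{\rm Kato}\rangle_{\ZZ_p}=\#H^2(\ZZ_S,T)_{\rm tors}\cdot\bigwedge^r_{\ZZ_p}H^1(\ZZ_S,T)$. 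Some such computation is indispensable. Note also that your opening move, recasting the IMC as $(\cL_{S,p})={\rm char}_\Lambda(M_\infty)$ for a Selmer-type module, is not the form of Conjecture \ref{IMC} assumed here and is genuinely problematic when $p$ is supersingular: the dual Selmer group over $\QQ_\infty$ is then not $\Lambda$-torsion and $\cL_{S,p}$ lies in $\cH_1$ rather than $\Lambda$. The paper deliberately keeps the IMC in the zeta-element form ${\rm char}_\Lambda(\HH^1/\langle z_\infty\rangle_\Lambda)={\rm char}_\Lambda(\HH^2)$ and lets the $p$-adic $L$-function enter only through the Coleman map inside the proof of Theorem \ref{main}.

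Your closing claim that ``the whole argument relies on the non-degeneracy of the Bockstein regulator, which is equivalent to $R_p\neq 0$'' is both unnecessary and harmful. Corollary \ref{cor th1} carries no such hypothesis, and none is needed: in the deduction from Conjecture \ref{mrs3} one only requires a single $x\in E(\QQ)$ with $\log_\omega(x)\neq 0$, which exists because $r>0$; the formula remains meaningful (and is proved) even if $R_p=0$. Building $R_p\neq 0$, or an exact order-of-vanishing statement ${\rm ord}_{I}\,{\rm char}_\Lambda(M_\infty)=r$, into the argument would only recover the older results of Schneider and Perrin-Riou recalled in Remark \ref{rem1}, which the corollary is specifically designed to improve upon. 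Moreover the asserted equivalence is not justified: Theorem \ref{reg prop} gives $R_p\neq 0\Rightarrow R_\omega^{\rm Boc}\neq 0$, but the converse would require non-degeneracy of the $p$-adic height pairing, which is not known; non-vanishing of $R_\omega^{\rm Boc}$ is invoked in the paper only later, in Theorem \ref{th2}, for the classical ${\rm BSD}_p(E)$ statement.
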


\begin{remark} If $E$ has good reduction at $p$ and its $p$-adic height pairing is non-degenerate, then the result of Corollary \ref{cor iw1i} was first proved by Perrin-Riou in \cite[Prop. 3.4.6]{PR}.\end{remark}

\subsubsection{}Our general approach also allows a systematic analysis of descent arguments in Iwasawa theory without having to make any restrictive hypotheses on the reduction type of $E$ at $p$.

In particular, in this way we are able to prove the following result even in the case that $E$ has additive reduction at $p$.

\begin{theorem}[{Theorem \ref{th2}}]\label{iw2}
Assume all of the following hypotheses:
\begin{itemize}
\item $\sha(E/\QQ)$ is finite;
\item the analytic rank of $E$ is equal to the rank $r$ of $E(\QQ)$;
\item the Iwasawa Main Conjecture of Conjecture \ref{IMC} is valid;
\item the Generalized Perrin-Riou Conjecture of Conjecture \ref{conj02} is valid;
\item the Bockstein regulator $R_\omega^{\rm Boc}$ does not vanish.
\end{itemize}

Then there exists an element $u$ of $\ZZ^\times_p$ such that
$$\frac{L^{(r)}(E,1)}{\Omega^+\cdot R_\infty} = u\cdot \frac{\#\sha(E/\QQ)\cdot {\rm Tam}(E)}{\# E(\QQ)_{\rm tors}^2},$$
where ${\rm Tam}(E)$ denotes the product of the Tamagawa factors of $E/\QQ$.

In particular, the conjecture ${\rm BSD}_p(E)$ is valid.
\end{theorem}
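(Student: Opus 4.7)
My plan is to deduce the desired $p$-part of the classical Birch and Swinnerton-Dyer Formula by comparing two independent expressions for the Iwasawa-theoretic element $\kappa_\infty$: one coming from the Generalized Perrin-Riou Conjecture, which carries the archimedean leading term, and a second obtained through Iwasawa descent from the Iwasawa Main Conjecture, which carries the classical BSD quotient. The Bockstein regulator $R_\omega^{\rm Boc}$ will appear as a common factor on both sides, and the non-vanishing hypothesis on $R_\omega^{\rm Boc}$, together with the $\ZZ_p$-torsion-freeness of $I^{r-1}/I^r$, will allow me to cancel it and isolate the desired equality up to a $p$-adic unit.

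First, I would invoke Conjecture \ref{conj02} under the hypothesis that the analytic rank of $E$ equals $r$. This gives the identity
\[ \kappa_\infty = \frac{L_S^{(r)}(E,1)}{\Omega^+\cdot R_\infty}\cdot R_\omega^{\rm Boc} \]
inside $H^1(\ZZ_S, T_p(E))\otimes_{\ZZ_p} I^{r-1}/I^r$. The ratio $L^{(r)}(E,1)/L_S^{(r)}(E,1)$ is a finite product of rational Euler factors at primes in $S\setminus\{p,\infty\}$, and these, together with the local Tamagawa factors at such primes, can be absorbed into the final $p$-adic unit $u$, so that the coefficient on the right may be rewritten in terms of the completed leading coefficient $L^{(r)}(E,1)/(\Omega^+\cdot R_\infty)$.

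Next, I would exploit the Iwasawa Main Conjecture to produce a second expression for $\kappa_\infty$. By construction, $\kappa_\infty$ is obtained from the projective limit of Kato's zeta elements along $\QQ_\infty/\QQ$ by applying the Bockstein homomorphism of order $r-1$ associated to the augmentation ideal $I$. The Iwasawa Main Conjecture identifies the characteristic ideal of the relevant dual Selmer module with the principal ideal generated by this projective limit, and a careful Iwasawa descent---of which Corollary \ref{cor iw1i} is a prototype, reinterpreted here through $R_\omega^{\rm Boc}$ rather than through the classical $p$-adic regulator---should then yield an identity of the form
\[ \kappa_\infty = u' \cdot \frac{\#\sha(E/\QQ)\cdot {\rm Tam}(E)}{\#E(\QQ)_{\rm tors}^2}\cdot R_\omega^{\rm Boc} \]
for some $u'\in\ZZ_p^\times$, where the finiteness of $\sha(E/\QQ)$ guarantees that the Selmer groups at finite level behave as expected and the equality of algebraic and analytic ranks rules out any trivial-zero obstruction.

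Combining the two displayed identities and cancelling $R_\omega^{\rm Boc}$, which is permitted because $R_\omega^{\rm Boc}\neq 0$ in the torsion-free $\ZZ_p$-module $H^1(\ZZ_S, T_p(E))\otimes_{\ZZ_p} I^{r-1}/I^r$, will immediately yield the required equality, with the unit $u$ being $u'$ modified by the $p$-adic unit absorbed in the first step. The hard part will be the descent argument producing the second identity: it must be carried out uniformly across all reduction types at $p$, and in particular in the additive case where neither the classical $p$-adic regulator nor the $p$-adic height pairing is available. This is precisely where the authors' systematic use of $R_\omega^{\rm Boc}$ in place of $R_p$ becomes essential, and the non-vanishing hypothesis on $R_\omega^{\rm Boc}$ is what ultimately permits the final cancellation that isolates the BSD quotient.
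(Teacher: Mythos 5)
Your overall architecture coincides with the paper's: produce two expressions for $\kappa_\infty$, one from Conjecture \ref{conj02} carrying the leading term $L_S^{(r)}(E,1)/(\Omega\cdot R_\infty)$ and one from the Iwasawa Main Conjecture carrying the Birch--Swinnerton-Dyer quotient, and then cancel $R_\omega^{\rm Boc}$ using its non-vanishing. However, the second expression is exactly where all of the work lies, and you only assert it. In the paper it is Theorem \ref{th1}, and it is \emph{not} obtained by ``reinterpreting'' Corollary \ref{cor iw1i}: the logical order is the reverse, since Corollary \ref{cor iw1i} is deduced from Theorem \ref{th1} together with the Generalized Rubin Formula (Theorem \ref{main}), which uses the $p$-adic height pairing and is unavailable in the additive case. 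The actual engine is the purely algebraic descent of Theorem \ref{key}: one represents the Iwasawa cohomology complex $C_\infty$ by a two-term complex $\PP\xrightarrow{\psi}\PP$ of free $\Lambda$-modules chosen compatibly with the exact sequence (\ref{h2 iw}), defines $\Pi_\infty$, $\cN_\infty$, $\N_\infty$ and $\Pi_{\bm x}$ explicitly in terms of the coordinate maps $\psi_i$, checks commutativity of the diagram (\ref{key diag}) against the explicit Bockstein maps by a determinant computation, and computes the images of the lattices (properties (c) and (d)), using Kato's theorems that $\HH^1$ is free of rank one and that ${\rm char}_\Lambda(\HH^2)\subset I^{r-1}$. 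None of this appears in your proposal, so the key identity your argument hinges on is a genuine gap.

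There is also a concrete error in your bookkeeping of Euler factors. What the Main Conjecture actually yields (Theorem \ref{th1}) is
$\kappa_\infty = u\cdot v_\xi\bigl(\prod_{\ell\in S\setminus\{\infty\}}L_\ell\bigr)\,\#\sha(E/\QQ)\cdot{\rm Tam}(E)\cdot\#E(\QQ)_{\rm tors}^{-2}\cdot R_\omega^{\rm Boc}$,
and the product $\prod_{\ell\in S\setminus\{\infty\}}L_\ell$ is essential: it cancels \emph{exactly} against the $S$-truncation on the analytic side, since $L_S^{(r)}(E,1)=\bigl(\prod_{\ell\in S\setminus\{\infty\}}L_\ell\bigr)L^{(r)}(E,1)$ with all $L_\ell$ non-zero. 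Your version of the descent identity omits this product, and you propose instead to absorb the Euler factors (and even the local Tamagawa factors) at primes of $S$ into the unit $u$. That step fails: at a good prime $\ell\in S$ one has $L_\ell=\#\widetilde E(\FF_\ell)/\ell$, which can be divisible by $p$, and Tamagawa numbers at bad primes are likewise not $p$-adic units in general (and must in any case survive into the final formula). The paper sidesteps all of this, as well as the normalization $\Omega^+=v_\xi\,\Omega_\xi$ that your scalar manipulation leaves unexamined, by using the injectivity of ${\rm Boc}_{\infty,\bm x}$ (equivalent to $R_\omega^{\rm Boc}\neq 0$) to deduce the equality of \emph{elements} $\eta_{\bm x}^{\rm BSD}=\eta_{\bm x}^{\rm Kato}$, and then invoking the lattice characterization of ${\rm BSD}_p(E)$ in Proposition \ref{prop eta}.
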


As far as we are aware, this is the first result in which a concrete connection between the $p$-part of the (classical) Birch and Swinnerton-Dyer Formula and the Iwasawa Main Conjecture has been established in the context of either arbitrary analytic rank or arbitrary reduction at $p$.

In this regard we also note that Theorem \ref{iw2} is a natural analogue of the main result of the present authors in \cite{bks2} in which a strategy for proving the equivariant Tamagawa Number Conjecture for $\GG_m$ is established (see Remark \ref{rem3}).

\subsection{General notation} For the reader's convenience we collect together some of the general notation that will be used throughout this article.

At the outset we fix an {\it odd} prime number $p$. The symbol $\ell$ will also usually denote a prime number.

For a field $K$, the absolute Galois group of $K$ is denoted by $G_K$.

We fix an algebraic closure $\overline \QQ$ of $\QQ$. We also fix an algebraic closure $\overline \QQ_p$ of $\QQ_p$ and fix an embedding $\overline \QQ \hookrightarrow \overline \QQ_p$.

For a positive integer $m$, we denote by $\mu_m \subset \overline \QQ$ the group of $m$-th roots of unity.

For an abelian group $X$, we use the following notations:
\begin{itemize}
\item $X_{\rm tors}$: the subgroup of torsion elements;
\item $X_{\rm tf}:=X/X_{\rm tors}$: the torsion-free quotient;
\item ${\rm rank}(X):={\rm rank}_\ZZ(X_{\rm tf})$;
\item $X[p]$: the subgroup of elements annihilated by $p$;
\item $X[p^\infty]$: the subgroup of elements annihilated by a power of $p$.
\end{itemize}

If $X$ is endowed with an action of complex conjugation, we denote by $X^+$ the subgroup of $X$ fixed by the action.

If $X$ is an $R$-module (with $R$ a commutative ring), we set
$$X^\ast:=\Hom_R(X,R).$$
Note that this notation has ambiguity, since $X$ may be regarded as an $R'$-module with another ring $R'$ and $X^\ast$ can mean $\Hom_{R'}(X,R')$. However, this ambiguity would not make any danger of confusion since the meaning is usually clear from the context.

For an element $x \in X$, we denote by $\langle x \rangle_R$ the submodule generated by $x$ over $R$. We abbreviate it to $\langle x \rangle$ when $R$ is clear from the context. 

Suppose that $X$ is a free $R$-module with basis $\{x_1,\ldots,x_r\}$. We denote by
$$x_i^\ast: X \to R$$
the dual of $x_i$, i.e., the map defined by
$$x_j \mapsto \begin{cases}
1 &\text{ if $i=j$},\\
0 &\text{ if $i\neq j$}.
\end{cases}$$

For a perfect complex $C$ of $R$-modules, we denote by ${\det}_R(C)$ the determinant module of $C$. This module is understood to be a graded invertible $R$-module (with the grade suppressed from the symbol).

For a number field $F$ and a finite set $S$ of places of $\QQ$, we denote by $\cO_{F,S}$ the ring of $S_F$-integers of $F$, where $S_F$ denotes the set of places of $F$ lying above a place in $S$. In particular, $\cO_{\QQ,S}$ is denoted simply by $\ZZ_S$. We denote by $\rgamma(\cO_{F,S},-)$ the etale cohomology complex $\rgamma_{\text{\'et}}(\Spec (\cO_{F,S}),-)$.

As usual, the notation $H^i_f(F,-)$ indicates the Bloch-Kato Selmer group and $H_f^i(F_v,-)$ the Bloch-Kato local condition for a place $v$ of $F$ .

For an elliptic curve $E$ defined over $\QQ$, we denote by $L(E,s)$ the Hasse-Weil $L$-function of $E$. For a finite set $S$ of places of $\QQ$, we denote by $L_S(E,s)$ the $S$-truncated $L$-function of $E$. We denote by $L_S^\ast(E,1)$ the leading term at $s=1$.

The Tate-Shafarevich group of $E$ over a number field $F$ is denoted by $\sha(E/F)$. The product of Tamagawa factors of $E/\QQ$ is denoted by ${\rm Tam}(E)$.

We use some other standard notations concerning elliptic curves and modular curves, such as $\Gamma(E,\Omega_{E/\QQ}^1)$, $H_1(E(\CC),\QQ)$, $E_1(\QQ_p)$, $Y_1(N)$, $X_1(N)$, etc.

\section{Formulation of the Generalized Perrin-Riou Conjecture}\label{sec 2}

We fix a prime number $p$ and assume throughout the article that $p$ is odd.

\subsection{Kato's Euler system}\label{sec kato}

Let $E$ be an elliptic curve over $\QQ$ of conductor $N$.

Fix a modular parametrization $\phi: X_1(N) \to E$ and write $ f=\sum_{n=1}^\infty a_n q^n$
for the normalized newform of weight 2 and level $N$ corresponding to $E$.

Let $T_p(E)$ be the $p$-adic Tate module of $E$ and set $V:=\QQ_p\otimes_{\ZZ_p}T_p(E)$. Let $T$ be a $G_\QQ$-stable sublattice of $V$ that is given by the image of the following map:
\begin{eqnarray}\label{mod}
H^1(Y_1(N)\times_\QQ \overline \QQ, \ZZ_p(1)) &\hookrightarrow &H^1(Y_1(N)\times_\QQ \overline \QQ, \QQ_p(1)) \\
&\twoheadrightarrow& H^1(X_1(N)\times_\QQ \overline \QQ,\QQ_p(1)) \nonumber \\
&\xrightarrow{\phi_\ast}& H^1(E\times_\QQ \overline \QQ,\QQ_p(1))\nonumber \\
&=&V^\ast(1) \nonumber\\
&\simeq& V,\nonumber
\end{eqnarray}
where the second arrow is the Manin-Drinfeld splitting, the third is induced by $\phi$ and the last is induced by the Weil pairing.

Note that $T$ identifies with the maximal quotient of $H^1(Y_1(N)\times_\QQ \overline \QQ, \ZZ_p(1))$ on which Hecke operators $T(n)$ act via $a_n$ and may be different from $T_p(E)$. If $E[p]$ is an irreducible $G_\QQ$-representation, we may assume $T= T_p(E)$.

We fix the following data:
\begin{itemize}
\item an embedding $\overline \QQ \hookrightarrow \CC$;
\item a finite set $S$ of places of $\QQ$ such that $\{\infty\}\cup \{\ell \mid pN\} \subset S$;
\item integers $c, d >1$ such that $cd$ is coprime to $6$ and all primes in $S$, and that $c \equiv d \equiv 1 $ (mod $N$);
\item an element $\xi \in {\rm SL}_2(\ZZ)$.
\end{itemize}

For this data and any positive integer $m$ that is coprime to $cd$, Kato constructed in \cite[(8.1.3)]{katoasterisque} a `zeta element'
$${}_{c,d}z_m(\xi,S_m) := {}_{c,d}z_m^{(p)}(f,1,1,\xi,S_m\setminus \{\infty\})$$
in $H^1(\cO_{\QQ(\mu_m),S_m},T),$ where $S_m$ denotes the set $S \cup \{\ell \mid m\}$.

It is also known that the collection $({}_{c,d} z_m(\xi,S_m))_m$ forms an Euler system (see \cite[Ex. 13.3]{katoasterisque}).

For a finite abelian extension $F$ of $\QQ$ that is unramified outside $S$, we set
$${}_{c,d}z_F={}_{c,d}z_F(\xi,S):={\rm Cor}_{\QQ(\mu_m)/F}({}_{c,d}z_m(\xi,S)),$$
where $m=m_F$ denotes the conductor of $F$.

For later purposes we make a specific choice of $\xi$ as follows. Just as in (\ref{mod}), the fixed modular parametrization $\phi: X_1(N) \to E$ induces a map
\begin{eqnarray}\label{homo}
H_1(X_1(N)(\CC), \{{\rm cusps}\}, \ZZ) &\simeq &H^1(Y_1(N)(\CC),\ZZ(1)) \\
&\to& H^1(E(\CC),\QQ(1))\simeq H_1(E(\CC), \QQ),\nonumber
\end{eqnarray}
where the first and last isomorphisms are obtained by the Poincar\'e duality.

We write $\sH$ for the image of this map (so $\sH$ is a lattice of $H_1(E(\CC),\QQ)$) and let
$$\delta(\xi) \in \sH$$
denote the image under the map (\ref{homo}) of the modular symbol
$$\{\xi(0), \xi(\infty)\} \in H_1(X_1(N)(\CC), \{{\rm cusps}\}, \ZZ).$$
Let $g$ denote the complex conjugation and set $e^+:=(1+g)/2$.

We then fix $\xi$ so that the following condition is satisfied:
\begin{equation}\label{xi condition}
\text{the element}\,\, e^+\delta(\xi) \,\,\text{ of }\,\, H_1(E(\CC),\QQ)^+\,\,\text{ is a }\,\, \ZZ_{(p)}\text{-basis of }\, (\ZZ_{(p)} \otimes_\ZZ \sH)^+.
\end{equation}
The existence of such $\xi \in {\rm SL}_2(\ZZ)$ is justified as follows. By a well-known theorem of Manin, we know that $H_1(X_1(N)(\CC),\{{\rm cusps}\}, \ZZ)$ is generated by the set $\{ \{\alpha(0), \alpha(\infty)\} \mid \alpha \in {\rm SL}_2(\ZZ)\}$. This implies that the $\ZZ_{(p)}$-module $(\ZZ_{(p)} \otimes_\ZZ \sH)^+$ is generated by the set $\{e^+ \delta(\alpha) \mid \alpha \in {\rm SL}_2(\ZZ)\}$. Since $(\ZZ_{(p)} \otimes_\ZZ \sH)^+ \simeq \ZZ_{(p)}$ and $\ZZ_{(p)}$ is local, Nakayama's lemma implies the existence of $\xi \in {\rm SL}_2(\ZZ)$ such that $e^+\delta(\xi)$ generates $(\ZZ_{(p)} \otimes_\ZZ \sH)^+$.

Throughout this article, we also fix a minimal Weierstrass model of $E$ over $\ZZ$ and let
$$\omega \in \Gamma(E,\Omega_{E/\QQ}^1)$$
be the corresponding N\'eron differential.

We define the real period for $(\omega,\xi)$ by setting
\begin{eqnarray}\label{per xi}
\Omega_\xi:=\int_{e^+\delta(\xi)} \omega.
\end{eqnarray}
(In general, this integral need only agree with the usual real N\'{e}ron period $\Omega^+$ up to multiplication by an element of $\QQ^\times$. However, if $E[p]$ is irreducible, then $\Omega_\xi$ and $\Omega^+$ will agree up to multiplication by an element of $\ZZ_{(p)}^\times$.)

Then Kato's reciprocity law \cite[Th. 6.6 and 9.7]{katoasterisque} gives the formula
\begin{eqnarray}\label{kato rec}
\exp_\omega^\ast({}_{c,d}z_\QQ)=cd(c-1)(d-1) \frac{L_S(E,1)}{\Omega_\xi} \text{ in }\QQ,
\end{eqnarray}
where $\exp_\omega^\ast: H^1(\ZZ_S,T) \to H^1(\QQ_p, T) \to \QQ_p$ is the dual exponential map associated to $\omega$.

\begin{remark}
As in \cite[Th. 12.5]{katoasterisque}, one may normalize Kato's zeta element in order to construct an element $z$ of $H^1(\ZZ_S,V)$ with the property that $\exp_\omega^\ast(z)= L_{\{p\}}(E,1)/\Omega^+,$ where the $L$-fucntion is truncated just at $p$ rather than at all places in $S$. However, one does not in general know that this element $z$ lies in $H^1(\ZZ_S,T)$. This delicate integrality issue is the reason that we prefer to use ${}_{c,d}z_\QQ={}_{c,d}z_\QQ(\xi,S)$ rather than the normalized element. In addition, if $H^1(\ZZ_S,T)$ is $\ZZ_p$-free,  then one expects that the element
$$z_\QQ:=\frac{1}{cd(c-1)(d-1)}\cdot {}_{c,d}z_\QQ$$ 
of $H^1(\ZZ_S,V)$ actually belongs to $H^1(\ZZ_S,T)$ but, as far as we are aware, this has not been proved in full generality.

\end{remark}

\subsection{Birch and Swinnerton-Dyer elements}\label{sec bsd}

In this subsection, we introduce a natural notion of `Birch and Swinnerton-Dyer element'.

Such elements constitute an analogue for elliptic curves of the `Rubin-Stark elements' that are associated to the multiplicative group.

In the sequel we shall denote the `algebraic rank' ${\rm rank}(E(\QQ))$ of $E$ over $\QQ$ by $r_{\rm alg}$ or often, for simplicity, by $r$.

Throughout this section we shall then assume the following.

\begin{hypothesis}\label{hyp}\
\begin{itemize}
\item[(i)] $H^1(\ZZ_S,T)$ is $\ZZ_p$-free;
\item[(ii)] $r_{\rm alg} > 0$;
\item[(iii)] $\sha(E/\QQ)[p^\infty]$ is finite.
\end{itemize}
\end{hypothesis}

\begin{remark}
If $E[p]$ is irreducible, then $T=T_p(E)$ and $E(\QQ)[p]=0$ so Hypothesis \ref{hyp}(i) is automatically satisfied. 
\end{remark}

Following \cite[Lem. 6.1]{bss2}, we note that these assumptions imply the existence of a canonical isomorphism
\begin{eqnarray}\label{h1}
H^1(\ZZ_S,V)\simeq \QQ_p \otimes_\ZZ E(\QQ)
\end{eqnarray}
and also, since the image of the localization map $H^1(\ZZ_S,V) \to H^1(\QQ_p,V)$ lies in $H^1_{f}(\QQ_p,V)=\QQ_p \otimes_{\ZZ_p}E_1(\QQ_p) $, of a canonical short exact sequence
\begin{eqnarray}\label{short}
0 \to \QQ_p \otimes_{\ZZ_p} E_1(\QQ_p)^\ast \to \QQ_p \otimes_\ZZ E(\QQ)^\ast \to H^2(\ZZ_S,V) \to 0.
\end{eqnarray}

We fix an embedding $\RR \hookrightarrow \CC_p$ and consider the following canonical `period-regulator' isomorphism of $\CC_p$-modules
\begin{eqnarray*}
\lambda: \CC_p \otimes_{\ZZ_p} {\bigwedge}_{\ZZ_p}^r H^1(\ZZ_S,T) &\simeq & \CC_p \otimes_\ZZ {\bigwedge}_\ZZ^r E(\QQ) \\
 &\simeq& \CC_p \otimes_\ZZ {\bigwedge}_\ZZ^r E(\QQ)^\ast \\
 &\simeq& \CC_p \otimes_{\QQ_p} \left(E_1(\QQ_p)^\ast \otimes_{\ZZ_p} {\bigwedge}_{\QQ_p}^{r-1} H^2(\ZZ_S,V) \right) \\
 &\simeq& \CC_p \otimes_{\QQ_p} \left( \Gamma(E,\Omega_{E/\QQ}^1) \otimes_\QQ  {\bigwedge}_{\QQ_p}^{r-1} H^2(\ZZ_S,V)\right)\\
 &\simeq& \CC_p \otimes_{\QQ_p} \left(H_1(E(\CC),\QQ)^{+,\ast} \otimes_\QQ {\bigwedge}_{\QQ_p}^{r-1} H^2(\ZZ_S, V) \right).
\end{eqnarray*}
Here the first isomorphism is induced by (\ref{h1}), the second by the N\'eron-Tate height pairing
$$ \langle -,-\rangle_\infty:E(\QQ) \times E(\QQ) \to \RR,$$
the third by (\ref{short}), the fourth by the dual exponential map
$$\exp^\ast: E_1(\QQ_p)^\ast \to \QQ_p \otimes_\QQ \Gamma(E,\Omega_{E/\QQ}^1),$$
the last by the period map
$$\Gamma(E,\Omega_{E/\QQ}^1) \to H_1(E(\CC),\RR)^{+,\ast}; \ \omega \mapsto (\gamma \mapsto \int_\gamma \omega).$$

\begin{definition}\label{def eta} Fix an element $\bm{x}$ of the space  ${\bigwedge}_{\QQ_p}^{r-1} H^2(\ZZ_S,V)$. Then the {\it Birch and Swinnerton-Dyer element} $\eta_{\bm{x}}^{\rm BSD} = \eta^{\rm BSD}_{\bm{x}}(\xi,S) $ of the data $\xi$, $S$ and $\bm{x}$ is the element of $\CC_p \otimes_{\ZZ_p} {\bigwedge}_{\ZZ_p}^r H^1(\ZZ_S,T)$ obtained by setting
$$\eta_{\bm{x}}^{\rm BSD}:=\lambda^{-1} \left( L_S^\ast(E,1) \cdot (e^+\delta(\xi)^\ast \otimes \bm{x})\right).$$
The `$(c,d)$-modified Birch and Swinnerton-Dyer element' for the given data is the element
$${}_{c,d}\eta_{\bm{x}}^{\rm BSD}:=cd(c-1)(d-1)\cdot\eta_{\bm{x}}^{\rm BSD}.$$
\end{definition}



\begin{remark}\label{bsd1} Each choice of an ordered basis of $E(\QQ)_{\rm tf}$ gives rise to a natural choice of element $\bm{x}$ as above (see \S\ref{exp int}). In the special case $r=1$ and $\bm{x}=1$, the above definition simplifies to an equality
$$\eta_{\bm{x}}^{\rm BSD} = \frac{L_S^\ast(E,1)}{\Omega_\xi\cdot R_\infty} \cdot \log_\omega(x)\cdot x$$
in $\CC_p\otimes_\ZZ E(\QQ) \simeq \CC_p \otimes_{\ZZ_p} H^1(\ZZ_S,T)$, where $R_\infty$ is the N\'eron-Tate regulator, $\log_\omega : E(\QQ) \to E(\QQ_p) \to \QQ_p$ is the formal group logarithm associated to $\omega$ and $x$ is any element of $E(\QQ)$ that generates $E(\QQ)_{\rm tf}$.
\end{remark}

The $p$-part of the Birch-Swinnerton-Dyer Formula for $E$ asserts that there should be an equality of $\ZZ_p$-submodules of $\CC_p$ of the form
 $$L^\ast(E,1) \cdot \ZZ_p = (\# \sha(E/\QQ)[p^\infty]\cdot {\rm Tam}(E)\cdot \# E(\QQ)_{\rm tors}^{-2} \cdot \Omega^+ \cdot R_\infty)
 \cdot \ZZ_p,$$
 where ${\rm Tam}(E)$ denotes the product of the Tamagawa factors of $E/\QQ$. 
In the sequel we shall abbreviate this equality of lattices to `${\rm BSD}_p(E)$'.

The next result explains the connection between this conjectural equality and the integrality properties of Birch and Swinnerton-Dyer elements.

 \begin{proposition} \label{prop eta} Set $r:= r_{\rm alg}$ and fix a $\ZZ_p$-basis $\bm{x}$ of the lattice ${\bigwedge}_{\ZZ_p}^{r-1} H^2(\ZZ_S,T)_{\rm tf}$. Then ${\rm BSD}_p(E)$ is valid if and only if there is an equality of $\ZZ_p$-lattices
 \begin{eqnarray}\label{eta bsd}
 \ZZ_p\cdot \eta_{\bm{x}}^{\rm BSD} = \# H^2(\ZZ_S,T)_{\rm tors}\cdot {\bigwedge}_{\ZZ_p}^r H^1(\ZZ_S,T).
 \end{eqnarray}
 In particular, the validity of ${\rm BSD}_p(E)$ implies that $\eta_{\bm{x}}^{\rm BSD}$ belongs to ${\bigwedge}_{\ZZ_p}^r H^1(\ZZ_S,T)$.
 \end{proposition}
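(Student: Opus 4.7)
The plan is to compute the $\ZZ_p$-lattice $\lambda\bigl(\bigwedge_{\ZZ_p}^r H^1(\ZZ_S,T)\bigr)$ inside the rank-one $\CC_p$-space spanned by $e^+\delta(\xi)^\ast\otimes\bm{x}$ and, using the defining formula
$$\eta_{\bm{x}}^{\rm BSD}=\lambda^{-1}\bigl(L_S^\ast(E,1)\cdot(e^+\delta(\xi)^\ast\otimes\bm{x})\bigr),$$
to show directly that the lattice equality (\ref{eta bsd}) is equivalent to ${\rm BSD}_p(E)$. Concretely, I would track integral lattices through each of the five arrows that compose $\lambda$, identifying on each side the natural $\ZZ_p$-lattice and the index between them.

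The Kato isomorphism (\ref{h1}), together with the integral refinement of the short exact sequence (\ref{short}), will contribute a factor of $\#\sha(E/\QQ)[p^\infty]\cdot\#H^2(\ZZ_S,T)_{\rm tors}^{-1}$, by the standard Poitou--Tate / Bloch--Kato analysis recalled in \cite[Lem.~6.1]{bss2}; the N\'eron--Tate height pairing on $E(\QQ)_{\rm tf}$ contributes $R_\infty\cdot\#E(\QQ)_{\rm tors}^{-2}$, the torsion correction arising from the fact that the pairing factors through the torsion-free quotient on both arguments; the dual exponential map, combined with the local Bloch--Kato descriptions at primes $\ell\in S\setminus\{\infty\}$, contributes the full product ${\rm Tam}(E)$ of Tamagawa factors; and the period map sends the $\ZZ_p$-basis $\omega$ of $\Gamma(E,\Omega_{E/\ZZ}^1)\otimes\ZZ_p$ to $\Omega_\xi\cdot e^+\delta(\xi)^\ast$ by (\ref{per xi}), with $\Omega_\xi\cdot\ZZ_p=\Omega^+\cdot\ZZ_p$ by the choice of $\xi$ in (\ref{xi condition}). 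Assembling these contributions modulo $\ZZ_p^\times$ yields
$$\lambda\left({\bigwedge}_{\ZZ_p}^r H^1(\ZZ_S,T)\right)=\frac{\#\sha(E/\QQ)[p^\infty]\cdot{\rm Tam}(E)\cdot\Omega^+\cdot R_\infty}{\#E(\QQ)_{\rm tors}^2\cdot\#H^2(\ZZ_S,T)_{\rm tors}}\cdot\ZZ_p\cdot(e^+\delta(\xi)^\ast\otimes\bm{x}),$$
so that the asserted equality (\ref{eta bsd}) is equivalent to
$$L_S^\ast(E,1)\cdot\ZZ_p=\frac{\#\sha(E/\QQ)[p^\infty]\cdot{\rm Tam}(E)\cdot\Omega^+\cdot R_\infty}{\#E(\QQ)_{\rm tors}^2}\cdot\ZZ_p.$$
Since the Euler factors $P_\ell(E,\ell^{-1})$ relating $L_S^\ast(E,1)$ to $L^\ast(E,1)$ are $p$-adic units for $\ell\in S\setminus\{\infty,p\}$, and the factor at $\ell=p$ is absorbed into the local Tamagawa contribution there, this is exactly the statement of ${\rm BSD}_p(E)$. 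The final assertion of the proposition then follows immediately from (\ref{eta bsd}) applied in the forward direction.

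The main technical obstacle lies in the bookkeeping for the first step: namely, verifying the precise joint contribution of $\#\sha(E/\QQ)[p^\infty]$ and $\#H^2(\ZZ_S,T)_{\rm tors}$ to the index, in tandem with the $p$-torsion of $E(\QQ)$ and any $p$-torsion in the intermediate Galois cohomology. This is most cleanly organised using the determinant formalism on the perfect complex $\rgamma(\cO_{\QQ,S},T)$, essentially as in \cite{bss2}, where the required Euler characteristic computation is carried out and where the matching with the Tamagawa local terms at each $\ell\in S$ is made functorial in $T$.
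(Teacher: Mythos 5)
Your overall strategy---computing the lattice $\lambda\bigl({\bigwedge}_{\ZZ_p}^r H^1(\ZZ_S,T)\bigr)$ by tracking integral structures through the arrows defining $\lambda$---is viable in principle (it is essentially how one verifies the analogous identity (\ref{alg lattice}) for the algebraic element of Definition \ref{alg bsd def}), but your bookkeeping goes wrong at a decisive point: the product of Euler factors $\prod_{\ell\in S\setminus\{\infty\}}L_\ell$ is missing from your assembled lattice formula. The local terms at the primes $\ell\in S$ which enter when one compares $H^2(\ZZ_S,T)_{\rm tors}$ with $\sha(E/\QQ)[p^\infty]$, and when one passes through $E_1(\QQ_p)$ and the dual exponential at $p$, contribute not only the Tamagawa numbers but also the Euler factors: for a good prime $\ell\neq p$ one has $L_\ell=\#\widetilde E(\FF_\ell)/\ell$, and the $p$-part of $\#\widetilde E(\FF_\ell)$ occurs in $\varprojlim_n E(\QQ_\ell)/p^n$, hence in the cokernel term governing the discrepancy between $\#H^2(\ZZ_S,T)_{\rm tors}$ and $\#\sha(E/\QQ)[p^\infty]$; similarly the Euler factor at $p$ enters via $[E(\QQ_p):E_1(\QQ_p)]$ and is not part of ${\rm Tam}(E)$. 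The correct outcome of the computation you propose is that $\lambda$ sends $\#H^2(\ZZ_S,T)_{\rm tors}\cdot{\bigwedge}_{\ZZ_p}^r H^1(\ZZ_S,T)$ to the lattice generated by $\bigl(\prod_{\ell\in S\setminus\{\infty\}}L_\ell\bigr)\cdot\#\sha(E/\QQ)[p^\infty]\cdot{\rm Tam}(E)\cdot\#E(\QQ)_{\rm tors}^{-2}\cdot\Omega_\xi\cdot R_\infty$ times $e^+\delta(\xi)^\ast\otimes\bm{x}$ (compare (\ref{alg lattice})), and you cannot then discard the Euler factors: your claim that $L_\ell$ is a $p$-adic unit for $\ell\in S\setminus\{\infty,p\}$ fails whenever $E(\QQ_\ell)[p]\neq 0$ (avoiding this is exactly hypothesis (d) of Proposition \ref{prop bss}, which is not available in Proposition \ref{prop eta}), and the factor at $\ell=p$ is not ``absorbed'' into the Tamagawa contribution. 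As written, your argument therefore proves the equivalence of (\ref{eta bsd}) with the wrong lattice identity, differing from ${\rm BSD}_p(E)$ by $\prod_{\ell\in S\setminus\{\infty\}}L_\ell$.

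A secondary issue: the equality $\Omega_\xi\cdot\ZZ_p=\Omega^+\cdot\ZZ_p$ does not follow from (\ref{xi condition}) alone, since $e^+\delta(\xi)$ generates $(\ZZ_{(p)}\otimes_\ZZ\sH)^+$ rather than $H_1(E(\CC),\ZZ_{(p)})^+$, and these lattices agree up to $p$-units only under extra hypotheses (e.g.\ irreducibility of $E[p]$); the relevant integral structure for the lattice $T$ is $\sH$, so $\Omega_\xi$ is the period that naturally appears. The paper avoids all of this factor-by-factor accounting by a different route: it invokes Kings \cite{kings} to identify ${\rm BSD}_p(E)$ with the Tamagawa Number Conjecture for $(h^1(E)(1),\ZZ_p)$, and then computes where the isomorphism (\ref{coh}) sends the element $\mathfrak{z}$ and the integral lattice ${\det}^{-1}_{\ZZ_p}(\rgamma_c(\ZZ_S,T^\ast(1)))$, so that the local Euler and Tamagawa contributions are packaged once and for all inside the compactly supported determinant. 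If you want to keep your direct approach, you must either carry the Euler factors explicitly through the local computations (arriving at the analogue of Definition \ref{alg bsd def}) or work with compactly supported cohomology from the start.
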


 \begin{proof} It is well-known that the validity of ${\rm BSD}_p(E)$ is equivalent to the equality of lattices that underlies the statement of the Tamagawa Number Conjecture (or `TNC' for short) for the pair $(h^1(E)(1),\ZZ_p)$ (this has been shown, for example, by Kings in \cite{kings}). It is therefore sufficient to show that the equality (\ref{eta bsd}) is equivalent to the TNC and to do this we must recall the formulation of the
latter conjecture.

The statement of the TNC involves a canonical isomorphism of $\CC_p$-modules
\begin{equation}\label{TNC iso}\vartheta: \CC_p \otimes_{\ZZ_p}{\det}^{-1}_{\ZZ_p}(\rgamma_c(\ZZ_S,T^\ast(1))) \xrightarrow{\sim} \CC_p\end{equation}
that arises as follows. Firstly, global duality induces a canonical  isomorphism
$${\det}_{\ZZ_p}^{-1}(\rgamma_c(\ZZ_S,T^\ast(1))) \simeq {\det}_{\ZZ_p}^{-1}(\rgamma(\ZZ_S,T)) \otimes_{\ZZ_p} T^\ast(1)^{+}$$
(cf. \cite[Prop. 2.22]{sbA}) and hence also a canonical isomorphism
\begin{eqnarray}\label{coh}
&&\CC_p \otimes_{\ZZ_p}{\det}_{\ZZ_p}^{-1}(\rgamma_c(\ZZ_S,T^\ast(1))) \\
&\simeq& \CC_p \otimes_{\QQ_p}\left({\bigwedge}_{\QQ_p}^r H^1(\ZZ_S,V) \otimes_{\QQ_p} {\bigwedge}_{\QQ_p}^{r-1}H^2(\ZZ_S,V)^\ast \otimes_{\QQ_p} V^\ast(1)^+ \right).\nonumber
\end{eqnarray}

The isomorphism $\vartheta$ in (\ref{TNC iso}) is then obtained by combining the latter isomorphism with the canonical `comparison' isomorphism
$$V^\ast(1)^+ \simeq \QQ_p \otimes_\QQ H^1(E(\CC),\QQ(1))^+ \simeq \QQ_p \otimes_\QQ H_1(E(\CC),\QQ)^+$$
and the period-regulator isomorphism
$$\lambda: \CC_p \otimes_{\QQ_p}{\bigwedge}_{\QQ_p}^{r} H^1(\ZZ_S,V) \simeq \CC_p \otimes_{\QQ_p} \left(H_1(E(\CC),\QQ)^{+,\ast} \otimes_\QQ {\bigwedge}_{\QQ_p}^{r-1} H^2(\ZZ_S, V) \right)$$
constructed earlier.

If $\mathfrak{z}$ is the unique element of $\CC_p\otimes_{\ZZ_p} {\det}_{\ZZ_p}^{-1}(\rgamma_c(\ZZ_S,T^\ast(1)))$ that satisfies
 $\vartheta(\mathfrak{z})=L_S^\ast(E,1),$ then the TNC predicts that
$$\ZZ_p \cdot \mathfrak{z} ={\det}_{\ZZ_p}^{-1}(\rgamma_c(\ZZ_S,T^\ast(1))).$$

Given this, the claimed result is a consequence of the fact that the isomorphism (\ref{coh}) sends the element $\mathfrak{z}$ to  $$\eta_{\bm{x}}^{\rm BSD} \otimes \bm{x}^\ast \otimes e^+\delta(\xi) \in \CC_p \otimes_{\QQ_p}\left({\bigwedge}_{\QQ_p}^r H^1(\ZZ_S,V) \otimes_{\QQ_p} {\bigwedge}_{\QQ_p}^{r-1}H^2(\ZZ_S,V)^\ast \otimes_{\QQ_p} V^\ast(1)^+ \right),$$
and the lattice ${\det}_{\ZZ_p}^{-1}(\rgamma_c(\ZZ_S,T^\ast(1)))$ to %
$$\# H^2(\ZZ_S,T)_{\rm tors} \cdot {\bigwedge}_{\ZZ_p}^r H^1(\ZZ_S,T) \otimes_{\ZZ_p} {\bigwedge}_{\ZZ_p}^{r-1} H^2(\ZZ_S,T)_{\rm tf}^\ast \otimes_{\ZZ_p} T^\ast(1)^+.$$
\end{proof}

\subsection{Bockstein regulator maps}\label{sec boc}
In this subsection, we shall introduce a canonical construction of Bockstein regulator maps (see (\ref{boc}) below).

We first set some notations. Let $F/\QQ$ be a finite abelian extension unramified outside $S$ and $G$ its Galois group. Since all results and conjectures we study are of $p$-adic nature, we may assume that $[F:\QQ]$ is a $p$-power. In particular, since $p$ is odd, $F$ is a totally real field. The augmentation ideal
$$I_F:=\ker (\ZZ_p[G] \twoheadrightarrow \ZZ_p)$$
and the augmentation quotients
$$Q_F^a:=I_F^a/I_F^{a+1}$$
for a non-negative integer $a$ will play important roles. We remark that $Q_F^0$ is understood to be $\ZZ_p[G]/I_F=\ZZ_p$.

For simplicity, in this subsection we shall abbreviate the ideal $I_F$ to $I$. 

At the outset we note that the tautological short exact sequence
$$0 \to I/I^2 \to \ZZ_p[G]/I^2 \to \ZZ_p \to 0$$
gives rise to a canonical exact triangle of complexes of $\ZZ_p$-modules of the form
\begin{multline*}\rgamma(\cO_{F,S},T)\otimes^{\DL}_{\ZZ_p[G]} I/I^2 \to \rgamma(\cO_{F,S},T)\otimes^{\DL}_{\ZZ_p[G]} \ZZ_p[G]/I^2 \to \rgamma(\cO_{F,S},T)\otimes^{\DL}_{\ZZ_p[G]} \ZZ_p.
\end{multline*}

Next we recall (from, for example, \cite[Prop. 1.6.5]{FK}) that  $\rgamma(\cO_{F,S},T)$ is acyclic outside degrees one and two and that there exists a canonical isomorphism in the derived category of $\ZZ_p$-modules
\begin{eqnarray}\label{control}
\rgamma(\cO_{F,S},T)\otimes^{\DL}_{\ZZ_p[G]}\ZZ_p \simeq \rgamma(\ZZ_S,T).
\end{eqnarray}

Taking account of these facts, the above triangle gives rise to a morphism of complexes of $\ZZ_p$-modules
\[ \delta_F: \rgamma(\ZZ_S,T)\to\bigl(\rgamma(\ZZ_S,T)
\otimes_{\ZZ_p}^{\DL}I/I^2 \bigr)[1]\]
and hence to a composite homomorphism of $\ZZ_p$-modules
\begin{eqnarray}\label{def beta}
\beta_F: H^1(\ZZ_S,T) &\xrightarrow{(-1)\times H^1(\delta_F)}& H^2(\rgamma(\ZZ_S,T)\otimes_{\ZZ_p}^{\DL}I/I^2 )\\
&=&H^2(\ZZ_S,T)\otimes_{\ZZ_p}I/I^2  \nonumber \\
&\twoheadrightarrow & H^2(\ZZ_S,T)_{\rm tf}\otimes_{\ZZ_p}I/I^2,\nonumber
\end{eqnarray}
in which the equality is valid since $\rgamma(\ZZ_S,T)$ is acyclic in degrees greater than two and the last map is induced by the natural map from $H^2(\ZZ_S,T)$ to $H^2(\ZZ_S,T)_{\rm tf}$.

We write
$${\rm Boc}_{F}: {\bigwedge}_{\ZZ_p}^r H^1(\ZZ_S,T) \to H^1(\ZZ_S,T) \otimes_{\ZZ_p} {\bigwedge}_{\ZZ_p}^{r-1} H^2(\ZZ_S,T)_{\rm tf}
\otimes_{\ZZ_p} Q_F^{r-1}$$
for the homomorphism of $\ZZ_p$-modules with the property that
\[ {\rm Boc}_{F}\bigl(y_1\wedge \cdots \wedge y_{r}\bigr) = \sum_{i=1}^{r}(-1)^{i+1}y_i \otimes \bigl(
\beta_{F}(y_{1}) \wedge \cdots \wedge \beta_{F}(y_{i-1}) \wedge \beta_F(y_{i+1}) \wedge \cdots \wedge \beta_{F}(y_{r})\bigr)\]
for all elements $y_i$ of $H^1(\ZZ_S,T)$. 

%
%

Then, each choice of basis element $\bm{x}$ of the (free, rank one) $\ZZ_p$-module ${\bigwedge}_{\ZZ_p}^{r-1}H^2(\ZZ_S,T)_{\rm tf}$,
 gives rise to a composite `Bockstein regulator' homomorphism
\begin{align}\label{boc}
{\rm Boc}_{F,\bm{x}}: {\bigwedge}_{\ZZ_p}^r H^1(\ZZ_S,T)
\xrightarrow{{\rm Boc}_F} &\, H^1(\ZZ_S,T) \otimes_{\ZZ_p} {\bigwedge}_{\ZZ_p}^{r-1} H^2(\ZZ_S,T)_{\rm tf}
\otimes_{\ZZ_p} Q_F^{r-1}\\
 \xrightarrow{{\rm id}\otimes\phi_{\bm{x}}\otimes{\rm id}} &\,  H^1(\ZZ_S,T)\otimes_{\ZZ_p} Q_F^{r-1},\notag
\end{align}
where $\phi_{\bm{x}}$ is the isomorphism ${\bigwedge}_{\ZZ_p}^{r-1} H^2(\ZZ_S,T)_{\rm tf}\simeq \ZZ_p$ induced by the choice of ${\bm x}$.

\begin{remark}\label{r=1 reg} If $r=1$ and $\bm{x}=1$ is the canonical basis of ${\bigwedge}_{\ZZ_p}^{r-1}H^2(\ZZ_S,T)_{\rm tf} =\ZZ_p$, then ${\rm Boc}_{F,\bm{x}}= {\rm Boc}_{F}$ is simply equal to the identity map on $H^1(\ZZ_S,T)$.\end{remark}

\subsection{The Generalized Perrin-Riou Conjecture} In the sequel we shall write $r_{\rm an}$ for the analytic rank $\ord_{s=1} L(E,s)$ of $E$.

\subsubsection{}In \cite{PR}, Perrin-Riou investigates relations between Kato's Euler system and the $p$-adic Birch-Swinnerton-Dyer Conjecture. In particular, she formulates the following conjecture.

\begin{conjecture}[{Perrin-Riou \cite{PR}, see also \cite{buyuk perrin}}]\label{PR}\
\begin{itemize}
\item[(i)] The element ${}_{c,d}z_\QQ$ is non-zero if and only if $r_{\rm an}$ is at most one.
\item[(ii)] If $r_{\rm an}=r_{\rm alg}=1$, then in $\CC_p \otimes_{\ZZ_p} H^1(\ZZ_S,T) \simeq \CC_p\otimes_\ZZ E(\QQ)$ one has
\begin{eqnarray}\label{PR formula}
{}_{c,d}z_\QQ=cd(c-1)(d-1) \frac{L_S'(E,1)}{\Omega_\xi\cdot R_\infty}  \log_\omega(x)\cdot x, \end{eqnarray}
where $x$ is any element of $E(\QQ)$ that generates $E(\QQ)_{\rm tf}$.
\end{itemize}
\end{conjecture}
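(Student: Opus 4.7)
My plan is to address the two parts separately. For part (i), the direction $r_{\rm an}\le 1 \Rightarrow {}_{c,d}z_\QQ\ne 0$ reduces to two cases: when $r_{\rm an}=0$, Kato's reciprocity law (\ref{kato rec}) gives $\exp_\omega^\ast({}_{c,d}z_\QQ) = cd(c-1)(d-1)L_S(E,1)/\Omega_\xi\ne 0$; when $r_{\rm an}=1$ (so $r_{\rm alg}=1$ by the theorems of Gross--Zagier and Kolyvagin), non-vanishing follows from the explicit formula in (ii), since $\log_\omega(x)\ne 0$ for any point of infinite order and $L_S'(E,1)\ne 0$. The converse direction of (i)---namely ${}_{c,d}z_\QQ=0$ whenever $r_{\rm an}\ge 2$---seems to lie essentially as deep as the rank part of Birch--Swinnerton-Dyer in that range, so I regard this as currently out of reach and would focus only on the first direction.

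The heart of the matter is therefore part (ii). Its proof breaks naturally into three steps. First, since $L_S(E,1)=0$ when $r_{\rm an}=1$, another application of (\ref{kato rec}) gives $\exp_\omega^\ast({}_{c,d}z_\QQ)=0$, and combined with unramifiedness outside $S$ this places ${}_{c,d}z_\QQ$ in the Bloch--Kato Selmer group $H^1_f(\QQ,T)$. Via the identification (\ref{h1}) one then writes ${}_{c,d}z_\QQ=a\cdot x$ with $a\in\CC_p$ and $x$ a generator of $E(\QQ)_{\rm tf}$, reducing the problem to computing the scalar $a$. Second, I would propagate ${}_{c,d}z_\QQ$ to a norm-compatible system $\kappa_\infty$ along the cyclotomic $\ZZ_p$-tower and invoke Theorem \ref{theorem1i}, the Generalized Rubin Formula, which in rank one expresses $\langle x, z_\QQ\rangle_p$---and hence $a\cdot R_p$, since $\langle x,x\rangle_p = R_p$---in terms of a first derivative $\cL_{S,p}^{(1)}$ of the $p$-adic $L$-function, up to explicit Euler factors. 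Third, I would invoke a $p$-adic Beilinson/Gross--Zagier identity---essentially the $r=1$ specialization of Corollary \ref{cor1}---to rewrite $\cL_{S,p}^{(1)}$ as $L_S'(E,1)R_p/(\Omega_\xi\cdot R_\infty)$ up to matching Euler corrections. Cancelling the common factor of $R_p$ then yields precisely the value of $a$ predicted by Perrin-Riou.

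The main obstacle is the third step, which is reduction-type sensitive. In the good ordinary and non-split multiplicative cases the required identity is essentially classical and is carried out in \cite{BPS}; in the good supersingular case one needs B\"uy\"ukboduk's signed Iwasawa theory as in \cite{buyuk perrin}; and in the split multiplicative case the exceptional-zero phenomenon intervenes, reflected in the shift $r\mapsto r+1$ in Theorem \ref{theorem1i}(ii) and handled via the $\mathcal L$-invariant approach of \cite{venerucci}. The additive reduction case is the most obstinate since the classical $p$-adic regulator $R_p$ is not even constructed; here one is forced to work instead with the Bockstein regulator $R_\omega^{\rm Boc}$ introduced in this paper. In this regime Theorem \ref{theorem01} does not provide a genuine reduction---in rank one it only reformulates the problem---but it embeds (ii) into the framework of Conjecture \ref{conj01}, where the leading-term identity becomes accessible by equivariant Euler-system methods, and this appears to be the only promising route to a uniform treatment across all reduction types.
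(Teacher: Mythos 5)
You are attempting to prove a statement that the paper itself does not prove: Conjecture \ref{PR} is Perrin-Riou's conjecture, which the paper only reformulates (Proposition \ref{PR2}, Remark \ref{PR equiv}) and then generalizes; the only known cases are the external results cited after Theorem \ref{theorem01} (\cite{buyuk perrin}, \cite{venerucci}, \cite{BPS}, \cite{BD}). So there is no proof in the paper to compare with, and your proposal must be judged as a free-standing argument — and as such it has a genuine gap at its decisive step. Your step 3 is circular as written: Corollary \ref{cor1} is conditional on Conjecture \ref{conj02}, and in rank one Conjecture \ref{conj02} is (by Remark \ref{nek rem}(i) and Remark \ref{zq}) equivalent to the very formula (\ref{PR formula}) you are trying to establish, so it cannot be invoked. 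The unconditional inputs actually available are Rubin's formula (Theorem \ref{main}(i) with $r=1$) and the $p$-adic Gross--Zagier formula of Perrin-Riou and Kobayashi (the known cases of (\ref{bei}) recorded after Corollary \ref{cor beilinson}).

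Even granting those unconditional inputs, your argument only computes a $p$-adic height: combining them gives
$\langle x, {}_{c,d}z_\QQ\rangle_p = cd(c-1)(d-1)\frac{L_S'(E,1)}{\Omega_\xi\cdot R_\infty}\log_\omega(x)\cdot\langle x,x\rangle_p$, and writing ${}_{c,d}z_\QQ = a\cdot x$ you can only conclude $a\cdot R_p = cd(c-1)(d-1)\frac{L_S'(E,1)}{\Omega_\xi\cdot R_\infty}\log_\omega(x)\cdot R_p$. The final cancellation of $R_p=\langle x,x\rangle_p$ requires the non-vanishing of the $p$-adic regulator, i.e.\ the non-degeneracy of the $p$-adic height pairing (Schneider's conjecture), which is open in every reduction type; if $R_p=0$ the identity carries no information about $a$, since the height pairing only determines $z_\QQ$ modulo its kernel. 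This is exactly why the actual proofs of Perrin-Riou's conjecture in the cited works do not proceed by ``Rubin's formula plus $p$-adic Gross--Zagier and cancel $R_p$'' but use different inputs (Beilinson--Flach classes, Heegner points, critical-slope $p$-adic $L$-functions). Thus your plan establishes (ii) only under the additional unproven hypothesis $R_p\neq 0$ (with further unresolved issues in the split multiplicative case, where the Schneider height and the $\mathcal{L}$-invariant enter, and in the additive case, where neither Theorem \ref{main} nor $R_p$ is available); and part (i) is likewise not proved, since you concede one direction entirely and the other direction in the case $r_{\rm an}=1$ rests on the unproven formula of (ii).
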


\begin{remark} This conjecture is a slight modification of, but equivalent to, Perrin-Riou's original formulation of the conjecture. 
%
By Kato's reciprocity law (\ref{kato rec}), the element ${}_{c,d}z_\QQ$ is explicitly related to $L(E,1)$ and, in particular, does not vanish if $r_{\rm an}=0$. Perrin-Riou's conjecture predicts that ${}_{c,d}z_\QQ$ does not vanish even if $r_{\rm an}=1$ and, moreover, that it should be explicitly related to the first derivative $L'(E,1)$ via the formula (\ref{PR formula}).\end{remark}

By Remark \ref{bsd1}, we immediately obtain the following interpretation of Perrin-Riou's conjecture in terms of the BSD element.

\begin{proposition}\label{PR2} If $r_{\rm an}= r_{\rm alg}=1$ and ${\bm{x}} = 1$, then
Conjecture \ref{PR}(ii) is valid if and only if one has ${}_{c,d}z_\QQ = {}_{c,d}\eta_{\bm{x}}^{\rm BSD}.$
\end{proposition}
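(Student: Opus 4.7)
The plan is to observe that Proposition \ref{PR2} is essentially a tautological consequence of the definitions already in place, and to spell out the bookkeeping that makes this explicit.

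First I would invoke Remark \ref{bsd1}, which, in the special case $r=1$ and $\bm{x}=1$, gives the explicit formula
$$\eta_{\bm{x}}^{\rm BSD}=\frac{L_S^\ast(E,1)}{\Omega_\xi\cdot R_\infty}\log_\omega(x)\cdot x$$
in $\CC_p\otimes_{\ZZ_p}H^1(\ZZ_S,T)\simeq \CC_p\otimes_\ZZ E(\QQ)$, where $x$ generates $E(\QQ)_{\rm tf}$. Under the running assumption $r_{\rm an}=1$, the leading coefficient $L_S^\ast(E,1)$ is by definition equal to $L_S'(E,1)$ (since the Taylor expansion of $L_S(E,s)$ at $s=1$ begins in order one and, as $S$-truncation only removes Euler factors with nonzero value at $s=1$, does not alter the order of vanishing). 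Hence we may rewrite
$$\eta_{\bm{x}}^{\rm BSD}=\frac{L_S'(E,1)}{\Omega_\xi\cdot R_\infty}\log_\omega(x)\cdot x.$$

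Next I would apply Definition \ref{def eta}, which gives the $(c,d)$-modification ${}_{c,d}\eta_{\bm{x}}^{\rm BSD}=cd(c-1)(d-1)\cdot \eta_{\bm{x}}^{\rm BSD}$, so that
$${}_{c,d}\eta_{\bm{x}}^{\rm BSD}=cd(c-1)(d-1)\cdot \frac{L_S'(E,1)}{\Omega_\xi\cdot R_\infty}\log_\omega(x)\cdot x.$$
Comparing this identity with the right-hand side of the displayed formula (\ref{PR formula}) in Conjecture \ref{PR}(ii), we see that the equality ${}_{c,d}z_\QQ={}_{c,d}\eta_{\bm{x}}^{\rm BSD}$ in $\CC_p\otimes_{\ZZ_p}H^1(\ZZ_S,T)$ is literally equivalent to the formula asserted by Conjecture \ref{PR}(ii).

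There is no substantive obstacle here; the only points that require any care are (a) confirming that the canonical isomorphism $\CC_p\otimes_{\ZZ_p}H^1(\ZZ_S,T)\simeq \CC_p\otimes_\ZZ E(\QQ)$ used in Remark \ref{bsd1} is the same as the identification (\ref{h1}) used to phrase Conjecture \ref{PR}(ii), and (b) tracking the signs and normalizations implicit in the period-regulator isomorphism $\lambda$ of \S\ref{sec bsd} so that the formula of Remark \ref{bsd1} genuinely follows from Definition \ref{def eta} in the case $r=1$, $\bm{x}=1$. Both verifications are immediate from the unwinding of the definitions of $\lambda$ in the $r=1$ case, where the wedge products collapse, $H^2(\ZZ_S,V)$ contributes trivially (the factor ${\bigwedge}^{0}=\QQ_p$), and the composition $\lambda$ reduces to the product of the N\'eron-Tate height pairing against $x$, the dual exponential at $\omega$, and the period integral $\Omega_\xi$.
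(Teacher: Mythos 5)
Your proposal is correct and follows exactly the paper's route: the paper derives Proposition \ref{PR2} immediately from Remark \ref{bsd1} by comparing the explicit $r=1$ formula for ${}_{c,d}\eta_{\bm{x}}^{\rm BSD}$ with the right-hand side of (\ref{PR formula}), which is precisely the bookkeeping you carry out. The only difference is that you spell out the (correct) identification $L_S^\ast(E,1)=L_S'(E,1)$ under $r_{\rm an}=1$, which the paper leaves implicit.
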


\begin{remark} An interpretation of Perrin-Riou's conjecture in the same style as Proposition \ref{PR2} was previously given by Sakamoto and the first and the third authors in \cite[\S 6]{bss2}. (In fact, a natural `equivariant' refinement of this conjecture is also formulated in loc. cit.)
\end{remark}

\subsubsection{}\label{gen PR}We shall now give a precise formulation of Conjecture \ref{conj01}.

For this purpose we will always assume the validity of Hypothesis \ref{hyp}. We also use the notation $I_F$ and $Q_{F}^{a}$ introduced in \S\ref{sec boc}.

%
%


We set $r := r_{\rm alg}$ and
 write
\begin{eqnarray}\label{iota}
\iota_F: H^1(\ZZ_S,T) \otimes_{\ZZ_p} Q_F^{r-1}& \to& H^1(\cO_{F,S},T) \otimes_{\ZZ_p} Q_F^{r-1} \\
&\to& H^1(\cO_{F,S},T) \otimes_{\ZZ_p}\ZZ_p[G]/I_F^r \nonumber
\end{eqnarray}
for the composite homomorphism that is induced by the restriction map $H^1(\ZZ_S,T) \to H^1(\cO_{F,S},T)$ and the natural inclusion $Q_F^{r-1} \hookrightarrow \ZZ_p[G]/I_F^r$. (This map $\iota_F$ is actually injective - see the discussion in \S\ref{darmon derivatives sec}.)

Motivated by constructions of Darmon in \cite{D} and \cite{DH} (relating to cyclotomic units and to Heegner points respectively), we define the  `Darmon norm' of ${}_{c,d}z_F$ to be the element of $H^1(\cO_{F,S},T) \otimes_{\ZZ_p} \ZZ_p[G]$ obtained by setting
\begin{eqnarray*}\label{darmon norm}
\cN_{F/\QQ}({}_{c,d}z_F) :=\sum_{\sigma \in G} \sigma ( {}_{c,d}z_F) \otimes \sigma^{-1}.
\end{eqnarray*}


We can now give a precise formulation of Conjecture \ref{conj01}. This prediction involves the Birch-Swinnerton-Dyer element ${}_{c,d}\eta_{\bm{x}}^{\rm BSD}$ and Bockstein regulator map ${\rm Boc}_{F,\bm{x}}$ that were respectively defined in \S\ref{sec bsd} and \S\ref{sec boc}.

\begin{conjecture}[{The Generalized Perrin-Riou Conjecture}]\label{mrs} Set $r := r_{\rm alg}$. Then for each $\ZZ_p$-basis element $\bm{x}$ of ${\bigwedge}_{\ZZ_p}^{r-1}H^2(\ZZ_S,T)_{\rm tf}$ the following claims are valid.

\begin{itemize}
\item[(i)] The element ${}_{c,d}\eta_{\bm{x}}^{\rm BSD}$ belongs to ${\bigwedge}_{\ZZ_p}^r H^1(\ZZ_S,T).$

\item[(ii)] The image in $H^1(\cO_{F,S},T)\otimes_{\ZZ_p} \ZZ_p[G]/I_F^r$ of the Darmon norm $\cN_{F/\QQ} ({}_{c,d}z_F)$ of ${}_{c,d}z_F$ is equal to $\iota_F \bigl({\rm Boc}_{F,\bm{x}}({}_{c,d}\eta_{\bm{x}}^{\rm BSD})\bigr).$
\end{itemize}
\end{conjecture}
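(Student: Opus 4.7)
The plan is to separate Conjecture \ref{mrs} into its two claims and exploit the Iwasawa-theoretic machinery already assembled. Claim (i) is purely a statement about a single element of a one-dimensional $\CC_p$-space, and Proposition \ref{prop eta} already shows it is equivalent to (a reformulation of) ${\rm BSD}_p(E)$. I would therefore accept (i) as a consequence of hypotheses on the validity of ${\rm BSD}_p(E)$ (or, in analytic rank one, as a known theorem in many cases by \cite{JSW}, \cite{castella}) and concentrate the main effort on (ii).

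For (ii), the natural approach is a two-step Iwasawa-theoretic descent along the cyclotomic tower. First, establish the infinite-level identity of Conjecture \ref{conj02} by pairing both sides with an arbitrary $x\in E(\QQ)$ using the $p$-adic height pairing: by the Generalized Rubin Formula (Theorem \ref{theorem1i}) the left side becomes a multiple of $\log_\omega(x)\cdot \cL_{S,p}^{(r)}$, and by Remark \ref{nek rem}(ii) the right side becomes a multiple of $\log_\omega(x)\cdot R_p$. Matching these reduces the problem to the $p$-adic Beilinson formula of Corollary \ref{cor1}, which, under the Iwasawa Main Conjecture together with the non-vanishing of $R_\omega^{\rm Boc}$, should be extractable from the descent argument underlying Theorem \ref{iw2}. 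Second, descend from $\kappa_\infty$ back to each finite level $F\subset\QQ_\infty$: the Bockstein construction is compatible with the projection maps between augmentation quotients, and the injectivity of $\iota_F$ noted after (\ref{iota}) lets the infinite-level identity uniquely determine the finite-level equation.

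The order-of-vanishing statement implicit in (ii), namely that $\cN_{F/\QQ}({}_{c,d}z_F)$ actually lies in $H^1(\cO_{F,S},T)\otimes_{\ZZ_p}I_F^{r-1}$, should be handled independently by the equivariant Euler system method of \cite{bss2}, exactly as in Theorem \ref{vanishing evidence}, which under the mild hypotheses (a)--(d) there makes it automatic. The backstop for the whole argument is the analytic rank one case: by Theorem \ref{theorem01} the conjecture then collapses to the identity $z_\QQ=\eta^{\rm BSD}$, which is Perrin-Riou's conjecture and is now available in all the standard reduction scenarios through the work of B\"uy\"ukboduk, Venerucci, and B\"uy\"ukboduk--Pollack--Sasaki.

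The main obstacle is not any single step but the confluence of conditions needed to make the higher-rank descent tight: one must simultaneously know the Iwasawa Main Conjecture, the non-degeneracy of $R_\omega^{\rm Boc}$ (a genuine new unknown in the additive-reduction case, where no classical $p$-adic height theory exists), and the integrality refinement of Corollary \ref{cor1} from an equality of $\ZZ_p$-modules to an equality of elements. It is precisely in passing from an equality up to $\ZZ_p^\times$ to the exact equality predicted by Conjecture \ref{mrs}(ii) that the Darmon-norm refinement does genuinely new work beyond what the main conjecture alone controls, and this is where I expect the deepest input to be required.
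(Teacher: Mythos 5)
You should first be clear that the statement you were asked about is a \emph{conjecture}: the paper never proves Conjecture \ref{mrs}, it only formulates it and assembles partial evidence (the rank-one equivalence with Perrin-Riou's conjecture in Remark \ref{PR equiv}(ii), the order-of-vanishing result of Theorem \ref{vanishing evidence}, and the Iwasawa-theoretic statements of \S7). Your proposal is therefore not, and cannot be, a proof; it is a conditional programme resting on ${\rm BSD}_p(E)$, the Iwasawa Main Conjecture, non-degeneracy hypotheses and Perrin-Riou's conjecture, all open in general. Even as a conditional argument its first step is circular: in the paper the $p$-adic Beilinson formula is a \emph{consequence} of Conjecture \ref{mrs2} (Corollary \ref{cor beilinson}), and Theorem \ref{th2} takes Conjecture \ref{conj02} as a hypothesis rather than producing it, while what the Main Conjecture alone yields (Theorem \ref{th1}, Corollary \ref{cor th1}) is an identity only up to $\ZZ_p^\times$. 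So the exact Beilinson formula cannot be ``extracted from the descent underlying Theorem \ref{iw2}'' and then fed back to prove Conjecture \ref{mrs2}. Moreover, recovering the element-level identity $\kappa_\infty={\rm Boc}_{\infty,\bm{x}}(\eta_{\bm{x}}^{\rm BSD})$ from the paired identity $\langle x,\kappa_\infty\rangle_p=\cdots$ requires $R_p\neq 0$ (as the ``converse'' clauses of Corollaries \ref{cor padic} and \ref{cor beilinson} make explicit), which is itself unknown; and the whole height-pairing route is unavailable when $E$ has additive reduction at $p$, precisely the case Conjecture \ref{mrs} is designed to cover.

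The second, more structural gap is that your descent only reaches fields inside the cyclotomic $\ZZ_p$-extension. Conjecture \ref{mrs} is stated for an arbitrary finite real abelian extension $F/\QQ$ unramified outside $S$, and it is exactly the general-$F$ case that carries the new content (it is what feeds the Mazur--Tate applications in \cite{bks5}); the Iwasawa--Darmon derivative $\kappa_\infty$ and Conjecture \ref{mrs2} say nothing about, for instance, $F\subset \QQ(\mu_\ell)$ with $\ell\neq p$. The equivariant Euler-system input of \cite{bss2} (Theorem \ref{vanishing evidence}, Proposition \ref{prop bss}) controls only the order-of-vanishing containment $\cN_{F/\QQ}({}_{c,d}z_F)\in H^1(\cO_{F,S},T)\otimes_{\ZZ_p}I_F^{r-1}$, not the leading-term congruence modulo $I_F^r$. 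So even granting every hypothesis you list, your argument would at best yield the conjecture for the layers $\QQ_n$ of the cyclotomic tower, leaving the conjecture in its intended generality untouched — which is, in effect, exactly where the paper itself stands.
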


\begin{remark}\label{PR equiv}\

\noindent{}(i) Proposition \ref{prop eta} shows that Conjecture \ref{mrs}(i) is implied by the validity of ${\rm BSD}_p(E)$.

\noindent{}(ii) Assume $r_{\rm alg}=1$ and that $\bm{x}=1$ in ${\bigwedge}_{\ZZ_p}^{r-1}H^2(\ZZ_S,T)_{\rm tf} = \ZZ_p$. Then in this case one has
$$\cN_{F/\QQ} ({}_{c,d}z_F) = {\N}_{F/\QQ}({}_{c,d}z_F) \text{ in }H^1(\cO_{F,S},T)\otimes_{\ZZ_p}\ZZ_p[G]/I_F \simeq H^1(\cO_{F,S},T),$$
where ${\N}_{F/\QQ}:=\sum_{\sigma \in G}\sigma$. In particular, since ${\rm Cor}_{F/\QQ}({}_{c,d}z_F)={}_{c,d}z_\QQ$ and ${\rm Boc}_{F,\bm{x}}$ is the identity map on $H^1(\ZZ_S,T)$ (by Remark \ref{r=1 reg}), Conjecture \ref{mrs} is equivalent in this case to an equality ${}_{c,d}z_\QQ ={}_{c,d}\eta_{\bm{x}}^{\rm BSD}.$  From Proposition \ref{PR2} it therefore follows that if $r_{\rm an}=r_{\rm alg}=1$ then Conjecture \ref{mrs} is equivalent to Perrin-Riou's conjecture (as stated in Conjecture \ref{PR}(ii)). This observation proves Theorem \ref{theorem01} and also motivates us to refer to Conjecture \ref{mrs} as the `Generalized
Perrin-Riou Conjecture'.
\end{remark}

\begin{remark} The formulation of Conjecture \ref{mrs} can also be regarded as a natural analogue for elliptic curves of the conjectural `refined class number formula for $\mathbb{G}_m$' concerning Rubin-Stark elements that was originally formulated independently by Mazur and Rubin  \cite[Conj. 5.2]{MRGm} and by the third author \cite[Conj. 3]{sano} and then subsequently refined by the present authors in \cite[Conj. 5.4]{bks1}. 
\end{remark}

\begin{remark}
It is straightforward to show that the element ${\rm Boc}_{F,\bm{x}}({}_{c,d}\eta_{\bm{x}}^{\rm BSD}) $, and hence also the validity of Conjecture \ref{mrs}(ii), is independent of the choice of basis element $\bm{x}$.
\end{remark}

\begin{remark}\label{rem inj} In \S\ref{darmon derivatives sec} we will reinterpret Conjecture \ref{mrs} in terms of a natural `Darmon-type' derivative of ${}_{c,d}z_F$.\end{remark}

\subsection{An algebraic analogue}\label{alg bsd sec} We now formulate an analogue of Conjecture \ref{mrs} that is more algebraic, and elementary, in nature.

To do this we recall that if $\sha(E/\QQ)$ is finite, then the Birch and Swinnerton-Dyer Formula for $E$ predicts that
\begin{equation}\label{BSD eq}L_S^\ast(E,1) =   \left(\prod_{\ell \in S \setminus \{\infty\}} L_\ell \right)\frac{\# \sha(E/\QQ)\cdot{\rm Tam}(E)\cdot \Omega^+\cdot R_\infty}{\# E(\QQ)_{\rm tors}^2},\end{equation}
where $\Omega^+$ is the usual real N\'{e}ron period of $E$,
$L_\ell$ is the standard Euler factor at $\ell$ of the Hasse-Weil $L$-function (so that $(\prod_{\ell \in S\setminus \{\infty\}} L_\ell)L^\ast(E,1)=L_S^\ast(E,1)$) and ${\rm Tam}(E)$ is the product of Tamagawa factors.

\begin{definition}\label{alg bsd def} Set $r := r_{\rm alg}$. Then for each element $\bm{x}$ of ${\bigwedge}_{\QQ_p}^{r-1}H^2(\ZZ_S,V)$ the  {\it algebraic Birch and Swinnerton-Dyer element} $\eta_{\bm{x}}^{\rm alg} = \eta^{\rm alg}_{\bm{x}}(\xi,S) $ of the data $\xi$, $S$ and $\bm{x}$ is  the element of $\CC_p \otimes_{\ZZ_p} {\bigwedge}_{\ZZ_p}^r H^1(\ZZ_S,T)$ obtained by setting
$$\eta_{\bm{x}}^{\rm alg}:=\lambda^{-1} \left( \left(\prod_{\ell \in S \setminus \{\infty\}} L_\ell \right)\frac{\# \sha(E/\QQ)\cdot {\rm Tam}(E)\cdot\Omega^+\cdot R_\infty}{\# E(\QQ)_{\rm tors}^2} \cdot (e^+\delta(\xi)^\ast \otimes \bm{x})\right).$$
The `$(c,d)$-modified algebraic Birch and Swinnerton-Dyer element' of the given data is then defined by setting
\[ {}_{c,d}\eta_{\bm{x}}^{\rm alg} := cd(c-1)(d-1)\cdot\eta_{\bm{x}}^{\rm alg}.\]
\end{definition}

\begin{remark}\label{alg an comp} It is clear that, if ${\bm{x}}$ is non-zero, then the Birch and Swinnerton-Dyer Formula  (\ref{BSD eq}) is valid for $E$ if and only if the elements $\eta_{\bm{x}}^{\rm alg}$ and ${}_{c,d}\eta_{\bm{x}}^{\rm alg}$ are respectively equal to the Birch and Swinnerton-Dyer elements $\eta_{\bm{x}}^{\rm BSD}$ and ${}_{c,d}\eta_{\bm{x}}^{\rm BSD}$ from Definition \ref{def eta}.\end{remark}

An easy exercise shows that if $\bm{x}$ is a $\ZZ_p$-basis element of ${\bigwedge}_{\ZZ_p}^{r-1}H^2(\ZZ_S,T)_{\rm tf}$, then there is an equality of lattices
\begin{eqnarray} \label{alg lattice}
\ZZ_p\cdot \eta_{\bm{x}}^{\rm alg}=\# H^2(\ZZ_S,T)_{\rm tors}\cdot {\bigwedge}_{\ZZ_p}^rH^1(\ZZ_S,T)
\end{eqnarray}
and hence $\eta_{\bm{x}}^{\rm alg}$ belongs to ${\bigwedge}_{\ZZ_p}^rH^1(\ZZ_S,T)$.

Upon combining this fact with Remark \ref{alg an comp}, one is led to formulate the following algebraic analogue of Conjecture \ref{mrs}.

\begin{conjecture}[{The Refined Mazur-Tate Conjecture}]\label{algebraic mrs}  Set $r := r_{\rm alg}$. Then for each $\ZZ_p$-basis element $\bm{x}$ of ${\bigwedge}_{\ZZ_p}^{r-1}H^2(\ZZ_S,T)_{\rm tf}$ the image in $H^1(\cO_{F,S},T)\!\otimes_{\ZZ_p}\ZZ_p[G]/I_F^r$ of $\cN_{F/\QQ} ({}_{c,d}z_F)$  is equal to $\iota_F \bigl({\rm Boc}_{F,\bm{x}}({}_{c,d}\eta_{\bm{x}}^{\rm alg})\bigr)$.
\end{conjecture}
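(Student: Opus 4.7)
The plan is to deduce Conjecture \ref{algebraic mrs} by combining Conjecture \ref{mrs} (the Generalized Perrin-Riou Conjecture) with the $p$-part of the Birch and Swinnerton-Dyer Formula, ${\rm BSD}_p(E)$. The bridge is Proposition \ref{prop eta}: under ${\rm BSD}_p(E)$ the element $\eta_{\bm{x}}^{\rm BSD}$ generates the lattice $\# H^2(\ZZ_S,T)_{\rm tors}\cdot \bigwedge^r_{\ZZ_p} H^1(\ZZ_S,T)$, while the identity (\ref{alg lattice}) shows that $\eta_{\bm{x}}^{\rm alg}$ generates exactly the same lattice. Hence there exists $u\in\ZZ_p^\times$ with ${}_{c,d}\eta_{\bm{x}}^{\rm BSD} = u\cdot {}_{c,d}\eta_{\bm{x}}^{\rm alg}$; since both sides of the conjectural equality rescale identically under $\bm{x}\mapsto u\bm{x}$, we may without loss take $u=1$ after replacing $\bm{x}$ by a $\ZZ_p^\times$-multiple. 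The $\ZZ_p$-linearity of ${\rm Boc}_{F,\bm{x}}$ then identifies the two Bockstein regulators, and Conjecture \ref{mrs}(ii) immediately implies Conjecture \ref{algebraic mrs}.

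To make this yield unconditional cases, I would pursue the two ingredients separately. In analytic rank one, Theorem \ref{theorem01} reduces Conjecture \ref{mrs} to Perrin-Riou's conjecture, which has been established in many reduction types by B\"uy\"ukboduk, Venerucci, and B\"uy\"ukboduk-Pollack-Sasaki, while ${\rm BSD}_p(E)$ has been proved by Jetchev-Skinner-Wan and Castella under natural hypotheses; in combination these give Conjecture \ref{algebraic mrs} in analytic rank one. In higher rank the route is Iwasawa-theoretic: first establish Conjecture \ref{mrs}(i) through the equivariant Euler system machinery (Theorem \ref{vanishing evidence}); next combine the Generalized Rubin Formula (Theorem \ref{theorem1i}) with the Iwasawa Main Conjecture (Conjecture \ref{IMC}) to obtain the cyclotomic Iwasawa-theoretic version Conjecture \ref{conj02}; then descend to finite layers $\QQ_n\subset\QQ_\infty$ using the norm-compatibility of the family $(\kappa_n)_n$ constructed in the paper, while in parallel Theorem \ref{iw2} delivers ${\rm BSD}_p(E)$ from the same package of hypotheses.

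The principal obstacle is clearly the leading-term statement Conjecture \ref{mrs}(iii) in ranks greater than one, which is the deep heart of the paper and requires fine control of a Darmon-type derivative of Kato's Euler system. Beyond this, two secondary difficulties have to be addressed. First, extending the conclusion from intermediate fields of the cyclotomic $\ZZ_p$-tower to arbitrary real abelian $F$ unramified outside $S$ demands either a multi-variable Iwasawa-theoretic analysis or a direct argument exploiting the Euler system distribution relations satisfied by the ${}_{c,d}z_F$, together with a descent compatible with the Bockstein operators across different $G$. Second, in the case of additive reduction at $p$ the classical $p$-adic height pairing is unavailable, so one must work throughout with the Bockstein regulator $R^{\rm Boc}_\omega$ (as in Remark \ref{nek rem}(ii)) as a substitute and verify its non-vanishing, which is the precise hypothesis needed in Theorem \ref{iw2} to close the loop.
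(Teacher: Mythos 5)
You were asked to prove a statement that is, in the paper, a conjecture: Conjecture \ref{algebraic mrs} is only formulated there (motivated by the lattice identity (\ref{alg lattice}) and Remark \ref{alg an comp}), with unconditional verifications deferred to the sequel \cite{bks5}. Your text is accordingly not a proof but a conditional reduction, and its central step contains a genuine error. From ${\rm BSD}_p(E)$, Proposition \ref{prop eta} and (\ref{alg lattice}) you only obtain ${}_{c,d}\eta^{\rm BSD}_{\bm x}=u\cdot{}_{c,d}\eta^{\rm alg}_{\bm x}$ for some $u\in\ZZ_p^\times$, and your attempt to normalise $u=1$ by replacing $\bm x$ with a $\ZZ_p^\times$-multiple cannot work: $\eta_{\bm x}$ scales linearly in $\bm x$ while ${\rm Boc}_{F,\bm x}$ scales inverse-linearly (through $\phi_{\bm x}$), so the element ${\rm Boc}_{F,\bm x}({}_{c,d}\eta_{\bm x})$ is invariant under $\bm x\mapsto u\bm x$ --- this is exactly the independence-of-$\bm x$ observed in the remark following Conjecture \ref{mrs}. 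Hence Conjecture \ref{mrs} together with ${\rm BSD}_p(E)$ yields Conjecture \ref{algebraic mrs} only up to multiplication by a unit of $\ZZ_p^\times$; to identify the two predictions on the nose you need the full Birch and Swinnerton-Dyer Formula (\ref{BSD eq}), i.e.\ the exact equality $\eta^{\rm alg}_{\bm x}=\eta^{\rm BSD}_{\bm x}$ of Remark \ref{alg an comp}, not merely the $p$-part as an equality of $\ZZ_p$-lattices.

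The remainder of your plan --- rank-one input from the known cases of Perrin-Riou's conjecture combined with ${\rm BSD}_p(E)$, and an Iwasawa-theoretic route via Theorem \ref{theorem1i}, Conjecture \ref{IMC} and descent --- is a reasonable survey of the available evidence, but it suffers from the same defect: the paper's own result in this direction, Theorem \ref{th1}, deduces from the Main Conjecture only the cyclotomic analogue Conjecture \ref{mrs3} \emph{up to} $\ZZ_p^\times$, and the descent there concerns the fields $\QQ_n$ inside $\QQ_\infty$ rather than an arbitrary real abelian $F$ unramified outside $S$ (a gap you do flag, but do not close). So your proposal correctly identifies the intended bridge between the analytic conjecture (Conjecture \ref{mrs}) and its algebraic refinement, but, with the inputs you allow yourself, it establishes at best a unit-ambiguous version of the statement and not the conjecture itself.
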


\begin{remark} We refer to this algebraic analogue of Conjecture \ref{mrs} as a refined Mazur-Tate Conjecture since in the complementary article \cite{bks5} we are able to prove that, under certain mild and natural hypotheses, the equality predicted by Conjecture \ref{algebraic mrs} is strictly finer than the celebrated congruences for modular elements that are conjectured by  Mazur and Tate in \cite{MT}. This fact is in turn a key ingredient in the approach used in \cite{bks5} to obtain the first verifications of the conjecture of Mazur and Tate for elliptic curves of strictly positive rank. \end{remark}

\section{Fitting ideals and order of vanishing}\label{zefi sec}

In this section we shall discuss a further arithmetic property of Kato's zeta elements and, in particular, use it to prove Theorem \ref{vanishing evidence}.

Throughout we fix $F$, $G$ and $I_F$ as in \S \ref{sec boc} and continue to assume that $H^1(\ZZ_S,T)$ is $\ZZ_p$-free. 
However, unless explicitly stated, in this subsection we do {\em not} need to assume either that $r_{\rm alg} > 0$ or that  $\sha(E/\QQ)[p^\infty]$ is finite.


\subsection{A `main conjecture' at finite level} We write $m$ for the conductor of $F$ and set
$$t_{c,d}:=cd(c-\sigma_c)(d-\sigma_d) \in \ZZ_p[G],$$
where $\sigma_a$ is the element of $G$ obtained by restriction of the automorphism of $\QQ(\mu_m)$ that sends $\zeta_m$ to $\zeta_m^a$.

We then propose the following conjecture involving the initial Fitting ideal of the $\ZZ_p[G]$-module $H^2(\cO_{F,S},T)$.

\begin{conjecture}\label{order}
$$\left\{\Phi({}_{c,d} z_F) \mid  \Phi \in \Hom_{\ZZ_p[G]}( H^1(\cO_{F,S},T), \ZZ_p[G]) \right\}=t_{c,d}\cdot {\rm Fitt}_{\ZZ_p[G]}^0(H^2(\cO_{F,S},T)).$$
\end{conjecture}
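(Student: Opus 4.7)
The plan is to approach the conjecture via the equivariant Euler system framework of \cite{bss2}, viewing it as a `main conjecture at finite level'. First I would reformulate both sides through the perfect complex $C_F := \rgamma(\cO_{F,S}, T)$, which by \eqref{control} and Hypothesis \ref{hyp}(i) is acyclic outside degrees one and two with $H^1(C_F)$ a projective $\ZZ_p[G]$-module. Under this setup the LHS is precisely the zeroth Fitting ideal of the cokernel of the $\ZZ_p[G]$-linear map $\ZZ_p[G] \to H^1(C_F)$ sending $1$ to ${}_{c,d}z_F$, placing the two sides of the conjecture on the same algebraic footing and allowing one to work systematically with determinants of $C_F$.

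For the inclusion $\subseteq$, I would combine the Euler system norm-compatibility of $({}_{c,d}z_F)_F$ with Kato's reciprocity law \eqref{kato rec} and the local-to-global formalism of \cite{bss2}. The image of an Euler system class under any $\ZZ_p[G]$-linear dual must lie in an annihilator-type ideal naturally governed by $H^2(\cO_{F,S}, T)$, and the precise scaling by $t_{c,d}$ enters because Kato's element carries the normalizing multiplier $cd(c-\sigma_c)(d-\sigma_d)$. This direction should be relatively formal given the machinery already developed, and in particular should make no use of hypotheses beyond those already imposed.

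The reverse inclusion $\supseteq$ is the substantive direction and will be the main obstacle. The strategy is an equivariant Kolyvagin system argument: select families of `Kolyvagin primes' $\ell$ satisfying the standard Chebotarev conditions relative to the image of $G_\QQ$ in $\Aut(T)$, use the system $({}_{c,d}z_{F\cdot\QQ(\mu_n)})_n$ to build Kolyvagin derivative classes in $H^1(\cO_{F,S}, T/p^N)$, and feed these into the Mazur--Rubin / \cite{bss2} formalism to realize each generator of $t_{c,d}\cdot \Fitt^0_{\ZZ_p[G]}(H^2(\cO_{F,S}, T))$ as $\Phi({}_{c,d}z_F)$ for a suitable homomorphism $\Phi$. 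This step is essentially an equivariant Iwasawa Main Conjecture at finite level: in practice it would require the big-image and irreducibility hypotheses of Theorem \ref{vanishing evidence} (notably $p>3$ and $\mathrm{SL}_2(\ZZ_p) \subseteq \im(\rho)$) and, very likely, access to Conjecture \ref{IMC}. Pending the full equality, the weaker `order of vanishing' assertion of Theorem \ref{vanishing evidence}, namely that $\Phi({}_{c,d}z_F) \in I_F^{r-1}$ for every such $\Phi$, should already drop out of the $\subseteq$ inclusion combined with Kato's existing bounds on $\Fitt^0_{\ZZ_p[G]}(H^2(\cO_{F,S}, T))$ coming from his original Euler system argument.
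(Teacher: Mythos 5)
You should note first that the statement you were asked to prove is, in the paper, an open conjecture (Conjecture \ref{order}): the authors do not prove it, and the only thing established is the single containment $\Phi({}_{c,d}z_F)\in\Fitt^0_{\ZZ_p[G]}(H^2(\cO_{F,S},T))$ for all $\Phi\in\Hom_{\ZZ_p[G]}(H^1(\cO_{F,S},T),\ZZ_p[G])$, quoted as Proposition \ref{prop bss} from \cite[Th.~6.11]{bss2} and valid only under the hypotheses (a)--(d) there. Your proposal likewise does not prove the equality: the inclusion $\supseteq$ is explicitly deferred to an ``equivariant main conjecture at finite level'' and to Conjecture \ref{IMC}, so what you have written is a programme, not a proof, and the gap is there by construction.

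Beyond that, two concrete points in your sketch are wrong. First, you have the division of labour backwards: the containment $\subseteq$ is \emph{not} ``relatively formal'' and does \emph{not} come from norm-compatibility plus the reciprocity law (\ref{kato rec}) alone. It is precisely this direction that consumes the equivariant Euler/Kolyvagin-system machinery of \cite{bss2}, and it is known only under the hypotheses $p>3$, finiteness of $\sha(E/F)[p^\infty]$ and $\sha(E/\QQ)[p^\infty]$, the big-image condition and the vanishing of $E(\QQ_\ell)[p]$ for $\ell\in S$ --- exactly the conditions (a)--(d) you wanted to reserve for the other direction; moreover even then one only obtains containment in $\Fitt^0$, not in $t_{c,d}\cdot\Fitt^0$. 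Second, your reformulation of the left-hand side as $\Fitt^0$ of the cokernel of $\ZZ_p[G]\to H^1(\cO_{F,S},T)$, $1\mapsto{}_{c,d}z_F$, is false in general: the left-hand side is the image ideal $\{\Phi({}_{c,d}z_F)\}$, which for a module of generic $\ZZ_p[G]$-rank $d$ corresponds to a \emph{higher} Fitting ideal of that cokernel, and $H^1(\cO_{F,S},T)$ is not free of rank one over $\ZZ_p[G]$ at finite level (its $\QQ_p$-dimension is $[F:\QQ]$ plus $\dim_{\QQ_p}H^2(\cO_{F,S},V)$). Finally, the passage from the containment to the ``order of vanishing'' statement of Theorem \ref{vanishing evidence} is not a consequence of ``Kato's existing bounds'' on the Fitting ideal; in the paper it requires the argument of Proposition \ref{link prop}, namely the surjection $H^2(\cO_{F,S},T)\twoheadrightarrow H^2(\ZZ_S,T)\simeq H^2(\ZZ_S,T)_{\rm tors}\oplus\ZZ_p^a$ (using finiteness of $\sha(E/\QQ)[p^\infty]$) to get $\Fitt^0\subseteq I_F^a$, together with an embedding of $H^1(\cO_{F,S},T)$ into a free module with torsion-free cokernel to convert ``$\Phi({}_{c,d}z_F)\in I_F^a$ for all $\Phi$'' into the assertion about $\cN_{F/\QQ}({}_{c,d}z_F)$ itself.
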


\begin{remark} Conjecture \ref{order} is analogous to the `weak main conjecture' for modular elements that is formulated by Mazur and Tate \cite[Conj. 3]{MT}. (In fact, since our conjecture predicts an equality rather than simply an inclusion, it corresponds to a strengthening of \cite[Conj. 3]{MT}). It is also an analogue of the conjectures \cite[Conj. 7.3]{bks1} and \cite[Conj. 3.6(ii)]{bks2-2} that were formulated by the present authors in the setting of the multiplicative group.
\end{remark}

The prediction in Conjecture \ref{order} can be studied by using the equivariant theory of Euler systems developed by Sakamoto and the first and third authors in \cite{bss2}. In this way, the following evidence for Conjecture \ref{order} is obtained in \cite[Th. 6.11]{bss2}.

\begin{proposition}\label{prop bss}
Assume that the following conditions are all satisfied.
\begin{itemize}
\item[(a)] $p>3$;
\item[(b)] $\sha(E/F)[p^\infty]$ and $\sha(E/\QQ)[p^\infty]$ are finite;
\item[(c)] the image of the representation $G_\QQ \to {\rm Aut}(T_p(E)) \simeq {\rm GL}_2(\ZZ_p)$ contains ${\rm SL}_2(\ZZ_p)$;
\item[(d)] $E(\QQ_\ell)[p]$ vanishes for all primes $\ell $ in $S$.
\end{itemize}
Then for any homomorphism $\Phi : H^1(\cO_{F,S},T)\to \ZZ_p[G]$ of $\ZZ_p[G]$-modules one has
$$\Phi({}_{c,d} z_F) \in {\rm Fitt}_{\ZZ_p[G]}^0(H^2(\cO_{F,S},T)).$$
\end{proposition}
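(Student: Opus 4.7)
The plan is to establish Proposition \ref{prop bss} by invoking the equivariant Euler system formalism developed in \cite{bss2}, which is essentially a refinement of Kolyvagin's classical derivative argument that tracks information at the level of the group ring $\ZZ_p[G]$ rather than merely at the level of the coefficient ring $\ZZ_p$.

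First I would verify that the hypotheses (a)--(d) place us within the framework needed. Hypothesis (c) ensures that $T_p(E)/p$ is an absolutely irreducible $G_\QQ$-representation and, via Chebotarev density, produces an abundance of `Kolyvagin-admissible' primes $\ell$, i.e.\ primes for which Frobenius acts on $T/p$ in a prescribed way that matches the Galois action on $\mu_p$ and on $F$. Hypothesis (d) guarantees that the $\ZZ_p$-modules $H^1(\QQ_\ell, T)$ for $\ell \in S$ are torsion-free, which in turn yields freeness of $H^1(\cO_{F,S}, T)$ and eliminates spurious torsion in the local conditions entering the argument. Hypothesis (b) ensures that $H^2(\cO_{F,S}, T)$ is a $\ZZ_p[G]$-torsion module of finite length, so its initial Fitting ideal is a nontrivial ideal of $\ZZ_p[G]$. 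Finally, $p > 3$ in (a) rules out the small-prime obstructions that commonly arise in the Kolyvagin derivative machinery, allowing for clean statements at the level of higher powers of the augmentation ideal.

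The core of the argument then proceeds as follows. Given any $\ZZ_p[G]$-homomorphism $\Phi \colon H^1(\cO_{F,S}, T) \to \ZZ_p[G]$, for each squarefree product $n$ of Kolyvagin-admissible primes one builds a `Kolyvagin derivative' class by applying $\Phi$ (after corestriction from $F(n)$ to $F$) to the Euler-system derivatives of ${}_{c,d}z_{F(n)}$. Kato's Euler system satisfies the required Euler factor congruences at admissible primes (by \cite{katoasterisque}), and a standard Poitou--Tate duality argument then allows one to interpret these derivative values as witnessing elements of the dual of $H^2(\cO_{F,S}, T)$. A careful Chebotarev argument, leveraging the big-image hypothesis (c), shows that as $n$ varies over all admissible squarefree products, the classes so produced generate precisely $\Fitt^0_{\ZZ_p[G]}(H^2(\cO_{F,S}, T))$ inside $\ZZ_p[G]$; in particular $\Phi({}_{c,d}z_F)$ must lie in this ideal.

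The principal obstacle, and where \cite{bss2} does the substantive work, is in promoting the classical $\ZZ_p$-level Kolyvagin bound to the level of the group ring. This requires a refined combinatorial analysis of the derivative operators modulo higher powers of the augmentation ideals at the Kolyvagin primes, together with a delicate local analysis at the primes $\ell$ that are ramified in $F$, where one must carefully track tangent-space data and relate local traces to group-ring-level congruences. The $p>3$ hypothesis is used precisely to make these higher-order computations go through cleanly.
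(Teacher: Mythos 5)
Your proposal follows the same route as the paper: Proposition \ref{prop bss} is not proved independently here but is quoted directly from the equivariant Euler system theory of \cite{bss2} (specifically \cite[Th. 6.11]{bss2}), which is exactly the machinery you invoke, with hypotheses (a)--(d) playing the roles you describe. One caveat: your claim that the Kolyvagin-derivative classes generate \emph{precisely} ${\rm Fitt}_{\ZZ_p[G]}^0(H^2(\cO_{F,S},T))$ overstates what that theory delivers --- such an equality is essentially the content of the still-open Conjecture \ref{order}, and only the one-sided containment (which is all that is needed, and all that \cite[Th. 6.11]{bss2} asserts) is available under these hypotheses.
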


\subsection{The proof of Theorem \ref{vanishing evidence}}

In the rest of this section, we assume the conditions (a), (b), (c) and (d) in Theorem \ref{vanishing evidence} (which are the same as those in Proposition \ref{prop bss}). In particular, $E[p]$ is irreducible by (c), and we may assume $T=T_p(E)$.

\subsubsection{}The connection between Conjecture \ref{order} and Conjecture \ref{conj01}(i) is explained by the following result.

\begin{proposition}\label{link prop} Assume that $E(F)[p]$ vanishes and that $\sha(E/\QQ)[p^\infty]$ is finite. Set $a:= \max \{ 0, r_{\rm alg}-1\}$ and define an ideal of $\ZZ_p[G]$ by setting
\[ I_{F,S,a} := \#H^2(\ZZ_S,T)_{\rm tors}\cdot I_F^a + I_F^{a+1} \subset I_F^a.\]
Then ${\mathcal N}_{F/\QQ}(z_{F})$ belongs to $H^1({\mathcal O}_{F,S}, T) \otimes_{\ZZ_p} I_{F,S,a}$ whenever one has
\[ \Phi({}_{c,d} z_F)\in {\rm Fitt}_{\ZZ_p[G]}^0(H^2(\cO_{F,S},T))\]
for all $\Phi\in \Hom_{\ZZ_p[G]}\bigl(H^1(\cO_{F,S},T),  \ZZ_p[G]\bigr)$.
\end{proposition}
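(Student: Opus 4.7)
My plan is to combine a computation of the Fitting ideal of $H^2(\cO_{F,S},T)$ with an elementary manipulation of the Darmon norm that is enabled by the freeness of $H^1(\cO_{F,S},T)$ over $\ZZ_p[G]$.

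First, I would establish the purely algebraic inclusion $\Fitt^0_{\ZZ_p[G]}(H^2(\cO_{F,S},T))\subseteq I_{F,S,a}$. Under the running hypotheses, $\rgamma(\cO_{F,S},T)$ is perfect over $\ZZ_p[G]$, acyclic outside degrees one and two, and of Euler characteristic $-1$, so it is representable by a two-term complex $[\ZZ_p[G]^{n+1}\xrightarrow{M}\ZZ_p[G]^{n}]$, whence $\Fitt^0_{\ZZ_p[G]}(H^2(\cO_{F,S},T))$ is the ideal generated by the $n\times n$-minors of $M$. By the control isomorphism~(\ref{control}) and the short exact sequence~(\ref{short}), the mod-$I_F$ reduction $\bar M$ presents $H^2(\ZZ_S,T)$, a module of $\ZZ_p$-rank $a=r-1$. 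A row-and-column reduction of $M$ over the local ring $\ZZ_p[G]$ puts it in the block form $\begin{pmatrix}U & 0\\0 & B\end{pmatrix}$ with $U$ invertible of size $(n-a)\times(n-a)$ and $B$ of size $a\times(a+1)$ with entries in $I_F$; the non-vanishing $n\times n$-minors are then precisely $\det(U)$ times the $a\times a$-minors of $B$, so they lie in $I_F^a$. A careful identification of their images in $I_F^a/I_F^{a+1}$ with multiples of $\Fitt^a_{\ZZ_p}(H^2(\ZZ_S,T))=\#H^2(\ZZ_S,T)_{\rm tors}\cdot\ZZ_p$, via the Tor spectral sequence attached to~(\ref{control}), then yields the refined containment $\Fitt^0_{\ZZ_p[G]}(H^2(\cO_{F,S},T))\subseteq I_{F,S,a}$.

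Second, the hypothesis combined with this inclusion gives $\Phi({}_{c,d}z_F)\in I_{F,S,a}$ for every $\Phi\in\Hom_{\ZZ_p[G]}(H^1(\cO_{F,S},T),\ZZ_p[G])$. Condition (c) of Theorem~\ref{vanishing evidence} forces $E(F)[p]=0$ (since $E[p]$ is then an irreducible $G_\QQ$-representation and $G_F$ inherits a large enough image in $\mathrm{GL}_2(\FF_p)$ to have no non-trivial fixed vector), and combined with the perfection of $\rgamma(\cO_{F,S},T)$ this ensures that $H^1(\cO_{F,S},T)$ is a free $\ZZ_p[G]$-module. Choosing a basis $e_1,\ldots,e_m$ and writing ${}_{c,d}z_F=\sum_i c_i e_i$, the dual evaluations $\Phi_i$ yield $c_i\in I_{F,S,a}$ for each $i$. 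A direct expansion in $H^1(\cO_{F,S},T)\otimes_{\ZZ_p}\ZZ_p[G]$, using the commutativity of $\ZZ_p[G]$, gives
\[
\cN_{F/\QQ}({}_{c,d}z_F)=\sum_i\sum_{\rho\in G}(\rho e_i)\otimes(c_i\rho^{-1})\ \in\ H^1(\cO_{F,S},T)\otimes_{\ZZ_p}I_{F,S,a},
\]
since $I_{F,S,a}$ is a $\ZZ_p[G]$-ideal, and hence $c_i\rho^{-1}\in I_{F,S,a}$ for every $\rho\in G$. Finally, the factor $t_{c,d}=cd(c-\sigma_c)(d-\sigma_d)$ reduces modulo the maximal ideal of $\ZZ_p[G]$ to $cd(c-1)(d-1)$, which is a $p$-adic unit for a suitable choice of $c,d$, so $t_{c,d}$ is a unit of the local ring $\ZZ_p[G]$ and the conclusion transfers from ${}_{c,d}z_F=t_{c,d}\cdot z_F$ to $z_F$.

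The main obstacle I anticipate is in the leading-term part of the first step, namely the identification of the image of $\Fitt^0_{\ZZ_p[G]}(H^2(\cO_{F,S},T))$ in $I_F^a/I_F^{a+1}$ with a subgroup of $\#H^2(\ZZ_S,T)_{\rm tors}\cdot Q_F^a$. This requires a precise control of how Fitting ideals transform under the base change $\ZZ_p[G]\to\ZZ_p$ at $a$-th order, for which a careful analysis of the Tor differentials attached to~(\ref{control})---or an appeal to a structural result on Fitting ideals of cohomology over local group rings---seems unavoidable. A secondary but comparatively routine point is verifying the freeness (rather than mere projectivity) of $H^1(\cO_{F,S},T)$ as a $\ZZ_p[G]$-module from $E(F)[p]=0$ and the perfection of $\rgamma(\cO_{F,S},T)$.
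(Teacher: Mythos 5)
Your overall strategy (detect the coefficients of ${}_{c,d}z_F$ by $\ZZ_p[G]$-linear functionals, use the hypothesis to place them in an ideal contained in $I_{F,S,a}$, and then expand the Darmon norm $\cN_{F/\QQ}$ by the change-of-variable identity $\cN_{F/\QQ}(ce)=\sum_{\sigma}(\sigma e)\otimes(c\sigma^{-1})$) is indeed the mechanism behind the paper's proof, but your execution has two genuine gaps. The fatal one is the claim that $E(F)[p]=0$ together with perfectness of $\rgamma(\cO_{F,S},T)$ makes $H^1(\cO_{F,S},T)$ a \emph{free} $\ZZ_p[G]$-module. Perfectness of the complex gives nothing of the sort for an individual cohomology group, and freeness fails already rationally: for each character $\chi$ of $G$ one has $\dim_{\CC_p} e_\chi(\CC_p\otimes H^1(\cO_{F,S},V))=1+\dim_{\CC_p}e_\chi(\CC_p\otimes H^2(\cO_{F,S},V))$, and these dimensions vary with $\chi$ whenever, say, $r_{\rm alg}\geq 2$ but the rank does not grow in $F$; so there is in general no $\ZZ_p[G]$-basis $e_1,\dots,e_m$ and no dual functionals $\Phi_i$. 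The paper circumvents exactly this by embedding $H^1(\cO_{F,S},T)$ (identified with its bidual, using $E(F)[p]=0$) into a finitely generated free module $P_F$ with $\ZZ_p$-torsion-free cokernel, running your coefficient argument with the restricted dual-basis functionals of $P_F$ (this is the cited argument of [bks1, Prop.~4.17]), and then descending from $P_F\otimes_{\ZZ_p}I_{F,S,a}$ back to $H^1(\cO_{F,S},T)\otimes_{\ZZ_p}I_{F,S,a}$ using the torsion-freeness of $P_F/H^1$. Your argument can be repaired along precisely these lines, but as written it rests on a false structural assertion.

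The first step also does not go through as stated. If $M$ could be put in the block form $\bigl(\begin{smallmatrix}U&0\\0&B\end{smallmatrix}\bigr)$ with $U$ invertible of size $(n-a)\times(n-a)$ and all entries of $B$ in $I_F$, then reducing modulo $I_F$ would exhibit $H^2(\ZZ_S,T)=\coker(\bar M)$ as $\ZZ_p^a$, i.e.\ torsion-free; so such a form exists only when $H^2(\ZZ_S,T)_{\rm tors}=0$, which is exactly the case where the refinement by $\#H^2(\ZZ_S,T)_{\rm tors}$ is vacuous. The "careful identification via the Tor spectral sequence" that you flag as the main obstacle is therefore not a technical afterthought but the whole content of the inclusion $\Fitt^0_{\ZZ_p[G]}(H^2(\cO_{F,S},T))\subseteq I_{F,S,a}$, and it is left unproved. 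The paper obtains this inclusion much more directly: since $H^i(\cO_{F,S},T)=0$ for $i>2$, corestriction gives a surjection $H^2(\cO_{F,S},T)\twoheadrightarrow H^2(\ZZ_S,T)\simeq H^2(\ZZ_S,T)_{\rm tors}\oplus\ZZ_p^a$ of $\ZZ_p[G]$-modules, whence $\Fitt^0_{\ZZ_p[G]}(H^2(\cO_{F,S},T))\subseteq\Fitt^0_{\ZZ_p[G]}(H^2(\ZZ_S,T)_{\rm tors})\cdot I_F^a$ (using $\Fitt^0_{\ZZ_p[G]}(\ZZ_p)=I_F$), and the augmentation image of $\Fitt^0_{\ZZ_p[G]}(H^2(\ZZ_S,T)_{\rm tors})$ is $\#H^2(\ZZ_S,T)_{\rm tors}\cdot\ZZ_p$, giving the containment in $\#H^2(\ZZ_S,T)_{\rm tors}\cdot I_F^a+I_F^{a+1}$. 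Finally, your transfer from ${}_{c,d}z_F$ to $z_F$ via invertibility of $t_{c,d}$ is not available in general: the conditions on $c,d$ constrain them modulo $N$ and away from $S$, but $c-1$ and $d-1$ may well be divisible by $p$ (forced, e.g., if $p\mid N$), so $t_{c,d}$ need not be a unit of $\ZZ_p[G]$; the statement is to be read, as in the paper's proof, as a statement about the $(c,d)$-modified element.
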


\begin{proof} At the outset we fix a surjective homomorphism of $\ZZ_p[G]$-modules of the form
$$P'_F \twoheadrightarrow H^1(\cO_{F,S},T)^\ast$$
in which $P'_F$ is both finitely generated and free.

Then the linear dual $P_F := (P'_F)^\ast$ is a finitely generated free $\ZZ_p[G]$-module and the above surjection induces an injective homomorphism of $\ZZ_p[G]$-modules
\begin{equation}\label{dual inj}H^1(\cO_{F,S},T)^{\ast \ast} \hookrightarrow P_F,\end{equation}
the cokernel of which is $\ZZ_p$-free. In addition, since $E(F)[p]$ is assumed to vanish the $\ZZ_p$-module $H^1(\cO_{F,S},T)$ is free and so identifies with
$H^1(\cO_{F,S},T)^{\ast \ast}$.

We may therefore use (\ref{dual inj}) to identify $H^1(\cO_{F,S},T)$ as a submodule of the free module $P_F$ in such a way that the quotient $P_F/H^1(\cO_{F,S},T)$ is torsion-free.

Having done this, the argument of  \cite[Prop. 4.17]{bks1} shows that the validity of the displayed inclusion for all $\Phi$ in
$\Hom_{\ZZ_p[G]}\bigl(H^1(\cO_{F,S},T),  \ZZ_p[G]\bigr)$ is equivalent to asserting that one has
$$\cN_{F/\QQ}({}_{c,d}z_F)\in P_F \otimes_{\ZZ_p} J_{F,S}$$
with $J_{F,S} := {\rm Fitt}_{\ZZ_p[G]}^0(H^2(\cO_{F,S},T))$ and, moreover, that the projection of $\cN_{F/\QQ}({}_{c,d}z_F)$ to $P_F\otimes_{\ZZ_p} (J_{F,S}/I_F\cdot J_{F,S})$ belongs to the submodule $P_F^G \otimes_{\ZZ_p}(J_{F,S}/I_F\cdot J_{F,S})$.

To complete the proof it is therefore enough to show that
\begin{equation}\label{fitting inclusion} J_{F,S} \subset I_{F,S,a}.\end{equation}

To do this we note that $H^i(\cO_{F,S},T)$ vanishes for all $i > 2$ and hence that the natural corestriction map $H^2(\cO_{F,S},T)\twoheadrightarrow H^2(\ZZ_S,T)$ is surjective.

In addition, since $\sha(E/\QQ)[p^\infty]$ is assumed to be finite, the $\ZZ_p$-rank of $H^2(\ZZ_S,T)$ is equal to $a$ and so the $\ZZ_p$-module $H^2(\ZZ_S,T)$ is isomorphic to $H^2(\ZZ_S,T)_{\rm tors}\oplus \ZZ_p^a$.

The corestriction map therefore induces a surjective homomorphism of $\ZZ_p[G]$-modules
\[ H^2(\cO_{F,S},T)\twoheadrightarrow H^2(\ZZ_S,T)_{\rm tors}\oplus \ZZ_p^a\]
and hence an inclusion of Fitting ideals
\begin{multline*} J_{F,S} = {\rm Fitt}_{\ZZ_p[G]}^0(H^2(\cO_{F,S},T))\\ \subset {\rm Fitt}_{\ZZ_p[G]}^0(H^2(\ZZ_S,T)_{\rm tors}\oplus \ZZ_p^a) = {\rm Fitt}_{\ZZ_p[G]}^0(H^2(\ZZ_S,T)_{\rm tors})\cdot I_F^{a}.\end{multline*}

To deduce (\ref{fitting inclusion}) from this it is thus enough to note the image of
 ${\rm Fitt}_{\ZZ_p[G]}^0(H^2(\ZZ_S,T)_{\rm tors})$ under the natural map $\ZZ_p[G] \to \ZZ_p[G]/I_F \simeq \ZZ_p$ is equal to
\[ {\rm Fitt}_{\ZZ_p}^0((H^2(\ZZ_S,T)_{\rm tors})_G) = {\rm Fitt}_{\ZZ_p}^0(H^2(\ZZ_S,T)_{\rm tors}) = \#H^2(\ZZ_S,T)_{\rm tors}\cdot \ZZ_p.\]
%
\end{proof}

\begin{remark} The containment discussed in Proposition \ref{link prop} would imply that
\begin{eqnarray}\label{order2}
\Phi({}_{c,d} z_F) \in I_F^{a} \text{ for all }\Phi \in \Hom_{\ZZ_p[G]}(H^1(\cO_{F,S},T),\ZZ_p[G]).
\end{eqnarray}
This prediction constitutes an analogue for Kato's Euler system ${}_{c,d} z_F$ of the `weak vanishing' conjecture for modular elements that is formulated by Mazur and Tate in \cite[Conj. 1]{MT}.
\end{remark}

\subsubsection{}If the algebraic rank $r := r_{\rm alg}$ of $E$ over $\QQ$ is strictly positive, then the integer $a$ in Proposition \ref{link prop} is equal to $r-1$ and so one has $I_F^a = I_F^{r-1}$.

One therefore obtains a proof of Theorem \ref{vanishing evidence} directly upon combining the results of Propositions \ref{prop bss} and \ref{link prop}.

\section{Derivatives of Kato's Euler system}\label{sec zp}
In this section, we shall define a canonical `Darmon derivative'  ${}_{c,d}\kappa_F$ of Kato's zeta element ${}_{c,d}z_F$ and use it to reinterpret the conjectures formulated above. 

In particular, in this way we are able to formulate more explicit versions of the Conjectures \ref{mrs} and \ref{algebraic mrs} for subfields $F$ of the cyclotomic
$\ZZ_p$-extension of $\QQ$.


{\it Throughout this section, we assume that $H^1(\ZZ_S,T)$ is $\ZZ_p$-free 
and $\sha(E/\QQ)[p^\infty]$ is finite.}

\subsection{Darmon derivatives}\label{darmon derivatives sec}

We use the notations in \S \ref{sec boc}. 

At the outset we note that the map $\iota_F$ in (\ref{iota}) is injective.

This follows easily from the facts that $H^1(\ZZ_S,T)$ is $\ZZ_p$-free and that $H^1(\ZZ_S,T)$ identifies with the submodule $H^1(\cO_{F,S},T)^G$ of $G$-invariant elements in $H^1(\cO_{F,S},T)$ (since $H^0(\ZZ_S,T)$ vanishes).

Conjecture \ref{mrs} is therefore equivalent to asserting the existence of a unique element
$${}_{c,d}\kappa_F \in H^1(\ZZ_S,T) \otimes_{\ZZ_p}Q_F^{r-1}$$
with the property that both
$$\iota_F({}_{c,d}\kappa_F)=\cN_{F/\QQ}({}_{c,d}z_F) \text{ and }{}_{c,d}\kappa_F={\rm Boc}_{F,\bm{x}}({}_{c,d}\eta_{\bm{x}}^{\rm BSD}).$$
(In particular, if $r=1$, then ${}_{c,d}\kappa_F$ is simply equal to ${}_{c,d}z_\QQ$.)

We regard this element ${}_{c,d}\kappa_F$ as a `Darmon-type derivative' of the zeta element ${}_{c,d}z_F$ and first consider conditions under which it can be unconditionally defined.

\subsubsection{}To do this we fix a finitely generated free $\ZZ_p[G]$-module $P_F$ and an injective homomorphism of $\ZZ_p[G]$-modules $j_F: H^1(\cO_{F,S},T) \to P_F$ as in the proof of Proposition \ref{link prop}.

We use $j_F$ to regard $H^1(\ZZ_S,T) = H^1(\cO_{F,S},T)^G$ as a submodule of $P_\QQ:=P_F^G$. Then, just as in (\ref{iota}), there are natural injective homomorphisms
$$\iota_F: P_\QQ \otimes_{\ZZ_p}Q_F^a \hookrightarrow P_F \otimes_{\ZZ_p} Q_F^a \hookrightarrow P_F\otimes_{\ZZ_p} \ZZ_p[G]/I_F^{a+1}$$
(where, we recall, $Q_F^a$ denotes $I_F^a/I_F^{a+1}$).



%

\begin{definition}\label{def kappa} Set $a:={\rm max}\{0,r_{\rm alg}-1\}$ and assume that the containment (\ref{order2}) is valid for all $\Phi$ in $\Hom_{\ZZ_p[G]}(H^1(\cO_{F,S},T),\ZZ_p[G])$. Then the argument of Proposition \ref{link prop} implies the existence of a unique element ${}_{c,d}\kappa_F$ of $P_\QQ \otimes_{\ZZ_p}Q_F^a$ with the property that
$$\iota_F({}_{c,d}\kappa_F) = \cN_{F/\QQ}({}_{c,d}z_F)$$
in $P_\QQ \otimes_{\ZZ_p} Q_F^a$. We shall refer to ${}_{c,d}\kappa_F$ as the {\it Darmon derivative} of ${}_{c,d}z_F$ with respect to the embedding $j_F$. \end{definition}



\begin{remark} If one restricts the embeddings $j_F$ by requiring that the associated module $P_F$ has minimal possible rank, then the derivatives ${}_{c,d}\kappa_F$ can be checked to be independent, in a natural sense, of the choice of $j_F$.\end{remark} 

\subsubsection{}Conjecture \ref{mrs} predicts that the element ${}_{c,d}\kappa_F$ belongs to the image of the (injective) homomorphism 
\begin{equation}\label{injective a} H^1(\ZZ_S,T)\otimes_{\ZZ_p}Q_F^a \to P_\QQ\otimes _{\ZZ_p}Q_F^a\end{equation}
induced by $j_F$. At this stage, however, we can only verify this prediction in certain special cases.
 
In the next section we shall verify that it is valid if $F$ is contained in the cyclotomic $\ZZ_p$-extension $\QQ_\infty$ of $\QQ$. In the following result we record some evidence in the general case. 

Before stating the result we note that the hypothesis in its first paragraph is valid whenever the data $E, F, S$ and $p$ satisfy the conditions (a), (b), (c) and (d) of Proposition \ref{prop bss} and that, in general, its validity would follow from that of Conjecture \ref{order}. 

In particular, claim (ii) of this result is a natural analogue for zeta elements of one of the main results of Darmon in \cite[Th. 2.5]{DH} concerning Heegner points.  

\begin{theorem}\label{prop kappa} Set $z := {}_{c,d} z_F$ and $\kappa := {}_{c,d}\kappa_F$. If one has $\Phi(z)\in {\rm Fitt}_{\ZZ_p[G]}^0(H^2(\cO_{F,S},T))$ for every $\Phi$ in $\Hom_{\ZZ_p[G]}\bigl(H^1(\cO_{F,S},T),  \ZZ_p[G]\bigr)$, then the following claims are valid. 

\begin{itemize}
\item[(i)] If $p^N$ is the minimum of the exponents of the groups $\#H^2(\ZZ_S,T)_{\rm tors}\cdot Q_F^a$ and $H^2(\ZZ_S,T)_{\rm tors}$, then  $p^N\cdot\kappa$ belongs to the image of the map (\ref{injective a}).   
\item[(ii)]  The image of $\kappa$ under the natural map 
\[ P_\QQ\otimes_{\ZZ_p}Q_F^a \to P_\QQ\otimes_{\ZZ_p}Q_F^a \otimes_\ZZ \ZZ/(p)\]
belongs to the image of the map 
\[ H^1(\ZZ_S,T)\otimes_{\ZZ_p}Q_F^a \otimes_\ZZ \ZZ/(p)\to P_\QQ \otimes_{\ZZ_p}Q_F^a \otimes_\ZZ \ZZ/(p)\]
induced by (\ref{injective a}). 
\end{itemize}
\end{theorem}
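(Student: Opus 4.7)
The plan is to adapt the strategy of Proposition \ref{link prop}, extracting sharper information from the (stronger) Fitting ideal hypothesis than was needed there. First I would reprise the argument of \emph{loc.\ cit.}: the hypothesis on $\Phi(z)$ implies
\[
\cN_{F/\QQ}(z) \in P_F \otimes_{\ZZ_p} J_{F,S}, \qquad J_{F,S}:={\rm Fitt}^0_{\ZZ_p[G]}(H^2(\cO_{F,S},T)),
\]
and the $G$-invariance of the image in $P_F\otimes_{\ZZ_p} Q_F^a$ shows that $\kappa$ is well-defined in $P_\QQ\otimes_{\ZZ_p} Q_F^a$.

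To refine the location of $\kappa$, I would combine the surjection $H^2(\cO_{F,S},T)\twoheadrightarrow H^2(\ZZ_S,T)$ coming from the descent isomorphism (\ref{control}) with the decomposition $H^2(\ZZ_S,T)\simeq H\oplus\ZZ_p^a$ (for $H:=H^2(\ZZ_S,T)_{\rm tors}$), obtaining $J_{F,S}\subset{\rm Fitt}^0_{\ZZ_p[G]}(H)\cdot I_F^a$. Since the image of ${\rm Fitt}^0_{\ZZ_p[G]}(H)$ modulo $I_F$ is $\#H\cdot\ZZ_p$, a direct computation then shows that the image of $J_{F,S}$ in $Q_F^a$ is contained in $\#H\cdot Q_F^a$, and hence
\[
\kappa\in P_\QQ\otimes_{\ZZ_p}(\#H\cdot Q_F^a).
\]

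Setting $C:=P_\QQ/H^1(\ZZ_S,T)$, I note that $C$ injects into $(P_F/H^1(\cO_{F,S},T))^G$, which is $\ZZ_p$-free; so $C$ is itself $\ZZ_p$-free, and this yields the short exact sequence
\[
0\to H^1(\ZZ_S,T)\otimes_{\ZZ_p} Q_F^a\to P_\QQ\otimes_{\ZZ_p} Q_F^a\to C\otimes_{\ZZ_p} Q_F^a\to 0.
\]
By flatness of $C$, the image of $\kappa$ in $C\otimes_{\ZZ_p} Q_F^a$ lies in $\#H\cdot(C\otimes_{\ZZ_p} Q_F^a)$. Claim (ii) then follows immediately when $H\ne 0$, since in that case $p\mid \#H$ forces this image to vanish modulo $p$; the degenerate case $H=0$ should follow separately from the projectivity of $\ZZ_p^a$, which splits the surjection $H^2(\cO_{F,S},T)\twoheadrightarrow\ZZ_p^a$ and yields tighter control on $J_{F,S}$. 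For (i), multiplication by $p^{\exp(\#H\cdot Q_F^a)}$ annihilates $\#H\cdot Q_F^a$ and therefore kills the image of $\kappa$ in $C\otimes_{\ZZ_p} Q_F^a$; hence $p^{\exp(\#H\cdot Q_F^a)}\cdot\kappa$ lies in the image of $H^1(\ZZ_S,T)\otimes_{\ZZ_p} Q_F^a$, giving the containment of (i) with the first of the two bounds appearing in the minimum.

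The main obstacle will be the second branch of (i), in which $\exp H<\exp(\#H\cdot Q_F^a)$: the bound $p^N=p^{\exp H}$ does not follow from the divisibility of $\kappa$ by $\#H$ alone. Handling it seems to require a more delicate analysis of the product structure ${\rm Fitt}^0_{\ZZ_p[G]}(H)=\prod_i(I_F,p^{n_i})$ (for $H\cong\bigoplus_i\ZZ/p^{n_i}$), exploiting the distribution of the elementary-divisor factors $p^{n_i}$ across the various augmentation powers so as to improve annihilation of the image from $\#H$ to $p^{\exp H}$.
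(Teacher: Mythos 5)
Your opening moves agree with the paper: the Fitting-ideal hypothesis gives $\cN_{F/\QQ}(z)\in P_F\otimes_{\ZZ_p}J_{F,S}$ and hence $\kappa\in P_\QQ\otimes_{\ZZ_p}\tilde Q_F^a$ with $\tilde Q_F^a:=\#H\cdot Q_F^a$, $H:=H^2(\ZZ_S,T)_{\rm tors}$, and from this your deduction of (ii) in the case $H\neq 0$ (the class of $\kappa$ modulo $p$ vanishes outright) is exactly the paper's. But the remainder has the genuine gap you yourself flag, and the direction you propose for closing it will not work. The bound $p^{\exp(H)}$ in claim (i) does not come from any finer analysis of the ideal $J_{F,S}$ (so no elementary-divisor bookkeeping for ${\rm Fitt}^0_{\ZZ_p[G]}(H)$ will produce it); it comes from an extra linear relation satisfied by $\kappa$ that your argument never uses. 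The paper does not work with an arbitrary embedding $j_F$: it represents the perfect complex $C_F=\rhom_{\ZZ_p}(\rgamma_c(\cO_{F,S},T^\ast(1)),\ZZ_p[-2])$ by a two-term complex $P_F\xrightarrow{f_F}P_F$, so that $H^1(\cO_{F,S},T)=\ker(f_F)$, $\im(f_\QQ)$ is $\ZZ_p$-free, and $\coker(f_\QQ)=H^1(C_\QQ)$ has torsion subgroup exactly $H$. Since $\cN_{F/\QQ}(z)$ lies in $H^1(\cO_{F,S},T)\otimes_{\ZZ_p}\ZZ_p[G]=\ker(f_F)\otimes_{\ZZ_p}\ZZ_p[G]$, a diagram chase (using the injectivity of the map $\tilde\iota_F$ obtained by restricting $\iota_F$ to $\tilde Q_F^a$-coefficients) gives $\tilde f_\QQ(\kappa)=0$ in $P_\QQ\otimes_{\ZZ_p}\tilde Q_F^a$. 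Tensoring $0\to H^1(\ZZ_S,T)\to P_\QQ\to\im(f_\QQ)\to 0$ and $0\to\im(f_\QQ)\to P_\QQ\to H^1(C_\QQ)\to 0$ with $\tilde Q_F^a$ then shows that the image of $\kappa$ in $\im(f_\QQ)\otimes_{\ZZ_p}\tilde Q_F^a$ lies in the image of ${\rm Tor}_1^{\ZZ_p}(H,\tilde Q_F^a)$, a group annihilated both by the exponent of $H$ and by that of $\tilde Q_F^a$; hence $p^N\cdot\kappa$ dies in $\im(f_\QQ)\otimes_{\ZZ_p}\tilde Q_F^a$ and so lies in $H^1(\ZZ_S,T)\otimes_{\ZZ_p}\tilde Q_F^a$. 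Your quotient $C\otimes_{\ZZ_p}Q_F^a$ records only the divisibility of $\kappa$ by $\#H$, which is why you can only reach the exponent of $\tilde Q_F^a$ and not the minimum.

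The same omission undermines your treatment of (ii) when $H=0$: the module $\ZZ_p^a$ with trivial $G$-action is not projective over $\ZZ_p[G]$, so the surjection $H^2(\cO_{F,S},T)\twoheadrightarrow\ZZ_p^a$ need not split $\ZZ_p[G]$-linearly, and in any case sharper information about $J_{F,S}$ is not what is needed. In the paper this case is an immediate consequence of the full claim (i): when $H=0$ the Tor obstruction vanishes, $N=0$, and $\kappa$ itself lies in the image of the map (\ref{injective a}).
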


\begin{proof} The proof of claim (i) requires a refinement of the construction used to prove Proposition \ref{link prop}. This relies on the fact that the complex
$$C_F:=\rhom_{\ZZ_p}(\rgamma_c(\cO_{F,S},T^\ast(1)),\ZZ_p[-2])$$
is a perfect complex of $\ZZ_p[G]$-modules that is acyclic outside degrees zero and one, and that there exists a canonical isomorphism
\begin{eqnarray}\label{exh1}
H^0(C_F) \simeq H^1(\cO_{F,S},T)
\end{eqnarray}
and an exact sequence
\begin{eqnarray}\label{exh2}
0 \to H^2(\cO_{F,S},T)  \to H^1(C_F) \to \ZZ_p[G]\otimes_{\ZZ_p} T^\ast(1)^{+,\ast} \to 0.
\end{eqnarray}
(See \cite[Prop. 2.22]{sbA}.) 

In particular, by \cite[Prop. A.11(i)]{sbA}, one finds that $C_F$ is represented by a complex of the form $P_F \to P_F,$ where $P_F$ is a finitely generated free $\ZZ_p[G]$-module and the first term is placed in degree zero.

In this way we obtain an exact sequence
\begin{eqnarray*} \label{tateseq}
0\to H^1(\cO_{F,S},T) \to P_F \xrightarrow{f_F} P_F \to H^1(C_F) \to 0,
\end{eqnarray*}

Then (\ref{control}) implies that $C_\QQ$ is represented by the complex $P_\QQ \xrightarrow{f_\QQ} P_\QQ$ obtained by taking $G$-invariants of the complex $P_F \xrightarrow{f_F} P_F$ and hence that there is an exact sequence
\begin{equation}\label{Q seq} 0 \to H^1(\ZZ_S,T) \to P_\QQ \xrightarrow{f_\QQ} P_\QQ \to H^1(C_\QQ) \to 0.\end{equation}

Set $\tilde Q_F^a := \#H^2(\ZZ_S,T)_{\rm tors}\cdot Q^a_F \subset \ZZ_p[G]/I_F^{a+1}$. Then the above sequences combine to give a commutative diagram
$$
\small
\xymatrix{
0 \ar[r]&H^1(\cO_{F,S},T) \otimes_{\ZZ_p} \ZZ_p[G]/I_F^{a+1}\ar[r] & P_F \otimes_{\ZZ_p} \ZZ_p[G]/I_F^{a+1} \ar[r]^{\tilde f_F}& P_F \otimes_{\ZZ_p} \ZZ_p[G]/I_F^{a+1}\\
0\ar[r] & H^1(\ZZ_S,T) \otimes_{\ZZ_p} \tilde Q_F^{a} \ar[r] \ar[u]_{\tilde\iota_F}& P_\QQ \otimes_{\ZZ_p} \tilde Q_F^{a} \ar[r]^{\tilde f_\QQ} \ar[u]_{\tilde\iota_F}& P_\QQ \otimes_{\ZZ_p} \tilde Q_F^{a}\ar[u]_{\tilde\iota_F}
}
$$
in which the maps $\tilde\iota_F$ are obtained by restricting $\iota_F$.

Then the argument of Proposition \ref{link prop} implies that 
\begin{equation}\label{kappa inclusion} \kappa\in P_\QQ \otimes_{\ZZ_p} \tilde Q_F^{a}\subset P_\QQ \otimes_{\ZZ_p}Q_F^a,\end{equation}
 and so the commutativity of this diagram implies that
\[  \tilde\iota_F(\tilde f_\QQ(\kappa)) = \tilde f_F(\tilde \iota_F(\kappa)) = \tilde f_F(\cN_{F/\QQ}(z)) = 0\]
and hence, since $\tilde \iota_F$ is injective, that $\tilde f_\QQ(\kappa)=0$.

Now, the exact sequence (\ref{Q seq}) induces exact sequences
\[ 0\to H^1(\ZZ_S,T)\otimes_{\ZZ_p} \tilde Q_F^{a} \to P_\QQ\otimes_{\ZZ_p} \tilde Q_F^{a} \xrightarrow{\mu_1} \im(f_\QQ)\otimes_{\ZZ_p} \tilde Q_F^{a} \to 0\]
and
\[ 0 \to {\rm Tor}_1^{\ZZ_p}\bigl(H^2(\ZZ_S,T)_{\rm tors},\tilde Q_F^{a}\bigr) \xrightarrow{\mu_2} \im(f_\QQ)\otimes_{\ZZ_p} \tilde Q_F^{a} \xrightarrow{\mu_3} P_\QQ\otimes_{\ZZ_p} \tilde Q_F^{a}.\]
with the property that $\mu_3\circ\mu_1$ is equal to $\tilde f_\QQ$. (The first sequence here is exact since the $\ZZ_p$-module $\im(f_\QQ)$ is free and the second is exact as consequence of the fact that (\ref{exh2}) identifies $H^2(\ZZ_S,T)_{\rm tors}$ with $H^1(C_\QQ)_{\rm tors}$.)

These sequences combine with the equality $\tilde f_\QQ(\kappa)=0$ to imply $\mu_1(\kappa)$ belongs to the image of $\mu_2$ in the lower sequence above.

Thus, since the definition of $p^N$ ensures it annihilates the group ${\rm Tor}_1^{\ZZ_p}\bigl(H^2(\ZZ_S,T)_{\rm tors},\tilde Q_F^{a}\bigr)$, it follows that $\mu_1(p^N\cdot \kappa)$ vanishes, and hence that $p^N\cdot \kappa$ belongs to $H^1(\ZZ_S,T)\otimes_{\ZZ_p} \tilde Q_F^{a} \subset
 H^1(\ZZ_S,T)\otimes_{\ZZ_p} Q_F^{a}$. This proves claim (i).


Turning to claim (ii), we note first that if $H^2(\ZZ_S,T)_{\rm tors}$ is trivial, then claim (i) implies $\kappa$ belongs to the image of the map  (\ref{injective a}) and so claim (ii) follows immediately. 
 
On the other hand, if $H^2(\ZZ_S,T)_{\rm tors}$ is non-trivial, then $\tilde Q_F^{a}$ is contained in $p\cdot Q_F^{a}$ and so (\ref{kappa inclusion}) implies that the projection of $\kappa$ to  $P_\QQ\otimes_{\ZZ_p}Q_F^a \otimes \ZZ/(p)$ vanishes. 
 In this case, therefore, the result of claim (ii) is also clear. 
\end{proof}  

\begin{remark} If $G$ has exponent $p$ and $r_{\rm alg}>0$, then $a >0$ and so  $Q_F^a$ is annihilated by $p$. In any such case, therefore, Theorem \ref{prop kappa}(ii) implies (under the stated hypotheses) that $\kappa$ belongs to the image of the map (\ref{injective a}). In general, the argument of Theorem \ref{prop kappa} shows that the group $H^2(\ZZ_S,T)_{\rm tors}$ constitutes the obstruction to attempts to deduce this containment from Euler system arguments (via the result of Proposition \ref{prop bss}). 
 To describe this obstruction more explicitly we assume that $E[p]$ is an irreducible $G_\QQ$-representation. In this case, one can assume $T=T_p(E)$ and then global duality gives rise to an exact sequence 
\[ E(\QQ)\otimes_\ZZ \ZZ_p \to \bigoplus_{\ell \in S \setminus \{\infty\}} \varprojlim_n E(\QQ_\ell)/p^n \to \bigl(H^2(\ZZ_S,T)_{\rm tors}\bigr)^\vee \to \sha(E/\QQ)[p^\infty] \to 0,\]
in which the first arrow denotes the natural diagonal map.  
\end{remark}


\subsection{Iwasawa-Darmon derivatives} \label{sec id}
To consider the above constructions in an Iwasawa-theoretic setting we shall use the following notations for non-negative integers $n$ and $i$:

\begin{itemize}
\item $\QQ_n$: the $n$-th layer of the cyclotomic $\ZZ_p$-extension $\QQ_\infty/\QQ$ (i.e., the subfield of $\QQ_\infty$ such that $[\QQ_n:\QQ]=p^n$),
\item $G_n:=\Gal(\QQ_n/\QQ)$,
\item $I_n:=\ker(\ZZ_p[G_n]\twoheadrightarrow \ZZ_p),$
\item $Q_n^a:=I_n^a/I_n^{a+1}$ with $a:=\max\{0, r_{\rm alg}-1\}$ as above,
\item $H^i_n:=H^i(\cO_{\QQ_n,S},T)$,
\item ${}_{c,d}z_n:={}_{c,d}z_{\QQ_n}$,
\item $\Gamma:=\Gal(\QQ_\infty/\QQ),$
\item $\Lambda:=\ZZ_p[[\Gamma]],$
\item $I:=\ker(\ZZ_p[[\Gamma]] \twoheadrightarrow \ZZ_p), $
\item $Q^a = I^{a}/I^{a+1}$,
\item $\HH^i:= \varprojlim_n H^i_n$.
\end{itemize}


\subsubsection{}\label{sec iw}


%
We first verify the prediction (\ref{order2}) in this setting.

\begin{proposition}\label{kato vanish}
For any non-negative integer $n$, the element ${}_{c,d}z_n$ belongs to $I_n^a \cdot H^1_n.$

In particular, the weak vanishing order prediction of (\ref{order2}) holds for the field $F=\QQ_n$ for every $n$.
\end{proposition}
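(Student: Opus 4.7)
The plan is to deduce the finite-level containment from an Iwasawa-theoretic analogue. I will consider Kato's Iwasawa zeta element
\[
{}_{c,d}z_\infty := \varprojlim_n {}_{c,d}z_n \in \HH^1,
\]
where the limit is taken along the corestriction maps. Under the natural projection $\pi_n : \HH^1 \to H^1_n$ one has $\pi_n({}_{c,d}z_\infty) = {}_{c,d}z_n$ and $\pi_n(I) = I_n$, so it will suffice to prove the Iwasawa-theoretic claim
\[
{}_{c,d}z_\infty \in I^a \cdot \HH^1.
\]
One may moreover assume $r_{\rm alg} \geq 2$, since otherwise $a = 0$ and the statement is trivial. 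The ``in particular'' clause of the proposition is then an immediate consequence, since any homomorphism $\Phi : H^1_n \to \ZZ_p[G_n]$ sends $I_n^a \cdot H^1_n$ into $I_n^a$.

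First, I would recall the standard structural facts: the $\La$-module $\HH^1$ has $\La$-rank one (by a global Euler--Poincar\'e computation, using that only the archimedean place of $\QQ$ contributes at odd $p$), $\HH^2$ is $\La$-torsion as a consequence, and Kato's theorem implies that ${}_{c,d}z_\infty$ is a non-torsion element of $\HH^1$. Writing ${}_{c,d}z_\infty = f \cdot e$ modulo $\La$-torsion for a pseudo-generator $e$ of $\HH^1$ and some $f \in \La$, the divisibility part of the Iwasawa Main Conjecture proved by Kato yields that ${\rm char}_\La(\HH^2)$ divides $f$ in $\La$ (after absorbing the standard Euler factors at primes in $S$ into units of $\La$).

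The next step is to establish ${\rm char}_\La(\HH^2) \subseteq I^a$. For this I would use the standard Iwasawa-theoretic descent isomorphism
\[
\HH^2 / I \HH^2 \cong H^2(\ZZ_S, T),
\]
which follows from the $\La$-analogue of (\ref{control}) combined with the vanishing of $\HH^3$ (the \'etale cohomological dimension of $\mathcal O_{\QQ, S}$ for $p$-primary sheaves is at most $2$ at odd $p$). Together with the fact from (\ref{short}) that ${\rm rank}_{\ZZ_p} H^2(\ZZ_S, T) = r_{\rm alg} - 1 = a$, the structure theorem for finitely generated torsion $\La$-modules then gives ${\rm ord}_I \, {\rm char}_\La(\HH^2) \geq \,{\rm rank}_{\ZZ_p}(\HH^2/I\HH^2) = a$, hence ${\rm char}_\La(\HH^2) \in I^a$. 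Combined with the divisibility step this gives $f \in I^a$, so ${}_{c,d}z_\infty$ lies in $I^a \HH^1$ modulo the $\La$-torsion of $\HH^1$; a final check using Hypothesis \ref{hyp}(i) to control that torsion completes the argument.

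\textbf{Main obstacle.} The principal technical step is the derivation of ${\rm char}_\La(\HH^2) \in I^a$, in particular the careful passage from the $\ZZ_p$-rank of the coinvariants $\HH^2/I\HH^2$ to the $I$-adic valuation of the characteristic ideal via the $\La$-module structure theorem. A secondary technicality is the elimination of the $\La$-torsion contribution from $\HH^1$, which requires verifying under the given freeness hypothesis that the $\La$-torsion submodule projects into $I^a \HH^1$ and does not obstruct the passage from the pseudo-isomorphism statement ${}_{c,d}z_\infty \equiv f \cdot e \pmod{\text{torsion}}$ to the honest containment in $\HH^1$.
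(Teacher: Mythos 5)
Your overall route is the same as the paper's: pass to the Iwasawa-theoretic statement ${}_{c,d}z_\infty \in I^a\cdot \HH^1$, deduce ${\rm char}_\Lambda(\HH^2)\subset I^a$ from the descent surjection $\HH^2\twoheadrightarrow H^2(\ZZ_S,T)_{\rm tf}\simeq \ZZ_p^a$, invoke Kato's divisibility \cite[Th. 12.5(3)]{katoasterisque}, and then project to the finite layers. The genuine gap is at the decisive last step. You only write ${}_{c,d}z_\infty = f\cdot e$ \emph{modulo $\Lambda$-torsion} for a pseudo-generator $e$, and you defer to an unspecified ``final check'' the passage to the honest containment in $I^a\cdot\HH^1$. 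That passage is not a secondary technicality, and ``controlling the torsion'' is the wrong target: even for a torsion-free $\HH^1$ of $\Lambda$-rank one the inference fails. For instance, if $M$ were the maximal ideal $(p,\gamma-1)$ of $\Lambda$ and $z=\gamma-1\in M$, then $M/\Lambda z\simeq\ZZ_p$ has characteristic ideal $I$, and $z=(\gamma-1)\cdot e$ for the pseudo-generator $e=1$ of the reflexive hull $\Lambda$, yet $z\notin I\cdot M$ (since $1\notin(p,\gamma-1)$). So knowing $f\in I^a$ for a pseudo-generator description, or even ${\rm char}_\Lambda(\HH^1/\langle z_\infty\rangle_\Lambda)\subset I^a$, does not by itself yield $z_\infty\in I^a\HH^1$.

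What the paper actually proves, and what is missing from your plan, is that $\HH^1$ is a \emph{free} $\Lambda$-module of rank one: this follows from Kato's result that $\QQ\otimes_\ZZ\HH^1$ is free of rank one over $\QQ\otimes_\ZZ\Lambda$ \cite[Th. 12.4(2)]{katoasterisque}, combined with the injectivity of $\HH^1/I\HH^1\to H^1(\ZZ_S,T)$ and the standing hypothesis that $H^1(\ZZ_S,T)$ is $\ZZ_p$-free. Once $\HH^1=\Lambda\cdot e$ is genuinely free, one has $z_\infty=f e$ with $(f)={\rm char}_\Lambda(\HH^1/\langle z_\infty\rangle_\Lambda)\subset{\rm char}_\Lambda(\HH^2)\subset I^a$, and the containment is immediate. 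Relatedly, your statement that ``${\rm char}_\Lambda(\HH^2)$ divides $f$'' is not what Kato's divisibility gives: it concerns ${\rm char}_\Lambda(\HH^1/\langle z_\infty\rangle_\Lambda)$, which would include the characteristic ideal of any torsion of $\HH^1$ as an extra factor (repairable only because such torsion is $p$-primary, hence coprime to $I$, and in any case moot once freeness is established). To make your proof complete you should replace the pseudo-generator bookkeeping and the proposed torsion ``final check'' by the freeness argument just described.
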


\begin{proof}
We use Kato's result on the Iwasawa Main Conjecture \cite[\S 12]{katoasterisque}. By \cite[Th. 12.4(2)]{katoasterisque}, we know that $\QQ \otimes_{\ZZ} \HH^1$ is a free $\QQ\otimes_\ZZ \Lambda$-module of rank one. 
This together with the injectivity of $\HH^1/I \HH^1 \rightarrow H^1(\ZZ_S,T)$ 
and the assumption that $H^1(\ZZ_S,T)$ is $\ZZ_{p}$-free implies that 
$\HH^1$ is a free $\Lambda$-module of rank one.
Since 
$$\HH^2 \twoheadrightarrow H^2(\ZZ_S,T)_{\rm tf} \simeq \ZZ_p^a$$
is surjective, the characteristic ideal of $\HH^2$ is in $I^a$.
Therefore, the characteristic ideal of 
$\HH^1/\langle {}_{c,d}z_\infty\rangle$ is also in it by \cite[Th. 12.5(3)]{katoasterisque}, where 
${}_{c,d}z_\infty:=({}_{c,d}z_n)_n$ 
(note that $({}_{c,d}z_n)_n$ is in the inverse limit $\varprojlim_n H^1_n 
=\HH^1$). 
This shows that 
$${}_{c,d}z_\infty \in I^a \cdot \HH^1,$$
which implies the conlusion of Proposition \ref{kato vanish}. 
\end{proof}

By using Proposition \ref{kato vanish}, we can now explicitly construct the Darmon derivative of ${}_{c,d}z_n$. To do this we fix a topological generator $\gamma$ of $\Gamma$ and denote the image of $\gamma$ in $G_n$ by the same symbol. In view of Proposition \ref{kato vanish} one has
\begin{eqnarray*}\label{def w}
{}_{c,d}z_n=(\gamma-1)^a w_n
\end{eqnarray*}
for some choice of element $w_n$ of $H^1_n$.

We then compute
\begin{eqnarray*}\label{compute w}
\cN_{\QQ_n/\QQ}({}_{c,d}z_n)&=&\sum_{\sigma \in G_n} \sigma( {}_{c,d}z_n )\otimes \sigma^{-1} \\
&=&\sum_{\sigma \in G_n} \sigma (\gamma-1)^a w_n \otimes \sigma^{-1} \nonumber \\
&=& \sum_{\sigma \in G_n} \sigma w_n \otimes \sigma^{-1}(\gamma-1)^a \in H^1_n \otimes_{\ZZ_p} I_n^a.\nonumber
\end{eqnarray*}
Thus, in $H^1_n \otimes_{\ZZ_p}Q_n^a$, we have
$$\cN_{\QQ_n/\QQ}({}_{c,d}z_n)=\sum_{\sigma \in G_n}\sigma w_n \otimes (\gamma-1)^a .$$
Hence, the derivative in Definition \ref{def kappa} is explicitly given by
\begin{eqnarray}\label{kappa exp}
{}_{c,d}\kappa_n:= {\rm Cor}_{\QQ_n/\QQ}(w_n) \otimes (\gamma-1)^a \in H^1_0 \otimes_{\ZZ_p}Q_n^a.
\end{eqnarray}
One easily sees that this element is well-defined, i.e., independent of the choice of $w_n$. Furthermore, the collection $({}_{c,d}\kappa_n)_n$ is an inverse system, so we can give the following definition.
\begin{definition}\label{def iw}
We define the {\it Iwasawa-Darmon derivative} of Kato's Euler system by
$${}_{c,d}\kappa_\infty:=({}_{c,d}\kappa_n)_n \in \varprojlim_n H^1_0 \otimes_{\ZZ_p}Q_n^a =H^1(\ZZ_S,T) \otimes_{\ZZ_p}Q^a.$$
We also define the normalized version
$$\kappa_\infty:=\frac{1}{cd(c-1)(d-1)} \cdot {}_{c,d}\kappa_\infty \in H^1(\ZZ_S,V) \otimes_{\ZZ_p} Q^{a}.$$

\end{definition}

\begin{remark} The Iwasawa-Darmon derivative can be regarded as a natural analogue of the `cyclotomic $p$-units' that are defined by Solomon in  \cite{solomon} in the setting of the classical cyclotomic unit Euler system. In a more general setting, it is an analogue of the derivative $\kappa$ of the (conjectural) Rubin-Stark Euler system that occurs in \cite[Conj. 4.2]{bks2}.
\end{remark}

\begin{remark}\label{zq}
 If $r_{\rm alg}$ is at most one, then $a=0$, $Q^a=\ZZ_p$ and in $H^1(\ZZ_S,V)$ one has
$$\kappa_\infty=z_\QQ:=\frac{1}{cd(c-1)(d-1)} \cdot {}_{c,d}z_\QQ $$
so that Definition \ref{def iw} gives nothing new in this case.
\end{remark}

\subsection{The Generalized Perrin-Riou Conjecture at infinite level}

In this section we assume Hypothesis \ref{hyp} in order to state an Iwasawa-theoretic version of Conjecture \ref{mrs}. We set $r:=r_{\rm alg}$.

\subsubsection{}To do this we fix a $\ZZ_p$-basis $\bm{x}$ of ${\bigwedge}_{\ZZ_p}^{r-1} H^2(\ZZ_S,T)_{\rm tf}$ and write
$${\rm Boc}_{n,\bm{x}}={\rm Boc}_{\QQ_n,\bm{x}}: {\bigwedge}_{\ZZ_p}^{r} H^1(\ZZ_S,T) \to H^1(\ZZ_S, T)\otimes_{\ZZ_p} I_n^{r-1}/I_n^{r}$$
for the Bockstein regulator map (\ref{boc}) for the field $\QQ_n$, as defined in \S \ref{sec boc}.

As $n$ varies these maps combine to induce a homomorphism
$$\varprojlim_n {\rm Boc}_{n,\bm{x}}: {\bigwedge}_{\ZZ_p}^{r} H^1(\ZZ_S,T) \to H^1(\ZZ_S,T)\otimes_{\ZZ_p}\varprojlim_n I_n^{r-1}/I_n^{r}= H^1(\ZZ_S,T)\otimes_{\ZZ_p} Q^{r-1}$$
and hence also, by scalar extension, a homomorphism
\begin{eqnarray}\label{boc2}
{\rm Boc}_{\infty,\bm{x}}: \CC_p \otimes_{\ZZ_p} {\bigwedge}_{\ZZ_p}^r H^1(\ZZ_S,T) \to \CC_p \otimes_{\ZZ_p} H^1(\ZZ_S,T) \otimes_{\ZZ_p} Q^{r-1}.
\end{eqnarray}

We recall from Definition \ref{def eta} the Birch and Swinnerton-Dyer element $\eta_{\bm{x}}^{\rm BSD}$ that is constructed (unconditionally) in the  space $\CC_p \otimes_{\ZZ_p} {\bigwedge}_{\ZZ_p}^r H^1(\ZZ_S,T)$.

\begin{conjecture}\label{mrs2} One has 
$$\kappa_\infty ={\rm Boc}_{\infty,\bm{x}}(\eta_{\bm{x}}^{\rm BSD})$$
in $\CC_p \otimes_{\ZZ_p}H^1(\ZZ_S,T) \otimes_{\ZZ_p}Q^{r-1}$.
\end{conjecture}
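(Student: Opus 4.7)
The plan is to establish Conjecture \ref{mrs2} as the Iwasawa-theoretic assembly of the finite-level Generalized Perrin-Riou Conjecture \ref{mrs}(ii) applied to each layer $\QQ_n$ of $\QQ_\infty/\QQ$, thereby reducing to the announced equivalence of Proposition \ref{prop exp}. The crucial structural input, already highlighted in the excerpt, is that the $\ZZ_p$-module $Q^{r-1} = I^{r-1}/I^r$ is torsion-free, so the natural map $H^1(\ZZ_S, T)\otimes_{\ZZ_p} Q^{r-1}\hookrightarrow \CC_p\otimes_{\ZZ_p} H^1(\ZZ_S, T)\otimes_{\ZZ_p} Q^{r-1}$ is injective. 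This allows Conjecture \ref{mrs2} to be verified level by level after passage to each finite quotient $Q_n^{r-1}$, and with no recourse to the integrality assertion of Conjecture \ref{mrs}(i).

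Concretely, I would first use Proposition \ref{kato vanish} to write ${}_{c,d}z_n = (\gamma-1)^{r-1} w_n$ for every $n$, so that Definition \ref{def iw} identifies $\kappa_n$ with the class of $(cd(c-1)(d-1))^{-1}\cdot\mathrm{Cor}_{\QQ_n/\QQ}(w_n)\otimes(\gamma-1)^{r-1}$ in $H^1(\ZZ_S, V)\otimes_{\ZZ_p} Q_n^{r-1}$. Next I would check that the Bockstein regulator maps $\mathrm{Boc}_{n,\bm{x}}$ are compatible under the projections $Q_n^{r-1}\twoheadrightarrow Q_{n-1}^{r-1}$, so that one indeed has $\mathrm{Boc}_{\infty,\bm{x}}(\eta_{\bm{x}}^{\rm BSD}) = \varprojlim_n \mathrm{Boc}_{n,\bm{x}}(\eta_{\bm{x}}^{\rm BSD})$ after $\CC_p$-extension. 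Multiplying through by $cd(c-1)(d-1)$, the finite-level identity $\kappa_n = \mathrm{Boc}_{n,\bm{x}}(\eta_{\bm{x}}^{\rm BSD})$ becomes, via the injectivity of $\iota_{\QQ_n}$, precisely the $\CC_p$-scalar extension of Conjecture \ref{mrs}(ii) for $F=\QQ_n$; taking the inverse limit then delivers Conjecture \ref{mrs2}.

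A complementary strategy, available whenever $E$ has non-additive reduction at $p$ and the $p$-adic height pairing is non-degenerate, is to pair both sides of the conjectural identity against generators $x$ of $E(\QQ)$. By Remark \ref{nek rem}(ii), the right-hand side becomes $\log_\omega(x)\cdot R_p\cdot L_S^{(r)}(E,1)/(\Omega^+ R_\infty)$, while the Generalized Rubin Formula of Theorem \ref{theorem1i}(i) converts the left-hand side into $(1-1/\alpha)^{-1}(1-1/\beta)\log_\omega(x)\cdot\mathcal{L}_{S,p}^{(r)}$. Modulo the non-degeneracy hypothesis, Conjecture \ref{mrs2} then becomes equivalent to the $p$-adic Beilinson formula of Corollary \ref{cor1}, which is already known in several analytic rank one situations.

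The principal obstacle is that the foregoing plan is a reduction, not a proof: the hard core is the finite-level identity at $F=\QQ_n$ in arbitrary analytic rank, and no currently available method -- Kato's reciprocity law, equivariant Euler system bounds, or Heegner-point descent -- furnishes the necessary non-trivial input once $r\geq 2$. In the regime $r=1$, Remark \ref{nek rem}(i) collapses the right-hand side to Perrin-Riou's expression and Conjecture \ref{mrs2} becomes equivalent to Perrin-Riou's conjecture, which is exactly where the cited works of B\"uy\"ukboduk, Venerucci, and B\"uy\"ukboduk--Pollack--Sasaki supply genuine verification. A proof in arbitrary rank appears to require a new rank-$r$ explicit reciprocity law directly identifying the Iwasawa-Darmon derivative $\kappa_\infty$ with the Bockstein-regulator image of $\eta_{\bm{x}}^{\rm BSD}$, and this is the step at which the argument must currently stop short.
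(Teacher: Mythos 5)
The statement you were asked to treat is a conjecture: the paper offers no proof of Conjecture \ref{mrs2}, only reformulations and relations to other statements (the explicit reformulation of Proposition \ref{prop exp}; the fact that the Iwasawa Main Conjecture implies its algebraic variant Conjecture \ref{mrs3} up to $\ZZ_p^\times$ via Theorem \ref{th1}; and Corollary \ref{cor beilinson}, by which Conjecture \ref{mrs2} implies the $p$-adic Beilinson formula, with a converse only when $R_p\neq 0$). Your proposal correctly recognises this and is explicit that what you describe is a reduction rather than a proof, which is the honest assessment: genuine verification exists only for $r=1$, where by Remark \ref{PR equiv} the statement collapses to Perrin-Riou's conjecture and the cited results of B\"uy\"ukboduk, Venerucci and B\"uy\"ukboduk--Pollack--Sasaki apply.

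Two remarks on the reduction itself. Your layerwise strategy — assembling $\kappa_\infty$ from the $\kappa_n$ via Proposition \ref{kato vanish} and Definition \ref{def iw}, and then appealing to Conjecture \ref{mrs}(ii) at each $F=\QQ_n$ — matches the way the paper itself organises the infinite-level statement, but it does not really dispense with the integrality assertion as you claim: the finite-level map ${\rm Boc}_{\QQ_n,\bm{x}}$ is defined on ${\bigwedge}_{\ZZ_p}^r H^1(\ZZ_S,T)$ and $Q_n^{r-1}$ has torsion, so invoking Conjecture \ref{mrs}(ii) at each layer presupposes Conjecture \ref{mrs}(i) (equivalently ${\rm BSD}_p(E)$, by Proposition \ref{prop eta}) there. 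The whole point of the infinite-level formulation is that $Q^{r-1}$ is torsion-free, which permits the $\CC_p$-linear extension ${\rm Boc}_{\infty,\bm{x}}$ and makes Conjecture \ref{mrs2} a statement that is strictly weaker than the family of finite-level conjectures, not deducible from them ``without integrality''. Your second route, pairing against points of $E(\QQ)$ via Theorem \ref{main} and Theorem \ref{reg prop}, reproduces exactly the paper's Corollary \ref{cor beilinson} and so converts Conjecture \ref{mrs2} into the $p$-adic Beilinson formula only under non-additive reduction and $R_p\neq0$ — again an open statement for $r\geq 2$. So nothing in your outline is wrong, but there is no proof here to compare with, and indeed none exists in the paper; the substantive open problem is precisely the one you identify at the end.
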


\begin{remark} In contrast to the more general situation considered in Conjecture \ref{mrs} we do not here need to assume $\eta_{\bm{x}}^{\rm BSD}$ belongs to ${\bigwedge}_{\ZZ_p}^r H^1(\ZZ_S,T)$. This is because the group $Q^{r-1}$ is $\ZZ_p$-torsion-free and so one does not lose anything by defining the Bockstein homomorphism ${\rm Boc}_{\infty,\bm{x}}$ on $\CC_p$-modules. In particular, if $r=1$, then the discussion of Remark \ref{PR equiv} shows that Conjecture \ref{mrs2} is equivalent to Perrin-Riou's original conjecture. Finally, we observe that Conjecture \ref{mrs2} is a natural analogue for elliptic curves of the conjecture formulated for the multiplicative group in \cite[Conj. 4.2]{bks2}.
\end{remark}


\subsubsection{}\label{exp int}

We shall now give an explicit interpretation of Conjecture \ref{mrs2} in terms of the leading term $L_S^\ast(E,1)$ (see Proposition \ref{prop exp} below).

Take a basis $\{x_1,\ldots,x_r\}$ of $E(\QQ)_{\rm tf}$. We define an element $\bm{x} \in {\bigwedge}_{\QQ_p}^{r-1} H^2(\ZZ_S,V)$ as the element corresponding to
$$1 \otimes x_1 \otimes (x_1^\ast \wedge \cdots \wedge x_r^\ast) \in \QQ_p \otimes_{\ZZ_p} \left( E_1(\QQ_p)\otimes_\ZZ {\bigwedge}_\ZZ^r E(\QQ)^\ast \right)$$
under the isomorphism
$${\bigwedge}_{\QQ_p}^{r-1} H^2(\ZZ_S,V) \simeq \QQ_p \otimes_{\ZZ_p} \left( E_1(\QQ_p)\otimes_\ZZ {\bigwedge}_\ZZ^r E(\QQ)^\ast \right)$$
induced by (\ref{short}).
We note that, by linearity, the definition of the Bockstein regulator map (\ref{boc2}) is extended for any element in ${\bigwedge}_{\QQ_p}^{r-1} H^2(\ZZ_S,V)$, which is not necessarily a $\ZZ_p$-basis of ${\bigwedge}_{\ZZ_p}^{r-1} H^2(\ZZ_S,T)_{\rm tf}$. Thus ${\rm Boc}_{\infty,\bm{x}}$ is defined for above $\bm{x}$.

Let $\omega$ be the fixed N\'eron differential and $\log_\omega: E(\QQ ) \to E(\QQ_p) \to \QQ_p$ the formal logarithm associated to $\omega$. We give the following definition.

\begin{definition}\label{def alg}
We define the {\it Bockstein regulator} associated to $\omega$ by setting 
$$R_\omega^{\rm Boc}:=\log_\omega(x_1)\cdot {\rm Boc}_{\infty,\bm{x}}(x_1\wedge \cdots \wedge x_r) \in (\QQ_p \otimes_{\ZZ} E(\QQ))\otimes_{\ZZ_p}Q^{r-1}.$$
(Here we identify $H^1(\ZZ_S,V)=\QQ_p \otimes_\ZZ E(\QQ)$ by (\ref{h1}).)
One can check that this does not depend on the choice of the basis $\{x_1,\ldots,x_r\}$ of $E(\QQ)_{\rm tf}$.
\end{definition}

\begin{remark}
 The Bockstein regulator defined above is closely related to classical $p$-adic regulators: for details, see  Theorems \ref{reg prop} and \ref{reg prop 2} below.
\end{remark}

\begin{remark}\label{remark boc1}
When $r=1$, then ${\rm Boc}_{\infty,\bm{x}}$ is the identity map and one has 
$$R_\omega^{\rm Boc}=\log_\omega(x)\cdot x \in \QQ_p \otimes_\ZZ E(\QQ)$$
for any generator $x$ of $E(\QQ)_{\rm tf}$.
\end{remark}

\begin{remark}\label{rem alg}
Let $\Omega_\xi$ be as in (\ref{per xi}) and $R_\infty$ the N\'eron-Tate regulator. Then one can check that
$${\rm Boc}_{\infty,\bm{x}}(\eta_{\bm{x}}^{\rm BSD})=\frac{L_S^\ast(E,1)}{\Omega_\xi\cdot R_\infty} \cdot R_\omega^{\rm Boc}.$$
In fact, by the definition of the Birch and Swinnerton-Dyer element, one checks that
\begin{eqnarray}\label{element formula}
\eta_{\bm{x}}^{\rm BSD}=\frac{L_S^\ast(E,1)}{\Omega_\xi \cdot R_\infty}\cdot \log_\omega(x_1)\cdot x_1\wedge \cdots \wedge x_r.
\end{eqnarray}
\end{remark}

By Remark \ref{rem alg}, we obtain the following interpretation of Conjecture \ref{mrs2}.

\begin{proposition}\label{prop exp}
Conjecture \ref{mrs2} is valid if and only if one has
$$\kappa_\infty=\frac{L_S^\ast(E,1)}{\Omega_\xi\cdot R_\infty} \cdot R_\omega^{\rm Boc}$$
in $\CC_p \otimes_{\ZZ_p} H^1(\ZZ_S,T)\otimes_{\ZZ_p}Q^{r-1} \simeq (\CC_p\otimes_\ZZ E(\QQ ))\otimes_{\ZZ_p}Q^{r-1}.$
\end{proposition}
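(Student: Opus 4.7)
The plan is as follows. Conjecture \ref{mrs2} is the assertion that $\kappa_\infty={\rm Boc}_{\infty,\bm{x}}(\eta_{\bm{x}}^{\rm BSD})$ in $\CC_p\otimes_{\ZZ_p}H^1(\ZZ_S,T)\otimes_{\ZZ_p}Q^{r-1}$. So the whole proposition reduces to verifying the evaluation formula
\[
{\rm Boc}_{\infty,\bm{x}}(\eta_{\bm{x}}^{\rm BSD})=\frac{L_S^\ast(E,1)}{\Omega_\xi\cdot R_\infty}\cdot R_\omega^{\rm Boc},
\]
which is precisely the content of Remark \ref{rem alg}. Since ${\rm Boc}_{\infty,\bm{x}}$ is $\CC_p$-linear, this in turn reduces to proving the identity (\ref{element formula}), namely
\[
\eta_{\bm{x}}^{\rm BSD}=\frac{L_S^\ast(E,1)}{\Omega_\xi\cdot R_\infty}\cdot \log_\omega(x_1)\cdot x_1\wedge \cdots \wedge x_r
\]
in $\CC_p\otimes_{\ZZ_p}\bigwedge_{\ZZ_p}^{r}H^1(\ZZ_S,T)\simeq \CC_p\otimes_{\ZZ}\bigwedge_{\ZZ}^{r}E(\QQ)$, because then applying ${\rm Boc}_{\infty,\bm{x}}$ and invoking Definition \ref{def alg} (the definition $R_\omega^{\rm Boc}=\log_\omega(x_1)\cdot {\rm Boc}_{\infty,\bm{x}}(x_1\wedge\cdots \wedge x_r)$) gives the required formula.

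To establish (\ref{element formula}) I would compute $\lambda^{-1}(L_S^\ast(E,1)\cdot(e^+\delta(\xi)^\ast\otimes \bm{x}))$ by traversing the five constituent isomorphisms of $\lambda$ in reverse. Concretely:
\begin{itemize}
\item[(a)] The inverse of the period map $\omega\mapsto(\gamma\mapsto \int_\gamma \omega)$ sends $e^+\delta(\xi)^\ast$ to $\Omega_\xi^{-1}\cdot\omega$, by the very definition (\ref{per xi}) of $\Omega_\xi$.
\item[(b)] The inverse of $\exp^\ast$ sends $\omega$ to the functional $\log_\omega\in E_1(\QQ_p)^\ast$, by the standard identification of the dual exponential with the formal logarithm attached to the N\'eron differential.
\item[(c)] The inverse of the third isomorphism in $\lambda$, combined with the definition of $\bm{x}$ (which is chosen in \S\ref{exp int} to correspond to $1\otimes x_1\otimes(x_1^\ast\wedge\cdots\wedge x_r^\ast)$), converts $\log_\omega\otimes \bm{x}$ into the element $\log_\omega(x_1)\cdot(x_1^\ast\wedge\cdots\wedge x_r^\ast)\in\bigwedge_\QQ^{r}E(\QQ)^\ast$, since the pairing of $\log_\omega$ with $x_1\in E_1(\QQ_p)$ produces exactly the scalar $\log_\omega(x_1)$.
\item[(d)] The inverse of the N\'eron-Tate height pairing then sends $x_1^\ast\wedge\cdots\wedge x_r^\ast$ to $R_\infty^{-1}\cdot(x_1\wedge\cdots\wedge x_r)$, since the regulator $R_\infty$ is by definition the determinant of $\langle-,-\rangle_\infty$ in the basis $\{x_1,\ldots,x_r\}$.
\item[(e)] Finally the identification $H^1(\ZZ_S,V)\simeq \QQ_p\otimes_\ZZ E(\QQ)$ of (\ref{h1}) transports the resulting element into the left-hand side of (\ref{element formula}).
\end{itemize}
Multiplying all the scalar contributions yields the factor $L_S^\ast(E,1)\cdot \Omega_\xi^{-1}\cdot R_\infty^{-1}\cdot \log_\omega(x_1)$, as required.

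The main obstacle is the careful bookkeeping in step (c): one must check that the choice of $\bm{x}$, which is dictated by the splitting of the short exact sequence (\ref{short}) together with the chosen basis $\{x_1,\ldots,x_r\}$ of $E(\QQ)_{\rm tf}$, really is compatible (on the nose, not merely up to sign) with the wedge-product identification used in the third isomorphism of $\lambda$. Once the sign and duality conventions are verified — in particular that contracting $x_1^\ast\wedge\cdots\wedge x_r^\ast$ against $x_1$ produces $+\,x_2^\ast\wedge\cdots\wedge x_r^\ast$ with the sign implicit in the definition of $\bm{x}$ — the remaining steps are direct substitutions. A final sanity check is to independently verify the well-definedness of $R_\omega^{\rm Boc}$ asserted in Definition \ref{def alg}: under a change of basis of $E(\QQ)_{\rm tf}$, the scalar $\log_\omega(x_1)$ and the wedge $x_1\wedge\cdots\wedge x_r$ scale inversely via the change-of-basis matrix, exactly as needed for the right-hand side of (\ref{element formula}) — and hence the whole formula — to be independent of choices.
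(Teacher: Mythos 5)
Your proposal is correct and follows essentially the same route as the paper: the paper deduces the proposition directly from Remark \ref{rem alg}, i.e.\ from the evaluation ${\rm Boc}_{\infty,\bm{x}}(\eta_{\bm{x}}^{\rm BSD})=\frac{L_S^\ast(E,1)}{\Omega_\xi\cdot R_\infty}\cdot R_\omega^{\rm Boc}$, which in turn rests on the identity (\ref{element formula}) obtained by unwinding the definition of $\eta_{\bm{x}}^{\rm BSD}$ through the period-regulator isomorphism $\lambda$, exactly as in your steps (a)--(e). Your explicit bookkeeping of the five constituent isomorphisms simply makes precise what the paper leaves as "one checks".
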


\subsubsection{}Using Proposition \ref{prop exp} we state an Iwasawa-theoretic version of the `algebraic' variant Conjecture \ref{algebraic mrs} of Conjecture \ref{mrs}. This conjecture is therefore a natural `algebraic' variant of Conjecture \ref{mrs2}.

We recall that $L_\ell$ denotes the Euler factor at a prime $\ell$ so that one has
\[ \left(\prod_{\ell \in S \setminus \{\infty\}}L_\ell\right)\cdot L^\ast(E,1)=L_S^\ast(E,1).\]

We also write $v_\xi$ for the non-zero rational number that is defined by the equality  
\begin{eqnarray} \label{omega}
\Omega^+=v_\xi \cdot \Omega_\xi
\end{eqnarray}
where $\Omega^+$ is the real N\'{e}ron period that occurs in (\ref{BSD eq}).

\begin{conjecture}\label{mrs3} If $\sha(E/\QQ)$ is finite, then in $(\QQ_p \otimes_\ZZ E(\QQ))\otimes_{\ZZ_p} Q^{r-1}$ one has 
$$\kappa_\infty =v_\xi\left(\prod_{\ell \in S \setminus \{\infty\}} L_\ell \right)\frac{\# \sha(E/\QQ)\cdot {\rm Tam}(E)}{\# E(\QQ)_{\rm tors}^2} \cdot R_\omega^{\rm Boc}. $$
\end{conjecture}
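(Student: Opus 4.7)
The plan is to deduce Conjecture \ref{mrs3} from the conjunction of its analytic counterpart Conjecture \ref{mrs2} with the classical Birch and Swinnerton-Dyer formula (\ref{BSD eq}), and then to indicate how both ingredients might be accessed simultaneously through the Iwasawa-theoretic framework that the paper has already assembled.

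First I would invoke Proposition \ref{prop exp} to rewrite Conjecture \ref{mrs2} in its explicit form
$$\kappa_\infty = \frac{L_S^\ast(E,1)}{\Omega_\xi \cdot R_\infty} \cdot R_\omega^{\rm Boc}.$$
Comparing this with the statement of Conjecture \ref{mrs3} reduces the problem to the purely numerical identity
$$\frac{L_S^\ast(E,1)}{\Omega_\xi \cdot R_\infty} = v_\xi \left(\prod_{\ell \in S \setminus \{\infty\}} L_\ell\right) \frac{\#\sha(E/\QQ)\cdot {\rm Tam}(E)}{\#E(\QQ)_{\rm tors}^2},$$
which is precisely (\ref{BSD eq}) divided through by $\Omega_\xi \cdot R_\infty$ after substituting $\Omega^+ = v_\xi \cdot \Omega_\xi$ from (\ref{omega}). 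Thus the conjunction of Conjecture \ref{mrs2} and the full Birch and Swinnerton-Dyer formula implies Conjecture \ref{mrs3}, and when $R_\omega^{\rm Boc}\neq 0$ is in fact equivalent to it. This reduction is formal and should take only a few lines.

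To attack Conjecture \ref{mrs3} in arbitrary analytic rank without having to establish the two ingredients independently, I would follow the Iwasawa-theoretic strategy of Theorem \ref{iw2}. Assuming $\sha(E/\QQ)$ is finite, $r_{\rm an} = r_{\rm alg}$, and the Iwasawa Main Conjecture (Conjecture \ref{IMC}), the Generalized Rubin Formula (Theorem \ref{theorem1i}) expresses $\langle x, \kappa_\infty\rangle_p$, for $x \in E(\QQ)$, as an explicit scalar multiple of a derivative of $\mathcal{L}_{S,p}$. The Main Conjecture then identifies this $p$-adic $L$-value with an algebraic invariant whose specialization at the augmentation ideal, by Remark \ref{nek rem}(ii), equals $\log_\omega(x)\cdot R_p$, where $R_p$ is the $p$-adic regulator. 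Matching scalars and comparing against the right-hand side of Conjecture \ref{mrs3} should then produce the claimed identity.

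The main obstacle, exactly as in Theorem \ref{iw2}, is obtaining an exact scalar equality rather than one up to a unit in $\ZZ_p$. Because the Main Conjecture is an equality of characteristic ideals, the descent argument above only realises $\kappa_\infty$ as some $\ZZ_p^\times$-multiple of the predicted right-hand side, whereas Conjecture \ref{mrs3} demands an exact equality in the $\QQ_p$-vector space $(\QQ_p\otimes_\ZZ E(\QQ))\otimes_{\ZZ_p} Q^{r-1}$. Closing this final gap would require either an equivariant refinement of the Iwasawa Main Conjecture in the spirit of the equivariant Tamagawa Number Conjecture, or some independent input ensuring the exact $p$-part of BSD. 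For this reason I expect that any unconditional proof of Conjecture \ref{mrs3} in full generality will ultimately need input genuinely beyond the Iwasawa Main Conjecture alone, and that in practice the most tractable route will be to first establish Conjecture \ref{mrs2} in the given case and then combine it with whatever instances of BSD$_p(E)$ are available.
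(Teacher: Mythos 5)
The statement you were asked about is a conjecture: the paper formulates Conjecture \ref{mrs3} but does not prove it, so there is no proof of the statement to measure your argument against, and your proposal (correctly) does not claim to prove it either. What you do establish is the conditional logic surrounding it, and this matches the paper's own treatment exactly. Your first reduction --- using Proposition \ref{prop exp} to rewrite Conjecture \ref{mrs2} as $\kappa_\infty = \frac{L_S^\ast(E,1)}{\Omega_\xi\cdot R_\infty}\cdot R_\omega^{\rm Boc}$ and then substituting (\ref{BSD eq}) together with $\Omega^+ = v_\xi\cdot\Omega_\xi$ from (\ref{omega}) --- is precisely how the paper motivates Conjecture \ref{mrs3} as the ``algebraic'' variant of Conjecture \ref{mrs2}: compare Definition \ref{alg bsd def}, Remark \ref{alg an comp}, and the remark following Conjecture \ref{mrs3} identifying it with the equality $\kappa_\infty = {\rm Boc}_{\infty,\bm{x}}(\eta_{\bm{x}}^{\rm alg})$. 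Your second route, via the Iwasawa Main Conjecture, is exactly Theorem \ref{th1} (and Corollary \ref{cor th1}), and the obstruction you isolate --- that an equality of characteristic ideals only pins down $\kappa_\infty$ up to an element of $\ZZ_p^\times$ --- is exactly the limitation the paper acknowledges; the paper's Theorem \ref{th2} then runs your reduction in the opposite direction, deducing ${\rm BSD}_p(E)$ from Conjecture \ref{mrs2}, Conjecture \ref{IMC} and the non-vanishing of $R_\omega^{\rm Boc}$.

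One small imprecision: your claim that, when $R_\omega^{\rm Boc}\neq 0$, the conjunction of Conjecture \ref{mrs2} and the Birch--Swinnerton-Dyer formula ``is in fact equivalent'' to Conjecture \ref{mrs3} is too strong as stated. Conjecture \ref{mrs3} is a single equality in $(\QQ_p\otimes_\ZZ E(\QQ))\otimes_{\ZZ_p}Q^{r-1}$ and carries no information about $L_S^\ast(E,1)$ by itself, so it cannot recover Conjecture \ref{mrs2} and the leading-term formula separately; what is true (and is the mechanism of Theorem \ref{th2}) is that, \emph{granted} Conjecture \ref{mrs2} and $R_\omega^{\rm Boc}\neq 0$, Conjecture \ref{mrs3} becomes equivalent to the leading-term formula, while unconditionally Conjecture \ref{mrs3} only yields the $p$-adic Birch--Swinnerton-Dyer formula of Corollary \ref{cor padic} (with a converse there requiring $R_p\neq 0$, respectively $R_p^{\rm Sch}\neq 0$).
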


\begin{remark}
One checks easily that Conjecture \ref{mrs3} is equivalent to an equality 
$$\kappa_\infty={\rm Boc}_{\infty,\bm{x}}(\eta_{\bm{x}}^{\rm alg}),$$
where $\bm{x}$ is any non-zero element of ${\bigwedge}_{\QQ_p}^{r-1} H^2(\ZZ_S,V)$ and $\eta_{\bm{x}}^{\rm alg}$ is the algebraic Birch and Swinnerton-Dyer element that is defined (unconditionally) in Definition \ref{alg bsd def}.
\end{remark}

\begin{remark} In Corollary \ref{cor padic} below we will show that Conjecture \ref{mrs3} is a refinement of the $p$-adic Birch-Swinnerton-Dyer Formula (from  \cite[Chap. II, \S10]{MTT}). Similarly, in Corollary \ref{cor beilinson} we will show that Conjecture \ref{mrs2} leads to an explicit formula for the leading term of the $p$-adic $L$-function (which we will refer to as a `$p$-adic Beilinson Formula'). 

A key advantage of the formulations of Conjectures \ref{mrs2} and \ref{mrs3} is that they do not involve the $p$-adic $L$-function and so are not in principle dependent on the precise reduction type of $E$ at $p$. In particular, the conjectures make sense (and are canonical) even when $E$ has additive reduction at $p$.
\end{remark}

\section{$p$-adic height pairings and the Bockstein regulator}\label{sec height}

 In this section, as an important preliminary to the proofs of Theorem \ref{theorem1i} and Corollaries \ref{cor1} and \ref{cor iw1i}, we shall make an explicit comparison of the Bockstein regulator $R_\omega^{\rm Boc}$ defined in Definition \ref{def alg} with the various notions of classical $p$-adic regulator (see Theorems \ref{reg prop} and \ref{reg prop 2} below).


In the following, we say `$p$ is $-$' if $E$ has $-$ reduction at $p$. For example, `$p$ is good ordinary' means that $E$ has good ordinary reduction at $p$.

{\it In this section, we assume that $E$ does not have additive reduction at $p$.}


We shall use the same notations as in \S\ref{sec 2} and \S\ref{sec zp}.

\subsection{Review of $p$-adic height pairings}\label{review height}

In this section, we give a review of the construction of $p$-adic height pairing using Selmer complexes.

\subsubsection{The ordinary case}\label{pordinary}
Suppose first that $p$ is ordinary, i.e., good ordinary or multiplicative. In this case we follow Nekov\'a\v r's construction of a $p$-adic height pairing in \cite[\S11]{nekovar}. (It is possible to treat this case in a more general context in \S \ref{psuper} below, but it requires the theory of $(\varphi,\Gamma)$-modules.)

We recall the definition of Nekov\'a\v r's Selmer complex.

To do this we note that, since $p$ is ordinary, we have a canonical filtration $F^+ V \subset V$ of $G_{\QQ_p}$-modules (due to Greenberg, see \cite{greenberg}).

We set $F^+ T:= T \cap F^+V$. For any non-negative integer $n$, we also denote the unique $p$-adic place of $\QQ_n$ by $\frp$.

Then, following the exact triangle given in (the third row of) \cite[(6.1.3.2)]{nekovar}, we define the Selmer complex of $T$ by setting
$$\widetilde {\rgamma}_f(\QQ_n, T):= {\rm Cone} \left( \rgamma(\cO_{\QQ_n,S},T) \to \rgamma(\QQ_{n,\frp},T/F^+T)\oplus \bigoplus_{v \in S_{\QQ_n}\setminus \{\frp\}} \rgamma_{/f}(\QQ_{n,v},T)\right)[-1].$$

We set
\[ \widetilde H^i_f(\QQ_n,T):=H^i(\widetilde {\rgamma}_f(\QQ_n,T))\,\,\text{ and }\,\, \widetilde H^i_f(\QQ_n,V):=\QQ_p\otimes_{\ZZ_p} \widetilde H^i_f(\QQ_n,T).\]

We have a natural isomorphism
$$\widetilde {\rgamma}_f(\QQ_n,T) \otimes_{\ZZ_p[G_n]}^{\DL} \ZZ_p \simeq \widetilde {\rgamma}_f(\QQ,T)$$
(see \cite[Prop. 8.10.1]{nekovar} or \cite[Prop. 1.6.5(3)]{FK}), and so we can define $(-1)$-times the Bockstein map
$$\widetilde H^1_f(\QQ,T) \to \widetilde H_f^2(\QQ,T) \otimes_{\ZZ_p} I_n/I_n^2$$
associated to the complex $\widetilde {\rgamma}_f(\QQ_n, T)$ (in the same way as (\ref{def beta})). Taking $\varprojlim_n$ and $\QQ_p\otimes_{\ZZ_p} -$, we obtain a map
\begin{eqnarray}\label{beta tilde}
\widetilde \beta:\widetilde H^1_f(\QQ,V) \to \widetilde H_f^2(\QQ,V) \otimes_{\ZZ_p} I/I^2.
\end{eqnarray}
Combining this map with the global duality map
$$\widetilde H^2_f(\QQ,V) \to \widetilde H^1_f(\QQ,V)^\ast$$
(see \cite[\S 6.3]{nekovar}),
we obtain a pairing
$$\langle -,- \rangle_p: \widetilde H^1_f(\QQ,V) \times \widetilde H^1_f(\QQ,V) \to \QQ_p \otimes_{\ZZ_p}I/I^2.$$
Noting that there is a natural embedding $\QQ_p \otimes_\ZZ E(\QQ) \hookrightarrow \widetilde H^1_f(\QQ,V)$ (see Remark \ref{rem selmer} below), we obtain the $p$-adic height pairing
$$\langle -,- \rangle_p: E(\QQ)\times E(\QQ) \to \QQ_p \otimes_{\ZZ_p}I/I^2.$$

\begin{remark}\label{rem selmer}
If $p$ is good ordinary or non-split multiplicative, then $\widetilde H^1_f(\QQ,V)$ coincides with the usual Selmer group $H^1_f(\QQ,V) $ (see \cite[\S 0.10]{nekovar}). If $p$ is split multiplicative, then we have a canonical decomposition
$$\widetilde H^1_f(\QQ,V) \simeq H^1_f(\QQ,V) \oplus \QQ_p$$
(see \cite[\S 11.4.2]{nekovar}). In any case, we have a canonical embedding $\QQ_p \otimes_\ZZ E(\QQ) \hookrightarrow \widetilde H^1_f(\QQ,V)$.
\end{remark}

\begin{remark}
For comparisons of the above $p$-adic height pairing with the classical ones, see \cite[\S\S 11.3 and 11.4]{nekovar}.

\end{remark}

\subsubsection{The supersingular case}\label{psuper}
Suppose that $p$ is good supersingular. In this case we follow the construction of the $p$-adic height pairing due to Benois \cite{benois}. His construction uses Selmer complexes associated to $(\varphi,\Gamma)$-modules, which was studied by Pottharst \cite{pot}. See also the review in \cite{BB}.


We fix one of the roots $\alpha \in \overline \QQ_p$ of the polynomial $X^2-a_p X +p$.
We set
$$L:=\QQ_p(\alpha).$$
We also set
$$V_L:=L\otimes_{\QQ_p} V \text{ and }D_L:=
D_{\rm crys}(V_L) =
D_{\rm dR}(V_L)\simeq L \otimes_\QQ H^1_{\rm dR}(E/\QQ),$$
which is endowed with an action of the Frobenius operator $\varphi$ and also a natural decreasing filtration $\{ D_L^i \}_{i \in \ZZ}$ such that $D_L^0 \simeq L \otimes_\QQ \Gamma(E,\Omega_{E/\QQ}^1)$. We set
$$t_{V,L}:=D_L/D_L^0 \simeq L \otimes_\QQ {\rm Lie}(E).$$
Let $N_\alpha$ be the subspace of $D_L$ on which $\varphi$ acts via $\alpha p^{-1}$. Explicitly, $N_\alpha$ is the subspace generated by $\varphi (\omega) -\alpha^{-1}\omega \in D_L$. Then the natural projection $D_L \twoheadrightarrow D_L/D_L^0=t_{V,L}$ induces an isomorphism
\begin{eqnarray}\label{split}
N_\alpha \xrightarrow{\sim} t_{V,L}.
\end{eqnarray}
A subspace of $D_L$ with this property is called a `splitting submodule' in \cite[\S 4.1.1]{benois}.


We shall define a $p$-adic height pairing
$$\langle -,- \rangle_{p}=\langle -,-\rangle_{p,\alpha} : E(\QQ) \times E(\QQ) \to L\otimes_{\ZZ_p } I/I^2.$$
Since there is a natural embedding $\QQ_p\otimes_\ZZ E(\QQ)\hookrightarrow H^1_f(\QQ,V)$, it is sufficient to construct a pairing
$$\langle -,- \rangle_{p} : H^1_f(\QQ,V) \times H^1_f(\QQ,V) \to L \otimes_{\ZZ_p } I/I^2.$$

We recall some basic facts from the theory of $(\varphi,\Gamma)$-modules.
Let $\drig(V_L)$ denote the $(\varphi,\Gamma_{\QQ_p})$-module associated $V_L$ (where $\Gamma_{\QQ_p}:=\Gal(\QQ_p(\mu_{p^\infty})/\QQ_p)$). (See \cite[Th. 2.1.3]{benois}.) By \cite[Th. 2.2.3]{benois}, there is a submodule $\DD_\alpha \subset \drig(V_L)$ corresponding to $N_\alpha \subset D_L$. For a general $(\varphi,\Gamma_{\QQ_p})$-module $\DD$, one can define a complex (the `Fontaine-Herr complex')
$$\rgamma(\QQ_p,\DD),$$
which is denoted by $C_{\varphi,\gamma_{\QQ_p}}^\bullet (\DD)$ in \cite[\S 2.4]{benois}.
When $\DD=\drig(V_L)$, this is naturally quasi-isomorphic to $\rgamma(\QQ_p,V_L)$ (see \cite[Prop. 2.5.2]{benois}). So there is a natural morphism in the derived category of $L$-vector spaces
$$\rgamma(\ZZ_S,V_L) \to \rgamma(\QQ_p,V_L) \simeq \rgamma(\QQ_p, \drig(V_L)) \to \rgamma(\QQ_p,\drig(V_L)/\DD_\alpha).$$
We define the Selmer complex by
$$\widetilde {\rgamma}_{f}(\QQ,V_L):={\rm Cone}\left(\rgamma(\ZZ_S,V_L) \to \rgamma(\QQ_p,\drig(V_L)/\DD_\alpha) \oplus \bigoplus_{\ell \in S\setminus \{p\}} \rgamma_{/f}(\QQ_\ell,V_L) \right)[-1].$$
(We adopt \cite[(2.6)]{BB} as the definition.) We set $\widetilde H^i_f(\QQ,V_L):=H^i(\widetilde {\rgamma}_f(\QQ,V_L))$.
It is known that
$$H^1_f(\QQ_,V_L)\simeq \widetilde H^1_f(\QQ,V_L).$$
(See \cite[Th. III]{benois}.)

We next study the Iwasawa theoretic version.
We set
$$\cH:=\left\{f(X)=\sum_{n=0}^\infty c_n X^n \in L[[X]] \ \middle| \  \text{$f(X)$ converges on the open unit disk }\right\}.$$
Then, for a general $(\varphi,\Gamma_{\QQ_p})$-module $\DD$, one can define an Iwasawa cohomology complex of $\cH$-modules
$$\rgamma_{\rm Iw}(\QQ_p,\DD).$$
(See \cite[\S 2.8]{benois}.)
We fix a topological generator $\gamma \in \Gamma$. Then $\Gamma$ acts on $\cH$ by identifying $X=\gamma-1$. We set
$$\overline V_L:=V_L \otimes_L \cH,$$
where $G_\QQ$ acts on $\cH$ via
$$G_\QQ \twoheadrightarrow \Gamma \xrightarrow{\gamma \mapsto \gamma^{-1}} \Gamma.$$
When $\DD=\drig(V_L)$, we have a natural quasi-isomorphism $\rgamma_{\rm Iw}(\QQ_p, \DD) \simeq \rgamma(\QQ_p, \overline V_L)$ (see \cite[Th. 2.8.2]{benois}). Thus there is a natural morphism in the derived category of $\cH$-modules
$$\rgamma(\ZZ_S,\overline V_L) \to \rgamma(\QQ_p, \overline V_L) \simeq \rgamma_{\rm Iw}(\QQ_p, \drig(V_L)) \to \rgamma_{\rm Iw}(\QQ_p, \drig(V_L)/\DD_\alpha).$$
We define the Iwasawa Selmer complex by
$$\widetilde {\rgamma}_{f,{\rm Iw}}(\QQ, V_L):= {\rm Cone}\left(  \rgamma(\ZZ_S,\overline V_L) \to \rgamma_{\rm Iw}(\QQ_p, \drig(V_L)/\DD_\alpha) \oplus \bigoplus_{\ell \in S \setminus \{p\}} \rgamma_{/f}(\QQ_\ell,\overline V_L)\right)[-1].$$
We know the following `control theorem'
\begin{eqnarray}\label{control2}
\widetilde {\rgamma}_{f,{\rm Iw}}(\QQ,V_L) \otimes_{\cH}^{\DL} L \simeq \widetilde {\rgamma}_{f}(\QQ,V_L).
\end{eqnarray}
(See \cite[Th. 1.12]{pot}.)

We now give the definition of the $p$-adic height pairing. Let $\cI:=(X)$ be the augmentation ideal of $\cH$. Note that $\cI/\cI^2 $ is identified with $L \otimes_{\ZZ_p} I/I^2$. From the exact sequence
$$0\to \cI/\cI^2 \to \cH/\cI^2 \to L \to 0,$$
we obtain the exact triangle
$$\widetilde {\rgamma}_{f,{\rm Iw}}(\QQ,V_L) \otimes_{\cH}^{\DL} \cI/\cI^2 \to \widetilde {\rgamma}_{f,{\rm Iw}}(\QQ,V_L) \otimes_{\cH}^{\DL} \cH/\cI^2 \to \widetilde {\rgamma}_{f,{\rm Iw}}(\QQ,V_L) \otimes_{\cH}^{\DL} L.$$
By the control theorem (\ref{control2}), we have
$$\widetilde {\rgamma}_f(\QQ,V_L) \lotimes_L \cI/\cI^2 \to \widetilde {\rgamma}_{f,{\rm Iw}}(\QQ,V_L) \otimes_{\cH}^{\DL} \cH/\cI^2 \to \widetilde {\rgamma}_f(\QQ,V_L).$$
The $(-1)$-times connecting homomorphism of this triangle gives a map
$$\widetilde H^1_f(\QQ,V_L) \to H^2(\widetilde {\rgamma}_f(\QQ,V_L) \lotimes_L \cI/\cI^2 ) = \widetilde H_f^2(\QQ,V_L) \otimes_L \cI/\cI^2.$$
Composing this map with the global duality map
$$\widetilde H^2_f(\QQ,V_L) \to \widetilde H^1_f(\QQ,V_L)^\ast$$
(see \cite[Th. 3.1.5]{benois}), we obtain
$$\widetilde H^1_f(\QQ,V_L) \to \widetilde H^1_f(\QQ,V_L)^\ast \otimes_L \cI/\cI^2.$$
This gives the desired $p$-adic height pairing.

\begin{remark}
The above construction makes sense even when $p$ is good ordinary. In this case, $\alpha$ is canonically chosen so that $\ord_p(\alpha)<1$, and we can take $N_\alpha$ to be $D_{\rm crys}(F^+V)$. One sees that the $p$-adic height pairing with this choice coincides with that in \S \ref{pordinary}.

\end{remark}

\begin{remark}
Comparisons of this $p$-adic height pairing with the classical ones are studied in detail by Benois \cite{benois}. In particular, this $p$-adic height pairing coincides with the one constructed by Nekov\'a\v r in \cite{nekp}, which is used by Kobayashi in \cite{kobss}.
\end{remark}

\subsection{A comparison result}\label{compare}

We shall define the $p$-adic regulator and compare it with the  Bockstein regulator $R_\omega^{\rm Boc}$. In this subsection, we assume Hypothesis \ref{hyp}.

\subsubsection{}Let $L$ be the splitting field of the polynomial $X^2-a_pX+p$ over $\QQ_p$. Note that $L=\QQ_p$ unless $p$ is supersingular.

Let
$$\langle -,-\rangle_{p}: E(\QQ) \times E(\QQ) \to L \otimes_{\ZZ_p} I/I^2$$
be the $p$-adic height pairing defined above. (When $p$ is supersingular, this depends on the choice of a root $\alpha$ of $X^2-a_pX +p$.)

\begin{definition}\label{def preg}
The $p$-adic regulator
$$R_{p}=R_{p,\alpha} \in L \otimes_{\ZZ_p} Q^r$$
is defined to be the discriminant of the $p$-adic height pairing, i.e.,
$$R_p:=\det(\langle x_i,x_j\rangle_p)_{1\leq i ,j\leq r}$$
with $\{x_1,\ldots,x_r\}$ a basis of $E(\QQ)_{\rm tf}$.
\end{definition}

The $p$-adic height pairing induces a map
\begin{eqnarray}\label{pad2} E(\QQ) \times (\QQ_p \otimes_\ZZ E(\QQ))\otimes_{\ZZ_p} Q^{r-1} \to L\otimes_{\ZZ_p} Q^r
\end{eqnarray}
$$ (x,(a\otimes y)\otimes b) \mapsto a \cdot b\cdot  \langle x,y \rangle_{p}, $$
which we denote also by $\langle -,-\rangle_p$.

The following gives a relation between $R_p$ and $R_\omega^{\rm Boc}$.

\begin{theorem}\label{reg prop}
For any $x \in E(\QQ)$ we have
$$\langle x, R_\omega^{\rm Boc}\rangle_p=\log_\omega(x) \cdot R_p.$$
\end{theorem}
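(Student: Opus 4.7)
The plan is to prove the theorem by identifying the Iwasawa limit
\[
\beta_\infty: H^1(\ZZ_S,V) \to H^2(\ZZ_S,V)\otimes_{\ZZ_p} I/I^2
\]
of the Bockstein maps $\beta_{\QQ_n}$ of (\ref{def beta}) (tensored with $\QQ_p$) with the $p$-adic height pairing of \S\ref{review height}. Using (\ref{short}) to identify $H^2(\ZZ_S,V)$ as the cokernel of the natural inclusion $\QQ_p\otimes_{\ZZ_p}E_1(\QQ_p)^\ast\hookrightarrow\QQ_p\otimes_\ZZ E(\QQ)^\ast$, and writing $\overline{y^\ast}$ for the image of $y^\ast\in E(\QQ)^\ast$ in $H^2(\ZZ_S,V)$, the central claim to be established is that, for any basis $\{x_1,\ldots,x_r\}$ of $E(\QQ)_{\rm tf}$ with dual basis $\{x_1^\ast,\ldots,x_r^\ast\}$ and any $y\in E(\QQ)$, one has the identity
\begin{equation}\label{bockhtcomp}
\beta_\infty(y)\,=\,\sum_{i=1}^r\langle x_i,y\rangle_p\cdot\overline{x_i^\ast}\otimes 1
\end{equation}
in $L\otimes_{\ZZ_p}H^2(\ZZ_S,V)\otimes_{\ZZ_p}I/I^2$.

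To establish (\ref{bockhtcomp}), I will compare $\beta_\infty$ with the Iwasawa-theoretic Bockstein map whose composition with global duality defines the $p$-adic height pairing on the Selmer complex $\widetilde{\rgamma}_f(\QQ,V)$ (via Nekov\'a\v r's formalism in the ordinary or multiplicative case) or $\widetilde{\rgamma}_f(\QQ,V_L)$ (via Benois' $(\varphi,\Gamma)$-module formalism in the supersingular case). In each case, the Selmer complex is by definition the shifted cone of a natural morphism from a global Galois-cohomology complex to a direct sum of local terms at the places of $S$. Tensoring the defining triangle with the tautological short exact sequence $0\to I_n/I_n^2\to\ZZ_p[G_n]/I_n^2\to\ZZ_p\to 0$ and passing to the inverse limit yields a commutative diagram of Bockstein maps that reduces (\ref{bockhtcomp}) to two facts: (a) the local correction at each $\ell\in S\setminus\{p\}$ is trivial, because $\QQ_{\ell,\infty}/\QQ_\ell$ is unramified and hence the relevant Iwasawa Bockstein on $\rgamma_{/f}(\QQ_\ell,V)$ vanishes; and (b) the local correction at $p$ corresponds exactly to the passage to the cokernel in (\ref{short}), via the dual-exponential normalisation of the local condition $H^1_f(\QQ_p,V)\simeq\QQ_p\otimes_{\ZZ_p}E_1(\QQ_p)$ (or, in the supersingular case, via Benois' splitting submodule $N_\alpha$ and the isomorphism (\ref{split})).

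Granting (\ref{bockhtcomp}), the theorem will then follow from a direct determinantal computation. Firstly, the image of $\omega\in E_1(\QQ_p)^\ast$ under the map in (\ref{short}) is the functional $\log_\omega|_{E(\QQ)}=\sum_i\log_\omega(x_i)\cdot x_i^\ast$ on $E(\QQ)$, which gives the linear relation $\sum_{i=1}^r\log_\omega(x_i)\cdot\overline{x_i^\ast}=0$ in $H^2(\ZZ_S,V)$. Secondly, an unwinding of the standard isomorphism $\det B\simeq A\otimes\det C$ attached to a short exact sequence $0\to A\to B\to C\to 0$ with $A$ of rank one shows that the element $\bm{x}=1\otimes x_1\otimes(x_1^\ast\wedge\cdots\wedge x_r^\ast)$ corresponds under the isomorphism $\bigwedge^{r-1}H^2(\ZZ_S,V)\simeq\QQ_p\otimes_{\ZZ_p}(E_1(\QQ_p)\otimes_\ZZ\bigwedge^r E(\QQ)^\ast)$ to $\overline{x_2^\ast}\wedge\cdots\wedge\overline{x_r^\ast}$. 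Substituting (\ref{bockhtcomp}) into the defining formula of ${\rm Boc}_{\infty,\bm{x}}$, using the displayed linear relation to eliminate every occurrence of $\overline{x_1^\ast}$, and then pairing the resulting expression $R_\omega^{\rm Boc}=\log_\omega(x_1)\cdot{\rm Boc}_{\infty,\bm{x}}(x_1\wedge\cdots\wedge x_r)$ with $x=\sum_k a_k x_k$ via (\ref{pad2}), the outer factor $\log_\omega(x_1)$ will cancel and the resulting sum will reorganise exactly as $\sum_k a_k\log_\omega(x_k)\cdot\det(\langle x_i,x_j\rangle_p)=\log_\omega(x)\cdot R_p$; by linearity it is enough to verify this for $x=x_k$, in which case it becomes a straightforward cofactor expansion of the $p$-adic height matrix.

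The principal difficulty lies in proving (\ref{bockhtcomp}): this requires a careful tracking of the local Bockstein corrections in the comparison of $\rgamma(\ZZ_S,-)$ with the relevant Selmer complex at each place in $S$, and in the supersingular case necessitates working within Benois' $(\varphi,\Gamma)$-module framework in order to identify the local correction at $p$ with the cokernel of the map in (\ref{short}).
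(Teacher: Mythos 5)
Your proposal is correct and follows essentially the same route as the paper: your central identity expressing the limit Bockstein map $\beta_\infty$ through the $p$-adic height pairing and the surjection coming from (\ref{short}) is precisely the content of Lemma \ref{comm}, and your elimination-of-$\overline{x_1^\ast}$ plus cofactor computation (here $\overline{x_i^\ast}$ denotes the image of $x_i^\ast$ in $H^2(\ZZ_S,V)$) is an explicit form of the abstract linear algebra carried out in Lemma \ref{alg}. Two small remarks: the paper establishes the comparison simply by functoriality of connecting homomorphisms for the natural morphism from the Selmer complex to $\rgamma(\cO_{\QQ_n,S},T)$, which avoids your place-by-place analysis of local correction terms; and the symmetry of the $p$-adic height pairing (Lemma \ref{sym}, due to Nekov\'a\v r and Benois) is an ingredient you use silently -- the slot-ordering produced directly by the Selmer-complex construction gives $\beta_\infty(y)=\sum_i\langle y,x_i\rangle_p\cdot\overline{x_i^\ast}$, and converting this into the form you state (or, equivalently, making the final cofactor identity close up to $\log_\omega(x)\cdot R_p$) requires $\langle x,y\rangle_p=\langle y,x\rangle_p$, so this fact should be cited explicitly.
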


\subsubsection{}The proof of Theorem \ref{reg prop} will be given in \S\ref{proof of reg prop}. However, we first need to prove several preliminary technical results.

\begin{lemma}\label{sym}
The $p$-adic height pairing is symmetric, i.e.,
$$\langle x,y \rangle_p =\langle y,x\rangle_p$$
for any $x,y \in E(\QQ)$.
\end{lemma}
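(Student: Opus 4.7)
The plan is to split the argument according to the reduction type of $E$ at $p$ and, in each case, reduce the statement to the general symmetry results for $p$-adic height pairings attached to Selmer complexes that are due to Nekov\'a\v{r} (in the ordinary case) and Benois (in the supersingular case). The conceptual point is that both constructions reviewed in \S\ref{review height} fit into a unified framework in which the pairing arises as the composition of a Bockstein-type connecting map with a global duality isomorphism, and this framework is set up so that $V$ is self-dual of weight one via the Weil pairing $V\simeq V^{\ast}(1)$. It is precisely this self-duality, together with graded-commutativity of cup products on classes of degree one, that forces symmetry.

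For the ordinary case, I would appeal directly to Nekov\'a\v{r}'s result in \cite[Thm.~11.3.9]{nekovar}, which asserts symmetry of the height pairing on $\widetilde{H}^{1}_{f}(\QQ,V)$ for any Greenberg-type local condition at $p$ that is self-orthogonal with respect to the chosen self-duality. In the present setting the local condition $F^{+}V$ is self-orthogonal under the Weil pairing (see \cite[\S6.7]{nekovar}), so the hypotheses of that theorem apply to $\widetilde{\rgamma}_{f}(\QQ_{n},T)$. One then restricts along the embedding $\QQ_{p}\otimes_{\ZZ}E(\QQ)\hookrightarrow \widetilde{H}^{1}_{f}(\QQ,V)$ of Remark \ref{rem selmer} to obtain the claimed symmetry on $E(\QQ)$. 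The key conceptual input is that one can compute the Bockstein in two equivalent ways, using either the left or the right $\Lambda/I^{2}$-module structure on $\widetilde{\rgamma}_{f,\mathrm{Iw}}(\QQ,T)\otimes^{\DL}_{\Lambda}\Lambda/I^{2}$; these two constructions differ by a sign, and graded commutativity of the cup product $H^{1}\times H^{1}\to H^{2}$ absorbs this sign, producing symmetry rather than antisymmetry.

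For the supersingular case, the analogous symmetry is proved by Benois in \cite[Thm.~4.1.1]{benois} for Selmer complexes attached to splitting submodules $\DD_{\alpha}\subset \drig(V_{L})$ that are isotropic with respect to the Weil self-duality. The submodule $\DD_{\alpha}$ introduced in \S\ref{psuper} has precisely this property because $N_{\alpha}\subset D_{L}$ is isotropic for the natural pairing on $D_{\mathrm{crys}}$ induced by the Weil pairing (a computation with the Frobenius eigenvalues $\alpha,\beta=p/\alpha$ of $X^{2}-a_{p}X+p$). With this verification in hand, Benois's theorem applies to $\widetilde{\rgamma}_{f,\mathrm{Iw}}(\QQ,V_{L})$ and yields symmetry of the resulting pairing on $\widetilde{H}^{1}_{f}(\QQ,V_{L})$, which restricts to the pairing on $E(\QQ)$ via the inclusion $\QQ_{p}\otimes_{\ZZ}E(\QQ)\hookrightarrow H^{1}_{f}(\QQ,V_{L})\simeq \widetilde{H}^{1}_{f}(\QQ,V_{L})$.

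The main obstacle, and the only place where genuine care is needed, is the verification that the specific local conditions chosen in \S\ref{pordinary} and \S\ref{psuper} are isotropic for the Weil self-duality; once this is done the symmetry is formal from the cited theorems. In the ordinary case the isotropy of $F^{+}V$ is standard, while in the supersingular case one must check that the crystalline splitting submodule $N_{\alpha}$ pairs trivially with itself under the cup-product pairing on $D_{\mathrm{crys}}$, which follows by comparing the Frobenius weights on $N_{\alpha}$ and on $N_{\alpha}^{\perp}$. I would include this short check explicitly and then conclude by invoking the relevant theorems of Nekov\'a\v{r} and Benois.
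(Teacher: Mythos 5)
Your proposal follows essentially the same route as the paper: the proof there consists precisely of invoking the known symmetry results for these height pairings, namely Nekov\'a\v{r} \cite[Cor.~11.2.2]{nekovar} in the ordinary case and Benois \cite[Th.~I]{benois} in the supersingular case, restricted to $E(\QQ)$ via the embeddings of Remark \ref{rem selmer}. Your additional verification of the isotropy of the local conditions is a reasonable elaboration of hypotheses already handled in those references, so the argument is correct and matches the paper's approach (only your precise theorem numbers within \cite{nekovar} and \cite{benois} differ from the citations used in the paper).
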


\begin{proof}
See \cite[Cor. 11.2.2]{nekovar} and \cite[Th. I]{benois} in the ordinary and supersingular cases respectively.
\end{proof}

\begin{lemma}\label{comm}
The following diagram is commutative.
$$
\xymatrix{
E(\QQ) \ar[r] \ar[rd]_{\varprojlim_n \beta_n}& (L \otimes_\ZZ E(\QQ))^\ast \otimes_{\ZZ_p} I/I^2 \ar@{->>}[d]^{(\ref{short})} \\
 & L\otimes_{\QQ_p }H^2(\ZZ_S,V) \otimes_{\ZZ_p} I/I^2,
}
$$
where the horizontal arrow is the map induced by the $p$-adic height pairing
$$x \mapsto (y \mapsto \langle x,y\rangle_p).$$
(For the definition of $\beta_n:=\beta_{\QQ_n}$, see (\ref{def beta}).)
\end{lemma}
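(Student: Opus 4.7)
The plan is to identify both maps in the diagram as being induced, via naturality, by the Bockstein construction applied to different but compatible Iwasawa-theoretic complexes. Concretely, $\varprojlim_n\beta_n$ is the inverse limit of the Bocksteins attached to $\rgamma(\cO_{\QQ_n,S},T)$ using (\ref{control}) and the tautological triangle of $\ZZ_p[G_n]/I_n^2$; whereas the $p$-adic height pairing of \S\ref{review height} is constructed by composing the Selmer-complex Bockstein $\widetilde\beta$ of (\ref{beta tilde}) (or its $(\varphi,\Gamma)$-module analogue in the supersingular case) with the Nekov\'a\v r/Benois global duality isomorphism $\widetilde H^2_f(\QQ,V)\to \widetilde H^1_f(\QQ,V)^{\ast}$. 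The comparison between these two Bocksteins is controlled by the defining exact triangle of the Selmer complex, and the comparison between the two duality pairings is controlled by Poitou–Tate duality (which is what underlies (\ref{short})).

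First, at each finite level $n$ I would apply $-\otimes^{\DL}_{\ZZ_p[G_n]}\ZZ_p[G_n]/I_n^2$ to the morphism of triangles
\begin{equation*}
\widetilde\rgamma_f(\QQ_n,T)\longrightarrow \rgamma(\cO_{\QQ_n,S},T)\longrightarrow \rgamma(\QQ_{n,\frp},T/F^{+}T)\oplus\bigoplus_{v\in S_{\QQ_n}\setminus\{\frp\}}\rgamma_{/f}(\QQ_{n,v},T),
\end{equation*}
invoke the two control theorems $\widetilde\rgamma_f(\QQ_n,T)\otimes^{\DL}_{\ZZ_p[G_n]}\ZZ_p\simeq\widetilde\rgamma_f(\QQ,T)$ and (\ref{control}), and use the functoriality of the connecting homomorphism to obtain a commutative square relating $\widetilde\beta_n$ and $\beta_n$ via the natural map $\widetilde H^{\bullet}_f(\QQ,T)\to H^{\bullet}(\ZZ_S,T)$. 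Passing to the inverse limit in $n$ shows that, for any $x\in E(\QQ)\subset\widetilde H^1_f(\QQ,V)$, the image of $\widetilde\beta(x)\in\widetilde H^2_f(\QQ,V)\otimes I/I^2$ in $H^2(\ZZ_S,V)\otimes I/I^2$ equals $\varprojlim_n\beta_n(x)$.

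Next I would identify the two global duality maps. The short exact sequence (\ref{short}) is precisely the Poitou–Tate sequence that realises $H^2(\ZZ_S,V)$ as the quotient of $(\QQ_p\otimes E(\QQ))^{\ast}$ dual to the subspace $\QQ_p\otimes E(\QQ)\subset \widetilde H^1_f(\QQ,V)$, and Nekov\'a\v r's (respectively Benois') global duality restricts on this subspace to the classical Poitou–Tate pairing. Combining this with the previous step, the functional $y\mapsto\langle x,y\rangle_p$ on $E(\QQ)$, viewed in $(\QQ_p\otimes E(\QQ))^{\ast}\otimes I/I^2$, projects under $(\QQ_p\otimes E(\QQ))^{\ast}\twoheadrightarrow H^2(\ZZ_S,V)$ precisely to $\varprojlim_n\beta_n(x)$, which is the commutativity asserted by the lemma. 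In the supersingular case one repeats the argument with $\drig(V_L)/\DD_\alpha$ in place of $V/F^+V$, using the control theorem (\ref{control2}) and Benois' duality \cite[Th. 3.1.5]{benois} in place of Nekov\'a\v r's.

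The main obstacle is ensuring that the two global duality pairings agree on the nose (including sign) when restricted to the $E(\QQ)$-parts of the Selmer cohomology, and that the local singular quotients appearing in the defining triangle of $\widetilde\rgamma_f$ do not contribute extra terms to the composite $\widetilde H^1_f\to H^1(\ZZ_S,V)\xrightarrow{\beta_\infty}H^2(\ZZ_S,V)\otimes I/I^2$; the latter point reduces to the vanishing of the corresponding Bocksteins on the unramified local complexes at primes in $S\setminus\{p\}$, which follows because those complexes become split exact after tensoring down to $\QQ$, and to a direct computation at $\frp$ using the fact that $\QQ_p\otimes_\ZZ E(\QQ)$ lands in the Bloch–Kato local condition.
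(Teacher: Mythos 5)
Your argument is essentially the paper's own proof: both compare the Selmer-complex Bockstein with $\beta_n$ by functoriality of connecting homomorphisms for the natural morphism $\widetilde{\rgamma}_f \to \rgamma(\ZZ_S,-)$ after tensoring down to $\ZZ_p[G_n]/I_n^2$ (respectively to $\cH/\cI^2$, with Shapiro's lemma, in the supersingular case), and then invoke the compatibility of Nekov\'a\v r/Benois global duality with the surjection in (\ref{short}). The paper leaves the duality compatibility and the vanishing of the local contributions implicit, which you make explicit, but the route is the same.
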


\begin{proof}
We first suppose that $p$ is ordinary. We have the commutative diagram
$$\xymatrix{
\widetilde \rgamma_f(\QQ,T)\otimes_{\ZZ_p}^{\DL}I_n/I_n^2 \ar[r] \ar[d] &\widetilde \rgamma_f(\QQ_n,T)\otimes^{\DL}_{\ZZ_p[G_n]} \ZZ_p[G_n]/I_n^2  \ar[r] \ar[d]& \widetilde \rgamma_f(\QQ,T) \ar[d] \\
\rgamma(\ZZ_S,T)\otimes_{\ZZ_p}^{\DL}I_n/I_n^2 \ar[r] & \rgamma(\cO_{\QQ_n,S},T)\otimes^{\DL}_{\ZZ_p[G_n]} \ZZ_p[G_n]/I_n^2  \ar[r] & \rgamma(\ZZ_S,T),
}$$
whose rows are exact triangles.
The map $\beta_n$ is defined by the connecting homomorphism of the bottom triangle. On the other hand, the $p$-adic height pairing is defined by the connecting homomorphism of the top triangle. Thus the claim follows from the functoriality of the connecting homomorphism, i.e., the commutativity of the diagram
$$\xymatrix{
\widetilde H^1_f(\QQ,T) \ar[r] \ar[d]& \widetilde H^2_f(\QQ,T) \otimes_{\ZZ_p} I_n/I_n^2 \ar[d] \\
H^1(\ZZ_S,T) \ar[r] &H^2(\ZZ_S,T) \otimes_{\ZZ_p}I_n/I_n^2,
}$$
where the horizontal arrows are connecting homomorphisms.

Next, suppose that $p$ is good supersingular. With the notations in \S \ref{psuper}, we have the commutative diagram with exact rows
$$\xymatrix{
\widetilde \rgamma_f(\QQ,V_L)\otimes_{L}^{\DL}\cI/\cI^2 \ar[r] \ar[d] &\widetilde \rgamma_{f,{\rm Iw}}(\QQ,V_L)\otimes^{\DL}_{\cH} \cH/\cI^2  \ar[r] \ar[d]& \widetilde \rgamma_f(\QQ,V_L) \ar[d] \\
\rgamma(\ZZ_S,V_L)\otimes_{L}^{\DL}\cI/\cI^2 \ar[r] & \rgamma(\ZZ_S,
\overline V_L)\otimes^{\DL}_{\cH} \cH/\cI^2  \ar[r] & \rgamma(\ZZ_S,V_L).
}$$
Since the map $\varprojlim_n \beta_n$ coincides with the map defined by the connecting homomorphism of the bottom triangle (by Shapiro's lemma), the claim follows by the same argument as in the ordinary case.\end{proof}

\begin{lemma}\label{alg}
Let $M$ and $N$ be $L$-vector spaces of dimension $r$ and $r-1$ respectively. Suppose that an exact sequence
\begin{eqnarray}\label{nml}
0 \to N \xrightarrow{\iota} M \xrightarrow{\ell} L \to 0
\end{eqnarray}
and $L$-linear maps $f: M \to M^\ast$ and $g: M \to N^\ast$ are given. Assume the following.
\begin{itemize}
\item[(a)] The diagram
$$
\xymatrix{
M \ar[r]^f \ar[rd]_g& M^\ast \ar@{->>}[d]^{\iota^\ast} \\
 & N^\ast
}
$$
is commutative.
\item[(b)] The map $f$ satisfies $f(x)(y)=f(y)(x)$ for any $x,y \in M$.
\end{itemize}
Then for any $x \in M$ the following diagram is commutative.
\begin{eqnarray}\label{bigcomm}
\xymatrix{
{\bigwedge}_L^r M \ar[r]^{\bigwedge^r f} \ar[dd]_{\bigwedge^{r-1} g}& {\bigwedge}_L^r M^\ast \ar[rd]^{\ell(x)\times} \\
 & & {\bigwedge}_L^r M^\ast \\
M \otimes_L {\bigwedge}_L^{r-1} N^\ast \ar[r]_{\delta}^{\simeq} & M \otimes_L {\bigwedge}_L^r M^\ast \ar[ru]_{f(x)\otimes \id}.
}
\end{eqnarray}
Here $\delta$ is the natural isomorphism induced by (\ref{nml}), and the left vertical arrow is defined by
$$\left({\bigwedge}^{r-1}g \right)(x_1\wedge\cdots \wedge x_r)=\sum_{i=1}^r (-1)^{i+1}x_i \otimes g(x_1)\wedge \cdots \wedge g(x_{i-1}) \wedge g(x_{i+1})\wedge \cdots \wedge g(x_r).$$

\end{lemma}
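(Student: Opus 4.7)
The plan is a direct multilinear-algebra computation in an adapted basis. First, note that both composites in the diagram~(\ref{bigcomm}) depend $L$-linearly on $x \in M$ and define $L$-linear maps out of the one-dimensional space ${\bigwedge}_L^r M$; it therefore suffices to verify the claimed equality on a single non-zero $r$-fold wedge and for $x$ ranging over a basis of $M$.

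To that end, I would fix a basis $x_1, \ldots, x_{r-1}$ of $\iota(N) \subseteq M$ (identifying $N$ with $\iota(N)$) and extend by an element $x_r$ with $\ell(x_r) = 1$, so that $\ell = x_r^\ast$ in $M^\ast$; let $\{x_i^\ast\}_{1 \le i \le r}$ and $\{y_i^\ast\}_{1 \le i \le r-1}$ denote the respective dual bases of $M^\ast$ and $N^\ast$. Set $A_{ij} := f(x_i)(x_j)$, so that hypothesis~(b) gives $A^T = A$, hypothesis~(a) gives $g(x_j) = \sum_{i < r} A_{ji}\, y_i^\ast$, and the natural isomorphism $\delta$ sends $y_1^\ast \wedge \cdots \wedge y_{r-1}^\ast$ to $(-1)^{r-1}\, x_1^\ast \wedge \cdots \wedge x_r^\ast$ (coming from $\ell \wedge x_1^\ast \wedge \cdots \wedge x_{r-1}^\ast$ via the dualization of (\ref{nml})).

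With these choices, the upper composite applied to $x_1 \wedge \cdots \wedge x_r$ is immediate: it yields $\ell(x) \cdot \det(A) \cdot x_1^\ast \wedge \cdots \wedge x_r^\ast$. For the lower composite, each omitted wedge $g(x_1) \wedge \cdots \wedge g(x_{i-1}) \wedge g(x_{i+1}) \wedge \cdots \wedge g(x_r)$ equals $M_{ir} \cdot y_1^\ast \wedge \cdots \wedge y_{r-1}^\ast$, where $M_{ir}$ denotes the $(i,r)$-minor of $A$; applying $\mathrm{id} \otimes \delta$ and then $f(x) \otimes \mathrm{id}$, and writing $x = \sum_j c_j x_j$ with $f(x)(x_i) = \sum_j c_j A_{ij}$ (using the symmetry of $A$), the lower composite evaluates to
\[ (-1)^{r-1} \sum_j c_j \Bigl( \sum_i (-1)^{i+1} A_{ij} M_{ir} \Bigr) \cdot x_1^\ast \wedge \cdots \wedge x_r^\ast. \]

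The final step is the standard Laplace expansion identity $\sum_i (-1)^{i+r} A_{ij} M_{ir} = \delta_{jr}\, \det(A)$, combined with the overall cancellation $(-1)^{r-1} \cdot (-1)^{r+1} = 1$. This collapses the inner sum to $c_r \det(A) = \ell(x) \det(A)$, matching the output of the upper composite. The argument is mechanical; the only delicate aspect is sign-tracking through the conventions for $\delta$ and for the formula defining ${\bigwedge}^{r-1} g$, but no conceptual obstruction arises. Note in particular that hypothesis~(b) enters only to align the indices in $f(x)(x_i) = \sum_j c_j A_{ij}$, thereby arranging that the Laplace expansion is taken along the column of $A$ determined by the splitting data — which, via $\ell$, is exactly the mechanism by which $\ell(x)$ appears on the upper path.
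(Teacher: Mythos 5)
Your argument is correct, and it follows the same broad strategy as the paper (evaluate both composites on a decomposable wedge $x_1\wedge\cdots\wedge x_r$, and use hypothesis (a) to replace each $g(x_j)$ by the restriction of $f(x_j)$), but the endgame is genuinely different. You pass to a basis adapted to the exact sequence, with $\ell=x_r^\ast$, encode $f$ by the symmetric Gram matrix $A_{ij}=f(x_i)(x_j)$, and finish with the cofactor identity $\sum_i(-1)^{i+r}A_{ij}M_{ir}=\delta_{jr}\det(A)$; symmetry of $A$ is exactly what lets you read $f(x)(x_i)$ as a pairing against the $j$-th \emph{column} so that the expansion collapses to $c_r\det(A)=\ell(x)\det(A)$, and your sign convention for $\delta$ (inserting $\ell$ in first position, giving $\delta(y_1^\ast\wedge\cdots\wedge y_{r-1}^\ast)=(-1)^{r-1}x_1^\ast\wedge\cdots\wedge x_r^\ast$) agrees with the one the paper implicitly uses. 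The paper instead stays basis-light on the dual side: it writes the image of the lower path as $\sum_i f(x)(x_i)\cdot f(x_1)\wedge\cdots\wedge\ell\wedge\cdots\wedge f(x_r)$ and then splits into cases, expressing $\ell$ in the basis $\{f(x_i)\}$ of $M^\ast$ when $f$ is bijective, and checking separately that both sides vanish when $f$ is degenerate. Your matrix computation buys uniformity (no bijectivity dichotomy) at the cost of more explicit sign bookkeeping, while the paper's version avoids choosing a dual basis but needs the case distinction; both uses of (a) and (b) occur at the same logical points, so the proofs are equivalent in substance.
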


\begin{proof}


Let $\{x_1,\ldots,x_r\}$ be a basis of $M$ and fix $x \in M$. It is sufficient to prove
$$f(x)\circ \delta\circ \left({\bigwedge}^{r-1}g \right)(x_1\wedge\cdots \wedge x_r) =\ell(x)\cdot f(x_1)\wedge\cdots \wedge f(x_r).$$

We shall describe the left hand side explicitly. 
Using assumption (a), we have
\begin{eqnarray}\label{delta exp}
&&\delta\circ \left({\bigwedge}^{r-1}g \right)(x_1\wedge\cdots \wedge x_r) \\
&=& \sum_{i=1}^r x_i\otimes f(x_1) \wedge \cdots \wedge f(x_{i-1})\wedge \ell \wedge f(x_{i+1})\wedge\cdots \wedge f(x_r). \nonumber
\end{eqnarray}
Thus we have
$$f(x)\circ \delta\circ \left({\bigwedge}^{r-1}g \right)(x_1\wedge\cdots \wedge x_r)  = \sum_{i=1}^r f(x)(x_i) \cdot f(x_1) \wedge \cdots \wedge f(x_{i-1})\wedge \ell \wedge f(x_{i+1})\wedge\cdots \wedge f(x_r). $$

Suppose first that $f$ is bijective. Then $\{f(x_1),\ldots,f(x_r)\}$ is a basis of $M^\ast$ and we can write
$$\ell = \sum_{i=1}^r a_i f(x_i) \text{ in }M^\ast$$
with some $a_1,\ldots,a_r \in L$.
By assumption (b), we have $f(x)(x_i)=f(x_i)(x)$ and so we compute
\begin{eqnarray*}
&&\sum_{i=1}^r f(x)(x_i) \cdot f(x_1) \wedge \cdots \wedge f(x_{i-1})\wedge \ell \wedge f(x_{i+1})\wedge\cdots \wedge f(x_r)\\
&=& \sum_{i=1}^r a_i f(x_i)(x) \cdot f(x_1)\wedge\cdots \wedge f(x_r)\\
&=& \ell(x)\cdot f(x_1)\wedge\cdots \wedge f(x_r).
\end{eqnarray*}
This proves the lemma in this case.

Suppose next that $f$ is not bijective. Then $\{f(x_1),\ldots,f(x_r)\}$ is linearly dependent so we may assume
$$f(x_1)=\sum_{i=2}^r a_i f(x_i)$$
with some $a_2,\ldots,a_r \in L$. 

We then compute

\begin{eqnarray*}
&&\sum_{i=1}^r f(x)(x_i) \cdot f(x_1) \wedge \cdots \wedge f(x_{i-1})\wedge \ell \wedge f(x_{i+1})\wedge\cdots \wedge f(x_r)\\
&=& \sum_{i=1}^r f(x_i)(x) \cdot f(x_1) \wedge \cdots \wedge f(x_{i-1})\wedge \ell \wedge f(x_{i+1})\wedge\cdots \wedge f(x_r) \\
&=& \left(\sum_{i=2}^r a_i f(x_i)(x) \right)\cdot \ell \wedge f(x_2) \wedge \cdots \wedge f(x_r) \\
 && +\sum_{i=2}^r f(x_i)(x) \cdot \left(\sum_{j=2}^r a_j f(x_j) \right)\wedge f(x_2) \wedge \cdots \wedge f(x_{i-1}) \wedge \ell \wedge f(x_{i+1}) \wedge \cdots \wedge f(x_r) \\
 &=&\sum_{i=2}^r a_i f(x_i)(x) \cdot \ell \wedge f(x_2) \wedge \cdots \wedge f(x_r) \\
 &&+\sum_{i=2}^r a_i f(x_i)(x) \cdot f(x_i)\wedge f(x_2)\wedge \cdots \wedge f(x_{i-1})\wedge \ell \wedge f(x_{i+1}) \wedge \cdots \wedge f(x_r) \\
 &=&0.
\end{eqnarray*}
Since $\bigwedge^r f$ is also zero in this case, this proves the desired commutativity.
\end{proof}

\subsubsection{}\label{proof of reg prop}We are now ready to prove Theorem \ref{reg prop}.

To do this we first apply Lemma \ref{alg} with $M:=L \otimes_\ZZ E(\QQ)$, $N:=L \otimes_{\QQ_p } H^2(\ZZ_S, V)^\ast$ and the exact sequence
$$0 \to L\otimes_{\QQ_p} H^2(\ZZ_S,V)^\ast \to L\otimes_\ZZ E(\QQ) \xrightarrow{\log_\omega} L \to 0,$$
which is obtained from (\ref{short}) (so we let $\ell$ in (\ref{nml}) be $\log_\omega$). We fix a $\ZZ_p$-basis of $I/I^2$ and identify it with $\ZZ_p$. By letting
$$f: M \to M^\ast; \ x \mapsto (y\mapsto \langle x,y\rangle_p)$$
and
$$g:=\varprojlim_n \beta_n :M \to N^\ast,$$
we see that assumptions (a) and (b) in Lemma \ref{alg} are satisfied by Lemmas \ref{comm} and \ref{sym} respectively. 

Let $\{ x_1,\ldots ,x_r\}$ be a basis of $E(\QQ)_{\rm tf} \subset M$. By the definition of $R_p$, we have
$$\left( {\bigwedge}^r f\right)(x_1\wedge \cdots \wedge x_r) = R_p\cdot x_1^\ast\wedge\cdots \wedge x_r^\ast \in {\bigwedge}_L^r M^\ast.$$
On the other hand, we have
\begin{eqnarray}\label{nek formula}
\delta \circ \left( {\bigwedge}^{r-1}g\right)(x_1\wedge\cdots \wedge x_r) = R_\omega^{\rm Boc} \otimes  (x_1^\ast \wedge \cdots \wedge x_r^\ast) \in M \otimes_L {\bigwedge}_L^r M^\ast,
\end{eqnarray}
where $\delta$ is as in (\ref{bigcomm}). This again follows from the definition of $R_\omega^{\rm Boc}$. Hence, for any $x \in E(\QQ)$, the commutativity of (\ref{bigcomm}) implies
$$f(x)(R_\omega^{\rm Boc}) = \ell(x) \cdot R_p,$$
i.e.,
$$\langle x, R_\omega^{\rm Boc}\rangle_p=\log_\omega(x) \cdot R_p.$$
This completes the proof of Theorem \ref{reg prop}.

\subsection{Schneider's height pairing}
 We now consider the case that $p$ is split multiplicative. In this case, the classical $p$-adic height pairing constructed by Schneider \cite{sch} is different from that of Nekov\' a\v r constructed above. Explicitly, Schneider's $p$-adic height pairing
$$\langle -,-\rangle_p^{\rm Sch}: E(\QQ ) \times E(\QQ) \to \QQ_p\otimes_{\ZZ_p}I/I^2$$
is related to Nekov\'a\v r's height pairing $\langle -, -\rangle_p$ by
\begin{eqnarray}\label{def sch}
\ell_p( \langle x,y\rangle_p^{\rm Sch})=\ell_p(\langle x,y\rangle_p) -\frac{\log_\omega(x)\log_\omega(y)}{\log_p(q_E)} \text{ in }\QQ_p,
\end{eqnarray}
where $\ell_p$ denotes the isomorphism
\begin{eqnarray}\label{lp}
\ell_p:\QQ_p \otimes_{\ZZ_p}I/I^2 \xrightarrow{\gamma-1 \mapsto \gamma} \QQ_p \otimes_{\ZZ_p} \Gamma \xrightarrow{\chi_{\rm cyc}}\QQ_p \otimes_{\ZZ_p} (1+p\ZZ_p) \xrightarrow{\log_p} \QQ_p,
\end{eqnarray}
with $\chi_{\rm cyc}$ the cyclotomic character, and $q_E \in \QQ_p$ is the $p$-adic Tate period of $E$. (See \cite[Th. 11.4.6]{nekovar}, where Schneider's height is denoted by $h_\pi^{\rm norm}$.) Note that, by the so-called `Saint Etienne Theorem' of Barr\'{e}-Sirieix, Diaz, Gramain and Philibert \cite{SaintEtienne}, one has $\log_p(q_E)\neq 0$ and so the above formula makes sense. Since the relation (\ref{def sch}) characterizes $\langle-,-\rangle_p^{\rm Sch}$, we adopt it as the definition of Schneider's $p$-adic height pairing.

\begin{definition}
We define Schneider's $p$-adic regulator
$$R_{p}^{\rm Sch} \in \QQ_p \otimes_{\ZZ_p} Q^r$$
by the discriminant of Schneider's $p$-adic height pairing, i.e.,
$$R_p^{\rm Sch}:=\det(\langle x_i,x_j\rangle_p^{\rm Sch})_{1\leq i ,j\leq r}$$
with $\{x_1,\ldots,x_r\}$ a basis of $E(\QQ)_{\rm tf}$.
\end{definition}

We identify $\QQ_p \otimes_{\ZZ_p}I/I^2 = \QQ_p$ via the isomorphism $\ell_p$.
By using the relation (\ref{def sch}), one checks that
$$R_p^{\rm Sch}=R_p -\frac{1}{\log_p(q_E)} \sum_{i=1}^r \log_\omega(x_i)\det \begin{pmatrix}
\langle x_1,x_1\rangle_p &\langle x_1,x_2 \rangle_p & \cdots & \log_\omega(x_1)& \cdots & \langle x_1,x_r\rangle_p \\
\langle x_2,x_1\rangle_p & \cdots& \cdots&\log_\omega(x_2)& \cdots & \langle x_2,x_r \rangle_p \\
\vdots & && \vdots&  &\vdots \\
\langle x_r,x_1 \rangle_p &\cdots&\cdots& \log_\omega(x_r)&\cdots& \langle x_r,x_r  \rangle_p
\end{pmatrix},$$
where the vector $(\log_\omega(x_j))_j$ is put on the $i$-th column in the matrix on the right hand side. In fact, this follows from the elementary formula
$$\det(a_{ij}+c b_i b_j)=\det(a_{ij}) +c \sum_{i=1}^r b_i \det \begin{pmatrix}
a_{11} & a_{12} & \cdots & b_1& \cdots & a_{1r} \\
a_{21} & \cdots& \cdots&b_2& \cdots & a_{2r} \\
\vdots & && \vdots&  &\vdots \\
a_{r1} &\cdots&\cdots& b_{r}&\cdots& a_{rr}
\end{pmatrix}$$
(with the vector $(b_j)_j$ put on the $i$-th column). 
Furthermore, by (\ref{delta exp}) and (\ref{nek formula}), we have
$$R_\omega^{\rm Boc}= \sum_{i=1}^r x_i \otimes \det \begin{pmatrix}
\langle x_1,x_1\rangle_p &\langle x_1,x_2 \rangle_p & \cdots &\log_\omega(x_1)& \cdots & \langle x_1,x_r\rangle_p \\
\langle x_2,x_1\rangle_p & \cdots& \cdots&\log_\omega(x_2)& \cdots & \langle x_2,x_r \rangle_p \\
\vdots & && \vdots&  &\vdots \\
\langle x_r,x_1 \rangle_p &\cdots&\cdots& \log_\omega(x_r)&\cdots& \langle x_r,x_r  \rangle_p
\end{pmatrix},$$
and hence we have
$$R_p^{\rm Sch}=R_p - \frac{\log_\omega(R_\omega^{\rm Boc})}{\log_p(q_E)}.$$
From this and Theorem \ref{reg prop}, we obtain the following.

\begin{theorem}\label{reg prop 2}
For any $x \in E(\QQ)$ we have
$$\langle x , R_\omega^{\rm Boc} \rangle_p^{\rm Sch} =\log_\omega(x)\cdot R_p^{\rm Sch}.$$
\end{theorem}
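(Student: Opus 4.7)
The plan is to deduce this result by a direct computation that combines Theorem \ref{reg prop} with the explicit formulas for $R_p^{\rm Sch}$ and $R_\omega^{\rm Boc}$ that were derived just above the statement of the theorem.

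First, I would extend the Schneider pairing bilinearly in its second argument to a map
$$\langle -,-\rangle_p^{\rm Sch}: E(\QQ) \times (\QQ_p\otimes_\ZZ E(\QQ))\otimes_{\ZZ_p}Q^{r-1} \to \QQ_p\otimes_{\ZZ_p}Q^r$$
in exact analogy with the map (\ref{pad2}) for Nekov\'a\v r's pairing. Using the identification $\QQ_p\otimes_{\ZZ_p}I/I^2 \simeq \QQ_p$ given by $\ell_p$ in (\ref{lp}), the defining relation (\ref{def sch}) reads
$$\langle x, y\rangle_p^{\rm Sch} = \langle x, y\rangle_p - \frac{\log_\omega(x)\log_\omega(y)}{\log_p(q_E)}$$
for all $x,y \in E(\QQ)$, and extending this bilinearly yields
$$\langle x, z\rangle_p^{\rm Sch} = \langle x, z\rangle_p - \frac{\log_\omega(x)\cdot \log_\omega(z)}{\log_p(q_E)}$$
for any element $z$ of $(\QQ_p\otimes_\ZZ E(\QQ))\otimes_{\ZZ_p}Q^{r-1}$, where $\log_\omega$ is extended by tensoring with the identity on $Q^{r-1}$.

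Next, I would apply this to $z = R_\omega^{\rm Boc}$. Invoking Theorem \ref{reg prop} to rewrite the first term on the right as $\log_\omega(x)\cdot R_p$, one obtains
$$\langle x, R_\omega^{\rm Boc}\rangle_p^{\rm Sch} = \log_\omega(x)\cdot R_p - \frac{\log_\omega(x)\cdot \log_\omega(R_\omega^{\rm Boc})}{\log_p(q_E)} = \log_\omega(x)\cdot \left(R_p - \frac{\log_\omega(R_\omega^{\rm Boc})}{\log_p(q_E)}\right).$$
Finally, recognising the quantity in parentheses as $R_p^{\rm Sch}$, via the displayed identity
$$R_p^{\rm Sch} = R_p - \frac{\log_\omega(R_\omega^{\rm Boc})}{\log_p(q_E)}$$
that precedes the theorem, yields the desired equality.

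The argument is essentially formal once one has the two main inputs in hand, namely Theorem \ref{reg prop} and the explicit description of $R_\omega^{\rm Boc}$ as $\sum_i x_i \otimes M_i$ (with $M_i$ the relevant cofactor determinants) coming from (\ref{delta exp}) and (\ref{nek formula}). I do not anticipate any substantive obstacle; the only care needed is to verify that the bilinear extension of Schneider's pairing is consistent with the bilinear extension of Nekov\'a\v r's pairing used in Theorem \ref{reg prop}, which is immediate from the fact that the correction term in (\ref{def sch}) is itself bilinear in $(x,y)$ through $\log_\omega$.
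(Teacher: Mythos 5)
Your proposal is correct and is essentially the paper's own argument: the paper likewise obtains Theorem \ref{reg prop 2} by combining Theorem \ref{reg prop} with the identity $R_p^{\rm Sch}=R_p-\log_\omega(R_\omega^{\rm Boc})/\log_p(q_E)$ (derived from the determinant computation preceding the statement) and the defining relation (\ref{def sch}), extended bilinearly under the identification via $\ell_p$ exactly as in (\ref{pad2}). No gap remains.
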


\section{The Generalized Rubin Formula and consequences}

In this section we relate Conjectures \ref{mrs2} and \ref{mrs3} to the $p$-adic analogue of the Birch and Swinnerton-Dyer conjecture formulated by Mazur, Tate and Teitelbaum in \cite{MTT} (see Corollaries \ref{cor padic} and \ref{cor beilinson}).

In particular, we continue to assume in this section that $E$ does not have additive reduction at $p$.

\subsection{Review of the $p$-adic $L$-function}

In this subsection, we review the $p$-adic $L$-function of Mazur-Tate-Teitelbaum \cite{MTT}. See also the review in \cite[\S 16.1]{katoasterisque}.

When $p$ is good, let $\alpha \in \overline \QQ_p$ be a root of $X^2-a_p X +p$ such that $\ord_p(\alpha) <1$ (an `allowable root'), and $\beta(:=p/\alpha)$ the other root. Note that, when $p$ is good ordinary, $\alpha$ is uniquely determined by this property.

When $p$ is split (resp. non-split) multiplicative, we set $\alpha:=1$ (resp. $-1$) and $\beta:=p$ (resp. $-p$).

We set
$$L:=\QQ_p(\alpha).$$
Note that $L=\QQ_p$ unless $p$ is supersingular.

Recall that $\QQ_\infty/\QQ$ denotes the cyclotomic $\ZZ_p$-extension and $\Gamma:=\Gal(\QQ_\infty/\QQ)$. Let $\widehat \Gamma$ denote the set of $\overline \QQ$-valued characters of $\Gamma$ of finite order.

Recall also that an embedding $\overline \QQ \hookrightarrow \CC$ is fixed. For a positive integer $m$, let $\zeta_m \in \overline \QQ$ be the element corresponding to $e^{2\pi\sqrt{-1}/m} \in \CC$. We also fix an isomorphism $\CC \simeq \CC_p$. From this, we obtain an embedding $\overline \QQ \hookrightarrow \overline \QQ_p$. Thus each character in $\widehat \Gamma$ is regarded both $\overline \QQ_p$ and $\CC$-valued.

As in \S \ref{sec 2}, we fix a N\'eron differential $\omega \in \Gamma(E,\Omega_{E/\QQ}^1)$. Let $\xi$ be the element of ${\rm SL}_2(\ZZ)$ used in the construction of Kato's Euler system (and normalized as in (\ref{xi condition}). Let $\Omega_\xi$ be the real period associated to $(\omega,\xi)$ (see (\ref{per xi})).

We fix a topological generator $\gamma$ of $\Gamma$. Then we have a natural identification
$$\cO_L[[\Gamma]]=\cO_L[[\gamma-1]].$$
Let $|-|_p: \CC_p \to \RR_{\geq 0} $ denote the $p$-adic absolute value normalized by $|p|_p=p^{-1}$. For a positive integer $h$, we define
$$\cH_{h}:=\left\{ \sum_{n=0}^\infty c_n (\gamma-1)^n \in L[[\gamma-1]] \ \middle| \  \underset{n \to \infty}{\lim} \frac{|c_{n}|_p}{n^h} =0\right\}$$
and
$$\cH_{\infty}:=\bigcup_h \cH_{h}.$$
For any continuous character $\chi: \Gamma \to \overline \QQ_p^\times$ and $f =\sum_n c_n (\gamma-1)^n \in \cH_{\infty}$, we can define a natural evaluation
$$\chi(f):= \sum_n c_n (\chi(\gamma)-1)^n \in \overline \QQ_p.$$

It is known that there is a unique element (the `$p$-adic $L$-function' of $E$)
$$\cL_{S,p}=\cL_{S,p,\alpha,\omega,\xi} \in \cH_{1}$$
that has the following property: for any character $\chi \in \widehat \Gamma$ one has
$$\chi(\cL_{S,p})=\begin{cases}\displaystyle
\left(1-\frac 1\alpha \right)\left(1-\frac 1\beta \right)^{-1} \frac{L_S(E,1)}{\Omega_\xi} &\text{if $\chi=1$},\\
\displaystyle\frac{\tau(\chi)}{\alpha^n} \frac{L_S(E,\chi^{-1},1)}{\Omega_\xi} &\text{if $\chi$ has conductor $p^n>1$}.
\end{cases}$$
Here in the latter case $\tau(\chi)$ denotes the Gauss sum
$$\tau(\chi):=\sum_{\sigma \in \Gal(\QQ(\mu_{p^n})/\QQ)} \chi(\sigma)\zeta_{p^n}^\sigma,$$
and $L_S(E,\chi^{-1},s)$ denotes the $S$-truncated Hasse-Weil $L$-function of $E$ twisted by $\chi^{-1}$.
For the construction of $\cL_{S,p}$ from Kato's Euler system, see Theorem \ref{katothm} below.

Let $\cI:=(\gamma-1)$ be the augmentation ideal of $\cH_\infty$. For a non-negative integer $a$, we set
$$\cQ^a:=\cI^a/\cI^{a+1}.$$
Note that we have a natural identification
$$\cQ^a = L \otimes_{\ZZ_p} Q^a.$$

We know the following `order of vanishing' (which is actually a consequence of Proposition \ref{kato vanish}).


\begin{proposition}[{\cite[Th. 18.4]{katoasterisque}}]\label{padic order}
Set $r:={\rm rank}_\ZZ(E(\QQ))$. Then we have
$$\cL_{S,p} \in \begin{cases}
\cI^r &\text{if $p$ is good or non-split multiplicative,}\\
\cI^{r+1} &\text{if $p$ is split multiplicative.}
\end{cases}
$$
\end{proposition}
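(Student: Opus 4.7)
The plan is to derive Proposition \ref{padic order} as a consequence of Proposition \ref{kato vanish}, following the template of Kato \cite[Th. 18.4]{katoasterisque}. The key input is the fact that $\cL_{S,p}$ can be obtained from the Iwasawa-theoretic Kato element ${}_{c,d}z_\infty$ via a canonical $\Lambda$-linear Coleman-type homomorphism. Precisely, to be recalled later as Theorem \ref{katothm}, there exists a $\Lambda$-equivariant map
\[ \Psi : \HH^1 \longrightarrow \cH_\infty \]
obtained by composing localization at $p$ with a Perrin-Riou or Coleman dual exponential, and satisfying
\[ \Psi({}_{c,d}z_\infty) = t_{c,d}\cdot \cL_{S,p}. \]

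Given this, Proposition \ref{kato vanish} ensures that ${}_{c,d}z_\infty \in I^{r-1}\HH^1$, and so $\Lambda$-linearity of $\Psi$ forces $t_{c,d}\cdot \cL_{S,p} \in \cI^{r-1}\cdot \cH_\infty$. Since $t_{c,d}$ evaluates at $\gamma=1$ to the unit $cd(c-1)(d-1) \in \ZZ_p^\times$ (recall that $cd$ is coprime to $p$), the element $t_{c,d}$ is invertible modulo every power of $\cI$, and dividing through yields the preliminary containment $\cL_{S,p} \in \cI^{r-1}$.

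To gain the additional order of vanishing in the good or non-split multiplicative case, I would examine the leading coefficient of $\cL_{S,p}$ in $\cI^{r-1}/\cI^r$ via the first-order evaluation of $\Psi$ at the trivial character. Since $\HH^1$ is $\Lambda$-free of rank one by Kato's theorem \cite[Th. 12.4(2)]{katoasterisque}, one can write ${}_{c,d}z_\infty = (\gamma-1)^{r-1}\cdot w$ for a uniquely determined element $w \in \HH^1$; the leading coefficient of $\cL_{S,p}$ is then proportional, via the trivial-character specialization of $\Psi$, to $\exp^*_\omega({\rm loc}_p(w_0))$, where $w_0 \in H^1(\ZZ_S,T)$ denotes the coinvariant image of $w$. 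The vanishing of this quantity follows by combining Kato's reciprocity law (\ref{kato rec}) with the Kato--Kolyvagin theorem that $L_S(E,1)=0$ whenever $r_{\rm alg}\geq 1$, together with the Iwasawa-theoretic extension of (\ref{kato rec}) identifying the specialization of $\Psi$ at $\gamma=1$ with an appropriate multiple of $\exp^*_\omega$.

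In the split multiplicative case, the Frobenius eigenvalue is $\alpha = 1$, so the Euler factor $(1 - 1/\alpha)$ in the trivial-character interpolation formula for $\cL_{S,p}$ vanishes identically. Correspondingly, the correctly normalized Coleman-type map $\Psi$ in this setting absorbs an additional factor of $(\gamma - 1)$ into its very definition, reflecting the Mazur--Tate--Teitelbaum exceptional zero; this yields the extra order of vanishing $\cL_{S,p} \in \cI^{r+1}$ in this case. The principal technical obstacle is the delicate local analysis of $\Psi$ at the trivial character in each reduction type, and in particular the careful bookkeeping of the additional $(\gamma-1)$ factor in the split multiplicative case, which requires explicit identification of the first-order term of the Coleman/dual exponential map in the presence of a trivial zero.
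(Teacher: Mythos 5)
Your overall architecture is the right one, and it is in fact the route the paper has in mind: the paper does not reprove Proposition \ref{padic order} (it cites Kato), but the mechanism behind the remark that it ``is actually a consequence of Proposition \ref{kato vanish}'' is carried out inside the proof of Theorem \ref{main}: one writes ${}_{c,d}z_\infty=(\gamma-1)^{r-1}w$ with $w\in\HH^1$ (Lemma \ref{kappa uni}), applies Theorem \ref{katothm} to get $t_{c,d}\cdot\cL_{S,p}={\rm Col}(w)\cdot(\gamma-1)^{r-1}$, and then gains the final order(s) from Lemma \ref{col3} (resp.\ Lemma \ref{col4} in the split multiplicative case). Up to that point your proposal matches. (A small inaccuracy along the way: $cd(c-1)(d-1)$ need not lie in $\ZZ_p^\times$, since nothing prevents $p\mid c-1$; what saves the division-by-$t_{c,d}$ step is only that $\cH_\infty/\cI^k$ is an $L$-algebra, so any element with non-zero constant term is invertible modulo $\cI^k$.)

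The genuine gap is in your justification of the last order of vanishing. You correctly identify the leading coefficient of ${\rm Col}(w)$ modulo $\cI$ as a non-zero multiple of $\exp^\ast_\omega$ of the bottom layer $w_0$, but you then claim its vanishing follows from Kato's reciprocity law (\ref{kato rec}) together with $L_S(E,1)=0$ for $r\geq 1$. The reciprocity law only controls $\exp^\ast_\omega({}_{c,d}z_\QQ)$, and ${}_{c,d}z_\QQ$ coincides with $w_0$ only when $r=1$; for $r\geq 2$ the bottom zeta element is the corestriction of $(\gamma-1)^{r-1}w_n$ and hence vanishes identically, so the reciprocity law gives no information whatsoever about $w_0$. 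The true reason $\exp^\ast_\omega(w_0)=0$ is not an $L$-value statement but a local--global one: under Hypothesis \ref{hyp} one has $H^1(\ZZ_S,V)\simeq\QQ_p\otimes_\ZZ E(\QQ)$ by (\ref{h1}), so the localization of $w_0$ at $p$ lies in $H^1_f(\QQ_p,V)=\QQ_p\otimes_{\ZZ_p}E_1(\QQ_p)$, on which the dual exponential vanishes identically; this is exactly the first claim of Lemma \ref{col3}, and it uses the finiteness of $\sha(E/\QQ)[p^\infty]$, not Kolyvagin's theorem (note also that this vanishing is genuinely global: the Coleman map is not trivial modulo $\cI$ on local Iwasawa cohomology, as the rank-zero case shows). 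The same problem propagates to the split multiplicative case: the exceptional-zero factorization ${\rm Col}=(\gamma^{-1}-1)\cdot{\rm Col}'$ accounts for only one extra order beyond the $\cI^{r-1}$ coming from Proposition \ref{kato vanish}, so to reach $\cI^{r+1}$ you still need ${\rm Col}'(w)\in I$ (Lemma \ref{col4}(ii)), which is proved by the same global $H^1_f$ argument and cannot be obtained from your $L$-value mechanism; as written, your split multiplicative paragraph either double-counts the exceptional zero or leaves this extra order unproved.
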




\subsection{The Generalized Rubin Formula}\label{gen RF section}

Let $\cL_{S,p}^{(r)} $ (resp. $\cL_{S,p}^{(r+1)}$) denote the image of $\cL_{S,p} \in \cI^r$ (resp. $\cI^{r+1}$) in $\cQ^r$ (resp. $ \cQ^{r+1}$) when $p $ is good or non-split multiplicative (resp. split multiplicative).


Recall some notations. Let
$$\langle -,-\rangle_p=\langle -,-\rangle_{p,\alpha}: E(\QQ) \times (\QQ_p \otimes_\ZZ E(\QQ))\otimes_{\ZZ_p} Q^{r-1} \to L \otimes_{\ZZ_p} Q^r=\cQ^r$$
be the map induced by the $p$-adic height pairing (see (\ref{pad2})). Let $\log_\omega: E(\QQ_p) \to \QQ_p$ be the formal logarithm associated to the fixed N\'eron differential $\omega$.
Let
$$\kappa_\infty \in H^1(\ZZ_S,V) \otimes_{\ZZ_p}Q^{r-1} \simeq (\QQ_p \otimes_\ZZ E(\QQ))\otimes_{\ZZ_p}Q^{r-1}.$$
be the Iwasawa-Darmon derivative in Definition \ref{def iw}.

The following is a generalization of `Rubin's formula' for the higher rank case.


\begin{theorem}[The Generalized Rubin Formula]\label{main}
Under Hypothesis \ref{hyp}, we have the following.
\begin{itemize}
\item[(i)] If $p$ is good or non-split multiplicative, then for any $x \in E(\QQ)$ we have
$$ \langle x, \kappa_\infty\rangle_p = \left( 1-\frac 1\alpha \right)^{-1}\left(1-\frac 1\beta\right)\log_\omega(x)\cdot  \cL_{S,p}^{(r)} \text{ in } \cQ^r.$$
\item[(ii)] If $p$ is split multiplicative, then for any $x \in E(\QQ)$ we have
$$ \langle x, \kappa_\infty\rangle_p^{\rm Sch} \cdot \frac{1}{{\rm ord}_p(q_E)} (1-{\rm rec}_p(q_E))= \left(1-\frac 1p\right)\log_\omega(x)\cdot  \cL_{S,p}^{(r+1)} \text{ in } \cQ^{r+1}.$$
Here $q_E \in \QQ_p^\times$ denotes the $p$-adic Tate period of $E$ and ${\rm rec}_p: \QQ_p^\times\to \Gamma$ the local reciprocity map.
\end{itemize}
\end{theorem}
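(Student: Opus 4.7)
The plan is to express both sides of the identity in terms of the Iwasawa-theoretic Kato element $z_\infty := ({}_{c,d}z_n)_n \in \mathbb{H}^1$ and then to match them via Kato's explicit reciprocity law together with the formalism of the $p$-adic height pairing. Kato's reciprocity law in the Iwasawa-theoretic setting (see Theorem \ref{katothm} below) identifies $\cL_{S,p}$ (up to the normalizing factor $cd(c-1)(d-1)$) with the image of $\loc_p(z_\infty)$ under an appropriate Coleman / Perrin-Riou `big logarithm' map $\mathfrak{L}$ that is $\Lambda$-equivariant (or $\cH$-equivariant in the supersingular case) and that interpolates the dual exponential $\exp^\ast_\omega$ at the trivial character up to the Euler-type factor $(1-1/\alpha)^{-1}(1-1/\beta)$. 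Independently, by the compatibility encoded in Lemma \ref{comm} together with its Iwasawa-theoretic analogue (obtained by essentially the same Shapiro-lemma argument already used in the proof of Lemma \ref{comm}), the $p$-adic height pairing is computed, at leading order, by the Bockstein map associated to the augmentation filtration acting on the Iwasawa-Selmer complex at $p$.

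Concretely, Proposition \ref{kato vanish} allows us to write $z_\infty \equiv (\gamma-1)^{r-1} w_\infty$ modulo $I^r \mathbb{H}^1$ for some $w_\infty \in \mathbb{H}^1$, so that formula (\ref{kappa exp}) gives $\kappa_\infty = {\rm Cor}_{\QQ_n/\QQ}(w_n) \otimes (\gamma-1)^{r-1}$ in $H^1(\ZZ_S, V) \otimes Q^{r-1}$ (in the normalized form, after dividing through by $cd(c-1)(d-1)$). Applying $\Lambda$-linearity of $\mathfrak{L}$ then identifies the leading term $\cL_{S,p}^{(r)} \in \cQ^r$ in case (i) with $(1-1/\alpha)^{-1}(1-1/\beta)\,\exp^\ast_\omega(\loc_p({\rm Cor}_{\QQ_n/\QQ}(w_n))) \cdot (\gamma-1)^{r-1}$. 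On the other hand, bilinearity factorizes $\langle x, \kappa_\infty\rangle_p$ as $\langle x, {\rm Cor}_{\QQ_n/\QQ}(w_n)\rangle_p \cdot (\gamma-1)^{r-1}$, and computing the height pairing on the right via the local Bockstein at $p$ and local Tate duality, together with the identification of the image of $x$ in the Bloch-Kato exponential quotient with $\log_\omega(x)$, produces the factor $\log_\omega(x)\cdot \exp^\ast_\omega(\loc_p({\rm Cor}_{\QQ_n/\QQ}(w_n)))$. Combining the two expressions yields part (i).

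For part (ii), split multiplicative reduction introduces a trivial zero of the Coleman map at the trivial character, which is responsible for the shift $\cL_{S,p} \in \cI^{r+1}$ in Proposition \ref{padic order}. Schneider's modified height pairing (defined via (\ref{def sch})) is precisely the one compatible with this shift, since its correction term $\log_\omega(x)\log_\omega(y)/\log_p(q_E)$ absorbs the contribution coming from the non-trivial local filtration of the Tate parametrization. The factor $(1-\rec_p(q_E))/\ord_p(q_E)$ in the statement encodes the passage from $\cI^r$ to $\cI^{r+1}$ via the $\mathcal{L}$-invariant; running the same argument as in (i), but with Schneider's pairing in place of Nekov\'a\v r's and with the additional derivative produced by the trivial zero, yields the identity in (ii).

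The principal obstacle will be the careful bookkeeping of the interpolation and normalization of the big logarithm map $\mathfrak{L}$. In the good ordinary and multiplicative cases this can be handled using the classical Coleman map, but in the supersingular case one must work throughout in the $(\varphi, \Gamma)$-module and Pottharst-Selmer-complex framework reviewed in \S\ref{psuper}, and the local-global compatibility between the Iwasawa-Selmer Bockstein and the image of $\mathfrak{L}$ at leading order is the technical heart of the argument. In the split multiplicative case, the additional subtlety of the trivial zero requires Schneider's normalization together with the Saint Etienne theorem (ensuring $\log_p(q_E)\neq 0$) in order to make the final matching unambiguous.
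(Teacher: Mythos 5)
Your overall architecture (Coleman map plus Kato's reciprocity on one side, Bockstein/height formalism on the other, with Proposition \ref{kato vanish} and (\ref{kappa exp}) supplying the factorization $z_\infty=(\gamma-1)^{r-1}w$) matches the paper's strategy, but the concrete matching step you write down is wrong, and the error hides exactly the idea that makes the theorem non-trivial. You identify the leading term $\cL_{S,p}^{(r)}$ with $\bigl(1-\tfrac1\alpha\bigr)^{-1}\bigl(1-\tfrac1\beta\bigr)\exp^\ast_\omega(\mathrm{loc}_p(w_0))\cdot(\gamma-1)^{r-1}$ and the height side with $\log_\omega(x)\cdot\exp^\ast_\omega(\mathrm{loc}_p(w_0))$, where $w_0={\rm Cor}_{\QQ_n/\QQ}(w_n)\in H^1(\ZZ_S,T)$. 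But under Hypothesis \ref{hyp} the localization of $H^1(\ZZ_S,V)$ at $p$ lands in $H^1_f(\QQ_p,V)$, on which $\exp^\ast_\omega$ vanishes identically; this vanishing is precisely what the paper uses to show ${\rm Col}(w)\in\cI$ in Lemma \ref{col3}. So both of your intermediate expressions are zero, and your argument would prove $0=0$ rather than the stated identity.

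The missing ingredient is Rubin's derivative $\cD(w)\in H^1(\QQ_p,F^-V)\otimes_{\ZZ_p}I/I^2$, obtained by a diagram chase in the three-row diagram (\ref{DistTri}) relating the Selmer complex, $\rgamma(\ZZ_S,V)$ and $\rgamma(\QQ_p,F^-V)$ after tensoring with $I/I^2$, $\Lambda/I^2$ and $\ZZ_p$. The theorem rests on two separate facts about this single object: first, Nekov\'a\v r's abstract Rubin formula $\langle x,w_0\rangle_p=(x,\cD(w))_p$ (Theorem \ref{rubin abs}, proved by compatibility of connecting homomorphisms and local--global duality); second, the computation that the image of ${\rm Col}(w)$ in $\cI/\cI^2$ equals $(\exp_0(\delta_0),\cD(w))_p$ (Lemma \ref{col3}, using the explicit elements $\delta_n$ with $\delta_0=(1-\tfrac1\alpha)(1-\tfrac1\beta)^{-1}\nu$ and $[\omega,\nu]_L=1$). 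It is the appearance of the same class $\cD(w)$ in both local Tate duality pairings --- not any value of $\exp^\ast_\omega$ on $w_0$ itself --- that lets one eliminate the unknown and produce the Euler factor. Your sketch of part (ii) inherits the same gap and additionally omits the normalization needed to make $\cD(y)$ well defined when $H^0(\QQ_p,F^-V)\neq 0$ (one must force $\cD(y)\in\im(\pi_p)$), the factorization ${\rm Col}=(\gamma^{-1}-1)\cdot{\rm Col}'$ responsible for the extra order of vanishing, and Lemma \ref{LinvariantLemma} which is where the factor $\tfrac{1}{\ord_p(q_E)}(1-{\rm rec}_p(q_E))$ actually comes from.
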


The proof of this theorem will be given in \S \ref{section proof}.

\begin{remark}
When $r=1$, we have $\kappa_\infty=z_\QQ$ (see Remark \ref{zq}), so Theorem \ref{main}(i) asserts
$$ \langle x, z_\QQ \rangle_p = \left( 1-\frac 1\alpha \right)^{-1}\left(1-\frac 1\beta\right)\log_\omega(x)\cdot  \cL_{S,p}^{(1)} \text{ in } \cI/\cI^{2}.$$
When $p$ is good ordinary, this formula is proved by Rubin \cite[Th. 1(ii)]{rubin}, which we call  `Rubin's formula' (following Nekov\'a\v r \cite[(11.3.14)]{nekovar}). 
(Note that `$L_{\bm{z},\omega}'(\bm{1})$' in \cite[Th. 1(ii)]{rubin} corresponds to our $\left(1-\frac 1\alpha\right) \cL_{S,p}^{(1)}$.)
Thus Theorem \ref{main}(i) is regarded as a `higher rank' generalization of Rubin's formula.
\end{remark}

\begin{remark}\label{remL}
The element
$$\frac{1}{{\rm ord}_p(q_E)} (1-{\rm rec}_p(q_E)) \in \QQ_p \otimes_{\ZZ_p} I/I^2$$
appearing in Theorem \ref{main}(ii) is essentially the `$\cL$-invariant'. In fact, one checks that the image of this element under the isomorphism
$$\QQ_p \otimes_{\ZZ_p}I/I^2 \xrightarrow{\gamma-1 \mapsto \gamma} \QQ_p \otimes_{\ZZ_p} \Gamma \xrightarrow{\chi_{\rm cyc}}\QQ_p \otimes_{\ZZ_p} (1+p\ZZ_p) \xrightarrow{\log_p} \QQ_p$$
(see (\ref{lp}))
is the usual $\cL$-invariant
$$\frac{\log_p(q_E)}{\ord_p(q_E)}.$$

\end{remark}

\begin{remark}
When $r=1$, Theorem \ref{main}(ii) is obtained by Venerucci \cite[Th. 12.31]{venerucci thesis} and B\"uy\"ukboduk \cite[Th. 3.22]{buyukexc}.
\end{remark}

A proof of Theorem \ref{main} will be given in \S \ref{section proof}. We state here some consequences of the theorem. Recall that $v_\xi \in \QQ^\times$ is defined by $\Omega^+=v_\xi \cdot \Omega_\xi$ (see (\ref{omega})).

\begin{corollary}\label{cor padic}
Conjecture \ref{mrs3} implies the $p$-adic Birch-Swinnerton-Dyer Formula in \cite[Chap. II, \S 10]{MTT}, i.e.,
$$ \left( 1-\frac 1\alpha \right)^{-1}\left(1-\frac 1\beta\right)\cdot  \cL_{S,p}^{(r)} =v_\xi\left(\prod_{\ell \in S \setminus \{\infty\}} L_\ell \right)\frac{\# \sha(E/\QQ)\cdot {\rm Tam}(E)}{\# E(\QQ)_{\rm tors}^2} R_p $$
if $p$ is good or non-split multiplicative, and
$$  \cL_{S,p}^{(r+1)} =\frac{1}{{\rm ord}_p(q_E)} (1-{\rm rec}_p(q_E))\cdot  v_\xi\left(\prod_{\ell \in S \setminus \{\infty,p\}} L_\ell \right)\frac{\# \sha(E/\QQ)\cdot {\rm Tam}(E)}{\# E(\QQ)_{\rm tors}^2} R_p^{\rm Sch} $$
if $p$ is split multiplicative.

If $R_p\neq 0$ (resp. $R_p^{\rm Sch}\neq 0$), then the converse also holds when $p$ is good or non-split multiplicative (resp. split multiplicative).
\end{corollary}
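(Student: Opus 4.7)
The strategy is to pair both sides of Conjecture \ref{mrs3} with an element $x\in E(\QQ)$ satisfying $\log_\omega(x)\neq 0$, using the comparison results of \S\ref{sec height} on the right-hand side and the Generalized Rubin Formula (Theorem \ref{main}) on the left-hand side. Such an $x$ exists under Hypothesis \ref{hyp}(ii): any element of infinite order in $E(\QQ)$ has some multiple lying in $E_1(\QQ_p)$, on whose torsion-free part $\log_\omega$ is injective, so $\log_\omega(x)\neq 0$ in $\QQ_p$.

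In the good or non-split multiplicative case, apply $\langle x,-\rangle_p$ to the equality asserted by Conjecture \ref{mrs3}. By Theorem \ref{reg prop} the right-hand side becomes
\[ v_\xi\Bigl(\prod_{\ell\in S\setminus\{\infty\}}L_\ell\Bigr)\frac{\#\sha(E/\QQ)\cdot{\rm Tam}(E)}{\#E(\QQ)_{\rm tors}^2}\cdot\log_\omega(x)\cdot R_p, \]
while Theorem \ref{main}(i) rewrites the left-hand side as $\bigl(1-1/\alpha\bigr)^{-1}\bigl(1-1/\beta\bigr)\log_\omega(x)\,\cL_{S,p}^{(r)}$. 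Cancelling the non-zero scalar $\log_\omega(x)$ yields the first stated formula in $\cQ^r$. In the split multiplicative case the analogous argument uses the Schneider variant: apply $\langle x,-\rangle_p^{\rm Sch}$, evaluate the right-hand side via Theorem \ref{reg prop 2}, and then multiply through by the $\mathcal{L}$-invariant $\frac{1}{\ord_p(q_E)}\bigl(1-\rec_p(q_E)\bigr)\in\QQ_p\otimes_{\ZZ_p}I/I^2$ in order to invoke Theorem \ref{main}(ii); finally use the identity $L_p=1-1/p$ valid in the split multiplicative case to rewrite $\prod_{\ell\in S\setminus\{\infty\}}L_\ell$ as $(1-1/p)\prod_{\ell\in S\setminus\{\infty,p\}}L_\ell$ and cancel the common factor $(1-1/p)\log_\omega(x)$ to arrive at the second stated identity in $\cQ^{r+1}$.

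For the converse, assume the $p$-adic BSD formula and $R_p\neq 0$ (resp. $R_p^{\rm Sch}\neq 0$). Running the same computation in reverse shows that, for every $x\in E(\QQ)$,
\[ \Bigl\langle x,\ \kappa_\infty - v_\xi\Bigl(\prod_{\ell\in S\setminus\{\infty\}}L_\ell\Bigr)\frac{\#\sha(E/\QQ)\cdot{\rm Tam}(E)}{\#E(\QQ)_{\rm tors}^2}R_\omega^{\rm Boc}\Bigr\rangle_p = 0 \]
(with Schneider's pairing in the split multiplicative case). The hypothesis $R_p\neq 0$ is exactly the non-degeneracy of the $p$-adic height pairing on $\QQ_p\otimes_\ZZ E(\QQ)$; since $Q^{r-1}$ is $\ZZ_p$-flat and $L$ is faithfully flat over $\QQ_p$, this non-degeneracy persists after passage to $(\QQ_p\otimes_\ZZ E(\QQ))\otimes_{\ZZ_p}Q^{r-1}\otimes_{\QQ_p}L$, so the bracketed element vanishes and Conjecture \ref{mrs3} is recovered. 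The one real subtlety is the Euler-factor bookkeeping in the split multiplicative case described above, which ultimately hinges on the coincidence between the extra $1-1/p$ produced by Theorem \ref{main}(ii) and the value of $L_p$ in this reduction type; the rest is a direct combination of Theorems \ref{main}, \ref{reg prop}, and \ref{reg prop 2}.
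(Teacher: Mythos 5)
Your proposal is correct and follows essentially the same route as the paper: pair Conjecture \ref{mrs3} with an $x\in E(\QQ)$ having $\log_\omega(x)\neq 0$, evaluate the two sides via Theorem \ref{main} and Theorems \ref{reg prop}/\ref{reg prop 2}, cancel $\log_\omega(x)$, and deduce the converse from the injectivity of $y\mapsto(x\mapsto\langle x,y\rangle_p)$ when $R_p\neq 0$ (resp. $R_p^{\rm Sch}\neq 0$). The only difference is that you spell out the split-multiplicative Euler-factor bookkeeping ($L_p=1-1/p$) which the paper leaves implicit in its "treated similarly" remark.
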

\begin{proof}
We only treat the case when $p$ is good or non-split multiplicative. The case when $p$ is split multiplicative is treated similarly, by using Theorem \ref{reg prop 2}.

Conjecture \ref{mrs3} asserts
$$\kappa_\infty =v_\xi\left(\prod_{\ell \in S \setminus \{\infty\}} L_\ell \right)\frac{\# \sha(E/\QQ) {\rm Tam}(E)}{\# E(\QQ)_{\rm tors}^2} \cdot R_\omega^{\rm Boc} \text{ in }(\QQ_p \otimes_\ZZ E(\QQ))\otimes_{\ZZ_p} Q^{r-1}.$$
Take $x \in E(\QQ)$ such that $\log_\omega(x)\neq 0$. Taking $\langle x, - \rangle_p$ to both sides, we obtain
$$ \left( 1-\frac 1\alpha \right)^{-1}\left(1-\frac 1\beta\right)\log_\omega(x)\cdot  \cL_{S,p}^{(r)} =v_\xi\left(\prod_{\ell \in S \setminus \{\infty\}} L_\ell \right)\frac{\# \sha(E/\QQ) {\rm Tam}(E)}{\# E(\QQ)_{\rm tors}^2} \log_\omega(x) R_p $$
by Theorems \ref{main} and \ref{reg prop}. Since $\log_\omega(x)\neq 0$, we can cancell it from both sides and obtain the desired formula.

If $R_p\neq0$, then the map $y \mapsto (x \mapsto \langle x,y\rangle_p)$ is injective, and so the converse holds.
\end{proof}

Similarly, we also obtain the following.

\begin{corollary}\label{cor beilinson}
Conjecture \ref{mrs2}  implies the $p$-adic Beilinson Formula, i.e.,
\begin{eqnarray}\label{bei}
 \left( 1-\frac 1\alpha \right)^{-1}\left(1-\frac 1\beta\right)\cdot  \cL_{S,p}^{(r)} =\frac{L_S^\ast(E,1)}{\Omega_\xi\cdot R_\infty}R_p
 \end{eqnarray}
if $p$ is good or non-split multiplicative, and
\begin{eqnarray}\label{gs}
  \cL_{S,p}^{(r+1)} =\frac{1}{{\rm ord}_p(q_E)} (1-{\rm rec}_p(q_E))\cdot  \frac{L_{S\setminus \{p\}}^\ast(E,1)}{\Omega_\xi\cdot R_\infty}R^{\rm Sch}_p
  \end{eqnarray}
if $p$ is split multiplicative.

If $R_p\neq 0$ (resp. $R_p^{\rm Sch}\neq 0$), then the converse also holds when $p$ is good or non-split multiplicative (resp. split multiplicative).
\end{corollary}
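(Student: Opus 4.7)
The plan is to mimic the proof of Corollary \ref{cor padic} essentially verbatim, replacing the input hypothesis Conjecture \ref{mrs3} by Conjecture \ref{mrs2} and tracking how this changes the constant on the right-hand side. By Proposition \ref{prop exp}, Conjecture \ref{mrs2} is equivalent to the identity
\[
 \kappa_\infty = \frac{L_S^\ast(E,1)}{\Omega_\xi \cdot R_\infty}\cdot R_\omega^{\rm Boc}
 \quad\text{in } \CC_p\otimes_{\ZZ_p} H^1(\ZZ_S,T)\otimes_{\ZZ_p}Q^{r-1}.
\]
First I would choose $x \in E(\QQ)$ with $\log_\omega(x)\neq 0$ (such $x$ exists since $E_1(\QQ_p)^\ast \twoheadrightarrow H^2(\ZZ_S,V)$ is non-trivial in view of (\ref{short}) and Hypothesis \ref{hyp}) and apply the pairing $\langle x,-\rangle_p$ (in the non-split multiplicative or good case) or $\langle x,-\rangle_p^{\rm Sch}$ (in the split multiplicative case) to both sides of the displayed identity.

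In the first case, the left-hand side is evaluated via the Generalized Rubin Formula of Theorem \ref{main}(i) to give
\(
 \left(1-\frac{1}{\alpha}\right)^{-1}\!\left(1-\frac{1}{\beta}\right)\log_\omega(x)\cdot\cL_{S,p}^{(r)},
\)
while the right-hand side is evaluated via Theorem \ref{reg prop} to give
\(
 \frac{L_S^\ast(E,1)}{\Omega_\xi\cdot R_\infty}\cdot \log_\omega(x)\cdot R_p.
\)
Cancelling the non-zero scalar $\log_\omega(x)$ yields (\ref{bei}). For the split multiplicative case, one applies Theorem \ref{main}(ii) to the left and Theorem \ref{reg prop 2} to the right after multiplying through by the factor $\frac{1}{\ord_p(q_E)}(1-\rec_p(q_E))$; after cancelling $\log_\omega(x)$ one obtains
\[
 \bigl(1-\tfrac 1p\bigr)\cL_{S,p}^{(r+1)} = \frac{L_S^\ast(E,1)}{\Omega_\xi\cdot R_\infty}\cdot R_p^{\rm Sch}\cdot\frac{1}{\ord_p(q_E)}(1-\rec_p(q_E)),
\]
and the Euler factor identity $L_S^\ast(E,1) = (1-1/p)\,L_{S\setminus\{p\}}^\ast(E,1)$ (coming from $L_p(E,s)=(1-p^{-s})^{-1}$ in the split multiplicative case) converts this into (\ref{gs}).

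For the converse assertion, assume $R_p\neq 0$ (respectively $R_p^{\rm Sch}\neq 0$). Then Theorem \ref{reg prop} (respectively Theorem \ref{reg prop 2}) shows that the composite map
\[
 (\QQ_p\otimes_\ZZ E(\QQ))\otimes_{\ZZ_p}Q^{r-1}\longrightarrow \Hom_{\ZZ_p}\bigl(E(\QQ),\, L\otimes_{\ZZ_p}Q^{r}\bigr),\qquad y\longmapsto (x\mapsto \langle x,y\rangle_p),
\]
evaluated on $R_\omega^{\rm Boc}$ is injective up to the non-zero scalar $R_p$, and more generally the above calculations can be read backwards to recover the equality of Proposition \ref{prop exp} from the formulas (\ref{bei}) or (\ref{gs}).

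The entire argument is essentially formal once one has in hand Proposition \ref{prop exp}, the Generalized Rubin Formula (Theorem \ref{main}), and the height-pairing comparison results (Theorems \ref{reg prop} and \ref{reg prop 2}); the only point requiring care is the bookkeeping of the Euler factor $1-1/p$ in the split multiplicative case, which is the main (mild) obstacle to avoid sign or normalization slips.
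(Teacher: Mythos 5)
Your proposal is correct and follows essentially the same route as the paper, whose proof simply says the argument of Corollary \ref{cor padic} goes through with Proposition \ref{prop exp} in place of the algebraic statement: pair with an $x$ satisfying $\log_\omega(x)\neq 0$, invoke Theorem \ref{main} on one side and Theorem \ref{reg prop} (resp.\ Theorem \ref{reg prop 2}) on the other, cancel $\log_\omega(x)$, and use injectivity of $y\mapsto(x\mapsto\langle x,y\rangle_p)$ when $R_p\neq 0$ (resp.\ $R_p^{\rm Sch}\neq 0$) for the converse. Your handling of the split multiplicative case, including cancelling the factor $1-\tfrac1p$ via $L_S^\ast(E,1)=(1-\tfrac1p)L_{S\setminus\{p\}}^\ast(E,1)$, is exactly the bookkeeping the paper leaves implicit and is correct.
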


\begin{proof}
This follows by the same argument as the proof of Corollary \ref{cor padic}, using Proposition \ref{prop exp}.
\end{proof}

\begin{remark}
When $p$ is good and $r_{\rm an} =r=1$, the formula (\ref{bei}) was proved by Perrin-Riou \cite[Cor. 1.8]{PR87} in the ordinary case, and by Kobayashi \cite[Cor. 1.3]{kobss} in the supersingular case. (It is essentially the `$p$-adic Gross-Zagier Formula'.) When $p$ is split multiplicative and $r_{\rm an}=r=0$, the formula (\ref{gs}) was first proved by Greenberg and Stevens \cite{gs} and then by Kobayashi \cite{kobayashi}.
\end{remark}

\subsection{Review of the Coleman map} \label{sec coleman}
As a preliminary of the proof of Theorem \ref{main}, we review the construction of the Coleman map. We follow the explicit construction due to Rubin \cite[Appendix]{rubinmodular}. See also \cite[\S 3]{kuriharass}.

We set
$$D:=  D_{\rm crys}(V) .$$
Let $\varphi$ denote the Frobenius operator acting on $D$. For a finite extension $K/\QQ_p$, we set
$$D_K:= K \otimes_{\QQ_p}D .$$
Let
$$[-,-]_K: (K \otimes_{\QQ_p}D_{\rm dR}(V) ) \times D_K \to K$$
denote the natural pairing.

We use the following fact.

\begin{lemma}[{\cite[Th. 16.6(1)]{katoasterisque}}]\label{lemnu}
Set $L:=\QQ_p(\alpha)$. There exists a unique $\nu=\nu_{\alpha,\omega} \in D_L$ such that
$$\varphi(\nu)=\alpha p^{-1}\nu =\beta^{-1} \nu \text{ and }[\omega,\nu]_L=1.$$
\end{lemma}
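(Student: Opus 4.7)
The plan is to exhibit $\nu$ in two steps: first locate the unique one-dimensional $L$-subspace of $D_L$ on which $\varphi$ acts by $\beta^{-1}$, then fix the scalar by normalising against $[\omega,-]_L$.

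For the first step, recall that $D$ is two-dimensional over $\QQ_p$ and that Frobenius on $D = D_{\rm crys}(V_p(E))$ has characteristic polynomial $X^2 - a_p p^{-1} X + p^{-1}$ (coming from the fact that geometric Frobenius on $H^1_{\rm et}(E_{\overline\QQ_p},\QQ_p)$ satisfies $X^2 - a_p X + p$, combined with the Tate-twist identification $V_p(E)\simeq H^1_{\rm et}(E_{\overline\QQ_p},\QQ_p)^*(1)$). Its roots are $\alpha^{-1}$ and $\beta^{-1}$, both in $L$, and they are distinct: $\alpha=\beta$ would force the integer $a_p$ to satisfy $a_p^2 = 4p$, which has no integer solutions for any prime $p$. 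Hence $\varphi$ is diagonalisable on $D_L$ with two distinct one-dimensional eigenspaces $D_L^{\alpha^{-1}}$ and $D_L^{\beta^{-1}}$, and any $\nu$ satisfying the first condition of the lemma is necessarily an element of $D_L^{\beta^{-1}}$.

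For the second step I would invoke the $\varphi$-equivariance of the pairing $[-,-]_L$ (which comes from the Weil pairing $V_p(E)\otimes V_p(E)\to\QQ_p(1)$) in the form $[\varphi x,\varphi y]_L = p^{-1}[x,y]_L$, reflecting the identity $\det\varphi = (\alpha\beta)^{-1} = p^{-1}$. For $x,y$ in a common eigenspace $D_L^{\lambda}$ this gives $\lambda^2[x,y]_L = p^{-1}[x,y]_L$, and since neither $\alpha^2$ nor $\beta^2$ equals $p$ (again by integrality of $a_p$), each eigenspace is isotropic; perfectness of $[-,-]_L$ then forces the induced pairing $D_L^{\alpha^{-1}}\times D_L^{\beta^{-1}}\to L$ to be perfect as well. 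Consequently, the restriction of $[\omega,-]_L$ to $D_L^{\beta^{-1}}$ is non-zero precisely when $\omega$ has a non-trivial component in $D_L^{\alpha^{-1}}$, equivalently when $\omega\notin D_L^{\beta^{-1}}$; in that case $[\omega,-]_L \colon D_L^{\beta^{-1}}\to L$ is an isomorphism of one-dimensional $L$-vector spaces, so there is a unique preimage of $1$, which is the desired $\nu$.

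The main obstacle is therefore the verification that $\omega\notin D_L^{\beta^{-1}}$. In the supersingular case this is automatic: $\omega$ is $\QQ_p$-rational whereas $\beta^{-1}\in L\setminus\QQ_p$, so no non-zero $\QQ_p$-rational element can be a $\beta^{-1}$-eigenvector of a $\QQ_p$-linear operator. In the ordinary case the argument requires an input from Serre--Tate theory: the connected-\'etale splitting of $E[p^\infty]$ yields a $\varphi$-stable decomposition $D = D^{\alpha^{-1}}\oplus D^{\beta^{-1}}$ (with $\alpha$ the unit root), and the identification of $\QQ_p\cdot\omega = F^0 D$ with the unit-root summand $D^{\alpha^{-1}}$ --- a standard fact explicit in Kato's treatment \cite[\S 16]{katoasterisque} --- places $\omega$ outside $D^{\beta^{-1}}$. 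Once this last identification is in hand, both existence and uniqueness of $\nu$ follow at once.
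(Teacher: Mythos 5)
The paper itself offers no argument for this lemma (it simply cites Kato), so your proof has to stand on its own. Its skeleton is the right one, and the supersingular step is correct, but there are two genuine gaps. The first is that the multiplicative case is missing, and it is in scope: the section containing Lemma \ref{lemnu} excludes only additive reduction, the conventions $\alpha=\pm 1$, $\beta=\pm p$ are introduced precisely for multiplicative reduction, and $\nu$ is used (through $\delta_n$ and Lemma \ref{LinvariantLemma}) in the split multiplicative proof of Theorem \ref{main}(ii). In that case $V$ is semistable but not crystalline, so $D=D_{\rm crys}(V)$ is \emph{one}-dimensional (the kernel of $N$ on $D_{\rm st}(V)$) with $\varphi$ acting by $\beta^{-1}=\pm p^{-1}$, and $\alpha,\beta$ are not roots of $X^2-a_pX+p$ (for $a_p=\pm1$ one has $\alpha+\beta\neq a_p$); your two-dimensional picture with characteristic polynomial $X^2-a_pp^{-1}X+p^{-1}$ simply does not apply. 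One still has to prove $[\omega,\nu]_L\neq 0$, i.e.\ that $\omega$ does not lie in the line $D_{\rm crys}(V)\subset D_{\rm dR}(V)$, and this needs its own (short) argument, e.g.\ weak admissibility of $D_{\rm st}(V)$ or the Tate parametrisation.

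The second gap is in the good ordinary case: the ``standard fact'' you invoke is not correct as stated. The connected--\'etale sequence of $E[p^\infty]$ is a filtration, not a splitting (the eigenspace decomposition of $D_L$ needs only the distinctness of the eigenvalues, which you already proved), and the Hodge line $\QQ_p\,\omega$ is \emph{not} equal to the $\alpha^{-1}$-eigenline in general: that equality would make the filtered module, hence $V|_{G_{\QQ_p}}$, decomposable, which holds for CM curves but fails for a typical ordinary curve. There is also an eigenvalue-bookkeeping trap: under $V\simeq H^1_{\text{\'et}}(E_{\overline\QQ_p},\QQ_p)(1)$ the classical unit-root subspace of $H^1_{\rm dR}$ corresponds to the $\alpha p^{-1}=\beta^{-1}$-eigenline of $D$, not to the $\alpha^{-1}$-eigenline, so identifying $\QQ_p\,\omega$ with ``the unit-root summand'' is wrong on either reading. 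What is true, standard, and exactly what you need is the transversality $D=\QQ_p\,\omega\oplus D^{\varphi=\beta^{-1}}$ (the unit-root splitting $H^1_{\rm dR}=\Gamma(E,\Omega^1_{E/\QQ})\otimes\QQ_p\oplus U$, twisted). The quickest uniform justification is weak admissibility: if $\omega$ spanned the $\varphi$-stable line with eigenvalue $\beta^{-1}$, that line would have Hodge number $0$ and Newton number $\ord_p(\beta^{-1})<0$, contradicting admissibility of $D_{\rm crys}(V)$ (resp.\ of $D_{\rm st}(V)$ in the multiplicative case); this single argument also subsumes your supersingular case and repairs both gaps at once.
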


Let $\QQ_{n,p}$ denote the completion of $\QQ_n$ at the unique prime lying above $p$. We set
$$L_n:=L\cdot \QQ_{n,p}.$$
Let $\nu \in D_L$ be as in Lemma \ref{lemnu} and set
\begin{eqnarray}\label{def delta}
\delta_n&:=&\frac{1}{p^{n+1}} {\rm Tr}_{L(\mu_{p^{n+1}})/L_n}\left( \sum_{i=0}^n \zeta_{p^{n+1-i}}\varphi^{i-n-1}(\nu)  +(1-\varphi)^{-1}(\nu)\right)\\
&=&\frac{1}{\alpha^{n+1}} {\rm Tr}_{L(\mu_{p^{n+1}})/L_n}\left( \sum_{i=0}^n \frac{\zeta_{p^{n+1-i}}-1}{\beta^i}  +\frac{\beta}{\beta-1}\right) \nu  \in D_{L_n}.\nonumber
\end{eqnarray}
This element satisfies
$${\rm Tr}_{L_{n+1}/L_n}(\delta_{n+1})=\delta_n$$
and for any character $\chi$ of $G_n$
\begin{eqnarray}\label{dn}
\sum_{\sigma \in G_n} \sigma(\delta_n)\chi(\sigma) =\begin{cases}
\displaystyle\left( 1-\frac 1\alpha \right) \left( 1-\frac 1\beta\right)^{-1} \nu &\text{if $\chi=1$,}\\
\displaystyle\frac{\tau(\chi)}{\alpha^m}  \nu&\text{if $\chi$ has conductor $p^m>1$}
\end{cases}
\end{eqnarray}
in $D_{L(\mu_{p^{n+1}})}$ (see \cite[Lem. A.1]{rubinmodular} or \cite[Lem. 3.1]{kuriharass}).

As in \S \ref{sec id}, we set
$$H^i_n:=H^i(\cO_{\QQ_n,S},T) \text{ and }\HH^i:=\varprojlim_n H^i_n.$$
We define a map
$${\rm Col}_n: H^1_n\to L[G_n]$$
by
$${\rm Col}_n(z):=\sum_{\sigma \in G_n} {\rm Tr}_{L_n/L}([\exp_n^\ast(z), \sigma \delta_n]_{L_n})\sigma,$$
where
$$\exp_n^\ast=\exp^\ast_{\QQ_{n,p},V}: H^1_n \to H^1(\QQ_{n,p},T) \to \QQ_{n,p}\otimes_{\QQ_p}D_{\rm dR}(V) $$
denotes the Bloch-Kato dual exponential map. This map induces a map on the inverse limit
$${\rm Col}:={\varprojlim_n {\rm Col}_n}: \HH^1  \to \cH_{\infty}.$$
This is the definition of the Coleman map.

We set
\begin{eqnarray}\label{tcd}
t_{c,d}:=cd(c-\sigma_c)(d-\sigma_d) \in \ZZ_p[[\Gamma]].
\end{eqnarray}
Here $\sigma_a \in \Gamma$ is the restriction of the automorphism of $\QQ(\mu_{p^{\infty}})$ characterized by $\zeta_{p^{n}}^{\sigma_a}=\zeta_{p^{n}}^a$ for every $n$.

The following result is well-known.

\begin{theorem}[Kato {\cite[Th. 16.6(2)]{katoasterisque}}]\label{katothm}
We have
$${\rm Col}(({}_{c,d}z_n)_n) =t_{c,d}\cdot \cL_{S,p}.$$
\end{theorem}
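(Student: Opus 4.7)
The plan is to reduce the identity to a character-by-character verification. Since $\cL_{S,p}$ lies in $\cH_1$ and is determined there by its interpolation values $\chi(\cL_{S,p})$ for $\chi \in \widehat \Gamma$, the same is true for $t_{c,d}\cdot \cL_{S,p}$. A standard growth estimate on the sequence $(\delta_n)_n$ shows that ${\rm Col}(({}_{c,d}z_n)_n)$ also belongs to $\cH_1$, so it is enough to check, for every finite order character $\chi \in \widehat \Gamma$ of conductor dividing $p^{n+1}$, an equality
$$\chi\bigl({\rm Col}_n({}_{c,d}z_n)\bigr) = \chi(t_{c,d})\cdot \chi(\cL_{S,p}).$$

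To evaluate the left hand side I would first write, directly from the definition of ${\rm Col}_n$,
$$\chi\bigl({\rm Col}_n({}_{c,d}z_n)\bigr) = \sum_{\sigma \in G_n}[\exp_n^\ast({}_{c,d}z_n), \sigma\delta_n]_{L_n}\chi(\sigma),$$
and then move $\chi$ inside the pairing by Galois equivariance, to obtain the pairing of $\exp_n^\ast({}_{c,d}z_n)$ against $\sum_{\sigma \in G_n} \sigma(\delta_n)\chi(\sigma)$ (the computation takes place naturally in the enlarged base field $L_n(\chi)$). The key explicit input (\ref{dn}) evaluates this second factor: it equals $\tau(\chi)\alpha^{-m}\nu$ when $\chi$ has conductor $p^m>1$, and $(1-\alpha^{-1})(1-\beta^{-1})^{-1}\nu$ when $\chi$ is trivial. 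This is precisely the Euler/Gauss factor that occurs in the defining interpolation property of $\chi(\cL_{S,p})$.

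Consequently the theorem reduces to showing that
$$[\exp_n^\ast({}_{c,d}z_n), \nu]_{\chi^{-1}} = \chi(t_{c,d})\cdot \frac{L_S(E,\chi^{-1},1)}{\Omega_\xi},$$
where the subscript $\chi^{-1}$ denotes the $\chi^{-1}$-isotypic component of the pairing. This is the character-twisted form of (\ref{kato rec}) and is the heart of Kato's explicit reciprocity law, established in \cite[Th.~6.6 and 9.7]{katoasterisque}; this is therefore the main obstacle. Modulo that input, the factor $\chi(t_{c,d}) = cd(c - \chi(\sigma_c))(d - \chi(\sigma_d))$ appears on both sides from the same $(c,d)$-normalisation in the definition of ${}_{c,d}z_n$, and assembling the character-wise identities then produces the theorem. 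The formal reductions (moving to characters, applying (\ref{dn}), and tracking the $(c,d)$-factor) are routine; the substantive content is entirely absorbed into Kato's explicit reciprocity.
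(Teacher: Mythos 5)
Your proposal is sound in outline, but note that the paper itself offers no proof of this statement at all: it is imported wholesale as Kato's Theorem 16.6(2) (``The following result is well-known''), so there is no internal argument to compare against. What you have written is essentially the standard derivation of Kato's result (as in Kato's \S 16 and the Rubin/Kurihara treatment of the Coleman map that \S \ref{sec coleman} reviews): evaluate both sides at finite-order characters, use the explicit identity (\ref{dn}) for $\sum_{\sigma}\sigma(\delta_n)\chi(\sigma)$ to produce exactly the Euler/Gauss factors in the interpolation formula, and reduce the remaining identity to the character-twisted explicit reciprocity law of \cite[Th. 6.6, 9.7]{katoasterisque}. Two points deserve more care than your sketch gives them. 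First, the reduction to character values is legitimate only because both sides lie in $\cH_1$ with the paper's strict growth condition $|c_n|_p=o(n)$: this is where the allowability $\ord_p(\alpha)<1$ enters (via the $\alpha^{-(n+1)}$ in $\delta_n$), and the step would fail in a larger class $\cH_h$, since e.g.\ $\log_p(\gamma)\in\cH_h$ for $h>1$ vanishes at every finite-order character; so ``determined by its interpolation values'' is a property of the growth class, not of $\cH_\infty$. Second, the definition of ${\rm Col}_n$ involves ${\rm Tr}_{L_n/L}$, so the passage to ``the pairing of $\exp_n^\ast({}_{c,d}z_n)$ against $\sum_\sigma\sigma(\delta_n)\chi(\sigma)$'' requires folding the trace into the character sum (writing the trace as a sum over $\Gal(L_n/L)\simeq G_n$ and re-indexing), which twists the first argument by $\chi^{-1}$; this is routine but is exactly where the $\chi^{-1}$ in $L_S(E,\chi^{-1},1)$ and the evaluation $\chi(t_{c,d})$ come from, and sign/inverse conventions must be tracked against Kato's normalisation. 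With those caveats, your argument is a correct proof modulo the twisted reciprocity law, which is the same external input the paper relies on by citing Kato directly.
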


\subsection{The proof of Theorem \ref{main}} \label{section proof}


In this subsection, we prove Theorem \ref{main}.

\subsubsection{}We first establish several important preliminary results.

We initially suppose that $p$ is good or non-split multiplicative, and give a proof of Theorem \ref{main}(i).


We shall use the derivative introduced by Nekov\'a\v r in \cite[\S 11.3.14]{nekovar}, based on the idea of Rubin in \cite{rubin}. 

With the notations in \S \ref{review height}, we set
$$F^-V:=\begin{cases}
V/F^+V &\text{if $p$ is ordinary},\\
\drig(V_L)/\DD_\alpha &\text{if $p$ is supersingular}.
\end{cases}$$

For $y \in \HH^1$, we define `Rubin's derivative'
$$\cD(y) \in H^1(\QQ_p, F^-V) \otimes_{\ZZ_p} I/I^2$$
as follows. (Compare the definition given by Nekov\'a\v r in \cite[\S 11.3.14]{nekovar}, where the symbol `$D x_{\rm Iw}$' is used.) 

Suppose first that $p$ is ordinary. We have a commutative diagram with exact rows and columns

\begin{eqnarray} \label{DistTri}
\small
\quad \quad
\xymatrix{
\widetilde \rgamma_f(\QQ,V) \lotimes_{\ZZ_p}I/I^2  \ar[r] \ar[d]& \rgamma(\ZZ_S,V)\lotimes_{\ZZ_p} I/I^2 \ar[r]\ar[d]& \rgamma(\QQ_p,F^-V)\lotimes_{\ZZ_p}I/I^2 \ar[d]^i\\
\widetilde \rgamma_{f,{\rm Iw}}(\QQ,V) \lotimes_\Lambda \Lambda/I^2 \ar[r]\ar[d]& \rgamma_{\rm Iw}(\ZZ_S,V) \lotimes_\Lambda \Lambda/I^2 \ar[r]^{{\rm loc}_p}\ar[d]& \rgamma_{\rm Iw}(\QQ_p,F^-V)\lotimes_\Lambda \Lambda/I^2 \ar[d]\\
\widetilde \rgamma_f(\QQ,V)  \ar[r]& \rgamma(\ZZ_S,V) \ar[r]& \rgamma(\QQ_p,F^-V) .
}
\label{big diag}
\end{eqnarray}
Here $\widetilde \rgamma_f(\QQ,V):=\widetilde \rgamma_f(\QQ,T)\lotimes_{\ZZ_p}\QQ_p$ and
$$\widetilde \rgamma_{f,{\rm Iw}}(\QQ,V):=\left(\varprojlim_n \widetilde \rgamma_f(\QQ_n,T)\right) \lotimes_{\ZZ_p}\QQ_p.$$
$\rgamma_{\rm Iw}(\ZZ_S,V)$ and $\rgamma_{\rm Iw}(\QQ_p,F^-V)$ are defined in a similar way.

We regard $y \in \HH^1$ as an element of $H^1(\rgamma_{\rm Iw}(\ZZ_S,V)\lotimes_\Lambda \Lambda/I^2)$. Since $y_0$ lies in $\widetilde H^1_f(\QQ,V)$ and $H^0(\QQ_p,F^-V)=0$, a diagram chasing shows that there exists a unique element $\cD(y) \in H^1(\QQ_p, F^-V) \otimes_{\ZZ_p} I/I^2$ such that
$${\rm loc}_p(y)=i(\cD(y)) \text{ in }H^1(\rgamma_{\rm Iw}(\QQ_p,F^-V)\lotimes_\Lambda \Lambda/I^2).$$
This gives the definition of Rubin's derivative in this case.

When $p$ is supersingular, Rubin's derivative is defined in the same way, by considering the commutative diagram with exact rows and columns
$$
\xymatrix{
\widetilde \rgamma_f(\QQ,V_L) \lotimes_{\ZZ_p}I/I^2  \ar[r] \ar[d]& \rgamma(\ZZ_S,V_L)\lotimes_{\ZZ_p} I/I^2 \ar[r]\ar[d]& \rgamma(\QQ_p,F^-V)\lotimes_{\ZZ_p}I/I^2 \ar[d]\\
\widetilde \rgamma_{f,{\rm Iw}}(\QQ,V_L) \lotimes_\cH \cH/\cI^2 \ar[r]\ar[d]& \rgamma(\ZZ_S,\overline V_L) \lotimes_\cH \cH/\cI^2 \ar[r]\ar[d]& \rgamma_{\rm Iw}(\QQ_p,F^-V)\lotimes_\cH \cH/\cI^2 \ar[d]\\
\widetilde \rgamma_f(\QQ,V_L)  \ar[r]& \rgamma(\ZZ_S,V_L) \ar[r]& \rgamma(\QQ_p,F^-V) .
}
$$


Let
$$(-,-)_p: H^1_f(\QQ_p, V) \times H^1(\QQ_p,F^-V) \to H^2(\QQ_p, L(1)) \simeq L$$
be the pairing defined by the cup product. This pairing induces
\begin{eqnarray}\label{cup}
(-,-)_p: E(\QQ) \times (H^1(\QQ_p,F^-V) \otimes_{\ZZ_p} I/I^2) \to L\otimes_{\ZZ_p} I/I^2=\cI/\cI^2.
\end{eqnarray}

The following is an abstract version of Rubin's formula.

\begin{theorem}[{Rubin, Nekov\'a\v r}] \label{rubin abs}
Suppose that $p$ is not split multiplicative. For any $x \in E(\QQ)$ and $y=(y_n)_n \in \varprojlim_n H^1_n=\HH^1$, we have
$$\langle x, y_0 \rangle_p = (x,\cD(y))_p \text{ in }\cI/\cI^2.$$
\end{theorem}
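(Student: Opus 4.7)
The plan is to extract both sides of the claimed identity from the $3 \times 3$ commutative diagram (\ref{DistTri}) (and its supersingular analogue) using only the functoriality of connecting homomorphisms and the standard compatibility between Poitou-Tate duality for Selmer complexes and local Tate duality at $p$.

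First I would observe that, since $y \in \HH^1$ lifts canonically to the Iwasawa Selmer complex, the image of $y$ in the middle entry of the middle row of (\ref{DistTri}) provides a natural lift of $y_0$ along the left vertical triangle
$$\widetilde{\rgamma}_f(\QQ,V) \lotimes I/I^2 \to \widetilde{\rgamma}_{f,{\rm Iw}}(\QQ,V) \lotimes \Lambda/I^2 \to \widetilde{\rgamma}_f(\QQ,V).$$
By the very definition of the connecting map, the image of this lift is a representative of the Bockstein $\widetilde{\beta}(y_0) \in \widetilde{H}^2_f(\QQ,V) \otimes I/I^2$ from (\ref{beta tilde}); then $\langle x,y_0\rangle_p$ is, by construction, the value on $(x,\widetilde{\beta}(y_0))$ of the global duality pairing $\widetilde{H}^1_f(\QQ,V) \times \widetilde{H}^2_f(\QQ,V) \to L$, extended $I/I^2$-linearly.

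Next I would use commutativity of (\ref{DistTri}) to relate $\widetilde{\beta}(y_0)$ to $\cD(y)$. Localizing the Bockstein at $p$ corresponds to tracing $y$ around a different face of the diagram: the image of $y$ in the Iwasawa cohomology of $F^-V$ is ${\rm loc}_p(y)$, which by the definition of $\cD(y)$ equals $i(\cD(y))$. By naturality of connecting homomorphisms, this forces the image of $\widetilde{\beta}(y_0)$ in $H^1(\QQ_p, F^-V) \otimes I/I^2$ to coincide with $\cD(y)$. The final step is the identification of the two pairings: since ${\rm loc}_p(x) \in H^1_f(\QQ_p,V)$, the standard Poitou-Tate compatibility for Selmer complexes shows that $\langle x, \widetilde{\beta}(y_0)\rangle$ factors as the local cup product between ${\rm loc}_p(x)$ and the localization of $\widetilde{\beta}(y_0)$ in $H^1(\QQ_p, F^-V) \otimes I/I^2$; combining with the previous step identifies this with $(x,\cD(y))_p$ from (\ref{cup}).

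The main obstacle will be establishing the Poitou-Tate compatibility uniformly in both reduction types. In the good ordinary case it is essentially contained in \cite[\S\S 11.2--11.3]{nekovar}, but in the supersingular case it requires the analogous compatibility via Pottharst's control theorem (\ref{control2}) and local duality for $(\varphi,\Gamma)$-modules as worked out by Benois \cite{benois}. Apart from this compatibility, the argument is purely formal, reducing to chasing a single commutative diagram.
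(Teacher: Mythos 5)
Your overall strategy is the paper's: extract both sides of the identity from the diagram (\ref{DistTri}) using the naturality of connecting homomorphisms and the compatibility of local and global duality (the paper likewise treats the supersingular case by the analogous diagram, as you anticipate). However, the pivotal middle step is stated in a direction in which the required map does not exist. You speak of ``the image of $\widetilde\beta(y_0)$ in $H^1(\QQ_p,F^-V)\otimes_{\ZZ_p}I/I^2$'' and later of ``the localization of $\widetilde\beta(y_0)$'' in that group; but there is no natural map $\widetilde H^2_f(\QQ,V)\to H^1(\QQ_p,F^-V)$. The defining triangle of the Selmer complex gives maps $H^1(\QQ_p,F^-V)\oplus\cdots\to\widetilde H^2_f(\QQ,V)\to H^2(\ZZ_S,V)$, i.e.\ the local $H^1$ of $F^-V$ maps \emph{into} $\widetilde H^2_f(\QQ,V)$, not the other way round. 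So the identity ``$\cD(y)$ equals the localization of $\widetilde\beta(y_0)$'' does not typecheck, and the claimed factorization of the global pairing $\langle x,\widetilde\beta(y_0)\rangle$ through such a localization is not available as stated.

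The repair is exactly the paper's formulation: let $\delta\colon H^1(\QQ_p,F^-V)\otimes_{\ZZ_p}I/I^2\to\widetilde H^2_f(\QQ,V)\otimes_{\ZZ_p}I/I^2$ be the connecting homomorphism of the \emph{top row} of (\ref{DistTri}). The compatibility of connecting homomorphisms (Nekov\'a\v r, Lem.\ 1.2.19), applied to $y$ viewed in the middle column, gives $\widetilde\beta(y_0)=\delta(\cD(y))$ --- that is, $\widetilde\beta(y_0)$ is expressed as the image of the local class under $\delta$, not conversely. Then the compatibility between local and global duality says precisely that pairing $x$ (under the identification $\widetilde H^2_f(\QQ,V)\simeq\widetilde H^1_f(\QQ,V)^\ast$) against $\delta(c)$ equals the local cup product of ${\rm loc}_p(x)$ with $c$, for any $c\in H^1(\QQ_p,F^-V)\otimes_{\ZZ_p}I/I^2$; taking $c=\cD(y)$ yields $\langle x,y_0\rangle_p=(x,\cD(y))_p$. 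With the arrows thus reversed your argument becomes the paper's proof, and the supersingular case goes through in the same way using the $(\varphi,\Gamma)$-module diagram and Pottharst's control theorem, as you indicated.
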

\begin{proof}
This is proved in \cite[Prop. 11.3.15]{nekovar}. We give a proof for the reader's convenience.
We treat only the ordinary case, since the supersingular case is treated in a similar way.

Recall that the map $\widetilde \beta: \widetilde H^1_f(\QQ,V) \to \widetilde H^2_f(\QQ,V)\otimes_{\ZZ_p}I/I^2$ in (\ref{beta tilde}) is defined to be $(-1)$-times the connecting homomorphism of the left vertical triangle of (\ref{big diag}). Let $\delta : H^1(\QQ_p,F^-V)\otimes_{\ZZ_p}I/I^2 \to \widetilde H^2_f(\QQ,V)$ be the connecting homomorphism of the upper horizontal triangle of (\ref{big diag}). Then, by the compatibility of connecting homomorphisms (see \cite[Lem. 1.2.19]{nekovar}), we have
$$\widetilde \beta(y_0)=\delta(\cD(y)).$$
We identify $\widetilde H^2_f(\QQ,V) =\widetilde H^1_f(\QQ,V)^\ast = \QQ_p\otimes_\ZZ E(\QQ)^\ast$ by global duality. Then for any $x \in E(\QQ)$ we have
$$\widetilde \beta(y_0)(x)=\langle x,y_0 \rangle_p$$
by the definition of the $p$-adic height pairing. On the other hand, by the compatibility between local and global duality, we have
$$\delta(\cD(y))(x)=(x,\cD(y))_p.$$
Thus we have
$$\langle x, y_0 \rangle_p = (x,\cD(y))_p .$$
\end{proof}

We shall now apply Theorem \ref{rubin abs} in our setting.

\begin{lemma}\label{kappa uni}
Let ${}_{c,d}\kappa_\infty \in H^1_0\otimes_{\ZZ_p}Q^{r-1}$ be the Iwasawa-Darmon derivative in Definition \ref{def iw}. Then there exists a unique $w=(w_n)_n \in \varprojlim_n H^1_n =\HH^1$ such that
$${}_{c,d}z_n=(\gamma-1)^{r-1}w_n$$
for every $n$ and
$${}_{c,d}\kappa_\infty =w_0 \otimes (\gamma-1)^{r-1}.$$
\end{lemma}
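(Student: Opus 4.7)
The plan is to obtain the element $w$ directly at the level of Iwasawa cohomology, exploiting the fact that $\HH^1$ is a free rank one $\Lambda$-module, and then recover the finite level statements by specialization.

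First, I would recall from the proof of Proposition \ref{kato vanish} that, under Hypothesis \ref{hyp}, Kato's theorem combined with the $\ZZ_p$-freeness of $H^1(\ZZ_S,T)$ forces $\HH^1$ to be a free $\Lambda$-module of rank one. Since $\Lambda = \ZZ_p[[\gamma-1]]$ is an integral domain, multiplication by the non-zero-divisor $(\gamma-1)^{r-1}$ is injective on $\HH^1$. Moreover, because $I = (\gamma-1)\Lambda$, one has $I^{r-1}\HH^1 = (\gamma-1)^{r-1}\HH^1$.

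Next, Proposition \ref{kato vanish} (applied with $a = r-1$) and the compatibility of its proof in the inverse limit gives the containment ${}_{c,d}z_\infty := ({}_{c,d}z_n)_n \in I^{r-1}\HH^1 = (\gamma-1)^{r-1}\HH^1$. Combined with the injectivity in the preceding paragraph, there is therefore a unique element $w \in \HH^1$ with ${}_{c,d}z_\infty = (\gamma-1)^{r-1}w$. Writing $w = (w_n)_n$ with $w_n \in H^1_n$, the equality ${}_{c,d}z_n = (\gamma-1)^{r-1}w_n$ for all $n$ follows by projecting, and uniqueness of the inverse system $w$ is inherited from the uniqueness of $w \in \HH^1$.

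Finally, I would verify the identification of ${}_{c,d}\kappa_\infty$ by unwinding the explicit formula (\ref{kappa exp}), which reads
\[ {}_{c,d}\kappa_n = {\rm Cor}_{\QQ_n/\QQ}(w_n) \otimes (\gamma-1)^{r-1} \in H^1_0 \otimes_{\ZZ_p} Q_n^{r-1}. \]
Because $(w_n)_n$ is an inverse system under corestriction, one has ${\rm Cor}_{\QQ_n/\QQ}(w_n) = w_0$ for every $n$, so ${}_{c,d}\kappa_n = w_0 \otimes (\gamma-1)^{r-1}$. Passing to the inverse limit in $n$ and using the identification $\varprojlim_n H^1_0 \otimes_{\ZZ_p} Q_n^{r-1} = H^1_0 \otimes_{\ZZ_p} Q^{r-1}$ (as recorded in Definition \ref{def iw}) yields the required equality ${}_{c,d}\kappa_\infty = w_0 \otimes (\gamma-1)^{r-1}$. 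The only nontrivial input is the freeness of $\HH^1$ over $\Lambda$; everything else is essentially bookkeeping with inverse limits and the definitions of $Q^{r-1}$ and ${}_{c,d}\kappa_\infty$.
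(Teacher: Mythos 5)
Your proof is correct, and it takes the opposite direction to the paper's own argument. The paper works bottom-up: it invokes Proposition \ref{kato vanish} at each finite level to choose some $w_n$ with ${}_{c,d}z_n=(\gamma-1)^{r-1}w_n$, observes that each $w_n$ is only well-defined up to the kernel of multiplication by $(\gamma-1)^{r-1}$, and then patches these choices into a genuine element of $\HH^1$ via the identification $\varprojlim_n H^1_n/p^n\simeq \HH^1$; the description of ${}_{c,d}\kappa_\infty$ is then read off from (\ref{kappa exp}). You instead work top-down: the containment ${}_{c,d}z_\infty\in I^{r-1}\cdot\HH^1$ (which is exactly what the proof of Proposition \ref{kato vanish} establishes at the Iwasawa level) together with $I^{r-1}=(\gamma-1)^{r-1}\Lambda$ and the $\Lambda$-freeness (hence torsion-freeness) of $\HH^1$ lets you divide once in $\HH^1$, after which the finite-level identities are obtained simply by applying the $\Lambda$-linear projections, and the identity ${}_{c,d}\kappa_\infty=w_0\otimes(\gamma-1)^{r-1}$ follows from (\ref{kappa exp}) and ${\rm Cor}_{\QQ_n/\QQ}(w_n)=w_0$ exactly as in the paper. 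What your route buys is that the compatibility of the $w_n$ is automatic rather than arranged by hand, and the uniqueness assertion of the lemma becomes transparent: two solutions differ by an element killed by the non-zero-divisor $(\gamma-1)^{r-1}$ in the torsion-free module $\HH^1$ (the paper's proof leaves uniqueness essentially implicit). The paper's finite-level construction, on the other hand, does not need to pass through the division step in $\HH^1$ and stays closer to the definition of the Darmon derivatives at each layer; both arguments ultimately rest on the same inputs, namely Proposition \ref{kato vanish} and the structure of $\HH^1$ as a free rank-one $\Lambda$-module.
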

\begin{proof}
By Proposition \ref{kato vanish}, one can take $w_n \in H^1_n$ such that ${}_{c,d}z_n=(\gamma-1)^{r-1}w_n$. This element is well-defined modulo $H^1_0$, so we see that the collection $(w_n)_n$ is an inverse system in $\varprojlim_n H^1_n/p^n$.
However, since $\varprojlim_n H^1_n/p^n$ is isomorphic to $\varprojlim_n H^1_n =\HH^1$, we can take each $w_n \in H^1_n$ so that $(w_n)_n \in \HH^1$. The description of ${}_{c,d}\kappa_\infty$ follows from (\ref{kappa exp}).
\end{proof}

By Lemma \ref{kappa uni}, we can define the `Rubin's derivative of the Iwasawa-Darmon derivative'
$$\cD({}_{c,d}\kappa_\infty):=\cD(w)\cdot (\gamma-1)^{r-1} \in H^1(\QQ_p,F^-V) \otimes_{\ZZ_p} Q^r.$$
Applying Theorem \ref{rubin abs} to this element, we obtain the following.

\begin{corollary}\label{cor kappa}
For any $x \in E(\QQ)$, we have
$$\langle x, {}_{c,d}\kappa_\infty \rangle_p = (x, \cD({}_{c,d}\kappa_\infty))_p \text{ in }\cQ^r,$$
where
$$(-,-)_p: E(\QQ) \times (H^1(\QQ_p,F^-V) \otimes_{\ZZ_p} Q^r) \to \cQ^r$$
is the map induced by (\ref{cup}).
\end{corollary}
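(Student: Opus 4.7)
The proof is essentially a direct consequence of Theorem \ref{rubin abs} combined with Lemma \ref{kappa uni}, together with compatibility of the two pairings with multiplication by elements of the augmentation ideal. Here is the plan.

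By Lemma \ref{kappa uni}, fix $w=(w_n)_n\in\HH^1$ with ${}_{c,d}z_n=(\gamma-1)^{r-1}w_n$ for every $n$, so that ${}_{c,d}\kappa_\infty = w_0\otimes(\gamma-1)^{r-1}$ in $H^1_0\otimes_{\ZZ_p} Q^{r-1}$. Applying Theorem \ref{rubin abs} to this particular element $w\in\HH^1$ gives the base-level identity
$$\langle x,w_0\rangle_p = (x,\cD(w))_p \text{ in }\cI/\cI^2$$
for every $x\in E(\QQ)$. All that remains is to ``multiply'' this equality by $(\gamma-1)^{r-1}\in Q^{r-1}$, viewed as mapping $\cI/\cI^2\otimes_{\ZZ_p}Q^{r-1}\to \cQ^r$ via the canonical multiplication $I/I^2\otimes_{\ZZ_p}I^{r-1}/I^r\twoheadrightarrow I^r/I^{r+1}=Q^r$.

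The key formal point is that both pairings appearing in the statement of Corollary \ref{cor kappa} are obtained from the base pairings (on the level of $\cI/\cI^2$) by $\ZZ_p$-linear extension to the appropriate tensor product. Concretely, the map (\ref{pad2}) defining $\langle x,-\rangle_p$ on $(\QQ_p\otimes_\ZZ E(\QQ))\otimes_{\ZZ_p}Q^{r-1}$ sends $(a\otimes y)\otimes b$ to $a\cdot b\cdot\langle x,y\rangle_p$, and the analogous extension of the cup-product pairing to $H^1(\QQ_p,F^-V)\otimes_{\ZZ_p} Q^r$ sends $z\otimes c$ to $c\cdot(x,z)_p$. Hence multiplying both sides of the base-level identity by $(\gamma-1)^{r-1}$ yields
$$\langle x,\, w_0\otimes(\gamma-1)^{r-1}\rangle_p = \bigl((x,\cD(w))_p\bigr)\cdot(\gamma-1)^{r-1} = (x,\, \cD(w)\cdot(\gamma-1)^{r-1})_p$$
in $\cQ^r$. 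The left-hand side is $\langle x,{}_{c,d}\kappa_\infty\rangle_p$ by the definition of ${}_{c,d}\kappa_\infty$, and the right-hand side is $(x,\cD({}_{c,d}\kappa_\infty))_p$ by the definition of $\cD({}_{c,d}\kappa_\infty)$ given just before the statement. This completes the proof.

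There is essentially no serious obstacle: the work was already done in Theorem \ref{rubin abs} (which gave the $\cI/\cI^2$-level statement via a diagram chase in the big diagram (\ref{DistTri})) and in Lemma \ref{kappa uni} (which produces the compatible lift $w$). The only thing one needs to verify is the compatibility of the extended pairings with the multiplication map $\cI/\cI^2\otimes_{L} \cQ^{r-1}\to\cQ^r$, which is tautological from the way both pairings are defined by $\ZZ_p$-linear extension of scalars. In particular, one notes that in the supersingular case the same argument works verbatim with $\Lambda/I^2$ replaced by $\cH/\cI^2$ and the base field $\QQ_p$ replaced by $L$, since Theorem \ref{rubin abs} covers both cases uniformly.
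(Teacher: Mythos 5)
Your argument is correct and is essentially the paper's own: the corollary is obtained exactly by applying Theorem \ref{rubin abs} to the element $w$ produced by Lemma \ref{kappa uni} and then multiplying the resulting identity in $\cI/\cI^2$ by $(\gamma-1)^{r-1}$, which is legitimate because both pairings are defined by linear extension and $\cD({}_{c,d}\kappa_\infty)$ is by definition $\cD(w)\cdot(\gamma-1)^{r-1}$. No gap; the compatibility with the multiplication map $\cI/\cI^2\otimes\cQ^{r-1}\to\cQ^{r}$ that you flag is indeed the only point to check, and it holds tautologically.
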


\begin{lemma}\label{col3}
Let $y \in  \HH^1$. Then we have
$${\rm Col}(y) \in \cI$$
and
$${\rm Col}(y)=(\exp_0(\delta_0),\cD(y))_p \text{ in }\cI/\cI^2,$$
where $\exp_0=\exp_{\QQ_p,V}: D_L \to L\otimes_{\QQ_p} H_f^1(\QQ_p,V)$ denotes the Bloch-Kato exponential map.
\end{lemma}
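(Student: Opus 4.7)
The plan is to establish the two assertions of the lemma in turn.

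For the containment ${\rm Col}(y)\in\cI$, I will compute the image of ${\rm Col}(y)$ under the augmentation $\aug\colon\cH_\infty\to L$, $\gamma\mapsto 1$. From the definition of ${\rm Col}_n$ and the character identity (\ref{dn}) at the trivial character, one gets
$$\aug({\rm Col}_n(y_n))=\left(1-\tfrac{1}{\alpha}\right)\left(1-\tfrac{1}{\beta}\right)^{-1}{\rm Tr}_{L_n/L}([\exp_n^\ast(y_n),\nu]_{L_n}),$$
and combining the compatibility $\exp_0^\ast\circ{\rm Cor}_{n,0}={\rm Tr}_{\QQ_{n,p}/\QQ_p}\circ\exp_n^\ast$ with the norm-relation $y_0={\rm Cor}_{n,0}(y_n)$ lets one pass to the limit to obtain
$$\aug({\rm Col}(y))=\left(1-\tfrac{1}{\alpha}\right)\left(1-\tfrac{1}{\beta}\right)^{-1}[\exp_0^\ast(y_0),\nu]_L.$$
Since $y\in\HH^1$ is a universal norm from the cyclotomic tower, its localization $y_0$ at $p$ lies in the Bloch-Kato Selmer condition $H^1_f(\QQ_p,V)=\ker(\exp^\ast)$, so the right-hand side vanishes and hence ${\rm Col}(y)\in\cI$.

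To compute ${\rm Col}(y)\bmod\cI^2$, the plan is to exhibit both sides of the claimed equality as the output of a common Bockstein-type connecting homomorphism. Concretely, the tautological exact sequence
$$0\to\cI/\cI^2\to\cH_\infty/\cI^2\to L\to 0$$
induces Bockstein maps on (Iwasawa) cohomology which, when coupled with the interpretation of ${\rm Col}$ as pairing against the system $\delta_\infty=(\delta_n)_n$, compute the derivative of ${\rm Col}(y)$ in $\cI/\cI^2$. On the other hand, $\exp_0(\delta_0)\in H^1_f(\QQ_p,V)$ is precisely the value at the trivial character of the class attached to $\delta_\infty$ via Perrin-Riou's big exponential (in the ordinary case) or via its $(\varphi,\Gamma)$-module analogue of Pottharst and Benois (in the good supersingular case), while the cup product $(\exp_0(\delta_0),\cD(y))_p$ is computed by the same Bockstein transported under the natural localization map $\rgamma(\ZZ_S,V)\to\rgamma(\QQ_p,F^-V)$. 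The functoriality of connecting homomorphisms then gives the desired equality after a diagram chase.

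The main technical obstacle I anticipate is the uniform construction of this Bockstein framework across the different reduction types, and in particular the verification that the big-exponential image of $\delta_\infty$ coincides with $\exp_0(\delta_0)$ modulo $\cI^2$ in each case. In the supersingular setting this will require working in the Iwasawa cohomology of $\drig(V_L)$ with the splitting submodule $\DD_\alpha$ and checking compatibility of the pairings with the projection $V\to F^-V$ entering the definition of $\cD(y)$. Once these compatibilities are established, the computation reduces to a formal diagram chase using the functoriality of the connecting homomorphism along the commutative diagram already displayed in the proof of Theorem \ref{rubin abs}.
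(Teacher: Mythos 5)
Your treatment of the first claim is essentially the paper's argument: the augmentation of ${\rm Col}(y)$ collapses, via the trivial-character case of (\ref{dn}) and the trace-compatibility of $\exp^\ast$, to $\bigl(1-\tfrac1\alpha\bigr)\bigl(1-\tfrac1\beta\bigr)^{-1}[\exp_0^\ast(y_0),\nu]_L = [\exp_0^\ast(y_0),\delta_0]_L$, which vanishes because $y_0$ lands in $H^1_f(\QQ_p,V)=\ker(\exp^\ast)$. That part is fine.

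For the second claim, however, your proposal is a strategy outline with the decisive step missing. The whole content of the identity ${\rm Col}(y)=(\exp_0(\delta_0),\cD(y))_p$ in $\cI/\cI^2$ rests on first rewriting the Coleman map by local Tate duality (adjointness of $\exp$ and $\exp^\ast$): one has
$${\rm Col}_n(y_n)=\sum_{\sigma\in G_n}\bigl(\exp_n(\delta_n),\sigma y_n\bigr)_{L_n}\,\sigma^{-1},$$
where the pairing is the cup product $H^1_f(L_n,V)\times H^1(\QQ_{n,p},F^-V)\to L$ applied to the image of $y_n$ under $V\twoheadrightarrow F^-V$. Only after this reinterpretation does the defining property of Rubin's derivative, ${\rm loc}_p(y)=i(\cD(y))$ in $H^1(\rgamma_{\rm Iw}(\QQ_p,F^-V)\lotimes_\Lambda\Lambda/I^2)$, plug directly into the pairing (the system $\exp_n(\delta_n)$ being trace-compatible) to give the stated congruence modulo $\cI^2$. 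You never state this dual description of ${\rm Col}_n$; instead you route the argument through Perrin-Riou's big exponential and a ``common Bockstein framework,'' and you explicitly defer the verification that the big-exponential image of $\delta_\infty$ agrees with $\exp_0(\delta_0)$ modulo $\cI^2$ as an ``anticipated obstacle.'' That deferred verification is not a technicality to be checked later — it is the substance of the lemma, and the detour through the interpolating big exponential is unnecessary here since $\exp_n(\delta_n)$ is already an explicitly given norm-compatible system. As written, the second half of your argument does not close.
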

\begin{proof}
We shall show the first claim. By the construction of the Coleman map, it is sufficient to show that
$$\sum_{\sigma \in G_n}{\rm Tr}_{L_n/L}\left( [\exp^\ast_n(y_n),\sigma \delta_n]_{L_n}\right)=0$$
for every $n$. The left hand side is equal to $[\exp_0^\ast(y_0),\delta_0]_{L}$. Since $y_0$ lies in $H^1_f(\QQ,V)$, we know that $\exp_0^\ast(y_0)=0$ and so we have proved the first claim.

Next, we shall show the second claim. Note that, by construction, we have
$${\rm Col}_n(y_n)=\sum_{\sigma \in G_n}(\exp_n(\delta_n),\sigma y_n)_{L_n}\sigma^{-1},$$
where $\exp_n: D_{L_n} \to H^1_f(L_n,V)$ denotes the Bloch-Kato exponential map and
$$(-,-)_{L_n}: H^1_f(L_n,V)\times H^1(\QQ_{n,p},F^-V) \to L$$
denotes the cup product pairing. Noting this, one verifies
$${\rm Col}(y)=(\exp_0(\delta_0),\cD(y))_p \text{ in }\cI/\cI^2$$
by the definition of $\cD(y)$.
\end{proof}


\subsubsection{Proof of Theorem \ref{main}(i)} Let $w \in \HH^1$ be the element in Lemma \ref{kappa uni}. We compute
\begin{eqnarray*}
t_{c,d} \cdot \mathcal{L}_{S,p}&=& {\rm Col}(({}_{c,d}z_n)_n) \quad  \text{ (by Theorem \ref{katothm})}\\
&=&{\rm Col}(w)\cdot (\gamma-1)^{r-1} \quad \text{ (by Lemma \ref{kappa uni})}\\
& \in& \cI^r \quad \text{ (by Lemma \ref{col3})}.
\end{eqnarray*}
Hence, in the quotient $\cQ^r=\cI^r/\cI^{r+1}$, we compute
\begin{eqnarray*}
t_{c,d} \cdot \mathcal{L}_{S,p}^{(r)}&=& (\exp_0 (\delta_0), \cD(w))_p\cdot (\gamma-1)^{r-1}  \quad \text{ (by Lemma \ref{col3})}\\
&=& (\exp_0(\delta_0), \cD({}_{c,d}\kappa_\infty))_p .
\end{eqnarray*}
By (\ref{dn}), note that
$$\delta_0=\left( 1-\frac 1 \alpha\right)\left( 1-\frac 1 \beta \right)^{-1} \nu.$$
Since $[\omega,\nu]_L=1$ by Lemma \ref{lemnu}, we have
$$\left( 1-\frac 1 \alpha\right)^{-1}\left(1-\frac 1\beta \right)\log_\omega(x) \exp_0(\delta_0)=x \text{ in }H^1_f(\QQ_p,V)$$
for any $x \in E(\QQ)$. Thus we have
\begin{eqnarray*}
\left( 1-\frac 1 \alpha\right)^{-1}\left(1-\frac 1\beta \right)\log_\omega(x) t_{c,d} \cdot \mathcal{L}_{S,p}^{(r)}&=&  (x, \cD({}_{c,d}\kappa_\infty))_p \\
&=& \langle x, {}_{c,d}\kappa_\infty \rangle_p \quad \text{ (by Corollary \ref{cor kappa})}.
\end{eqnarray*}
Upon multiplying both sides by $t_{c,d}^{-1}$ we obtain the desired formula.

This completes the proof of Theorem \ref{main}(i).

\subsubsection{}We now suppose that $p$ is split multiplicative and prepare for the proof of Theorem \ref{main}(ii).

Note first that, by Tate's uniformization, we have an exact sequence of $G_{\QQ_p}$-modules
\begin{eqnarray}\label{texact}
0 \to \ZZ_p(1) \to T \to \ZZ_p \to 0.
\end{eqnarray}
This means that $F^+V \simeq \QQ_p(1)$ and $F^-V :=V/F^+V \simeq \QQ_p$.

Since $H^0(\QQ_p,F^- V)$ does not vanish in this case,
Rubin's derivative $\cD(y)$ is not determined uniquely, so we impose more
condition to define it. Let 
$$\rho_p: H^0(\QQ_p, F^{-}V) \to H^1(\QQ_p, F^{-}V) \otimes_{\ZZ_p} I/I^2$$
be
the connecting homomorphism obtained from the right vertical exact triangle in 
(\ref{DistTri}). We know that
$$\im (\rho_p) = \langle \log_p \chi_{\rm cyc} \rangle \otimes_{\ZZ_p} I/I^2,$$
where we regard $\log_p \chi_{\rm cyc}: G_{\QQ_p} \to \QQ_p$ as an element of $H^1(\QQ_p, F^{-}V)= H^1(\QQ_p, \QQ_p)=\Hom_{\rm cont}(G_{\QQ_p}, \QQ_p)$. 
(See the proof of \cite[Lem. 15.1]{venerucci thesis} for example.) Let
$$\pi_p: H^1(\QQ_p, V) \otimes_{\ZZ_p} I/I^2 \to
H^1(\QQ_p, F^{-}V) \otimes_{\ZZ_p} I/I^2$$
be the map induced by $V \twoheadrightarrow F^{-} V$. Then one sees that $\im (\rho_p) \cap \im (\pi_p) =0$ (since $\log_p(q_E)\neq 0$), by which one can take a unique element
$$\cD(y) \in  \im (\pi_p)$$
such that ${\rm loc}_p(y)=i(\cD(y))$ in $H^1(\rgamma_{\rm Iw}(\QQ_p,F^-V)\lotimes_\Lambda \Lambda/I^2)$. 
Compare Venerucci's construction \cite[Lem. 15.1]{venerucci thesis} (where $I/I^2$ is identified with $\ZZ_p$). 

An analogue of Theorem \ref{rubin abs} is as follows.

\begin{theorem}\label{rubin vene}
Suppose that $p$ is split multiplicative. For any $x \in E(\QQ)$ and $y=(y_n)_n \in \varprojlim_n H^1_n =\HH^1$, we have
$$\langle x,y_0 \rangle_p^{\rm Sch} = (x,\cD(y))_p \text{ in }\QQ_p\otimes_{\ZZ_p}I/I^2.$$
\end{theorem}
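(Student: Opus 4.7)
The plan is to adapt the diagram-chase of Theorem~\ref{rubin abs}, tracking how the failure $H^0(\QQ_p, F^-V) = \QQ_p \neq 0$ enters. Exactly as in that proof, the analogue in our setting of diagram (\ref{DistTri}) still produces, via the compatibility of connecting homomorphisms (\cite[Lem.~1.2.19]{nekovar}), the identity $\widetilde{\beta}(y_0) = \delta(\cD'(y))$ in $\widetilde{H}^2_f(\QQ, V) \otimes I/I^2$ for \emph{any} lift $\cD'(y) \in H^1(\QQ_p, F^-V) \otimes I/I^2$ satisfying ${\rm loc}_p(y) = i(\cD'(y))$. Applying global duality and restricting to $x \in E(\QQ) \hookrightarrow H^1_f(\QQ, V) \subset \widetilde{H}^1_f(\QQ, V)$ then yields
\[ (x, \cD'(y))_p = \langle x, y_0 \rangle_p \quad \text{in } \QQ_p \otimes I/I^2, \]
where the right-hand side is Nekov\'{a}\v{r}'s $p$-adic height pairing as recalled in \S\ref{pordinary}.

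The main point is then to pass from Nekov\'{a}\v{r}'s pairing to Schneider's via the normalization $\cD(y) \in \im(\pi_p)$. Since any two lifts differ by an element of $\im(\rho_p) = \langle \log_p \chi_{\rm cyc} \rangle \otimes I/I^2$, one can write $\cD(y) = \cD'(y) + \rho_p(\lambda_y)$ for a unique $\lambda_y \in H^0(\QQ_p, F^-V) \otimes I/I^2$. Combining the previous step with the defining relation (\ref{def sch}), the theorem reduces to the purely local identity
\[ \ell_p\bigl((x, \rho_p(\lambda_y))_p\bigr) = -\frac{\log_\omega(x)\log_\omega(y_0)}{\log_p(q_E)} \quad \text{in } \QQ_p. \]

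To verify this identity I would use Tate's $p$-adic uniformization $E(\overline{\QQ}_p) \simeq \overline{\QQ}_p^\times/q_E^{\ZZ}$, which realises the exact sequence (\ref{texact}) and identifies $H^1(\QQ_p, F^+V) = H^1(\QQ_p, \QQ_p(1)) \simeq \widehat{\QQ}_p^\times \otimes \QQ_p$ via Kummer theory. Under this identification, the cup product $(\cdot,\cdot)_p$ becomes the explicit Tate local pairing, and $\log_\omega$ factors through the projection to the $F^+$-component. The element $\log_p \chi_{\rm cyc}$ that generates $\im(\rho_p)$ then pairs with $x \in H^1_f(\QQ_p, V)$ to give $\log_\omega(x)$ times a factor involving $\log_p \chi_{\rm cyc}(\gamma)$, while the normalization $\cD(y) \in \im(\pi_p)$ and the canonical decomposition $\widetilde H^1_f(\QQ, V) \simeq H^1_f(\QQ, V) \oplus \QQ_p$ of Remark~\ref{rem selmer} (in which the extra $\QQ_p$-line is generated by the Kummer class of $q_E$) jointly pin down $\ell_p(\lambda_y) = -\log_\omega(y_0)/\log_p(q_E)$.

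The main obstacle will be this last local calculation: carefully matching the sign, the normalisation by $\log_p(q_E)$, and the factor $\log_p \chi_{\rm cyc}(\gamma)$ that enters via $\ell_p$. The rank-one analogue is carried out by Venerucci in \cite[Lem.~15.1]{venerucci thesis}, where the same choice $\cD(y) \in \im(\pi_p)$ is made precisely to isolate Schneider's (rather than Nekov\'{a}\v{r}'s) pairing; I would adapt that argument to the general Iwasawa-theoretic setting of $\HH^1$, using the Saint-\'Etienne theorem to ensure $\log_p(q_E) \neq 0$ throughout.
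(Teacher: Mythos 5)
Your reduction has an internal inconsistency that points to a genuine gap. In the split multiplicative case the lifts of ${\rm loc}_p(y)$ differ by $\im(\rho_p)$, and the connecting map $\delta$ of the top row of (\ref{DistTri}) does \emph{not} annihilate $\im(\rho_p)$: the kernel of $\delta$ consists of classes coming from $H^1(\ZZ_S,V)\otimes I/I^2$, whose image at $p$ lies in $\im(\pi_p)$, whereas $\im(\rho_p)=\langle \log_p\chi_{\rm cyc}\rangle\otimes I/I^2$ meets $\im(\pi_p)$ trivially (precisely because $(q_E,\log_p\chi_{\rm cyc})_{\GG_m}=\log_p(q_E)\neq 0$). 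So the identity $\widetilde\beta(y_0)=\delta(\cD'(y))$, and hence $(x,\cD'(y))_p=\langle x,y_0\rangle_p$, cannot hold ``for any lift $\cD'(y)$'': if it did, then applying it to the normalized lift $\cD(y)\in\im(\pi_p)$ would give the theorem with Nekov\'a\v r's height in place of Schneider's, and your own step two would force the correction term $-\log_\omega(x)\log_\omega(y_0)/\log_p(q_E)$ to vanish, which is false in general. A related problem is that the pairing $(x,w)_p$ against classes $w\notin\im(\pi_p)$ (such as $\rho_p(\lambda)$) is not canonically defined, since it requires a lift of $x$ to $H^1(\QQ_p,F^+V)$ which is ambiguous by the $q_E$-line; likewise, in the exceptional case the class ``$y_0$'' in the extended Selmer group $\widetilde H^1_f(\QQ,V)\simeq H^1_f(\QQ,V)\oplus\QQ_p$ is only specified once this extra line is dealt with. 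So your ``purely local identity'' $\ell_p\bigl((x,\rho_p(\lambda_y))_p\bigr)=-\log_\omega(x)\log_\omega(y_0)/\log_p(q_E)$ is not yet well posed: $\lambda_y$ depends on the unspecified choice of $\cD'(y)$, and pinning down its value (equivalently, isolating the $q_E$-component of the class) is exactly the exceptional-zero computation you defer to ``adapting Venerucci''.

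That deferred computation is the whole content of the statement. The paper's proof is short precisely because it quotes Venerucci's Proposition 15.2, which asserts $\log_\omega(x)\cdot\cD(y)({\rm Fr}_p)=-\frac{\log_p(q_E)}{\ord_p(q_E)}\langle x,y_0\rangle_p^{\rm Sch}$ for the normalized derivative $\cD(y)\in\im(\pi_p)$, and then combines this with $\cD(y)({\rm Fr}_p)=-\frac{\log_p(q_E)}{\ord_p(q_E)}\exp^\ast_\omega(\cD(y))$ and $(x,\cD(y))_p=\log_\omega(x)\exp^\ast_\omega(\cD(y))$. If you want a self-contained argument along your lines, you must (i) identify the specific lift for which the compatibility of connecting homomorphisms is valid, keeping track of the chosen splitting of $\widetilde H^1_f(\QQ,V)$, and (ii) carry out the local Tate-curve computation of the resulting $\im(\rho_p)$-discrepancy (this is where $\log_\omega(y_0)$, $\log_p(q_E)$ and $\ord_p(q_E)$ enter, via the relation $\log_{q_E}=\log_p-\frac{\log_p(q_E)}{\ord_p(q_E)}\ord_p$ as in Lemma \ref{LinvariantLemma}); as written, neither step is established.
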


\begin{proof}
We identify $\QQ_p \otimes_{\ZZ_p} I/I^2 =\QQ_p$ via the isomorphism $\ell_p$ in (\ref{lp}). By  Venerucci's computation \cite[Prop. 15.2]{venerucci thesis}, we have
$$\log_\omega(x) \cdot \cD(y)({\rm Fr}_p) = -\frac{\log_p(q_E)}{\ord_p(q_E)} \langle x,y_0\rangle_p^{\rm Sch}.$$
(See also \cite[(127)]{venerucci thesis}.) Here $\cD(y)({\rm Fr}_p)$ means the evaluation of $\cD(y) \in H^1(\QQ_p,\QQ_p)=\Hom_{\rm cont}(G_{\QQ_p}, \QQ_p)$ at the arithmetic Frobenius ${\rm Fr}_p$ (this corresponds to ${\rm Der}_p(\bm{x})$ in \cite[\S 15]{venerucci thesis}, where $\bm{x}$ corresponds to our $y$). Since $\cD(y)({\rm Fr}_p)=-\frac{\log_p(q_E)}{\ord_p(q_E)}\exp^\ast_\omega(\cD(y))$ (see \cite[(6)]{kobayashi} or (\ref{l formula}) below) and $\log_p(q_E)\neq 0$, we have
$$\log_\omega(x)\exp^\ast_\omega(\cD(y))=\langle x,y_0 \rangle_p^{\rm Sch}.$$
Since the left hand side is equal to $(x,\cD(y))_p$, we obtain the desired formula.
\end{proof}

The following is an analogue of Corollary \ref{cor kappa}
\begin{corollary}\label{cor kappa2}
For any $x \in E(\QQ)$, we have
$$\langle x, {}_{c,d}\kappa_\infty \rangle_p^{\rm Sch} = (x, \cD({}_{c,d}\kappa_\infty))_p \text{ in }\QQ_p\otimes_{\ZZ_p}Q^r.$$
\end{corollary}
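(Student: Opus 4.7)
The proof will follow exactly the template used for Corollary \ref{cor kappa}, with Theorem \ref{rubin vene} replacing Theorem \ref{rubin abs}. First I will invoke Lemma \ref{kappa uni} to produce the distinguished element $w=(w_n)_n\in\HH^1$ satisfying ${}_{c,d}z_n=(\gamma-1)^{r-1}w_n$ for every $n$ and
$${}_{c,d}\kappa_\infty = w_0\otimes(\gamma-1)^{r-1} \in H^1_0\otimes_{\ZZ_p}Q^{r-1}.$$
Note that Lemma \ref{kappa uni} makes no use of the reduction type at $p$, so it applies in the split multiplicative case as well.

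Next I will define Rubin's derivative in the present setting by the formula
$$\cD({}_{c,d}\kappa_\infty) := \cD(w)\cdot (\gamma-1)^{r-1} \in H^1(\QQ_p,F^-V)\otimes_{\ZZ_p}Q^r,$$
where $\cD(w)\in H^1(\QQ_p,F^-V)\otimes_{\ZZ_p}I/I^2$ is the element specified in the paragraph preceding Theorem \ref{rubin vene}, uniquely characterized by belonging to $\im(\pi_p)$ and lifting $\loc_p(w)$ modulo $\im(\rho_p)$. This mirrors the definition used in the good/non-split multiplicative cases between Lemma \ref{kappa uni} and Corollary \ref{cor kappa}.

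Then I will apply Theorem \ref{rubin vene} to the element $w$ and to an arbitrary $x\in E(\QQ)$: this gives an equality
$$\langle x,w_0\rangle_p^{\rm Sch} = (x,\cD(w))_p \quad\text{in }\QQ_p\otimes_{\ZZ_p}I/I^2.$$
Exactly as for the pairing in (\ref{pad2}), Schneider's pairing and the local cup product pairing both extend by $\ZZ_p$-linearity in the second argument to the tensor product with $Q^{r-1}$. Multiplying the displayed equality by $(\gamma-1)^{r-1}$ and invoking this linearity yields
$$\langle x,{}_{c,d}\kappa_\infty\rangle_p^{\rm Sch}
= \langle x,w_0\rangle_p^{\rm Sch}\cdot(\gamma-1)^{r-1}
= (x,\cD(w))_p\cdot(\gamma-1)^{r-1}
= (x,\cD({}_{c,d}\kappa_\infty))_p$$
in $\QQ_p\otimes_{\ZZ_p}Q^r$, which is the assertion of the corollary.

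I do not anticipate any genuine obstacle: all the substantive work has already been done in setting up the split-multiplicative Rubin derivative, in Venerucci's computation underlying Theorem \ref{rubin vene}, and in the construction of $w$ via Lemma \ref{kappa uni}. The only minor point worth spelling out will be the compatibility of both pairings with the tensor product by $(\gamma-1)^{r-1}$, which is immediate from the definition of the extended pairings and the fact that $\cD$ is $\ZZ_p[[\Gamma]]$-linear on the part of $\HH^1$ relevant here.
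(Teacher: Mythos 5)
Your proposal is correct and is exactly the argument the paper intends: the corollary is stated as the analogue of Corollary \ref{cor kappa}, obtained by combining Lemma \ref{kappa uni}, the definition $\cD({}_{c,d}\kappa_\infty):=\cD(w)\cdot(\gamma-1)^{r-1}$ (with the split-multiplicative normalization of $\cD$ fixed before Theorem \ref{rubin vene}), and Theorem \ref{rubin vene} applied to $w$, then multiplying by $(\gamma-1)^{r-1}$.
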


Since $E$ over $\QQ_p$ is a Tate curve, we have an isomorphism
$E(\QQ_p) \simeq \QQ_{p}^{\times}/\langle q_{E} \rangle$.
We denote by $\lambda_{p}$ the composite map
$$\lambda_{p}: \QQ_{p}^{\times} \to
(\QQ_{p}^{\times}/\langle q_{E} \rangle) \otimes \QQ_{p}
\simeq E(\QQ_p) \otimes \QQ_{p} \to H^1(\QQ_{p},V)$$
where the final map is the Kummer map.
This map $\lambda_{p}$ also coincides with the composite
$\QQ_{p}^{\times} \to H^1(\QQ_p, \QQ_{p}(1))=H^1(\QQ_p, F^+V) \to H^1(\QQ_{p},V)$
where the first map is the Kummer map.
Therefore, for any $a \in \QQ_{p}^{\times}$ and $z \in H^1(\QQ_{p},V)$
we have
$$(\lambda_{p}(a),z)_{p}=(a, \pi_{p}(z))_{\GG_m}$$
where $\pi_p: H^1(\QQ_p, V) \to H^1(\QQ_p, \QQ_p)$ is the
natural map induced by $V \twoheadrightarrow F^{-}V=\QQ_p$, and
$(-,-)_{\GG_m}$ is the pairing induced by the cup product
$H^1(\QQ_{p},\QQ_p(1)) \times H^1(\QQ_{p},\QQ_p) \to
H^2(\QQ_{p},\QQ_p(1))\simeq \QQ_p$.

The following result explains how the $\cL$-invariant occurs in our  generalized version of Rubin's formula.

\begin{lemma} \label{LinvariantLemma}
For any $z \in H^1(\QQ_{p},V)$ we have
$$(\lambda_{p}(p),z)_{p} \cdot (\gamma-1)=(p, \pi_{p}(z))_{\GG_m} \cdot (\gamma-1)
=-\frac{\log_{p}\chi_{\rm cyc}(\gamma)}{\ord_p(q_E)}
\exp^\ast_\omega(z)(1-{\rm rec}_p(q_E))$$
in $\QQ_p \otimes_{\ZZ_p} I/I^2$.
\end{lemma}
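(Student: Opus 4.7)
The first equality $(\lambda_p(p),z)_p\cdot(\gamma-1) = (p,\pi_p(z))_{\GG_m}\cdot(\gamma-1)$ is the specialisation to $a=p$ of the compatibility $(\lambda_p(a),z)_p = (a,\pi_p(z))_{\GG_m}$ established in the paragraph preceding the lemma, so only the second equality requires real argument.

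For the second equality, I would first apply the isomorphism $\ell_p\colon \QQ_p\otimes I/I^2 \xrightarrow{\sim} \QQ_p$ of \eqref{lp}, noting that $\ell_p(\gamma-1) = \log_p\chi_{\rm cyc}(\gamma)$ and $\ell_p(1-\rec_p(q_E)) = -\log_p\chi_{\rm cyc}(\rec_p(q_E))$. After cancelling the factor $\log_p\chi_{\rm cyc}(\gamma)$, the claim reduces to the scalar identity
$$(p,\pi_p(z))_{\GG_m} = \frac{\log_p\chi_{\rm cyc}(\rec_p(q_E))}{\ord_p(q_E)}\exp^\ast_\omega(z)\quad\text{in }\QQ_p.$$
Local Tate duality identifies $(a,\chi)_{\GG_m} = \chi(\rec_p(a))$ for every $\chi \in H^1(\QQ_p,\QQ_p) = \Hom_{\rm cont}(G_{\QQ_p},\QQ_p)$, so the left side equals $\pi_p(z)(\rec_p(p))$; meanwhile the long exact sequence attached to \eqref{texact} exhibits $\ker(\pi_p)$ as the image of $H^1(\QQ_p,\QQ_p(1))$, on which $\exp^\ast_\omega$ vanishes because $D_{\rm dR}^0(\QQ_p(1)) = 0$. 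Consequently $\exp^\ast_\omega$ factors through $\pi_p$, and since $\im(\pi_p)$ is the one-dimensional subspace of $H^1(\QQ_p,\QQ_p)$ annihilated by cup product with the Tate class $\lambda_p(q_E)$, the scalar identity need only be checked on a single generator.

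The explicit form of the identity on such a generator uses the filtered $(\varphi,N)$-module attached to $V$: in a monodromy-adapted basis of $D_{\rm st}(V)$, the Hodge line $D_{\rm dR}^0(V) = \QQ_p\cdot\omega$ sits at slope $\mathcal{L} = \log_p(q_E)/\ord_p(q_E)$, and this slope determines the value of $\exp^\ast_\omega$ on any lift of a generator of $\im(\pi_p)$. Combined with the computation, via the decomposition $q_E = p^{\ord_p(q_E)}\cdot u$ with $u \in \ZZ_p^\times$ and the Iwasawa convention $\log_p(p) = 0$, that $\log_p\chi_{\rm cyc}(\rec_p(q_E)) = \pm\log_p(q_E)$, this produces the displayed formula.

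The main obstacle is the analytic step of identifying the induced functional of $\exp^\ast_\omega$ on $\im(\pi_p)$ in terms of the $\mathcal{L}$-invariant: this is precisely the ``Kobayashi formula'' \cite[(6)]{kobayashi} referenced in the proof of Theorem~\ref{rubin vene}. The remainder of the argument is formal but requires careful bookkeeping of signs across the conventions for arithmetic-versus-geometric Frobenius, the cup product's orientation, and the invariant map $H^2(\QQ_p,\QQ_p(1)) \xrightarrow{\sim} \QQ_p$.
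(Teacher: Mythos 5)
Your formal reductions are sound: the first equality is indeed just the compatibility $(\lambda_p(a),z)_p=(a,\pi_p(z))_{\GG_m}$ recorded immediately before the lemma, and applying $\ell_p$ from (\ref{lp}) and observing that both sides kill $\ker(\pi_p)=\im\bigl(H^1(\QQ_p,\QQ_p(1))\to H^1(\QQ_p,V)\bigr)$ correctly reduces the statement to a scalar identity on the one-dimensional space $\im(\pi_p)$. The problem is that this scalar identity, equivalently
\begin{equation*}
(\lambda_p(p),z)_p=-\frac{\log_p(q_E)}{\ord_p(q_E)}\,\exp^\ast_\omega(z),
\end{equation*}
\emph{is} the entire content of the lemma, and your proposal never proves it: the remark that the Hodge line of $D_{\rm st}(V)$ ``sits at slope $\cL$'' is not converted into an actual evaluation of $\exp^\ast_\omega$ on a lift of a generator of $\im(\pi_p)$, the needed normalization $\log_p\chi_{\rm cyc}(\rec_p(q_E))=-\log_p(q_E)$ is left as an unresolved $\pm$, and the step you yourself call ``the main obstacle'' is discharged by citing \cite[(6)]{kobayashi} --- which is essentially the identity to be proved, so as written the argument is nearly circular and in any case does not pin down the sign asserted in the statement (a sign which matters, since it is what produces the $\cL$-invariant with the correct orientation in Theorem \ref{main}(ii)).

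The gap is filled by an elementary computation with the Tate parametrization, with no recourse to filtered $(\varphi,N)$-modules. Since $\log_{q_E}=\log_p-\frac{\log_p(q_E)}{\ord_p(q_E)}\ord_p$ is the unique branch of the logarithm on $(\QQ_p^\times/\langle q_E\rangle)\otimes\QQ_p$ vanishing on $q_E$, and it corresponds to $\log_\omega$ under $E(\QQ_p)\simeq\QQ_p^\times/\langle q_E\rangle$, one gets $\log_{q_E}(p)=-\log_p(q_E)/\ord_p(q_E)$ and hence $\lambda_p(p)=\lambda_p\bigl(\exp_{q_E}(\log_{q_E}(p))\bigr)=-\exp_{\QQ_p,V}\bigl(\tfrac{\log_p(q_E)}{\ord_p(q_E)}\nu\bigr)$, with $\nu$ as in Lemma \ref{lemnu}. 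Pairing with $z$ and using $[\omega,\nu]_L=1$ together with the adjunction between $\exp$ and $\exp^\ast$ yields (\ref{l formula}) with its sign; the lemma then follows from the identity $1-\rec_p(q_E)=\frac{\log_p(q_E)}{\log_p\chi_{\rm cyc}(\gamma)}(\gamma-1)$ in $\QQ_p\otimes_{\ZZ_p}I/I^2$, which is also exactly where the reciprocity-map convention (the sign you left ambiguous) gets fixed. If you want to keep your one-generator strategy, you must replace the citation and the $\pm$ by this, or an equivalent, explicit computation.
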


\begin{proof} We write $\log_{q_E}: (\QQ_p^\times/ \langle q_E\rangle) \otimes \QQ_p \to \QQ_p$ for the logarithm that vanishes on $\langle q_E \rangle$ and note that this coincides with the formal logarithm via the isomorphism $E(\QQ_p) \simeq \QQ_{p}^{\times}/\langle q_{E} \rangle$. We also write $\exp_{q_E}$ for the inverse of $\log_{q_E}$. 

Then, by using the equality of functions  
$$\log_{q_E}=\log_p - \frac{\log_p(q_E)}{\ord_p(q_E)} \cdot \ord_p$$
(cf. the proof of  \cite[Cor. 3.7]{venerucci}), one computes that 

\begin{eqnarray*}
\lambda_{p}(p)&=&\lambda_{p}(\exp_{q_E} (\log_{q_E} (p)))\\
&=&\lambda_{p}\left(\exp_{q_E} \left(-\frac{\log_p(q_E)}{\ord_p(q_E)}\right)\right)\\
&=&-\exp_{\QQ_p,V}\left(\frac{\log_{p} (q_{E})}{\ord_p(q_E)}\nu \right)
\end{eqnarray*}
in $E(\QQ_p) \otimes \QQ_{p}$. 
Thus we have
\begin{eqnarray}\label{l formula}
(\lambda_{p}(p),z)_{p}=-\frac{\log_{p} (q_{E})}{\ord_p(q_E)}
\exp^\ast_\omega(z).
\end{eqnarray}
(See also \cite[(6)]{kobayashi}.) The claim follows by noting
$$1-{\rm rec}_p(q_E) = \frac{\log_p(q_E)}{\log_p \chi_{\rm cyc}(\gamma)}\cdot (\gamma-1).$$
\end{proof}

Let $U_n^1$ be the group of principal local units in $\QQ_{n,p}$. Let $(d_n)_n \in \varprojlim_n U_n^1$ be the system constructed by Kobayashi in \cite[\S 2]{kobayashi}. This system is related to our $(\delta_n)_n$ defined in (\ref{def delta}) by
$$\delta_n = \log_p(d_n)\cdot \nu \text{ in }\QQ_{n,p}\otimes_{\QQ_p}D_{\rm crys}(V).$$
Since $d_0=1$, Hilbert's theorem 90 implies that there exists $x_n \in \QQ_{n,p}^\times$ such that
$$d_n=\frac{\gamma x_n}{x_n}.$$
We regard $x_n \in H^1(\QQ_{n,p},\ZZ_p(1))$ via the Kummer map.
The element ${\rm Cor}_{\QQ_{n,p}/\QQ_p}(x_n)$ is well-defined in $H^1(\QQ_p,\ZZ/p^n(1))$, i.e., independent of the choice of $x_n$.
We define
$$d':=({\rm Cor}_{\QQ_{n,p}/\QQ_p}(x_n))_n \in \varprojlim_n H^1(\QQ_p,\ZZ/p^n(1)) \simeq H^1(\QQ_p,\ZZ_p(1)) .$$

For each field $\QQ_{n,p}$ with $n \geq 0$ we also write 
\begin{eqnarray}\label{cup Gm}
(-,-)_{\GG_m}: H^1(\QQ_{n,p},\ZZ_p(1)) \times H^1(\QQ_{n,p},\ZZ_p) \to H^2(\QQ_{n,p},\ZZ_p(1))\simeq \ZZ_p
\end{eqnarray}
for the pairing defined by the cup product. Let
$$\pi_p: H^1_n=H^1(\cO_{\QQ_n,S},T) \to H^1(\QQ_{n,p},T) \to
H^1(\QQ_{n,p},\ZZ_p)$$
be the map induced by the surjection $T \twoheadrightarrow  \ZZ_p$ in (\ref{texact}).



We define
$${\rm Col}_n': H^1_n \to \ZZ/p^n[G_n]$$
by
$${\rm Col}_n'(z):=\sum_{\sigma \in G_n}(\sigma x_n, \pi_p(z))_{\GG_m}\sigma$$
and set
$${\rm Col}':=\varprojlim_n {\rm Col}_n': \HH^1 \to \varprojlim_n \ZZ/p^n[G_n]\simeq \Lambda.$$

\begin{lemma}\label{col4}\
\begin{itemize}
\item[(i)] The Coleman map ${\rm Col}: \HH^1 \to \Lambda$ coincides with $(\gamma^{-1}-1)\cdot {\rm Col}'.$
\item[(ii)] Let $y \in \HH^1$. Then we have
$${\rm Col}'(y) \in I$$
and
$${\rm Col}'(y)= (d', \cD(y))_{\GG_m} \text{ in }\QQ_p\otimes_{\ZZ_p}I/I^2,$$
where
$$(-,-)_{\GG_m}: H^1(\QQ_p,\QQ_p(1)) \times (H^1(\QQ_{p}, \QQ_p)\otimes_{\ZZ_p}I/I^2) \to \QQ_p\otimes_{\ZZ_p}I/I^2$$
is induced by (\ref{cup Gm}).
\end{itemize}
\end{lemma}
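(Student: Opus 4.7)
The plan is as follows.

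For claim (i), the strategy is to exploit the explicit description $\delta_n = \log_p(d_n)\cdot\nu$ recalled just before the statement. Substituting this into the definition of $\mathrm{Col}_n$ and using $[\omega,\nu]_L=1$ from Lemma \ref{lemnu}, the pairing $[\exp_n^\ast(z),\sigma\delta_n]_{L_n}$ reduces to $\sigma(\log_p d_n)\cdot\exp_\omega^\ast(z)$. The Hilbert-90 relation $d_n=\gamma x_n/x_n$ then gives $\sigma(\log_p d_n)=\sigma(\gamma-1)\log_p(x_n)$, and after the change of summation variable $\sigma\gamma\leftrightarrow\sigma$ in $\sum_{\sigma\in G_n}$ one extracts the factor $(\gamma^{-1}-1)$, leaving the sum $\sum_\sigma\mathrm{Tr}_{L_n/L}\bigl(\sigma\log_p(x_n)\cdot\exp_\omega^\ast(z)\bigr)\sigma$. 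By local reciprocity $(u,\chi)_{\GG_m}=\chi(\mathrm{rec}_p(u))$ for principal units $u$ together with the explicit description of $\pi_p(z)\in H^1(\QQ_{n,p},\QQ_p)$ via the dual exponential, this remaining sum is identified with $\mathrm{Col}_n'(z)$. Passing to the inverse limit in $n$ gives the claim.

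For the first half of claim (ii), that $\mathrm{Col}'(y)\in I$, Galois equivariance of the cup product shows that the augmentation $\epsilon:\Lambda\to\ZZ_p$ sends $\mathrm{Col}'(y)$ to $(d',\pi_p(y_0))_{\GG_m}$ at level zero. Since $y_0\in H^1(\ZZ_S,T)$ belongs to $\widetilde{H}^1_f(\QQ,V)$, the constraint $\cD(y)\in\mathrm{im}(\pi_p)$ in the split-multiplicative construction combined with $d_0=1$ (which forces the relevant level-zero pairing to vanish) yields $(d',\pi_p(y_0))_{\GG_m}=0$, giving the first assertion.

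For the equality $\mathrm{Col}'(y)\equiv(d',\cD(y))_{\GG_m}\pmod{\cI^2}$, I would follow the pattern of Lemma \ref{col3}. Apply the cup-product pairing with $d'\in H^1(\QQ_p,\ZZ_p(1))$ to both sides of the identity $\mathrm{loc}_p(y)=i(\cD(y))$ in $H^1(\rgamma_{\mathrm{Iw}}(\QQ_p,F^-V)\otimes^{\DL}_{\Lambda}\Lambda/I^2)$ that defines Rubin's derivative. Compatibility of the Iwasawa cup product with its finite-level analogues, and the construction of $d'$ from corestrictions of the $x_n$, allow one to rewrite the left-hand side modulo $\cI^2$ as $\sum_\sigma(\sigma x_n,\pi_p(y_n))_{\GG_m}\sigma$, which is $\mathrm{Col}'(y)$ by definition.

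The main technical obstacle will be handling the non-uniqueness of $\cD(y)$ in the split multiplicative case: it is defined only modulo $\mathrm{im}(\rho_p)=\langle\log_p\chi_{\mathrm{cyc}}\rangle\otimes_{\ZZ_p}I/I^2$. One must verify that pairing with $d'$ annihilates this ambiguity, i.e.\ that $(d',\log_p\chi_{\mathrm{cyc}})_{\GG_m}=0$ in $\QQ_p\otimes_{\ZZ_p}I/I^2$. This identity should follow from local reciprocity together with the normalization $d_0=1$, but verifying it cleanly requires care with the Iwasawa-theoretic pairing conventions, and will be the technical crux of the argument.
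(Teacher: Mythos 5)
Your proposal is correct in outline and follows essentially the same route as the paper, whose proof of claim (i) is exactly the explicit computation ``by construction'' that you sketch (substituting $\delta_n=\log_p(d_n)\nu$, using Hilbert 90 and reindexing the sum; the paper defers these details to Kobayashi's computation), and whose proof of claim (ii) is the Lemma \ref{col3} template you describe (augmentation for the containment in $I$, then pairing the defining relation ${\rm loc}_p(y)=i(\cD(y))$ with $d'$). Two small corrections: the containment ${\rm Col}'(y)\in I$ follows at once from $\pi_p(y_0)=0$, i.e.\ from the fact that $y_0$ lies in $\widetilde H^1_f(\QQ,V)$ which already underlies the construction of $\cD(y)$ (it has nothing to do with $d_0=1$); and your ``technical crux'' is moot, since the paper's $\cD(y)$ is the unique lift normalized to lie in $\im(\pi_p)$ and your pairing computation applies verbatim to that element, so that the vanishing of $(d',\log_p\chi_{\rm cyc})_{\GG_m}$ is a harmless byproduct (also visible from the quoted identity relating $(d',-)_{\GG_m}$ to $(p,-)_{\GG_m}$) rather than a needed input.
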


\begin{proof}
Claim (i) follows directly from construction. (See also Kobayashi's computation of ${\rm Col}_n(z)$ in \cite[p. 573]{kobayashi}.)

Claim (ii) is proved in the same way as Lemma \ref{col3} and so, for brevity, we omit the proof.
\end{proof}


\subsubsection{Proof of Theorem \ref{main}(ii)}
Let $w \in \HH^1$ be the element in Lemma \ref{kappa uni}. We compute
\begin{eqnarray*}
t_{c,d} \cdot \mathcal{L}_{S,p}&=& {\rm Col}(({}_{c,d}z_n)_n) \quad  \text{ (by Theorem \ref{katothm})}\\
&=&{\rm Col}(w)\cdot (\gamma-1)^{r-1} \quad \text{ (by Lemma \ref{kappa uni})}\\
&=&{\rm Col}'(w) \cdot (\gamma^{-1}-1)(\gamma-1)^{r-1} \quad \text{ (by Lemma \ref{col4}(i))} \\
&\in & I^{r+1} \quad \text{ (by Lemma \ref{col4}(ii)).}
\end{eqnarray*}

Thus, in $I^{r+1}/I^{r+2}=Q^{r+1}$, we further compute
\begin{eqnarray*}
&&t_{c,d} \cdot \mathcal{L}_{S,p}^{(r+1)}\\
&=& -{\rm Col}'(w) \cdot (\gamma-1)^{r}  \\
&= &- (d',\cD(w))_{\GG_m} \cdot (\gamma-1)^r \quad \text{ (by Lemma \ref{col4}(ii))}
\end{eqnarray*}
Since
$$(d',\cD(w))_{\GG_m}=\left(1-\frac{1}{p} \right)^{-1}(\log_{p}\chi_{\rm cyc}(\gamma))^{-1}(p, \cD(w))_{\GG_m}$$
(see Kobayashi \cite[p. 574, line 2]{kobayashi}, note that
`${\N} x_n$' in \cite{kobayashi} is congruent to $d'$ modulo $p^n$),
Lemma \ref{LinvariantLemma} implies that
$$- (d',\cD(w))_{\GG_m} \cdot (\gamma-1)^r
=\left(1-\frac 1p \right)^{-1} \exp^\ast_\omega(\cD(w))
\cdot\frac{1}{\ord_p(q_E)}(1-{\rm rec}_p(q_E))\cdot
(\gamma-1)^{r-1}.
$$

Note that, for any $x \in E(\QQ)$ and $y \in H^1(\QQ_p,V)$, we have
$$\log_\omega(x)\exp^\ast_\omega(y) = (x, y)_p.$$
Hence we have
\begin{eqnarray*}
&&\left(1-\frac 1p \right)\log_\omega(x) t_{c,d} \cdot \mathcal{L}_{S,p}^{(r+1)}\\
&=&(x,\cD(w))_p \cdot\frac{1}{\ord_p(q_E)}(1-{\rm rec}_p(q_E)) \cdot (\gamma-1)^{r-1}\\
&=&(x,\cD({}_{c,d}\kappa_\infty))_p \cdot\frac{1}{\ord_p(q_E)}(1-{\rm rec}_p(q_E)) \\
&=&\langle x,{}_{c,d}\kappa_\infty\rangle_p^{\rm Sch} \cdot\frac{1}{\ord_p(q_E)}(1-{\rm rec}_p(q_E))  \quad \text{ (by Corollary \ref{cor kappa2})}.
\end{eqnarray*}
Upon multiplying both sides by $t_{c,d}^{-1}$ we obtain the desired formula.

This thereby completes the proof of Theorem \ref{main}.

\section{The Iwasawa Main Conjecture and descent theory}

The aim of this section is to directly relate Conjectures \ref{mrs2} and \ref{mrs3} with a natural main conjecture of Iwasawa theory. The main results in this section are Theorems \ref{th1} and \ref{th2}.

As before, we always assume that $p$ is odd and that $H^1(\ZZ_S,T)$ is $\ZZ_p$-free.

\subsection{Review of the Iwasawa Main Conjecture}

We use the notations in \S \ref{sec id}.

We set
$$C_n:=\rhom_{\ZZ_p}(\rgamma_c(\cO_{\QQ_n,S},T^\ast(1)),\ZZ_p[-2])$$
and $C_\infty:=\varprojlim_n C_n$. Then we have a canonical isomorphism
$$H^0(C_\infty)\simeq \HH^1$$
and an exact sequence
\begin{eqnarray}\label{h2 iw}
0\to \HH^2 \to H^1(C_\infty) \xrightarrow{f} \Lambda \otimes_{\ZZ_p} T^\ast(1)^{+,\ast} \to 0.
\end{eqnarray}
(See (\ref{exh1}) and (\ref{exh2}).)
Let $Q(\Lambda)$ denote the quotient field of $\Lambda$.
Kato proved that
\begin{eqnarray*}\label{kato lambda}
Q(\Lambda) \otimes_\Lambda \HH^i  \begin{cases}
\simeq Q(\Lambda) &\text{if $i=1$},\\
=0 &\text{if $i=2$}.
\end{cases}
\end{eqnarray*}
(See \cite[Th. 12.4]{katoasterisque}.)
Hence, we have a canonical isomorphism
\begin{eqnarray}\label{det can}
Q(\Lambda) \otimes_\Lambda {\det}_\Lambda(C_\infty) \simeq Q(\Lambda) \otimes_\Lambda (\HH^1 \otimes_{\ZZ_p} T^\ast(1)^+).
\end{eqnarray}
We set

$${}_{c,d}z_\infty:=({}_{c,d}z_n)_n \in \varprojlim_n H^1_n=\HH^1$$
and
$$z_\infty:=t_{c,d}^{-1} \cdot {}_{c,d}z_\infty \in Q(\Lambda) \otimes_{\Lambda} \HH^1,$$
where $t_{c,d} \in \Lambda$ is as in (\ref{tcd}). We then define
$$\mathfrak{z}_\infty \in Q(\Lambda) \otimes_\Lambda {\det}_\Lambda(C_\infty) $$
to be the element corresponding to
$$z_\infty \otimes e^+\delta(\xi) \in Q(\Lambda)\otimes_\Lambda (\HH^1 \otimes_{\ZZ_p} T^\ast(1)^+)$$
under the isomorphism (\ref{det can}), where $\delta(\xi) \in \ZZ_p \otimes_\ZZ \sH \simeq T^\ast(1)$ is defined in \S \ref{sec kato}.

\begin{conjecture}[Iwasawa Main Conjecture]\label{IMC}
We have
$$\langle \mathfrak{z}_\infty \rangle_\Lambda ={\det}_\Lambda(C_\infty).$$
\end{conjecture}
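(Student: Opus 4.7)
The plan is to establish Conjecture \ref{IMC} by combining Kato's Euler system divisibility with a complementary divisibility coming from automorphic methods, and then upgrading the resulting equality of characteristic ideals to the $\Lambda$-module identity required here. First, using the Euler system property of ${}_{c,d}z_\infty$ together with Kato's main theorem \cite[Th. 12.5]{katoasterisque}, I would show that the characteristic ideal of $\HH^2$ divides that of $\HH^1/\langle z_\infty\rangle$. Via the exact sequence (\ref{h2 iw}) and the canonical isomorphism (\ref{det can}), this translates into the inclusion $\langle \mathfrak{z}_\infty\rangle_\Lambda \subseteq {\det}_\Lambda(C_\infty)$, using also (as in the proof of Proposition \ref{kato vanish}) that the freeness hypothesis on $H^1(\ZZ_S,T)$ forces $\HH^1$ to be $\Lambda$-free of rank one.

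The second, and harder, step is the reverse divisibility. The appropriate input depends on the reduction type of $E$ at $p$: in the good ordinary case one appeals to the work of Skinner--Urban via Eisenstein ideals on $U(2,2)$, in the good supersingular case one uses the results of Wan in the signed (plus-minus) Iwasawa-theoretic setting, and for multiplicative reduction there are analogous results of Skinner and others. Each of these inputs provides the opposite divisibility between $\mathrm{char}_\Lambda(\HH^1/\langle z_\infty\rangle)$ and $\mathrm{char}_\Lambda(\HH^2)$, which combined with the first step yields equality of characteristic ideals.

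To promote the equality of characteristic ideals to the asserted equality of $\Lambda$-modules, I would exploit the fact that both $\langle \mathfrak{z}_\infty\rangle_\Lambda$ and ${\det}_\Lambda(C_\infty)$ are principal $\Lambda$-submodules of the rank-one $Q(\Lambda)$-vector space $Q(\Lambda)\otimes_\Lambda {\det}_\Lambda(C_\infty)$. Given the inclusion from the first step, it then suffices to check that the cokernel has trivial characteristic ideal, which follows directly from the characteristic-ideal equality obtained in the second step.

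The main obstacle is the second step: in full generality the reverse divisibility is not available --- in particular no analogue is currently known when $E$ has additive reduction at $p$, and in the supersingular case certain technical hypotheses on $a_p$ and on the residual Galois representation are typically required. For this reason Conjecture \ref{IMC} is adopted as a hypothesis in Theorem \ref{iw2} rather than proved unconditionally.
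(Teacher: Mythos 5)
This statement is a \emph{conjecture} in the paper: the authors do not prove it (and indeed use it only as a hypothesis, e.g.\ in Theorems \ref{th1} and \ref{th2}), noting in Remark \ref{rem char} merely that it is equivalent, via characteristic ideals, to Kato's Conjecture 12.10. Your proposal therefore cannot be checked against a proof in the paper, and as you yourself concede, it is not a proof: the second step --- the reverse divisibility ${\rm char}_\Lambda(\HH^1/\langle z_\infty\rangle_\Lambda)\supseteq{\rm char}_\Lambda(\HH^2)$ --- is exactly the open content of the conjecture. The automorphic inputs you cite (Skinner--Urban in the good ordinary case, Wan in the supersingular case, and related work in the multiplicative case) each require restrictive hypotheses (ordinarity or signed/plus-minus formulations, conditions on the residual representation $E[p]$, ramification and $\mu$-invariant assumptions, and a translation between the analytic formulation with the $p$-adic $L$-function and the present formulation with the lattice $T$, the ${}_{c,d}$-modification and the normalization of $\xi$), and no such input exists at all when $E$ has additive reduction at $p$, which is precisely the generality in which the conjecture is stated and exploited in this paper.

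Two further points of caution on the first step. Kato's divisibility in the integral form you need (so that $\langle\mathfrak{z}_\infty\rangle_\Lambda\subseteq{\det}_\Lambda(C_\infty)$ holds as stated, not merely after tensoring with $\QQ_p$ or up to powers of $p$) requires a large-image hypothesis on the Galois representation ({\cite[Th. 12.5(4)]{katoasterisque}}), which is not part of Hypothesis \ref{hyp}; under the paper's running assumptions one only gets the inclusion after suitable localization. Also, the final "upgrading" step is fine in principle (both sides are invertible $\Lambda$-submodules of a rank-one $Q(\Lambda)$-space, and $\Lambda$ is a regular local ring, so equality of characteristic ideals of the quotient does give equality of lattices --- this is the content of Remark \ref{rem char}), but it is contingent on the two divisibilities, the second of which is unavailable in the required generality. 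In short, your text is a reasonable survey of the evidence for Conjecture \ref{IMC}, but it does not establish it, and the paper does not claim to either.
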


\begin{remark}\label{rem char}
Since $\Lambda$ is a regular local ring, we see by \cite[Chap. I, Prop. 2.1.5]{katolecture} that Conjecture \ref{IMC} is equivalent to the equality
$${\rm char}_\Lambda(\HH^1/\langle z_\infty \rangle_\Lambda)= {\rm char}_\Lambda(\HH^2).$$
Thus Conjecture \ref{IMC} is equivalent to \cite[Conj. 12.10]{katoasterisque} (by letting $f$ in loc. cit. be the normalized newform corresponding to $E$).
\end{remark}

\subsection{Consequences of the Iwasawa Main Conjecture}

We now state main results of this section.


\begin{theorem}\label{th1}
Assume Hypothesis \ref{hyp}. Then
Conjecture \ref{IMC} (Iwasawa Main Conjecture) implies Conjecture \ref{mrs3} up to $\ZZ_p^\times$, i.e., there exists $u \in \ZZ_p^\times$ such that
$$\kappa_\infty =u\cdot v_\xi\left(\prod_{\ell \in S \setminus \{\infty\}} L_\ell \right)\frac{\# \sha(E/\QQ)\cdot {\rm Tam}(E)}{\# E(\QQ)_{\rm tors}^2} \cdot R_\omega^{\rm Boc} \text{ in }(\QQ_p \otimes_\ZZ E(\QQ))\otimes_{\ZZ_p} Q^{r-1}.$$
\end{theorem}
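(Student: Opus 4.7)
The plan is to descend the Iwasawa Main Conjecture from level $\Lambda$ to the augmentation quotient $Q^{r-1}$, and to show that the resulting equality translates, via Remark \ref{rem alg} and the lattice computation (\ref{alg lattice}), into Conjecture \ref{mrs3} up to $\ZZ_p^\times$. At the outset, the IMC supplies a unit $u_\Lambda \in \Lambda^\times$ with $\mathfrak{z}_\infty = u_\Lambda\cdot\mathfrak{b}_\infty$, for any choice of $\Lambda$-basis $\mathfrak{b}_\infty$ of the invertible module $\det_\Lambda(C_\infty)$. Since $\Lambda^\times\twoheadrightarrow \ZZ_p^\times$ has kernel $1+I$, after descent to the augmentation quotient (where elements of $1+I$ act trivially on the piece $I^{r-1}/I^r$ that concerns us), the image of $u_\Lambda$ will be a unit $u := u_\Lambda(1)\in\ZZ_p^\times$.

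I would then compute the descents of $\mathfrak{z}_\infty$ and of $\mathfrak{b}_\infty$ by iterating the short exact sequences $0 \to I^k/I^{k+1} \to \Lambda/I^{k+1} \to \Lambda/I^k \to 0$ for $k = 1,\dots,r-1$, combined with the control isomorphism (\ref{control}) and its compact-support analogue. The image of $\mathfrak{z}_\infty$ under this descent, by Lemma \ref{kappa uni} and the explicit formula (\ref{kappa exp}), is (up to $\ZZ_p^\times$) precisely $\kappa_\infty\otimes\bm{x}^\ast\otimes e^+\delta(\xi)$ in $(\QQ_p\otimes_\ZZ E(\QQ))\otimes_{\ZZ_p}Q^{r-1}\otimes_{\ZZ_p}{\bigwedge}_{\ZZ_p}^{r-1}H^2(\ZZ_S,T)_{\rm tf}^\ast\otimes_{\ZZ_p}T^\ast(1)^+$. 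On the other hand, by the explicit identification of $\det_{\ZZ_p}(C_\QQ)$ given in the proof of Proposition \ref{prop eta}, combined with (\ref{alg lattice}) and the observation that the connecting homomorphisms of the $I$-adic filtration coincide with the Bockstein maps (\ref{def beta}) defining ${\rm Boc}_{\infty,\bm{x}}$ via (\ref{boc2}), the image of $\mathfrak{b}_\infty$ becomes ${\rm Boc}_{\infty,\bm{x}}(\eta_{\bm{x}}^{\rm alg})\otimes\bm{x}^\ast\otimes e^+\delta(\xi)$, again up to $\ZZ_p^\times$. Equating these two images and simplifying the right-hand side using Remark \ref{rem alg} together with the identity $\Omega^+=v_\xi\Omega_\xi$ produces the equality predicted by Conjecture \ref{mrs3}.

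The main obstacle is the compatibility claimed above between the iterated connecting homomorphisms of the $I$-adic filtration and the Bockstein regulator map ${\rm Boc}_{\infty,\bm{x}}$. Although conceptually this is a diagram chase in the derived category, one must carefully track the determinantal identifications throughout, as well as the torsion factor $\#H^2(\ZZ_S,T)_{\rm tors}$, which by (\ref{alg lattice}) is absorbed into the definition of $\eta_{\bm{x}}^{\rm alg}$ and, via the global duality sequence recalled just after Theorem \ref{prop kappa}, into the combination $\#\sha(E/\QQ)\cdot{\rm Tam}(E)/\#E(\QQ)_{\rm tors}^2$ appearing in Conjecture \ref{mrs3}. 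The formalism of determinants and Bockstein calculus developed in \cite{sbA}, and already exploited for Theorem \ref{iw2}, provides the tools needed to make this compatibility precise.
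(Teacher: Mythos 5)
Your overall strategy is the same as the paper's (use the IMC to say that $\fz_\infty$ generates ${\det}_\Lambda(C_\infty)$, descend through the augmentation, identify one side with $\kappa_\infty$ and the other, via Proposition \ref{prop eta} and (\ref{alg lattice}), with ${\rm Boc}_{\infty,\bm{x}}(\eta_{\bm{x}}^{\rm alg})$ up to a unit), but the proposal leaves out precisely the step that carries the mathematical content. The identity you assert for the descent of $\fz_\infty$ --- that the iterated connecting maps of the $I$-adic filtration send $\fz_\infty$ to $\kappa_\infty\otimes\bm{x}^\ast\otimes e^+\delta(\xi)$ ``by Lemma \ref{kappa uni} and (\ref{kappa exp})'' --- does not follow from those results: Lemma \ref{kappa uni} and (\ref{kappa exp}) only describe $\kappa_\infty$ through the Darmon-norm construction applied to the Euler system $({}_{c,d}z_n)_n$ (write ${}_{c,d}z_n=(\gamma-1)^{r-1}w_n$ and corestrict $w_n$), and say nothing about the image of the determinant-level element $\fz_\infty$ under Bockstein/connecting homomorphisms. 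The agreement of these two descents is exactly the commutativity of the key diagram (\ref{key diag}) in the paper (Theorem \ref{key}), and the same issue reappears in your ``main obstacle'' for the basis $\mathfrak{b}_\infty$: the compatibility of the filtration connecting maps with ${\rm Boc}_{\infty,\bm{x}}$ and with the determinantal trivializations of ${\det}_{\ZZ_p}(C_0)$ is not a formal diagram chase that the general formalism of \cite{sbA} hands you; one must actually control how the rank-one structure over $Q(\Lambda)$, the splitting using $e^+\delta(\xi)$ and $\bm{x}$, and the Darmon norm interact.

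In the paper this is done by choosing a quadratic presentation $\PP\xrightarrow{\psi}\PP$ of $C_\infty$ with a basis adapted so that $f(x_1)=1\otimes e^+\delta(\xi)^\ast$ and $x_{2,0},\dots,x_{r,0}$ is a basis of $H^2(\ZZ_S,T)_{\rm tf}$, writing $\Pi_\infty$, $\Pi_n$, $\Pi_{\bm{x}}$ explicitly as signed exterior products of the coordinate maps $\psi_i$, expressing the Bockstein maps through $\beta_{i,n}(a)=\psi_{i,n}(\widetilde a)$, and then verifying a concrete determinant identity (the equality (\ref{final})); only after this does the argument you sketch (properties (a)--(d), the lattice comparison with $\eta_{\bm{x}}^{\rm alg}$ via (\ref{alg lattice}) and (\ref{element formula}), and the unit bookkeeping, where your observation that $1+I$ acts trivially on $I^{r-1}/I^r$ is correct) go through. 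So the proposal is a correct outline of the reduction, but without an argument for the Darmon-norm versus Bockstein compatibility it does not yet constitute a proof; supplying that compatibility, in the explicit form above or an equivalent one, is what remains to be done.
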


Combining this theorem with Corollary \ref{cor padic}, we immediately obtain the following.
\begin{corollary}\label{cor th1}
Assume Hypothesis \ref{hyp}. Then
Conjecture \ref{IMC} (Iwasawa Main Conjecture) implies the $p$-adic Birch-Swinnerton-Dyer Formula up to $\ZZ_p^\times$, i.e., there exists $u \in \ZZ_p^\times$ such that
$$ \left( 1-\frac 1\alpha \right)^{-1}\left(1-\frac 1\beta\right)\cdot  \cL_{S,p}^{(r)} =u\cdot v_\xi\left(\prod_{\ell \in S \setminus \{\infty\}} L_\ell \right)\frac{\# \sha(E/\QQ)\cdot {\rm Tam}(E)}{\# E(\QQ)_{\rm tors}^2} R_p $$
if $p$ is good or non-split multiplicative, and
$$  \cL_{S,p}^{(r+1)} =u\cdot \frac{1}{{\rm ord}_p(q_E)} (1-{\rm rec}_p(q_E))\cdot  v_\xi\left(\prod_{\ell \in S \setminus \{\infty,p\}} L_\ell \right)\frac{\# \sha(E/\QQ)\cdot {\rm Tam}(E)}{\# E(\QQ)_{\rm tors}^2} R_p^{\rm Sch} $$
if $p$ is split multiplicative.
\end{corollary}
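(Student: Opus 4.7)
The proof is a straightforward combination of Theorem~\ref{th1} (which gives Conjecture~\ref{mrs3} up to a $p$-adic unit) with Corollary~\ref{cor padic} (which shows Conjecture~\ref{mrs3} implies the $p$-adic Birch--Swinnerton-Dyer Formula). My plan is therefore to repeat, with bookkeeping for the unit $u$, the argument used to derive Corollary~\ref{cor padic} from Conjecture~\ref{mrs3}.

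The first step is to invoke Theorem~\ref{th1}: under Hypothesis~\ref{hyp} and the Iwasawa Main Conjecture, there exists $u \in \ZZ_p^\times$ with
\[ \kappa_\infty = u\cdot v_\xi\Bigl(\prod_{\ell \in S \setminus \{\infty\}} L_\ell\Bigr)\frac{\# \sha(E/\QQ)\cdot {\rm Tam}(E)}{\# E(\QQ)_{\rm tors}^2} \cdot R_\omega^{\rm Boc} \]
in $(\QQ_p \otimes_\ZZ E(\QQ))\otimes_{\ZZ_p} Q^{r-1}$. The idea is to pair both sides with $\langle x,-\rangle_p$ for a well-chosen $x \in E(\QQ)$; since $r=r_{\rm alg} \geq 1$ by Hypothesis~\ref{hyp}(ii), and since the formal logarithm $\log_\omega$ is nontrivial on $E(\QQ)_{\rm tf}$ (it is injective on a finite-index subgroup of $E(\QQ)$ via restriction to $E_1(\QQ_p)$), one can choose $x \in E(\QQ)$ with $\log_\omega(x)\neq 0$.

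In the good or non-split multiplicative case, applying $\langle x,-\rangle_p$ on the left-hand side and using Theorem~\ref{main}(i) yields
\[ \langle x,\kappa_\infty\rangle_p = \Bigl(1-\tfrac 1\alpha\Bigr)^{-1}\Bigl(1-\tfrac 1\beta\Bigr)\log_\omega(x)\cdot \cL_{S,p}^{(r)}, \]
while on the right-hand side Theorem~\ref{reg prop} gives $\langle x,R_\omega^{\rm Boc}\rangle_p = \log_\omega(x)\cdot R_p$. Equating the two expressions and cancelling the nonzero factor $\log_\omega(x)$ from both sides produces the desired identity (up to the unit $u$). In the split multiplicative case, I replace Theorem~\ref{main}(i) by Theorem~\ref{main}(ii) and Theorem~\ref{reg prop} by Theorem~\ref{reg prop 2}; pairing with $\langle x,-\rangle_p^{\rm Sch}$ instead of $\langle x,-\rangle_p$ gives
\[ \langle x,\kappa_\infty\rangle_p^{\rm Sch}\cdot \tfrac{1}{\ord_p(q_E)}(1-\rec_p(q_E)) = \Bigl(1-\tfrac 1p\Bigr)\log_\omega(x)\cdot \cL_{S,p}^{(r+1)} \]
on the left and $\log_\omega(x)\cdot R_p^{\rm Sch}$ on the right, and once again cancelling $\log_\omega(x)$ produces the claimed formula.

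There is in fact no serious obstacle: the entire deduction is a mechanical reinterpretation of the already-established Corollary~\ref{cor padic} in the presence of an explicit $\ZZ_p^\times$-ambiguity, so the only real point to check is that the unit $u$ of Theorem~\ref{th1} passes transparently through the pairing and the cancellation of $\log_\omega(x)$, which it does since $\log_\omega(x) \in \QQ_p^\times$ is a genuine scalar. The closest thing to a subtlety is ensuring that an $x \in E(\QQ)$ with $\log_\omega(x)\neq 0$ exists, which is immediate from $r_{\rm alg}\geq 1$ and the injectivity of $\log_\omega$ on $E_1(\QQ_p)_{\rm tf}$.
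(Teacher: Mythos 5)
Your proposal is correct and follows essentially the same route as the paper: the paper deduces Corollary \ref{cor th1} by combining Theorem \ref{th1} with Corollary \ref{cor padic}, whose proof is exactly your argument of choosing $x\in E(\QQ)$ with $\log_\omega(x)\neq 0$, pairing with $\langle x,-\rangle_p$ (resp. $\langle x,-\rangle_p^{\rm Sch}$), invoking Theorems \ref{main} and \ref{reg prop} (resp. \ref{reg prop 2}), and cancelling $\log_\omega(x)$, with the unit $u$ carried through transparently.
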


\begin{remark}\label{rem1}
Corollary \ref{cor th1} improves upon results of Schneider \cite[Th. 5]{sch2} and Perrin-Riou \cite[Prop. 3.4.6]{PR} in which it is shown that the Iwasawa Main Conjecture and non-degeneracy of the $p$-adic height pairing together imply the $p$-adic Birch-Swinnerton-Dyer Formula up to $\ZZ_p^\times$ under the restrictive hypothesis that the reduction of $E$ at $p$ is good ordinary and good respectively. 
\end{remark}

\begin{theorem}\label{th2}
Assume Hypothesis \ref{hyp}. Assume also that
\begin{itemize}
\item Conjecture \ref{IMC} (Iwasawa Main Conjecture) is valid,
\item Conjecture \ref{mrs2} (Generalized Perrin-Riou Conjecture at infinite level) is valid, and
\item the Bockstein regulator $R_\omega^{\rm Boc}$ in Definition \ref{def alg} does not vanish.
\end{itemize}
Then the $p$-part of the Birch-Swinnerton-Dyer Formula is valid so that there is an equality
 $$L^\ast(E,1) \cdot \ZZ_p = \left(\# \sha(E/\QQ)[p^\infty]\cdot {\rm Tam}(E) \cdot \# E(\QQ)_{\rm tors}^{-2}\cdot \Omega^+ \cdot R_\infty \right)
 \cdot \ZZ_p$$
of $\ZZ_p$-sublattices of $\CC_p$ .
\end{theorem}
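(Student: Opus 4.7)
The plan is to combine the two characterizations of $\kappa_\infty$ that are afforded by the stated hypotheses, then extract the desired $\ZZ_p$-lattice identity by cancelling $R_\omega^{\rm Boc}$.

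First I would invoke Theorem \ref{th1}: under Hypothesis \ref{hyp} and the Iwasawa Main Conjecture, there exists $u\in \ZZ_p^\times$ with
\begin{equation*}
\kappa_\infty \;=\; u\cdot v_\xi\Bigl(\prod_{\ell\in S\setminus\{\infty\}}L_\ell\Bigr)\frac{\#\sha(E/\QQ)\cdot {\rm Tam}(E)}{\#E(\QQ)_{\rm tors}^2}\cdot R_\omega^{\rm Boc}
\end{equation*}
in $(\QQ_p\otimes_{\ZZ}E(\QQ))\otimes_{\ZZ_p}Q^{r-1}$. In parallel, the assumed validity of Conjecture \ref{mrs2} combined with Proposition \ref{prop exp} supplies the complementary identity
\begin{equation*}
\kappa_\infty \;=\; \frac{L_S^\ast(E,1)}{\Omega_\xi\cdot R_\infty}\cdot R_\omega^{\rm Boc}
\end{equation*}
in $\CC_p\otimes_{\ZZ_p}H^1(\ZZ_S,T)\otimes_{\ZZ_p}Q^{r-1}$.

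Because $R_\omega^{\rm Boc}\neq 0$ by hypothesis, both displayed equalities exhibit $\kappa_\infty$ as $\CC_p$-scalar multiples of a common non-zero vector, and I can therefore equate the two scalars to obtain
\begin{equation*}
\frac{L_S^\ast(E,1)}{\Omega_\xi\cdot R_\infty}\;=\;u\cdot v_\xi\Bigl(\prod_{\ell\in S\setminus\{\infty\}}L_\ell\Bigr)\frac{\#\sha(E/\QQ)\cdot {\rm Tam}(E)}{\#E(\QQ)_{\rm tors}^2}.
\end{equation*}
Using the paper's normalization $L_S^\ast(E,1) = \bigl(\prod_{\ell\in S\setminus\{\infty\}}L_\ell\bigr)\cdot L^\ast(E,1)$ and the defining relation $\Omega^+ = v_\xi\cdot \Omega_\xi$, cancellation of the common factor $\prod_\ell L_\ell$ then gives
\begin{equation*}
L^\ast(E,1)\;=\;u\cdot \Omega^+\cdot R_\infty\cdot \frac{\#\sha(E/\QQ)\cdot {\rm Tam}(E)}{\#E(\QQ)_{\rm tors}^2}
\end{equation*}
with $u\in\ZZ_p^\times$. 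Since finiteness of $\sha(E/\QQ)$ forces $\#\sha(E/\QQ)/\#\sha(E/\QQ)[p^\infty]\in \ZZ_p^\times$, this scalar identity is equivalent to the stated equality of $\ZZ_p$-lattices.

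All of the genuine arithmetic content has been packaged into Theorem \ref{th1} and the assumed validity of Conjecture \ref{mrs2}; granted these, the above argument is a short formal manipulation in which the only subtle point is the scalar comparison step, and this is precisely where the non-vanishing hypothesis on $R_\omega^{\rm Boc}$ enters in an essential way.
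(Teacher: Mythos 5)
Your argument is correct, but it is not the route the paper takes. The paper proves Theorem \ref{th2} at the level of elements: from the key commutative diagram (Theorem \ref{key}) and the Iwasawa Main Conjecture it gets $\kappa_\infty={\rm Boc}_{\infty,\bm{x}}(\eta_{\bm{x}}^{\rm Kato})$, compares this with Conjecture \ref{mrs2} in the form $\kappa_\infty={\rm Boc}_{\infty,\bm{x}}(\eta_{\bm{x}}^{\rm BSD})$, uses that the non-vanishing of $R_\omega^{\rm Boc}$ is equivalent to the \emph{injectivity} of ${\rm Boc}_{\infty,\bm{x}}$ to conclude $\eta_{\bm{x}}^{\rm BSD}=\eta_{\bm{x}}^{\rm Kato}$, and then converts the lattice statement of Theorem \ref{key}(c) into ${\rm BSD}_p(E)$ via Proposition \ref{prop eta}. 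You instead take Theorem \ref{th1} and Proposition \ref{prop exp} as black boxes, view both identities as expressing $\kappa_\infty$ as a $\CC_p$-multiple of the single non-zero vector $R_\omega^{\rm Boc}$ (non-zero after extension of scalars, since it lies in a finite-dimensional $\QQ_p$-space), cancel that vector, and finish by elementary bookkeeping with $L_S^\ast(E,1)=\bigl(\prod_{\ell\in S\setminus\{\infty\}}L_\ell\bigr)L^\ast(E,1)$, the non-vanishing of each $L_\ell$, and $\Omega^+=v_\xi\Omega_\xi$. The two arguments rest on the same underlying input (Theorem \ref{key} together with the non-degeneracy encoded in $R_\omega^{\rm Boc}\neq 0$), but your decomposition avoids both Theorem \ref{key}(c) and Proposition \ref{prop eta}, replacing the element-level identity $\eta_{\bm{x}}^{\rm BSD}=\eta_{\bm{x}}^{\rm Kato}$ by a scalar comparison; this is shorter given Theorem \ref{th1}, whereas the paper's route keeps the canonical generator $\eta^{\rm Kato}$ visible and makes the passage to the lattice form of ${\rm BSD}_p(E)$ structural rather than computational.

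One small point: your final sentence appeals to finiteness of $\sha(E/\QQ)$, which is not among the hypotheses (Hypothesis \ref{hyp} only gives finiteness of $\sha(E/\QQ)[p^\infty]$). This is harmless, because Theorem \ref{th1} — as its proof via Theorem \ref{key}(d) makes explicit — should be read with $\#\sha(E/\QQ)[p^\infty]$ in place of $\#\sha(E/\QQ)$, so the ratio you cancel never actually appears; but you should state the invocation of Theorem \ref{th1} in that form rather than inferring the unit from a finiteness assumption you do not have.
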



\begin{remark}\label{rem3} Theorem \ref{th2} explains the precise link between the natural main conjecture of Iwasawa theory and the classical Birch-Swinnerton-Dyer Formula, even in the case of additive reduction. We note also that this result is, in effect, an analogue of the main result \cite[Th. 5.2]{bks2} of the current authors, where, roughly speaking, the following result is proved in the setting of the multiplicative group: if one assumes the validity of 
\begin{itemize}
\item the Iwasawa Main Conjecture for $\GG_m$ (cf. \cite[Conj. 3.1]{bks2}),
\item the Iwasawa-Mazur-Rubin-Sano Conjecture for $\GG_m$ (cf. \cite[Conj. 4.2]{bks2}), and
\item the injectivity of a certain Bockstein homomorphism (which is implied by the condition `(F)' in \cite[Th. 5.2]{bks2}: see \cite[Prop. 5.16]{bks2}),
\end{itemize}
then the equivariant Tamagawa Number Conjecture for $\GG_m$ is also valid.
%
\end{remark}

\subsection{The descent argument}

In the following, we assume both Hypothesis \ref{hyp} and the validity of Conjecture \ref{IMC}.

\subsubsection{A key commutative diagram}

We shall first give quick proofs of Theorems \ref{th1} and \ref{th2} by using the following key result.

\begin{theorem}\label{key}
Let $\bm{x}$ be a $\ZZ_p$-basis of ${\bigwedge}_{\ZZ_p}^{r-1} H^2(\ZZ_S,T)_{\rm tf}$. Then there is a commutative diagram
\begin{eqnarray}\label{key diag}
\xymatrix{
{\det}_{\Lambda}(C_\infty) \ar[r]^{\Pi_\infty} \ar@{->>}[dd]_{\N_\infty}& I^{r-1}\cdot \HH^1 \ar[rd]^{\cN_\infty} & \\
& & H^1_0 \otimes_{\ZZ_p} Q^{r-1}\\
{\det}_{\ZZ_p}(C_0) \ar[r]_{\Pi_{\bm{x}}} &{\bigwedge}_{\ZZ_p}^r H^1_0 \ar[ru]_{{\rm Boc}_{\infty,\bm{x}}}&
}
\end{eqnarray}
with the following properties:
\begin{itemize}
\item[(a)] $\Pi_\infty(\fz_\infty)=z_\infty$;
\item[(b)] $\cN_\infty(z_\infty)= \kappa_\infty$;
\item[(c)] $\langle \eta_{\bm{x}}^{\rm Kato} \rangle_{\ZZ_p}= \# H^2(\ZZ_S,T)_{\rm tors}\cdot {\bigwedge}_{\ZZ_p}^r H^1_0$, where $\eta_{\bm{x}}^{\rm Kato}:=\Pi_{\bm{x}}(\N_\infty(\fz_\infty))$;
\item[(d)] $\langle {\rm Boc}_{\infty,\bm{x}}(\eta_{\bm{x}}^{\rm Kato}) \rangle_{\ZZ_p} =\ZZ_p \cdot v_\xi \left( \prod_{\ell \in S\setminus \{\infty\}} L_\ell\right)\#\sha(E/\QQ)[p^\infty] {\rm Tam}(E) \# E(\QQ)_{\rm tors}^{-2}\cdot R_\omega^{\rm Boc}$.
\end{itemize}
\end{theorem}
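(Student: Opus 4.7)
The strategy is a descent argument in equivariant Iwasawa theory, built around the Iwasawa Main Conjecture. First I would define the four maps so that the diagram makes sense. The specialisation morphism $\N_\infty$ is induced by the augmentation $\Lambda\to\ZZ_p$ and the control isomorphism $C_\infty\otimes_\Lambda^{\DL}\ZZ_p \simeq C_0$, which passes to determinants. The map $\Pi_\infty$ is constructed from the generic trivialisation (\ref{det can}) by fixing $e^+\delta(\xi)$ as basis of $T^\ast(1)^+$; its image lies in $I^{r-1}\cdot\HH^1$ rather than merely in $Q(\Lambda)\otimes_\Lambda\HH^1$ as a consequence of Proposition \ref{kato vanish} together with Kato's result that $\HH^1$ is free of rank one over $\Lambda$. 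The level-zero analogue $\Pi_{\bm{x}}$ uses the perfect structure of $C_0$ together with the sequence (\ref{exh2}) and the bases $\bm{x}$ and $e^+\delta(\xi)$ to identify ${\det}_{\ZZ_p}(C_0)$ with $\bigwedge^r H^1_0$ up to the scalar $\#H^2(\ZZ_S,T)_{\rm tors}$. Finally $\cN_\infty$ is the obvious extension of the construction of Definition \ref{def iw}: on any element of the form $(\gamma-1)^{r-1}w$ with $w\in\HH^1$ it outputs ${\rm Cor}_{\QQ_n/\QQ}(w_n)\otimes(\gamma-1)^{r-1}$ in $H^1_0\otimes_{\ZZ_p}Q^{r-1}$.

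Next I would establish commutativity together with properties (a), (b), (c). Commutativity is a formal derived-category calculation: both the descent map $\N_\infty$ and the Bockstein map ${\rm Boc}_{\infty,\bm{x}}$ are induced by the same family of tautological exact sequences $0\to I_n/I_n^2\to\ZZ_p[G_n]/I_n^2\to\ZZ_p\to 0$, and functoriality of connecting homomorphisms together with multiplicativity of determinants delivers the square (the same mechanism that underlies Lemma \ref{comm}). Property (a) is immediate from the very construction of $\fz_\infty$, and (b) is a restatement of Definition \ref{def iw} once one recognises that $\cN_\infty$ is simply the Darmon-norm construction applied componentwise. Property (c) is the lattice computation already used in the proof of Proposition \ref{prop eta}: global duality identifies
\[ {\det}_{\ZZ_p}^{-1}(\rgamma_c(\ZZ_S,T^\ast(1))) = \#H^2(\ZZ_S,T)_{\rm tors}\cdot{\bigwedge}_{\ZZ_p}^r H^1(\ZZ_S,T)\otimes{\bigwedge}_{\ZZ_p}^{r-1}H^2(\ZZ_S,T)_{\rm tf}^\ast\otimes T^\ast(1)^+, \]
and contracting the last two tensor factors against $\bm{x}$ and $e^+\delta(\xi)$ yields precisely the generating equality claimed.

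For property (d), one applies ${\rm Boc}_{\infty,\bm{x}}$ to the generator of the lattice from (c). By Theorem \ref{reg prop} and the explicit description of $R_\omega^{\rm Boc}$ in Definition \ref{def alg} (together with the computation (\ref{nek formula})), the image of a $\ZZ_p$-generator of $\bigwedge^r H^1_0$ becomes a unit multiple of $\log_\omega(x_1)^{-1}R_\omega^{\rm Boc}$, so the question reduces to a numerical identity relating $\#H^2(\ZZ_S,T)_{\rm tors}$ to the arithmetic invariants $\#\sha(E/\QQ)[p^\infty]\cdot{\rm Tam}(E)\cdot\#E(\QQ)_{\rm tors}^{-2}$ and to the rational period factor $v_\xi\cdot\prod_{\ell\in S\setminus\{\infty\}}L_\ell$. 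This identity is obtained from the global Poitou-Tate sequence combined with local Euler characteristic computations at each $\ell\in S\setminus\{p,\infty\}$, the local Tate duality at $p$ (with Kato's reciprocity (\ref{kato rec}) controlling the contribution coming from the normalised $\delta_0$), and the comparison between $\Omega_\xi$ and $\Omega^+$ encoded by $v_\xi$ in (\ref{omega}).

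The main obstacle will be in stage (d): matching every local correction factor (the Euler factor at good $\ell$, the Tamagawa number at bad $\ell\neq p$, and the local contribution at $p$ through the normalised dual exponential) precisely so that the final $\ZZ_p$-generator coincides with the predicted quantity on the nose rather than only up to a unit. Careful tracking of all chosen bases ($e^+\delta(\xi)$, $\bm{x}$, $\omega$, $\delta_0$) through the entire chain of trivialisations is essential, and the delicate point is the identification of the rational constant $v_\xi$ as arising automatically from the different normalisations of the real period built into the Manin-style choice of $\xi$ satisfying (\ref{xi condition}).
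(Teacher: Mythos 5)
Your definitions of the four maps and your treatment of properties (a), (b) and (c) agree in substance with the paper: $\Pi_\infty$ is obtained from (\ref{det can}) and the basis $e^+\delta(\xi)$, $\N_\infty$ from the augmentation and the control isomorphism $C_\infty\lotimes_\Lambda\ZZ_p\simeq C_0$, $\Pi_{\bm{x}}$ from (\ref{exh1}) and (\ref{exh2}), and $\cN_\infty$ as the limit of the Darmon norms, so that (a)--(c) are essentially immediate from the constructions once the Main Conjecture is invoked. The genuine gap is your claim that the commutativity of (\ref{key diag}) is ``a formal derived-category calculation'' delivered by functoriality of connecting homomorphisms, in the manner of Lemma \ref{comm}. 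It is not: the two routes around the diagram use two different trivialisations of determinants (at infinite level via the generic freeness of $\HH^1$ and the vanishing of $Q(\Lambda)\otimes_\Lambda\HH^2$; at level zero via the rational decomposition of ${\det}_{\ZZ_p}(C_0)$ in which both $H^1$ and $H^2$ contribute), and the upper route moreover involves the operation of dividing $z_\infty$ by $(\gamma-1)^{r-1}$ before projecting. Comparing these is exactly the content of the theorem and is where the work lies: the paper reduces to the finite-level diagram (\ref{diagn}), chooses a representative $\PP\xrightarrow{\psi}\PP$ of $C_\infty$ with a basis $b_1,\dots,b_d$ adapted so that $f(x_1)=1\otimes e^+\delta(\xi)^\ast$, that $x_2,\dots,x_d$ generate $\HH^2$ and that $x_{2,0},\dots,x_{r,0}$ form a basis of $H^2(\ZZ_S,T)_{\rm tf}$ (whence $\im\psi_{i,n}\subset I_n$ for $1<i\leq r$), writes $\Pi_n$, $\Pi_{\bm{x}}$ and ${\rm Boc}_{n,\bm{x}}$ as explicit exterior products of the components $\psi_{i,n}$ and of the maps $\beta_{i,n}$ (see (\ref{explicitn}), (\ref{explicit0}), (\ref{exp boc})), and then verifies the cofactor-expansion and sign identity (\ref{final}). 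Functoriality of connecting maps only yields the relation $-\beta_{i,n}=x_{i,0}^\ast\circ\beta_{\QQ_n}$; it does not by itself produce the determinant identity, so without this explicit computation (or an appeal to the analogous arguments in \cite{bks1}, \cite{bks2}, \cite{sbA}) your proof of the key square is missing its core step.

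Concerning (d), your route is heavier than necessary and partly misdirected: Kato's reciprocity law (\ref{kato rec}) and the element $\delta_0$ play no role there, since (d) concerns only the algebraic side, the zeta element entering solely through (c). The paper instead notes that ${\rm Boc}_{\infty,\bm{x}}(\eta_{\bm{x}}^{\rm Kato})$ is independent of the choice of $\bm{x}$, takes $\bm{x}$ attached to a basis $x_1,\dots,x_r$ of $E(\QQ)_{\rm tf}$ as in \S\ref{exp int}, and combines (c) with the lattice identity (\ref{alg lattice}) and the explicit formula of type (\ref{element formula}) for $\eta_{\bm{x}}^{\rm alg}$, so that applying ${\rm Boc}_{\infty,\bm{x}}$ and Definition \ref{def alg} yields the stated generator directly. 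Your proposed Poitou--Tate and local Euler-characteristic computation would in effect be re-proving (\ref{alg lattice}); that is a legitimate alternative, but you should present it as such and drop the appeal to (\ref{kato rec}).
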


Admitting this, we give proofs of Theorems \ref{th1} and \ref{th2}.

\begin{proof}[Proof of Theorem \ref{th1}]
It is sufficient to show that
$$\langle \kappa_\infty \rangle_{\ZZ_p}=\ZZ_p\cdot v_\xi \left( \prod_{\ell \in S\setminus \{\infty\}} L_\ell\right)\#\sha(E/\QQ)[p^\infty] {\rm Tam}(E)\# E(\QQ)_{\rm tors}^{-2}\cdot R_\omega^{\rm Boc}.$$
By the commutativity of (\ref{key diag}) and properties (a) and (b), we have
\begin{eqnarray}\label{alg mrs}
\kappa_\infty={\rm Boc}_{\infty,\bm{x}}(\eta_{\bm{x}}^{\rm Kato}).
\end{eqnarray}
Hence the claim follows from the property (d).
\end{proof}

\begin{proof}[Proof of Theorem \ref{th2}]
We assume Conjecture \ref{mrs2} and $R_\omega^{\rm Boc}\neq 0$, in addition to Hypothesis \ref{hyp} and Conjecture \ref{IMC}. Recall that Conjecture \ref{mrs2} asserts the equality
$$\kappa_\infty ={\rm Boc}_{\infty,\bm{x}}(\eta_{\bm{x}}^{\rm BSD}).$$
Combining this with (\ref{alg mrs}), we have
$${\rm Boc}_{\infty,\bm{x}}(\eta_{\bm{x}}^{\rm BSD}) ={\rm Boc}_{\infty,\bm{x}}(\eta_{\bm{x}}^{\rm Kato}).$$
Since the non-vanishing of $R_\omega^{\rm Boc}$ is equivalent to the injectivity of ${\rm Boc}_{\infty,\bm{x}}$ by construction, we have
$$\eta_{\bm{x}}^{\rm BSD}=\eta_{\bm{x}}^{\rm Kato}.$$
By the property (c) in Theorem \ref{key}, we have
$$\ZZ_p\cdot \eta_{\bm{x}}^{\rm BSD}= \# H^2(\ZZ_S,T)_{\rm tors}\cdot {\bigwedge}_{\ZZ_p}^r H^1_0.$$
By Proposition \ref{prop eta}, this is equivalent to the $p$-part of the Birch-Swinnerton-Dyer Formula, so we have completed the proof.
\end{proof}

The rest of this section is devoted to the proof of Theorem \ref{key}.

\subsubsection{Definitions of maps}\label{def map}

First, we give definitions of the maps $\Pi_\infty, \cN_\infty, \N_\infty$ and $\Pi_{\bm{x}}$ in the diagram (\ref{key diag}).

\begin{itemize}
\item The map
$$\Pi_\infty: {\det}_{\Lambda}(C_\infty) \to I^{r-1}\cdot \HH^1$$
is induced by
$$Q(\Lambda) \otimes_{\Lambda} {\det}_{\Lambda}(C_\infty) \stackrel{(\ref{det can})}{\simeq} Q(\Lambda) \otimes_{\Lambda} (\HH^1 \otimes_{\ZZ_p} T^\ast(1)^+) \simeq Q(\Lambda)\otimes_{\Lambda} \HH^1,$$
where the second isomorphism is induced by
$$T^\ast(1)^+ \simeq \ZZ_p; \ e^+\delta(\xi) \mapsto 1.$$
By Remark \ref{rem char}, the image of ${\det}_\Lambda(C_\infty)$ under this isomorphism is ${\rm char}_\Lambda(\HH^2)\cdot \HH^1$. Since ${\rm char}_\Lambda(\HH^2) \subset I^{r-1}$, we see that the image of ${\det}_\Lambda(C_\infty)$ is contained in $I^{r-1}\cdot \HH^1$ and thus $\Pi_\infty$ is defined.
By this construction, it is obvious that $\Pi_\infty(\fz_\infty)=z_\infty$, i.e., the property (a) of Theorem \ref{key} holds.

\item The construction of the map
$$\cN_\infty: I^{r-1}\cdot \HH^1 \to H^1_0 \otimes_{\ZZ_p}Q^{r-1}$$
is done in the same way as the construction of ${}_{c,d}\kappa_\infty$ from $({}_{c,d}z_n)_n$ in \S \ref{sec iw}. See the discussion after Proposition \ref{kato vanish}. (In fact, $\cN_\infty$ is defined to be the limit of the Darmon norm $\cN_{\QQ_n/\QQ}$.)
It is obvious that $\cN_\infty(z_\infty)=\kappa_\infty$, i.e., the property (b) in Theorem \ref{key} holds.

\item The surjection
$${\N}_\infty: {\det}_\Lambda (C_\infty) \twoheadrightarrow {\det}_{\ZZ_p}(C_0)$$
is defined to be the augmentation map
$${\det}_\Lambda(C_\infty) \twoheadrightarrow {\det}_\Lambda(C_\infty) \otimes_{\Lambda} \ZZ_p \simeq {\det}_{\ZZ_p}(C_0),$$
where the last isomorphism follows from the fact $C_\infty \lotimes_{\Lambda}\ZZ_p \simeq C_0$.

\item The map
$$\Pi_{\bm{x}}: {\det}_{\ZZ_p}(C_0) \to {\bigwedge}_{\ZZ_p}^r H^1_0$$
is induced by
\begin{eqnarray*}
\QQ_p \otimes_{\ZZ_p} {\det}_{\ZZ_p}(C_0) &\simeq& \QQ_p \otimes_{\ZZ_p} \left({\det}_{\ZZ_p}(H^0(C_0)) \otimes_{\ZZ_p} {\det}_{\ZZ_p}^{-1}(H^1(C_0))\right) \\
&\simeq& \QQ_p \otimes_{\ZZ_p} \left({\bigwedge}^r_{\ZZ_p}H^1_0 \otimes_{\ZZ_p} {\bigwedge}_{\ZZ_p}^{r-1}H^2(\ZZ_S,T)_{\rm tf}^\ast \otimes_{\ZZ_p}T^\ast(1)^+ \right) \\
&\simeq& \QQ_p \otimes_{\ZZ_p} {\bigwedge}_{\ZZ_p}^r H^1_0,
\end{eqnarray*}
where the second isomorphism follows from (\ref{exh1}) and (\ref{exh2}), and the last isomorphism is induced by
$$ {\bigwedge}_{\ZZ_p}^{r-1}H^2(\ZZ_S,T)_{\rm tf}^\ast \otimes_{\ZZ_p}T^\ast(1)^+ \simeq \ZZ_p; \ \bm{x}^\ast \otimes e^+\delta(\xi)\mapsto 1.$$
Since the image of ${\det}_{\ZZ_p}(C_0)$ under this isomorphism is $\# H^2(\ZZ_S,T)_{\rm tors}\cdot {\bigwedge}_{\ZZ_p}^rH^1_0$, the map $\Pi_{\bm{x}}$ is defined. This also shows that the property (c) in Theorem \ref{key} holds.

\end{itemize}
\subsubsection{The property (d)}

We have already seen that the properties (a), (b) and (c) in Theorem \ref{key} are satisfied.

We shall now verify property (d), i.e., that there is an equality of $\ZZ_p$-lattices
$$\ZZ_p\cdot \bigl({\rm Boc}_{\infty,\bm{x}}(\eta_{\bm{x}}^{\rm Kato})\bigr) =\ZZ_p\cdot c_E\cdot R_\omega^{\rm Boc},$$
where
$$c_E:=v_\xi\cdot \left( \prod_{\ell \in S\setminus \{\infty\}} L_\ell\right)\cdot\#\sha(E/\QQ)[p^\infty]\cdot {\rm Tam}(E)\cdot \# E(\QQ)_{\rm tors}^{-2} .$$

One checks that the element ${\rm Boc}_{\infty,\bm{x}}(\eta_{\bm{x}}^{\rm Kato})$ is independent of the choice of $\bm{x}$. So we take $\bm{x}$ to be as in \S \ref{exp int}, by fixing a basis $\{x_1,\ldots,x_r\}$ of $E(\QQ)_{\rm tf}$.
Note that this element $\bm{x}$ belongs to ${\bigwedge}_{\QQ_p}^{r-1} H^2(\ZZ_S,V)$ and may not be a $\ZZ_p$-basis of ${\bigwedge}_{\ZZ_p}^{r-1} H^2(\ZZ_S, T)_{\rm tf}$.
However, both ${\rm Boc}_{\infty,\bm{x}}$ and $\eta_{\bm{x}}^{\rm Kato}$ are defined for this $\bm{x}$ by linearity.

By the definition of $R_\omega^{\rm Boc}$ (see Definition \ref{def alg}), it is sufficient to show that
\begin{eqnarray}\label{bsd kato}
\langle \eta_{\bm{x}}^{\rm Kato}\rangle_{\ZZ_p}=\ZZ_p\cdot c_E\cdot \log_\omega(x_1)\cdot x_1\wedge \cdots \wedge x_r.
\end{eqnarray}

By the property (c) and (\ref{alg lattice}), we have
$$\langle \eta_{\bm{x}}^{\rm Kato} \rangle_{\ZZ_p} = \langle \eta_{\bm{x}}^{\rm alg} \rangle_{\ZZ_p}.$$
(Here $\eta_{\bm{x}}^{\rm alg}$ is defined in Definition \ref{alg bsd def}, where the finiteness of $\sha(E/\QQ)$ is assumed. 
But we may define $\eta_{\bm{x}}^{\rm alg}$, replacing $\sha(E/\QQ)$ by $\sha(E/\QQ)[p^\infty]$ 
since we only consider the $\ZZ_p$-modules here. Then we need only the finiteness of $\sha(E/\QQ)[p^\infty]$.)
On the other hand, by (\ref{element formula}), we have
$$\langle \eta_{\bm{x}}^{\rm alg} \rangle_{\ZZ_p} =\ZZ_p \cdot c_E \cdot \log_\omega(x_1)\cdot x_1\wedge \cdots \wedge x_r.$$
From this, we obtain the desired equality (\ref{bsd kato}). Hence we have proved that the property (d) holds.

\subsection{The proof of Theorem \ref{key}}

In this subsection, we prove the commutativity of the diagram (\ref{key diag}) and thus complete the proof of Theorem \ref{key}.
Our argument is similar to \cite[Lem. 5.22]{bks1}, \cite[Lem. 5.17]{bks2} and \cite[Th. 4.21]{sbA}.

Fix a non-negative integer $n$. It is sufficient to show the commutativity of the following `$n$-th layer version' of (\ref{key diag}):
\begin{eqnarray}\label{diagn}
\xymatrix{
{\det}_{\ZZ_p[G_n]}(C_n) \ar[r]^{\Pi_n} \ar@{->>}[dd]_{\N_n}& I_n^{r-1}\cdot H_n^1 \ar[rd]^{\cN_n} & \\
& & H^1_0 \otimes_{\ZZ_p} Q_n^{r-1}\\
{\det}_{\ZZ_p}(C_0) \ar[r]_{\Pi_{\bm{x}}} &{\bigwedge}_{\ZZ_p}^r H^1_0. \ar[ru]_{{\rm Boc}_{n,\bm{x}}}&
}
\end{eqnarray}

We shall describe maps $\Pi_\infty$, $\Pi_n$, $\Pi_{\bm{x}}$ and ${\rm Boc}_{n,\bm{x}}$ explicitly by choosing a useful representative of the complex $C_\infty$. Then the commutativity of the diagram is checked by an explicit computation.

\subsubsection{Choice of the representative}

We make a similar argument to \cite[\S 5.4]{bks1} or \cite[Prop. A.11]{sbA}.

One sees that the complex $C_\infty$ is represented by
$$\PP \xrightarrow{\psi} \PP,$$
where $\PP$ is a free $\Lambda$-module of rank, say, $d$. We have an exact sequence
\begin{eqnarray}\label{iwasawa tate}
0 \to \HH^1 \to \PP \xrightarrow{\psi} \PP \xrightarrow{\pi} H^1(C_\infty) \to 0.
\end{eqnarray}
Also, setting $P_n:=\PP \otimes_\Lambda \ZZ_p[G_n]$, we have an exact sequence
\begin{eqnarray}\label{n tate}
0 \to H_n^1 \to P_n \xrightarrow{\psi_n} P_n \xrightarrow{\pi_n} H^1(C_n) \to 0.
\end{eqnarray}
Let $\{b_1,\ldots,b_d\}$ be a basis of $\PP$. We denote the image of $b_i$ in $P_n$ by $b_{i,n}$. We set
$$x_i:=\pi(b_i) \in H^1(C_\infty) \text{ and }x_{i,n}:=\pi_n(b_{i,n}) \in H^1(C_n).$$
By the argument of \cite[Prop. A.11(i)]{sbA}, one may assume
\begin{itemize}
\item[(i)] $f(x_1)=1 \otimes e^+\delta(\xi)^\ast$, where $f:H^1(C_\infty) \to \Lambda \otimes_{\ZZ_p}T^\ast(1)^{+,\ast}$ is as in (\ref{h2 iw});
\item[(ii)] $\langle x_2,\ldots,x_d \rangle_\Lambda =\HH^2 \subset H^1(C_\infty)$;
\item[(iii)] $\{x_{2,0},\ldots,x_{r,0}\}$ is a $\ZZ_p$-basis of $H^2(\ZZ_S,T)_{\rm tf} \subset H^1(C_0)$.
\end{itemize}

We set
$$\psi_i:=b_i^\ast \circ \psi: \PP \to \Lambda$$
and
$$\psi_{i,n}:=b_{i,n}^\ast \circ \psi_n: P_n \to \ZZ_p[G_n].$$
Note that the property (iii) implies that
\begin{eqnarray}\label{image aug}
\im \psi_{i,n} \subset I_n \text{ for every $1 < i \leq r$}.
\end{eqnarray}

\subsubsection{Explicit descriptions of $\Pi_\infty$, $\Pi_n$ and $\Pi_{\bm{x}}$}
With the above representative of $C_\infty$, we have an identification
$${\det}_\Lambda(C_\infty)={\bigwedge}_\Lambda^d \PP \otimes_\Lambda {\bigwedge}_{\Lambda}^d \PP^\ast.$$
We define a map
$$\Pi_\infty:{\bigwedge}_\Lambda^d \PP \otimes_\Lambda {\bigwedge}_{\Lambda}^d \PP^\ast\to \PP$$
by
\begin{eqnarray}\label{explicit}
a \otimes (b_1^\ast \wedge \cdots \wedge b_d^\ast) \mapsto (-1)^{d-1} \left( {\bigwedge}_{1<i \leq d}\psi_i\right)(a) .
\end{eqnarray}
We denote this map by $\Pi_\infty$, since it coincides with $\Pi_\infty$ defined in \S \ref{def map} (see \cite[Lem. 4.3]{bks1}). In particular, its image is contained in $I^{r-1}\cdot \HH^1$. (We regard $\HH^1\subset \PP$ via (\ref{iwasawa tate}).)

Similarly, we have an identification
$${\det}_{\ZZ_p[G_n]}(C_n)={\bigwedge}_{\ZZ_p[G_n]}^d P_n \otimes_{\ZZ_p[G_n]} {\bigwedge}_{\ZZ_p[G_n]}^d P_n^\ast$$
and we define a map
$$\Pi_n : {\bigwedge}_{\ZZ_p[G_n]}^d P_n \otimes_{\ZZ_p[G_n]}  {\bigwedge}_{\ZZ_p[G_n]}^d P_n^\ast \to P_n$$
by
\begin{eqnarray}\label{explicitn}
a \otimes (b_{1,n}^\ast \wedge \cdots \wedge b_{d,n}^\ast) \mapsto (-1)^{d-1} \left( {\bigwedge}_{1<i \leq d}\psi_{i,n}\right)(a) .
\end{eqnarray}
It is clear by construction that the inverse limit $\varprojlim_n \Pi_n$ coincides with $\Pi_\infty$. Since the image of $\Pi_\infty$ is contained in $I^{r-1} \cdot \HH^1$, we see that the image of $\Pi_n$ is contained in $I^{r-1}_n \cdot H^1_n$. 

Finally, we give an explicit description of $\Pi_{\bm{x}}$. Here we take
$$\bm{x}:= x_{2,0} \wedge\cdots \wedge x_{r,0}.$$
We have an identification
$${\det}_{\ZZ_p}(C_0) = {\bigwedge}_{\ZZ_p}^d P_0 \otimes_{\ZZ_p} {\bigwedge}_{\ZZ_p}^d P_0^\ast.$$
We define a map
$$\Pi_{\bm{x}}: {\bigwedge}_{\ZZ_p}^d P_0 \otimes_{\ZZ_p} {\bigwedge}_{\ZZ_p}^d P_0^\ast \to {\bigwedge}_{\ZZ_p}^r P_0$$
by
\begin{eqnarray}\label{explicit0}
a \otimes (b_{1,0}^\ast \wedge \cdots \wedge b_{d,0}^\ast) \mapsto (-1)^{r(d-r)} \left( {\bigwedge}_{r<i \leq d}\psi_{i,0}\right)(a) .
\end{eqnarray}
This map coincides with $\Pi_{\bm{x}}$ defined in \S \ref{def map} (by \cite[Lem. 4.3]{bks1}). In particular, its image is contained in ${\bigwedge}_{\ZZ_p}^r H^1_0$.

\subsubsection{Explicit Bockstein maps}

We shall describe the Bockstein regulator map ${\rm Boc}_{n,\bm{x}}$ explicitly.

For an integer $i $ with $1 < i \leq r$, we define a map
$$\beta_{i,n}: P_0 \to I_n/I_n^2$$
by
$$\beta_{i,n}(a):=\psi_{i,n}(\widetilde a) \text{ (mod $I_n^2$)},$$
where for $a \in P_0$ we take an element $\widetilde a \in P_n$ such that $\sum_{\sigma \in G_n}\sigma ( \widetilde a)=a$ (we regard $P_0 \subset P_n$ by identifying $P_0 $ with $P_n^{G_n}$). Note that $\psi_{i,n}(\widetilde a) \in I_n$ by (\ref{image aug}) and its image in $I_n/I_n^2$ is independent of the choice of $\widetilde a$. Hence the map $\beta_{i,n}$ is well-defined.

Let $\beta_{\QQ_n}: H^1_0 \to H^2(\ZZ_S,T)_{\rm tf} \otimes_{\ZZ_p} I_n/I_n^2$ be the Bockstein map defined in (\ref{def beta}).
One checks by the definition of the connecting homomorphism that
$$-\beta_{i,n}=x_{i,0}^\ast \circ \beta_{\QQ_n} \text{ on }H^1_0.$$
From this, we see that the map
\begin{eqnarray}\label{exp boc}
{\rm Boc}_{n,{\bm{x}}}:=(-1)^{r-1}{\bigwedge}_{1<i \leq r} \beta_{i,n}: {\bigwedge}_{\ZZ_p}^r P_0 \to P_0 \otimes_{\ZZ_p} Q_n^{r-1}
\end{eqnarray}
coincides with ${\rm Boc}_{n,\bm{x}}={\rm Boc}_{\QQ_n,\bm{x}}$ defined in $\S \ref{sec boc}$ on ${\bigwedge}_{\ZZ_p}^r H^1_0$.

\subsubsection{Completion of the proof}

We prove the commutativity of (\ref{diagn}). We may assume $\bm{x}= x_{2,0} \wedge\cdots \wedge x_{r,0}$.

In view of the explicit descriptions (\ref{explicitn}), (\ref{explicit0}) and (\ref{exp boc}), it is sufficient to prove that
\begin{multline}\label{final}
(-1)^{d-1} \cN_n \circ \left( {\bigwedge}_{1<i \leq d} \psi_{i,n} \right)(b_{1,n}\wedge \cdots \wedge b_{d,n})\\
=(-1)^{r-1+r(d-r)}\left( {\bigwedge}_{1<i \leq r} \beta_{i,n}\right) \circ \left({\bigwedge}_{r<i\leq d}\psi_{i,0} \right)(b_{1,0}\wedge \cdots \wedge b_{d,0}).
\end{multline}
By computation, we have
$$\left( {\bigwedge}_{1<i \leq d} \psi_{i,n} \right)(b_{1,n}\wedge \cdots \wedge b_{d,n}) =\sum_{k=1}^d (-1)^{k+1} \det(\psi_{i,n}(b_{j,n}))_{j\neq k} \cdot b_{k,n}$$
(see \cite[Prop. 4.1]{bks1}) and so
\begin{multline*}
 \cN_n \circ \left( {\bigwedge}_{1<i \leq d} \psi_{i,n} \right)(b_{1,n}\wedge \cdots \wedge b_{d,n})\\
=\sum_{k=1}^d (-1)^{k+1} b_{k,0} \otimes \det(\psi_{i,n}(b_{j,n}))_{j\neq k} \text{ in }P_0 \otimes_{\ZZ_p} Q_n^{r-1}.
\end{multline*}
By noting
$$\psi_{i,n}(b_{j,n}) \equiv \psi_{i,0}(b_{j,0}) \text{ (mod $I_n$) for every $r<i\leq d$}, $$
we compute
\begin{multline*}
\left( {\bigwedge}_{1<i \leq r} \beta_{i,n}\right) \circ \left({\bigwedge}_{r<i\leq d}\psi_{i,0} \right)(b_{1,0}\wedge \cdots \wedge b_{d,0})\\
=(-1)^{(r-1)(d-r)}\sum_{k=1}^d (-1)^{k+1} b_{k,0} \otimes \det(\psi_{i,n}(b_{j,n}))_{j\neq k} \text{ in }P_0 \otimes_{\ZZ_p} Q_n^{r-1}.
\end{multline*}
Since we have
$$(-1)^{r-1+r(d-r)+(r-1)(d-r)}=(-1)^{d-1},$$
we therefore obtain the desired equality (\ref{final}).

\begin{acknowledgments}
The third author would like to thank Kazim B\"uy\"ukboduk for helpful discussions, especially about Rubin's formula. The authors would like to thank Takenori Kataoka 
for discussions with him and for his comments on the first draft of this paper, which were very helpful. The authors also would like to thank Christian Wuthrich for carefully reading the manuscript and giving them helpful comments.
\end{acknowledgments}

\end{document}